\numberwithin{equation}{section}
\newtheorem{theorem}{Theorem}[section]
\newtheorem{lemma}[theorem]{Lemma}
\newtheorem{corollary}[theorem]{Corollary}
\newtheorem{remark}[theorem]{Remark}
\newtheorem{proposition}[theorem]{Proposition}
\newtheorem{definition}[theorem]{Definition}
\newtheorem{assumption}[theorem]{Assumption}
\newcommand{\dd}{\,\mathrm{d}}
\renewcommand{\epsilon}{\varepsilon}
\newcommand{\R}{\mathbb{R}}
\newcommand{\C}{\mathbb{C}}
\newcommand{\T}{\mathbb{T}}
\newcommand{\Z}{\mathbb{Z}}
\newcommand{\N}{\mathbb{N}}
\renewcommand{\P}{\mathbb{P}}
\newcommand{\E}{\mathbb{E}}
\newcommand{\indicator}[1]{\mathbbm{1}_{#1}}
\newcommand{\Id}{\,\mathrm{d}}
\newcommand{\norm}[1]{\left\lVert#1\right\rVert}
\newcommand{\qv}[1]{\langle #1 \rangle}
\newcommand{\cF}{\mathcal{F}}
\newcommand{\Tr}{\mathrm{Tr}}
\def \red {\textcolor{red}}
\renewcommand{\hat}{\widehat}
\begin{document} 

\title[Convergence rate for Fluctuations of mean field diffusion]{Convergence rate for Fluctuations of mean field interacting diffusion and application to 2D Viscous Vortex Model and Coulomb potential}

\author{Alekos Cecchin}

\author{Paul Nikolaev}

\address[A Cecchin and P. Nikolaev]
{\newline \indent University of Padova, Department of Mathematics ``Tullio Levi-Civita'', 
 \newline \indent Via Trieste 63, 35121 Padova, Italy}
\email{alekos.cecchin@unipd.it, paul.nikolaev@unipd.it}

\date{August 31, 2025}

\thanks{The authors are supported by the project MeCoGa ``Mean field control and games'' of the University of Padova through the program STARS@UNIPD. A.C. also acknowledge support  from 
the PRIN 2022 Project 2022BEMMLZ ``Stochastic control and games and the role of information'', 
 the INdAM-GNAMPA Project 2025 ``Stochastic control and MFG under asymmetric information: methods and applications'' and the PRIN 2022 PNRR Project P20224TM7Z  ``Probabilistic methods for energy transition''.}

\keywords{Central Limit Theorem, Mean field interacting diffusions, Gaussian fluctuation, Convergence rate, Stochastic PDE on negative Sobolev space, Vortex model, Biot-Savart kernel, Repulsive Coulomb potential}


\begin{abstract}
For a system of mean field interacting diffusion on $\T^d$, the empirical measure $\mu^N_{\bm{X}}$ converges to the solution $\mu$ of the Fokker-Planck equation. Refining this mean field limit as a Central Limit Theorem, the fluctuation process $\rho^N_t= \sqrt{N}( \mu^N_{\bm{X}} -\mu_t)$ convergences to the solution $\rho$ of a linear stochastic PDE on the negative Sobolev space $H^{-\lambda-2}(\T^d)$. The main result of the paper is to establish a rate for such convergence: we show that  $|\E[\Phi(\rho_t^N) - \Phi(\rho_t)]| = \mathcal{O}(\tfrac{1}{\sqrt{N}})$, for smooth functions on $H^{-\lambda-2}(\T^d)$. The strategy relies on studying the generators of the processes $\rho^N$ and $\rho$ on $H^{-\lambda-2}(\T^d)$, and thus estimating their difference. Among others, this requires to approximate in probability $\rho$  with solutions to  stochastic diffential equations on the Hilbert space $H^{-\lambda-2}(\T^d)$. The flexibility of the approach permits to establish a rate for the fluctuations, not only in case of a regular drift, but also for the the 2D viscous Vortex model, governed by the Biot-Savart kernel, and  for the repulsive Coulomb potential. 
\end{abstract}

\maketitle

\tableofcontents

\section{Introduction}

In this work, we analyze the asymptotic fluctuation as \(N \to \infty\) of the particle system on the torus \(\T^d\) given by
\begin{equation} 
\label{eq: interacting_particle_system}
    \Id X_t^{i} = b(t,X_t^{i}, \mu_t^N) \Id t + \sigma \Id B_t^{i}, \quad i =1, \ldots , N,
\end{equation}
for $t\in (0,T]$, where $\sigma>0$ and $\mu_t^N = \frac1N\sum\limits_{i=1}^N \delta_{X_t^{i}}$
denotes the empirical measure of the system \((X^{i}, i \in \N)\). 
Mean field interacting particle systems of the form~\eqref{eq: interacting_particle_system} are widely used in various contexts. In biology, they appear in models describing the collective behavior of animals and micro-organisms, such as flocking, swarming, and chemotaxis processes~\cite{Carrillo2014}, as well as opinion dynamics~\cite{hegselmann2002,nikolaev2025}. In physics, such systems are employed to model large-scale structures like galaxies~\cite{Jeans15}, the dynamics of ions and electrons in plasma physics~\cite{Dobrushin1979}, phenomena in fluid dynamics~\cite{Onsager1949}, and problems in statistical mechanics~\cite{Serfaty2020}. More recently, they have been applied to modern fields such as neural networks~\cite{Moynot2002}, neuroscience~\cite{Baladron2012}, and optimization~\cite{Carrillo2018}. Another wide rang of applications is to the recent theories of mean field games and mean field control problems, started in the papers \cite{huang2006LargePopulationStochastic, lasry2007mean}.

A central problem in the study of mean field interacting particle systems is to understand the asymptotic behavior of the empirical measure \(\mu_t^N\) as \(N \to \infty\), a regime referred to as the mean field limit. This concept was first introduced by Boltzmann~\cite{Boltzmann1970} in the context of kinetic theory to describe the phenomenon of molecular chaos. 
If the law of the system is asymptotically i.i.d., then Propagation of Chaos is said to hold for such systems. 
Under suitable regularity assumptions on the drift coefficient \(b\), it can be shown that the empirical measure \(\mu_t^N\) converges, as \(N \to \infty\), to a deterministic measure-valued solution \((\mu_t, 0 \leq t \leq T)\) of the Fokker--Planck equation
\begin{equation} \label{eq: fokker--planck}
    \partial_t \mu_t = \frac{\sigma^2}{2} \Delta \mu_t - \nabla \cdot (b(t,\cdot,\mu_t) \mu_t),
\end{equation}
which describes the evolution of the law of a typical particle whose trajectory  is governed by the McKean--Vlasov equation
\begin{equation} \label{eq: mckean_vlasov}
    \Id Y_t = b(t,Y_t, \mathbb{P}_{Y_t}) \Id t + \sigma \Id B_t.
\end{equation} 
There is a huge literature on Propagation of Chaos for mean field model: see e.g. ~\cite{Sznitman1991} for a classical proof in the case of regular mean-field interaction, and \cite{Carmona2018vol1, cardaliaguet2019master} for the related convergence problem in mean field games. 
Embedded in the scaling of the empirical measure is the factor \(\tfrac{1}{N}\), which reflects both the weak interaction between particles and a law of large numbers type behavior. Therefore, the convergence result described above can be interpreted as a law of large numbers for the interaction particle system~\eqref{eq: interacting_particle_system}.

In this article we are interested in the asymptotic behavior of the system~\eqref{eq: interacting_particle_system} in the fluctuation regime, which plays an important role for the applications. That is, we are interested in the asymptotic behavior of the fluctuation process
\begin{equation} 
\label{eq: def_fluctuation_process}
    \rho_t^N := \sqrt{N}(\mu_t^N - \mu_t),
\end{equation}
which provides a sort of Central Limit Theorem for the mean field limit. 
As shown for instance in~\cite{Tanaka1981} it is expected that in the limit the process converges towards the SPDE
\begin{equation} 
\label{eq: limiting_spde}
\partial_t \rho_t = -  \nabla \cdot (\rho_t b(t,\mu_t)) - \bigg\langle \nabla_x \cdot  \bigg(  \mu_t  \frac{\delta b}{\delta m} (t,\cdot,\mu_t , v ) \bigg), \rho_t(v)  \bigg\rangle 
+  \frac{\sigma^2}{2}  \Delta \rho_t   \Id t
 - \nabla \cdot \big( \sigma^{\mathrm{T}} \sqrt{\mu_t} \xi \big) , 
\end{equation}
where \(\xi\) denotes a Gaussian noise (sometimes called white noise); for a precise definition see \S \ref{subsec: Gaussian_noise}.
Notice that, while the Fokker--Planck equation~\eqref{eq: fokker--planck} is non-linear, the SPDE~\eqref{eq: limiting_spde} is a linear parabolic SPDE with additive noise. It holds in the sense of distribution and is set on a negative Sobolev space $H^{-\lambda-2}(\T^d)$, where the constant $\lambda$ depends on the dimension $d$. Such dependence indeed guarantees that convergence of $\mu^N_t$ to $\mu_t$ on that space is of order $1/\sqrt{N}$, thus allowing to study fluctuations, while it is known that the convergence rate in e.g. Wasserstein metrics suffers form the dimension; see~\cite{Fournier2015}. The need to embed the signed measure $\rho^N_t$ into a Hilbert space comes also in order to have a good structure and apply known results.

Our primary motivation for this work is the analysis of the point Vortex model, which corresponds to system~\eqref{eq: interacting_particle_system} with \(b(t,x,m)= K*m\) and \(K\) given by the Biot–Savart kernel
\begin{equation} \label{eq: introd_biot_savart_kernel}
    K(x_1,x_2) = \frac{1}{2\pi}\frac{(-x_2,x_1)}{|x|^2}.
\end{equation}
The well posedness of~\eqref{eq: interacting_particle_system} was established for instance by Osada~\cite{Osada1985} under sufficient regularity on the initial data, which will be recalled in Section~\ref{sec: application}. 
This model is closely connected to the two-dimensional incompressible Navier–Stokes equations, given by
\begin{align*}
    \partial_t u = \Delta u - u \cdot \nabla u - \nabla p,
\end{align*}
where \(p\) denotes the local pressure. Taking the curl of this equation, it is a classical result~\cite{Marchioro1994} that the vorticity \(\omega(t,x) = \nabla \times u(t,x)\) satisfies the Fokker–Planck equation~\eqref{eq: fokker--planck} with \(K\) given by~\eqref{eq: introd_biot_savart_kernel}. This establishes a relationship between the Navier–Stokes dynamics and the corresponding Fokker–Planck formulation.

The point Vortex approximation to the two-dimensional Navier--Stokes and Euler equations has attracted considerable attention since the 1980s. A first result on Propagation of Chaos was provided by Osada~\cite{Osada1986}, who established convergence for bounded initial data under the assumption of large viscosity \(\sigma\). Using a different approach based on compactness methods,~\cite{Fournier2014} proved entropic Propagation of Chaos without any restriction on \(\sigma\), as long as it remains strictly positive. Later, Wang and Jabin~\cite{JabinWang2018} derived a quantitative version of Propagation of Chaos by analyzing the evolution of relative entropy and employing an exponential law of large numbers. 
Building on the same method, uniform-in-time Propagation of Chaos is established in~\cite{Guillin2025}.

Besides the Vortex model, we are also interested in further particle systems with singular interaction kernels \(K\). In recent years, significant progress has been made in the study of Propagation of Chaos for such singular kernels. We refer to the non-exhaustive list of seminal works~\cite{Fournier2014,JabinWang2018,Serfaty2020,Bresch2023}. 
In particular, the Coulomb kernel
\begin{equation*}
 K(x) = \begin{cases}
     \nabla \log(|x|) , \quad &d=2 \\
        \nabla |x|^{2-d}  , \quad &d \ge 3 
 \end{cases}   
\end{equation*}
is of central importance in the fields of statistical and quantum mechanics.

The aim of this paper is to establish a rate for the convergence of the weak error 
\begin{equation}
\label{eq:uno}
|\E[\Phi(\rho_t^N)] - \E[\Phi(\rho_t)]|
\end{equation} 
for smooth $\Phi: H^{-\lambda-2}(\T^d) \rightarrow \R$. We study the convergence rate both for regular interaction and for the Vortex and Coulomb models.  

\subsection{Related works on Fluctuations}

The convergence of $\rho^N$ to $\rho$ in distribution on a negative Sobolev space has been studied in several papers, always by proving tightness of the sequence and then convergence in law by means of a martingale central limit theorem. 
The first results in this direction are provided in \cite{Tanaka1981} and  \cite{HITSUDA1986311} in case of linear interaction; see also  \cite{Szn85}. The case of nonlinear and smooth interaction is first studied first  \cite{KurtzXiong2004}, where a common noise is also present, but the analysis is restricted to dimension one. In higher dimension, a suitable weighted negative Sobolev space for the Hilbertian approach to fluctuations for dynamics in $\R^d$ is introduced in \cite{Meleard1996} and further developed in \cite{Fernandez1997}, but the analysis therein is restricted to linear interaction. The case of moderate interaction is studied in~\cite{Jourdain1998}.  Fluctuations for the case of smooth and nonlinear interaction in arbitrary dimension, and also in presence of common noise,  are studied in \cite{Delarue2019} by exploiting the weighted spaces introduced in \cite{Meleard1996}, and their result is applied to the convergence problem in mean field games; see also \cite{delarue2021mean}. 

Recently, the compactness method in negative Sobolev spaces has been applied to obtain fluctuation results, i.e. convergence in law of $\rho^N$ to $\rho$, also for dynamics with irregular interaction. 
Building on the relative entropy bounds obtained uniformly in \(N \in \N\) in~\cite{JabinWang2018},  Wang, Zhao, and Zhu~\cite{Xianliang2023} obtained a Gaussian fluctuation result for the Vortex model. Based on the same idea, Shao and Zhao~\cite{shao2024} and the second author~\cite{Nikolaev2025_fluc} extended the Gaussian fluctuation result to the stochastic version of the Navier--Stokes equation and the Langevin dynamics, where a common source of randomness is introduced into the particle system~\eqref{eq: interacting_particle_system}. 
For the Coulomb kernel, the main fluctuation result has been obtained by Serfaty~\cite{Serfaty2023}.

Let us mention some recent works related some particular cases of quantitative fluctuations.
The papers 
\cite{JourdainTse21, flenghi2022} show that, for any smooth $U:\mathcal{P}_2(\R^d)\rightarrow\R$, the process 
$\sqrt{N}(U(\mu^N_t)-U(\mu_t))_t$ converges to a Gaussian process (dependent on $U$). Their method is to compare the PDEs satisfied by $U(\mu^N_t)$ and $U(\mu_t)$ on $\mathcal{P}_2(\R^d)$, by using the differential calculus on the space of measures 
(see \cite{Carmona2018vol1, cardaliaguet2019master}) and thus improving the quantitative weak Propagation of Chaos analyzed in \cite{CST22}. 
The recent preprint \cite{bernou2025} establishes a rate for the weak convergence of $\langle \rho^N_t, \varphi \rangle$ to $\langle \rho_t, \varphi \rangle$, for any smooth $\varphi: \R^d\rightarrow \R$, that is, they study a version of \eqref{eq:uno} for a linear $\Phi$, but they treat a Langevin dynamics and obtain a uniform in time rate.  The preprint \cite{gu2024} considers a specific irregular Langevin dynamics and establishes quantitative fluctuations for a particular functional of the empirical measure. 

A common and powerful tool to study the convergence rate in Central Limit Theorems is Stein's method. The basic idea is to view the limiting measure as an invariant measure of some process: it is widely used and has been applied in many problems, also for stochastic processes, and also in combination with Malliavin calculus; we refer to \cite{azmoodeh_peccati_yang_2021} for a recent survey. However, is seems to not have employed so far for stochastic processes valued in Hilbert spaces, which is the case we treat here. 
We finally mention that the convergence rate for \eqref{eq:uno} has been analyzed for mean field interacting systems of finite state Markov chains, obtaining an estimate similar to ours. In that case the state space of both $\rho^N$ and $\rho$ is a subset of $\R^d$, which  is finite dimensional and simplifies a lot the analysis. We refer to 
\cite[Prop. 5.11.3]{kolokoltsov2011markov} for the estimate via an analysis of the generators of the processes, which partially motivated the present work; see also \cite{Kolokoltsov2010} for the application of that technique to other type of processes and
\cite{CECCHIN2019} for an application to finite state mean field games.

\subsection{Our contribution}

The main result of the paper is to establish a rate for the weak convergence of $\rho^N$ to $\rho$: we show that 
\begin{equation}
\label{eq:main_intro}
  \sup\limits_{0 \le t \le T} |\E[\Phi(\rho_t^N)] - \E[\Phi(\rho_t)]| \le C [\Phi]_{C^2(H^{-\lambda-2}(\T^d))}  \bigg( \frac{1}{\sqrt N } +   
      W_{1, H^{-\lambda-2}(\T^d)}\Big( \P_{\rho^N_0}, \P_{\rho_0}\Big) 
    \bigg) 
\end{equation}
for a smooth function $\Phi: H^{-\lambda-2}(\T^d) \rightarrow \R$. This is a first quantitative estimate for mean field fluctuations and may permit to better understand the relation between the fluctuation process and its limit.  Compactness methods clearly can not provide a convergence rate, thus we employ a different approach in order to obtain the result. Roughly speaking, we study the generator of the fluctuation process and of the SPDE on $H^{-\lambda-2}(\T^d)$, and estimate the difference. The convergence rate or order $1/\sqrt{N}$ is common to all quantitative Central Limit Theorems, as given e.g. by Berry-Esseen Theorem or by Stein method. The flexibility of our approach permits to treat, in the same way, not only a smooth drift, but also  the case of irregular interaction kernel, in particular the Vortex and Coulomb model, hence improving with a convergence rate the fluctuation results recently obtained by compactness methods in~\cite{Xianliang2023} and \cite{Serfaty2023}.

The idea of this work is to study the generators of the the processes $\rho^N$ in 
\eqref{eq: def_fluctuation_process} and $\rho$ in \eqref{eq: limiting_spde}, and then to estimate their difference to otbain the  convergence rate. The method in \cite{kolokoltsov2011markov}, however, is not directly applicable, because $\rho^N$ and $\rho$ take values is a infinite dimensional space. More importantly, the process $\rho$ solves a SPDE in a space of distribution and is a kind of Ornstein-Uhlenbeck process in a Hilbert space, which is not strongly continuous. Thus we can not talk about generators of the processes in the usual sense and, moreover, we can not apply the usual technique to pass estimates from generators to semigroups in the proof of Trotter-Kato Theorem; see \cite[eq. I,(6.1)]{Ethier1986}.    Although there are techniques to deal with non strongly continuous semigroups in infinite-dimensional Hilbert spaces (see e.g.~\cite[\S B.5]{Gozzi2017}), they seem to not allow in general to prove a product formula as in \cite[eq. I,(6.1)]{Ethier1986}. Therefore, we have to proceed in another way to prove the main estimate \eqref{eq:main_intro}.  

As another main result, we introduce a mollification procedure which allows to approximate the limiting process $\rho$ with solutions to SPDEs with smoother coefficients which, importantly, can be written as solutions of Hilbert space valued stochastic differential equation. This fact, on one hand, allows to apply It\^o formula for It\^o processes on Hilbert spaces and hence to write a sort of generator of the approximating process $\rho^n$ on $H^{-\lambda-2}(\T^d)$, although the corresponding semigroup is not strongly continuous.  On the other hand, by employing techniques used for nonlinear SPDEs (see \cite{LiuWei2015SPDE,Rozovsky2018}), as another novelty of the paper, we show the crucial fact that the approximation $\rho^n$ converges to $\rho$ in probability on the path space.
Indeed, the usual weak convergence (in the sense of functional analysis) commonly obtained for linear SPDEs, e.g. for the finite dimensional approximation, would not be sufficient here, since the function $\Phi$ in \eqref{eq:main_intro} is non-linear. We refer to Definition \ref{def: spde_solution} for the notion of solution to \eqref{eq: limiting_spde} within the theory of SPDEs and to Section \ref{sec: SPDE_existence} for the details on the approximation procedure, which also shows existence and uniqueness of solution to \eqref{eq: limiting_spde}.


As far as the process $\rho^N$ is concerned, we use the standard It\^o formula in $\R^d$ in order to write its generator, and exploit the calculus rules for derivatives of functions along probability measures and their restriction to empirical measures detailed in \cite{Carmona2018, cardaliaguet2019master}. We do not require tightness of $\rho^N$ in order to show the main result \eqref{eq:main_intro}. 
On a technical side, we find that the second order term 
\[
\frac{1}{N^2} \partial^2_{\mu\mu} \Phi(\mu^N_{\bm{x}}; x_i,x_i), 
\]
appearing in the expansion of the second order pure derivative $\partial_{x_i x_i}\Phi(\mu^N_{\bm{x}})$ for the restriction of functions along empirical measures, is exactly, after rescaling and integration, what is given by It\^o formula in the Hilbert space $H^{-\lambda-2}(\T^d)$ as the trace of a linear operator containing the second Fréchet derivative of $\Phi$. 
To this end, we need to consider second derivatives on $H^{-\lambda-2}(\T^d)$ as functions  of two variables and study their regularity. 
Finally, to deal with the lack of strong continuity of the semigroups, we develop a technique partially presented in \cite{gess2024}. 
Our main result holds for $\Phi$ of linear growth, which in particular includes the linear functionals considered in \cite{bernou2025}. As another result, differently from other quantitative estimates, our main estimate \eqref{eq:main_intro} may imply convergence in law of the fluctuation process to its limit.


In this work, we will also make use of the relative entropy estimates from~\cite{JabinWang2018} to derive our main inequality \eqref{eq:main_intro}. Our method of proof enables to obtain the main estimate \eqref{eq:main_intro} also for the Vortex model: see \S \ref{sec:vortex}. 
 For the Coulomb kernel, we also derive our main estimate \eqref{eq:main_intro} in dimensions \(d = 2,3\), but with a convergence rate depending on the dimension: see \S \ref{sec:couulomb}.
The dimensional restriction is consistent with the work by Serfaty~\cite{Serfaty2023} on Gaussian fluctuations and, in our case, follows from the sharp estimates on the modulated free energy in ~\cite{Serfaty2025}.

\subsection{Perspectives}
The same result on the convergence rate for the fluctuations could be obtained for some more general dynamics than \eqref{eq: interacting_particle_system}. For instance, we could include a volatility $\sigma(t,x)$ depending on time and space, and non-degenerate. We prefer not to consider this case, as it would complicate more the notations. Instead, we prefer to present the main ideas and to show that our approach permits to treat cases of irregular drift. Several other more difficult generalizations could be considered. First of all, we consider dynamics on the torus and not on $\R^d$; one of the main reason for this restriction is that is is not clear to us how to approximate the solution to \eqref{eq: limiting_spde} with operators on weighted Sobolev spaces on $\R^d$ which have all the properties requires in Section  \ref{sec: SPDE_existence}. Another interesting question would be to consider a non-degenerate and measure-dependent volatility, as in the fluctuation results in \cite{Meleard1996, Fernandez1997}. We remark, however, that in case of linear functionals, the convergence rate established in \cite{bernou2025} gets worse if $\sigma$ is degenerate. Other possible generalizations may include a common noise and a rate uniform in time. Is is not clear how to treat these questions with the techniques presented in the paper, and are left to future work.

\subsection{Organization of the paper}
In Section~\ref{sec: setting}, we fist provide the 
notation and the assumptions. 
Then we introduce the Gaussian noise and the Definition of solution to \eqref{eq: limiting_spde}, stating its well-posedness in Theorem \ref{theorem: existence_SPDE}. We thus provide the main result Theorem \ref{theorem: main_result}, which establishes the convergence rate in case of drift with bounded measure derivatives. We also show how this result may imply convergence in law of $\rho^N$ to $\rho$. In Section \ref{sec: SPDE_existence} we introduce a mollification $\rho^n$ of the SPDE \eqref{eq: limiting_spde}, which writes as SDE in $H^{-\lambda-2}(\T^d)$, and show that it is well-posed.  The crucial result on convergence in probability of $\rho^n$ to $\rho$ is Theorem \ref{thm: conv_prob}. We then study the regularity of the flow of $\rho^n$. In section \ref{sec:generators}, we first study the regularity and stability of the semigroup related to $\rho^n$, and also the regularity of its derivatives viewed as Sobolev functions. Then we write its (backward) generator and also in Proposition \ref{prop:4.9}. Thus we show how derivatives in  $H^{-\lambda-2}(\T^d)$ are related to flat derivatives in $\mathcal{P}(\T^d)$ and write the generator of $\rho^N$ in Proposition \ref{prop: generator_n_fluctuation}. In Section \ref{sec: comparison} we compare the generators of $\rho^N$ and $\rho^n$ and estimate the remainder to prove the main result Theorem \ref{theorem: main_result}. 
In Section~\ref{sec: application} we derive the main estimate in Theorem~\ref{theorem: main_result} for the Biot-Savart kernel and the repulsive Coulomb kernel, by exploiting the specific features of these models in order to deal with irregular kernels; see Theorems \ref{thm:main:vortex} and \ref{theorem: repulsive_fluc}.  
Finally, in Appendix \ref{sec:appendix} we first recall some properties of Sobolev and Besov spaces, and state a regularity result for Sobolev functions on the diagonal. Then we recall the Gy\"ongy--Krylov criterion for convergence in probability and state a result on approximation by cylindrical functions on Hilbert spaces.

\section{Setting and Main Result} 
\label{sec: setting}
In this section we introduce the main framework for our result as well as provide the main result. 

\subsection{Notation}
\label{sec:notation}
We write a vector in \(\T^{d}\) as \(x = (x_1, \ldots, x_d) \in \T^{d} \).  
Throughout the entire paper, we use the generic constant \( C \) for inequalities, which may change from line to line at may depend on the dimension \(d\) and final time \(T\). For \(z \in \C\) we write \(\overline z \) for the complex conjugate of \(z\). Given a probability space $(\Omega, \mathcal{A}, \mathbb{P})$, we denode by $\P_{X}$ the law of a random variable $X: \Omega \rightarrow \R^m$. 

Given a linear operator between (separable) Hilbert spaces $L: H_1\rightarrow H_2$, the adjoint operator is denoted by $L^*:H_2\rightarrow H_1$.
For a Hilbert space $H$, we adopt the standard identification of the first and second Fréchet derivatives \(\nabla\) and \(\nabla^2\) as follows. For a real-valued twice Fréchet differentiable \(\Phi \in C^2(H)\), the first derivative takes the form
\[
\nabla \Phi \colon H \to H,
\]
where the identification of the space \(L(H,\R)\) with \(H\) is made via the Riesz representation theorem. The second derivative is then given by
\[
\nabla^2 \Phi \colon H \to L(H, H),
\]
where \(L(H, H)\) denotes the space of bounded linear operators from \(H\) to \(H\). 
We denote the linear-growth norm 
\[
\|\Phi\|_{C_\ell(H)}:= \sup_{z\in H} \frac{|\Phi(z)|}{1+ \|z\|_H} 
\]
and the seminorms 
\begin{equation*}
    [\Phi]_{C^1(H)}:= \sup_{z \in H}\|\nabla \Phi(z)\|_{H}, \qquad 
     [\Phi]_{C^2(H)}:= \sup_{z \in H}\|\nabla^2 \Phi(z)\|_{L(H,H)} + [\Phi]_{C^1(H)}, 
\end{equation*}
and let $C^2_\ell(H)$ be the subset of $\Phi\in C^2(H)$ such that $\|\Phi\|_{C_\ell(H)}$ and $[\Phi]_{C^2(H)}$ are finite, that is, functions of linear growth with bounded first and second derivative. We also denote by $C_b(H)$ the set of bounded continuous functions with the uniform norm and by $C^2_b$ its subset of $C^2(H)$ functions  with bounded first and second derivatives. The Trace of a (Trace Class) operator $L\in L(H,H)$ is denoted by $\mathrm{Tr}(L)$.

\noindent In contrast, for scalar functions \(u\) defined on \(\R^d\) or \(\T^d\), we employ classical differential notation: \(D u\) denotes the gradient, \(D^2 u\) the Hessian, \(\partial_{x_i} u\) the partial derivative in the \(i\)-th coordinate, \(\nabla \cdot u\) the divergence, and \(\Delta u\) the Laplacian.


We introduce the space of Schwarz distributions \(\mathcal{S}'(\T^d)\). We denote dual parings by \(\qv{\cdot, \cdot}\). For instance, for \(f \in \mathcal{S}', \; u \in C^\infty(\T^d) \) we have \(\qv{f,u}  = \qv{u,f}= f[u]\) and for a probability measure~\(\mu\) we have \(\qv{u,\mu}  = \int u \Id \mu\). The correct interpretation will be clear from the context but should not be confused with the scalar product \(\qv{\cdot, \cdot}_{H}\) for some arbitrary Hilbert space \(H\). If we mean the scalar product, we  write the corresponding space \(H\) as subscript onto the scalar product. 
For the space \(L^2(\T^d)\) we define the following orthonormal basis \((e_k, k \in \Z^d)\) given by 
\begin{equation*}
    e_k(x) := \exp(2\pi i k \cdot x) , 
\end{equation*}
where \(i\) denotes the imaginary unit in the complex numbers \(\C\). 
For simplicity, we denote by \( \langle k \rangle := (1+|k|^2)^{\tfrac{1}{2}} \) for \(k \in \Z^d\). Then, for \(s \in \R\) we can define the following Sobolev space 
\begin{equation*}
    H^{s}(\T^d) := \{ f \in \mathcal S'(\T^d) : \norm{f}_{H^{s}(\T^d)} < \infty\},
\end{equation*}
where \(\norm{\cdot}_{H^{s}(\T^d)}\) is induced by the scalar product 
\begin{equation*}
    \langle f, g \rangle_{H^{s}(\T^d)} := \sum\limits_{k \in \Z^d } \langle k \rangle^{2s} \langle f, e_k \rangle \overline{\langle g,e_k \rangle}. 
\end{equation*}
Note that $f$ is a function if $s\geq 0$ and a distribution if $s<0$. Given a smooth function $\varphi$,  it will often be seen as a distribution, without changing the notation, which will be clear from the context. For instance, given $s>0$, $f\in H^{-s}(\T^d)$ and $\varphi\in H^s(\T^d)$, when we write $\langle f, \varphi \rangle_{H^{-s}(\T^d)}$, we mean the scalar product with the distribution $\varphi\in \mathcal{S}'(\T^d)$ defined by $\varphi(\psi)= \langle \varphi, \psi \rangle_{L^2(\T^d)}$, for any $\psi\in C^{\infty}(\T^d)$.

A dyadic partition of unity $(\tilde \chi,\chi)$ in dimension $d$ is given by two smooth symmetric functions on $\R^d$ satisfying $\mathrm{supp}\;  \tilde \chi\subseteq \{x \in \R^d \, : \, |x| \le 2 \}$, $\mathrm{supp}\; \chi \subseteq\{x \in\R^d: 1\le|x|\le 4 \}$ and $\tilde \chi(z)+\sum_{j \ge 0}\chi(2^{-j}z)=1$ for all $z\in\R^d$. We set
\begin{equation*}
  \chi_{0}:=\tilde \chi\quad\text{and}\quad\chi_{j}:=\chi(2^{-j-1}\cdot)\quad\text{for }j\ge1.
\end{equation*}
For \(1 \le p \le \infty\), \(1 \le q \le \infty\) we definie the Besov spaces 
\begin{equation*}
    B_{p,q}^s(\T^d) := \big \{  f \in \mathcal S'(\T^d) : \norm{f}_{B_{p,q}^s(\T^d)} < \infty    \big \},
\end{equation*}
where \(\norm{f}_{B_{p,q}^s(\T^d)}\) is given by 
\begin{equation*}
    \norm{f}_{B_{p,q}^s(\T^d)}
:= \bigg( \sum\limits_{j=0}^\infty 2^{sjq} \norm{\sum\limits_{k \in \Z^d }  \chi_j(k) \langle f, e_k \rangle \exp(-2\pi i k \cdot x )}_{L^p(\T^d)}^q   \bigg)^{\frac{1}{q}}
\end{equation*}
with the usual convention if \(q= \infty\) or \(p = \infty\). It is well-known fact that \(B_{\infty,\infty}^s(\T^d)\) is equal to periodic Hölder spaces for positive non-integer \(s\)~\cite[Theorem~3.5.4]{Schmeisser1987}.

For a Banach space \((E, \norm{\cdot}_{E})\), some filtration \((\cF_t, 0 \leq t \leq T)\), \(1 \le p < \infty\) and \(0 \le s <t \le T\) we denote by \(S^p_{\cF}([s,t];E )\) the set of \(E\)-valued \((\mathcal{F}_t)\)-adapted continuous processes \((Z_u, u \in [s,t])\) such that 
%
\begin{equation*}
  \norm{Z}_{S^p_{\cF}([s,t];E )}:=
  \bigg( \E \bigg[ \sup\limits_{u \in [s,t]} \norm{Z_u}_E^p\bigg] \bigg)^{\frac{1}{p}}
\end{equation*}
is finite. Similar, \(L^p_{\cF}([s,t];E)\) denotes the set of \(E\)-valued predictable processes \((Z_u, u \in [s,t])\) such that
\begin{equation*}
  \norm{Z}_{L^p_{\cF}([s,t];E )}:=
  \bigg( \E \bigg[ \int\limits_{s}^t  \norm{Z_u}_E^p \Id u \bigg] \bigg)^{\frac{1}{p}}
\end{equation*}
is finite. 
Similar definitions hold for the deterministic counterpart \(L^p([s,t];E)\) and time independent counterpart \(L^p_{\cF}(E)\).

The set of probability measures on $\T^d$ is denoted by \(\mathcal P (\T^d) \) and is endowed with the 1-Wasserstein distance, denoted by $W_1$. The relative entropy is denoted by $\mathcal{H}(\mu | \nu)$, also for probability measures on $\T^{ld}$. We denote by $W_{1,H}$ the 1-Wasserstein distance between probability measures on a Hilbert space $H$. 
We also require derivatives on \(\mathcal P (\T^d) \): we refer to \cite{cardaliaguet2019master} for the details. We say that \(U \colon \mathcal P (\T^d) \to \R \) is differentiable, if for every \(m , \tilde m \in \mathcal P(\T^d) \) we have 
\begin{equation*}
    U(m) - U(\tilde m) = \int\limits_0^1 \int_{\T^d} \frac{\delta U}{\delta m } ((1-r)\tilde m + t m, v ) \Id (m -\tilde m )(v) \Id  r, 
\end{equation*}
where $\frac{\delta U}{\delta m }$ is called the \emph{flat} derivative. 
  Similar, \(U\) is twice differentiable if for every \(v\) the map \(m \mapsto \tfrac{\delta U}{\delta m} (m,v) \) is differentiable and we denote its second flat derivative by \(\tfrac{\delta^2 U }{\delta m^2}\), i.e. 
\begin{equation*}
    \frac{\delta^2 U}{\delta m^2}(m,v,v') =  \frac{\delta }{\delta m}\left(\frac{\delta U}{\delta m}(\cdot ,v) \right) (m,v') . 
\end{equation*}
Continuous differentiability and properties of the derivatives when computed along empirial measures are recalled when needed in \S \ref{sec:generator_fluctuation}.

Let us finally fix the constants $\lambda, \lambda' \in \R$  
\begin{equation} 
\label{eq: const}
\begin{split}
    \lambda &> \frac32 d, \\
    \lambda' &> \lambda +1.
    \end{split}
\end{equation}

\subsection{Assumptions}
\label{sec:assumptions}

We state three sets of assumptions for the several results we prove. The first imply existence of the particle system and the Fokker-Planck equation as well existence and stability and approximation results for the SPDE \eqref{eq: limiting_spde}. 

\begin{assumption}[Initial condition] \label{ass: inital_cond}
Suppose \(\rho_0 \in L^2_{\cF_0}(H^{-\lambda-2}(\T^d))  \). 


\end{assumption}
\begin{assumption}[Existence] \label{ass: existence}
The coefficient \(b(t, \cdot, \mu_t) : [0,T] \times \T^d \times \mathcal P(\T^d) \to \T^d\), interacting particle system \((X^i , i \in \N)\) and Fokker--Planck solution \((\mu_t, 0 \leq t \leq T)\) satisfy:
    \begin{enumerate}
    \item There exists a probabilistic weak solution of the interacting particle system~\eqref{eq: interacting_particle_system}, with initial condition \((X^i_0 , i \in \N)\). 
    \item There exists a non-negative solution \((\mu_t, 0 \leq t \leq T) \) in $L^1(\T^d)\cap \mathcal{P}(\T^d)$ to the Fokker--Planck equation~\eqref{eq: fokker--planck} in the sense of distribution, 
    with initial condition \(\mu_0\).  
        \item The following stability estimates hold: 
        \begin{align}
    \norm{\nabla \cdot (b(t,\cdot,\mu_t) f) }_{ H^{-\lambda-2}(\T^d)}^2 & \le  C  \norm{f}_{ H^{-\lambda-1}(\T^d)}^2, \label{eq: b_ineq}\\
    \norm{\bigg\langle \nabla_x \cdot  \bigg(  \mu_t(x)  \frac{\delta b}{\delta m} (t,x,\mu_t , v ) \bigg), f(v)  \bigg\rangle}_{ H^{-\lambda-2}(\T^d)}^2
    &\le C \norm{f}_{H^{-\lambda-1}(\T^d)}^2
    \label{eq: deriv_prob_inequ}
\end{align}
    \end{enumerate}
\end{assumption}

The next Assumption gives the regularity in time in order to apply It\^o formula and compute the generator of the limiting SPDE as well as the fluctuation process. 
\begin{assumption}[It\^o formula]\label{ass: coef_fokker} 
Let \((j_n , n \in \N)\) be a mollifier on \(\T^d\). Suppose the coefficient \(b(t, \cdot, \mu_t) : [0,T] \times \T^d \times \mathcal P(\T^d) \to \T^d\) and Fokker--Planck solution \((\mu_t, 0 \leq t \leq T)\) satisfy: 
\begin{enumerate}
    \item The function \(t \mapsto b(t, \cdot, \mu_t)\) lies in \(C([0,T];H^{\lambda'}(\T^d))\).
    \item  \(\lim\limits_{s \to t} \norm{j_n*  \bigg\langle \nabla_x \cdot  \bigg(  \mu_t(\cdot )  \frac{\delta b}{\delta m} (t,\cdot ,\mu_t , v )- \mu_s(\cdot)  \frac{\delta b}{\delta m} (s,\cdot,\mu_s , v ) \bigg), j_n* f(v)  \bigg\rangle  \bigg\rangle}_{H^{-\lambda-2}(\T^d)} =0. \)
    \item The function \(t \mapsto \mu_t\) satisfies \(\lim\limits_{s \to t } \norm{\sqrt{\mu_t}-\sqrt{\mu_s}}_{L^2(\T^d)}^2 = 0. \)
\end{enumerate}
\end{assumption}
These three assumptions are quite general and hold for both a general smooth non-linear interaction as well as for specific linear irregular interaction. We shall show in Section \ref{sec: application} that they are verified for the Vortex and  Coulomb models.

The last Assumption enables to estimate the reminder in the difference of the semigroups and hence to prove the main result. 
Here and throughout the paper, for $t\in[0,T]$,
 \(\mu_t^{\otimes l}= \mu_t \otimes\cdots \otimes \mu_t\) is the \(l\)-the tensorized version of \(\mu_t\) on \(\T^{dl}\), and  
    \(\bar \mu_t^{N} = \P_{(X^1_t, \dots, X^N_t)} \) is the law of the whole interacting particle system~\eqref{eq: interacting_particle_system} on 
    \(\T^{dN}\).

\begin{assumption}[Mean-field limit]
\label{ass: convergence_ass}
 \textcolor{white}{a}
\begin{enumerate}
        \item \(\sup\limits_{0 \le t \le T}  \sup\limits_{m \in \mathcal P(\T^d)} \norm{\frac{\delta b}{\delta m} (t, \cdot,m , \cdot )}_{L^\infty(\T^d \times \T^d) } < \infty\). 
        \item \(\sup\limits_{0 \le t \le T}  \sup\limits_{m \in \mathcal P(\T^d)} \norm{\frac{\delta^2 b}{\delta m^2} (t, \cdot,m , \cdot ,\cdot )}_{L^\infty(\T^d \times \T^d \times \T^d ) } < \infty\). 
        \item 
        \(\sup\limits_{N \in \N} \sup\limits_{0\leq t\leq T} \mathcal{H}\big(\bar \mu_t^N \vert \mu_t^{\otimes N}\big) < \infty\). 
\end{enumerate}
\end{assumption}
This assumption is clearly concerned with the case of regular interaction which we treat in the main result below; see  Theorem \ref{theorem: main_result}. We show in Section \ref{sec: application} how to prove the main result for cases of irregular interaction, by avoiding this assumption and using instead specific features of the Vortex and Coulomb model.  

\begin{remark}
We observe the following on the assumptions:
\begin{itemize}
\item We do not assume that the initial conditions of the particle system $(X^i_0:i\in\N)$ are i.i.d., neither exchangeable, nor we assume specific integrability of the initial condition $\mu_0$ of the Fokker-Planck equation. However, something is implicitly assumed on the initial conditions in order for the assumptions to hold, in particular Assumption \ref{ass: existence}-(1)-(2) and \ref{ass: convergence_ass}-(3); 
\item We do not assume strong existence neither of the interacting particle system nor of the McKean-Vlasov SDE  \eqref{eq: mckean_vlasov}, and we do not assume uniqueness of solutions neither of \eqref{eq: interacting_particle_system} nor of ~\eqref{eq: fokker--planck};
\item Boundedness of the flat derivative $\tfrac{\delta b}{\delta m}$ gives Lipschitz continuity of $b$ for the total variation norm; such condition is implied by (and thus weaker than) Lipschitz continuity for the  $W_1$ distance, since we are on the torus; 
\item We consider a drift $b$ depending on $t$ in order to consider also controlled dynamics arising from the theory of mean field games and mean field control problems.
\item We do not impose explicit assumptions so that Propagation of Chaos holds; instead, we impose condition (3) above, which is a form of Propagation of Chaos and fits to many models. 
\end{itemize}
\end{remark} 

The Assumptions~\ref{ass: existence},\ref{ass: coef_fokker} may seem technical. Hence, let us reformulate them in the liner case 
\[
b(t,x,m) = K*m(x)
\] for some interaction kernel \(K\).

\begin{lemma} 
\label{rem: convolution_estimate}
 In the linear case \(b(t,x,m) = K*m\), Assumptions~\ref{ass: existence}-~\ref{ass: coef_fokker} are verified if the following conditions hold: 
     \begin{itemize}
     \item[(i)] There exists a probabilistic weak solution of the interacting particle system~\eqref{eq: interacting_particle_system}. 
         \item[(ii)] \(K \in L^1(\T^d) \). 
         \item[(iii)]  \(\mu \in C([0,T];H^{\lambda'}(\T^d))\) solves the Fokker--Planck equation~\eqref{eq: fokker--planck}.
     \end{itemize}
Moreover, Assumption \ref{ass: convergence_ass} holds if 
\begin{itemize}
\item[(iv)] $K\in L^{\infty}(\T^d)$, 
\item[(v)] the initial condition is exchangeable and $\sup_{N \in \N} \mathcal{H}\big(\bar\mu^N_0 |\mu_0^{\otimes N}\big)$ is finite.  
\end{itemize}
These conditions also imply (i) and (ii) above
\end{lemma}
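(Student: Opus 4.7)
The plan is to verify each requirement of Assumptions~\ref{ass: existence}, \ref{ass: coef_fokker}, and \ref{ass: convergence_ass} by exploiting the convolution structure of the linear drift, the multiplication theorem in Sobolev spaces, and standard entropy propagation.

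\textbf{Assumption~\ref{ass: existence}.} Existence of a weak solution is assumed in (i), while (iii) gives a solution $(\mu_t)$ of the Fokker--Planck equation lying in the class $L^1(\T^d)\cap\mathcal{P}(\T^d)$. The main point is to check the two stability estimates. For the first, I use the continuity of the divergence $H^{-\lambda-1}\to H^{-\lambda-2}$ and reduce \eqref{eq: b_ineq} to a product estimate
\[
\|(K*\mu_t)\, f\|_{H^{-\lambda-1}(\T^d)}\le C\|K*\mu_t\|_{H^{\lambda'}(\T^d)}\|f\|_{H^{-\lambda-1}(\T^d)},
\]
which holds because $\lambda'>d/2$ makes $H^{\lambda'}(\T^d)$ a multiplier algebra acting on $H^{-\lambda-1}(\T^d)$ (recall that $|-\lambda-1|\le \lambda'$ by \eqref{eq: const}). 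Young's inequality on the torus $\|K*\mu_t\|_{H^{\lambda'}}\le \|K\|_{L^1}\|\mu_t\|_{H^{\lambda'}}$ together with (ii)--(iii) closes the estimate. For \eqref{eq: deriv_prob_inequ}, I first identify $\delta b/\delta m(t,x,m,v)=K(x-v)$, so that the inner integration against $f(v)$ produces $\mu_t(x)(K*f)(x)$; then \eqref{eq: deriv_prob_inequ} becomes the same product estimate combined with $\|K*f\|_{H^{-\lambda-1}}\le\|K\|_{L^1}\|f\|_{H^{-\lambda-1}}$.

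\textbf{Assumption~\ref{ass: coef_fokker}.} Part (1) is immediate: $t\mapsto K*\mu_t$ lies in $C([0,T];H^{\lambda'}(\T^d))$ by (iii) and Young's inequality. For part (2), the expression simplifies to
\[
j_n*\bigl(\nabla_x\cdot\bigl((\mu_t-\mu_s)(K*(j_n*f))\bigr)\bigr),
\]
whose $H^{-\lambda-2}$-norm is controlled by $\|\mu_t-\mu_s\|_{H^{\lambda'}}\|j_n*f\|_{H^{-\lambda-1}}$, using the same multiplier argument; this tends to $0$ by time-continuity of $\mu$. For part (3), I use the elementary inequality $|\sqrt{a}-\sqrt{b}|^{2}\le|a-b|$ for $a,b\ge0$ to dominate $\|\sqrt{\mu_t}-\sqrt{\mu_s}\|_{L^2}^{2}$ by $\|\mu_t-\mu_s\|_{L^1}$, which vanishes as $s\to t$ because the embedding $H^{\lambda'}(\T^d)\hookrightarrow L^\infty(\T^d)\hookrightarrow L^1(\T^d)$ is continuous.

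\textbf{Assumption~\ref{ass: convergence_ass}.} Parts (1) and (2) are trivial in the linear case: $\delta b/\delta m(t,x,m,v)=K(x-v)$ is uniformly bounded by $\|K\|_{L^\infty}$ thanks to (iv), and the second flat derivative vanishes identically. Part (3) is the main obstacle and the only step that is not just a direct product estimate: I will invoke the relative entropy propagation of Jabin--Wang~\cite{JabinWang2018}, whose hypotheses are satisfied when $K\in L^\infty(\T^d)$, giving a Gr\"onwall-type inequality
\[
\mathcal{H}\bigl(\bar\mu_t^N\,\vert\,\mu_t^{\otimes N}\bigr)\le e^{CT}\bigl(\mathcal{H}\bigl(\bar\mu_0^N\,\vert\,\mu_0^{\otimes N}\bigr)+o_N(1)\bigr);
\]
together with the exchangeability and the finite-entropy assumption in (v), this produces the required uniform bound in $N$ and $t\in[0,T]$.

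\textbf{Conditions (iv)--(v) imply (i)--(ii).} The inclusion $L^\infty(\T^d)\subseteq L^1(\T^d)$ gives (ii) at once. For (i), bounded drift allows a Girsanov change of measure from the $N$-dimensional Brownian motion, producing a weak solution on $[0,T]$ with initial law $\bar\mu_0^N$; the Novikov condition is automatic since $\|K*\mu_t^N\|_\infty\le\|K\|_{L^\infty}$. I expect the entropy bound step to be the most delicate, as it genuinely requires the quantitative argument of~\cite{JabinWang2018}, whereas all other steps reduce to routine Young/multiplier estimates in Sobolev spaces.
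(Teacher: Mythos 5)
Your proposal is correct and follows essentially the same route as the paper: the two stability estimates reduce to the product estimate of Lemma~\ref{lemma: product_distr} combined with Young's inequality (Lemma~\ref{lemma: young}), the time-continuity in Assumption~\ref{ass: coef_fokker} reduces to $\mu\in C([0,T];H^{\lambda'}(\T^d))$, and the flat derivatives are $K(x-v)$ and $0$. The only (harmless) deviations are that you cite the Jabin--Wang entropy propagation plus Girsanov where the paper instead invokes \cite[Corollary~2.6]{Lacker2023}, which delivers both the uniform entropy bound and the existence of an exchangeable weak solution in one stroke, and your pointwise inequality $|\sqrt a-\sqrt b|^2\le|a-b|$ for part (3) is a slightly more elementary substitute for the paper's appeal to the continuity of $(t,x)\mapsto\mu_t(x)$.
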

Clearly the latter condition is satisfied if $(X^1_0,\dots, X^N_0)$ is i.i.d. with law $\mu_0$. 
See Assumption~(A4) in~\cite{Xianliang2023} for a comparison in the case of the Vortex model with \(K\) given by the Biot--Savart law. As a consequence of the computations below, we recover the structure of the SPDE provided by~\cite[Equation~(1.5)]{Xianliang2023}. 
 
\begin{proof}
 We have
    \(\frac{\delta (K*\mu_t)}{\delta m}(x,v) = K(x-v)\),
    which implies
    \begin{equation*}
        \bigg\langle f,  \bigg \langle     \mu_t(x)  \frac{\delta b}{\delta m} (t,x,\mu_t , \cdot ) ,D \varphi(x) \bigg \rangle\bigg \rangle
        =  \langle f, \tilde K*(    \mu_t D \varphi) \rangle,
     \end{equation*}
    where we set \(\tilde K(x) = K(-x)\). The expression corresponds exactly to the Schwarz distribution \(\nabla \cdot (\mu_t K*f)\), which offers valuable insight into inequality~\eqref{eq: deriv_prob_inequ}. In our case it is a priori not possible to connect the integrating variable \(x\) and the variable \(v\) corresponding to the action of the Schwartz distribution.  
    But with the convolution structure we can absorbs much of the regularity in the Fokker--Planck equation \(\mu_t\). We obtain  
    %
    \begin{equation} \label{eq: linear_case_1}
           \norm{\nabla \cdot (\mu_t K*f) }_{ H^{-\lambda-2}(\T^d)}^2
           \le C\norm{\mu_t K*f }_{ H^{-\lambda-1}(\T^d)}^2
           \le C\norm{\mu_t}_{H^{\lambda'}(\T^d)}^2 \norm{f}_{H^{-\lambda-1}(\T^d)}^2
           \norm{K}_{L^1(\T^d)}^2
    \end{equation} 
    by an application of Lemma~\ref{lemma: product_distr} and Lemma~\ref{lemma: young}. This proves inequality~\eqref{eq: deriv_prob_inequ}. Similar computation demonstrate 
     \begin{equation} \label{eq: linear_case_2}
          \norm{\nabla \cdot (f K* \mu_t ) }_{ H^{-\lambda-2}(\T^d)}^2 \le 
          C\norm{f K* \mu_t }_{ H^{-\lambda-1}(\T^d)}^2
          \le C\norm{\mu_t}_{H^{\lambda'}(\T^d)}^2 \norm{f}_{H^{-\lambda-1}(\T^d)}^2
           \norm{K}_{L^1(\T^d)}^2,
     \end{equation}
     which implies inequality~\eqref{eq: b_ineq}. 
     For Assumption~\ref{ass: coef_fokker} we notice 
    \begin{equation*}
        \norm{b(t,\cdot,\mu_t)- b(s,\cdot,\mu_s)}_{H^{\lambda'}(\T^d)}
        = \norm{K*(\mu_t-\mu_s)}_{H^{\lambda'}(\T^d)}
        \le \norm{K}_{L^1(\T^d)} \norm{\mu_t-\mu_s}_{H^{\lambda'}(\T^d)} 
    \end{equation*}
    and 
    \begin{align*}
        &\norm{j_n*  \bigg\langle \nabla_x \cdot  \bigg(  \mu_r(\cdot )  \frac{\delta b}{\delta m} (r,\cdot ,\mu_r , v )- \mu_s(\cdot)  \frac{\delta b}{\delta m} (s,\cdot,\mu_s , v ) \bigg), j_n* f(v)  \bigg\rangle  \bigg\rangle}_{H^{-\lambda-2}(\T^d)} \\
        &\quad \le \norm{j_n*(D (\mu_t-\mu_s) \cdot K*j_n*f) }_{H^{-\lambda-2}(\T^d)}
       \le C \norm{ (\mu_t-\mu_s)K*j_n*f }_{H^{-\lambda-1}(\T^d)}\\
        &\quad \le C(n) \norm{K}_{L^1(\T^d)} \norm{\mu_t-\mu_s}_{H^{\lambda'}(\T^d)}\norm{f}_{H^{-\lambda-2}(\T^d)}. 
    \end{align*}
    Therefore, it is enough to require \(\mu \in C([0,T];H^{\lambda'}(\T^d))\). This also implies 
    \begin{equation*}
        \lim\limits_{s \to t } \norm{\sqrt{\mu_t}-\sqrt{\mu_s}}_{L^2(\T^d)}^2 = 0
    \end{equation*}
     by the continuity of \((t,x) \mapsto \mu_t(x)\), which is implied by the Sobolev embedding and \(\lambda' > d/2\).
     Therefore the first claim is proved. 

Regarding Assumptions~\ref{ass: convergence_ass}, we note that the first conditions is equivalent to \(K \in L^\infty(\T^d)\),
while, for bounded kernels, a bound on the relative entropy is provided by~\cite[Corollary~2.6]{Lacker2023} under the assumption on the initial condition in the statement; that result also provides existence and uniqueness of an exchangeable weak solution to the particle system.  
\end{proof}


\subsection{Gaussian noise} 
\label{subsec: Gaussian_noise}

The aim of this section is to construct the Gaussian noise \(\xi\) appearing in SPDE~\eqref{eq: limiting_spde} with the needed covariance structure
and to demonstrate that it coincides with the martingale formulation used in~\cite{Fernandez1997,Delarue2019,Xianliang2023}. Sometimes it is called white noise since it is valued in a space of distributions. 
In light of Assumption \ref{ass: existence}, there exist a probability space where a weak solution to the particle system is defined. Throughout the paper, we fix an enlargement of that space, that is, a filtered probability space $(\Omega, (\mathcal{F}_t)_{0\leq t\leq T}, \P)$ where all the processes are defined and the following noise exists.

In order to construct the Gaussian noise, we require cylindrical Brownian motions. Let us recall the construction provided by Liu, Röckner~\cite{LiuWei2015SPDE}. 
For two Hilbert spaces \(U,H\) denote by \(L_2(U,H)\) the Hilbert--Schmidt operators from \(U \) to \(H\).  
For \(j=1,\ldots,d\) let \(W_j\) be a Gaussian noise on the Hilbert space \(L^2(\T^d)\). More precisely, let \((U_1, \langle \cdot, \cdot \rangle ) \) be another Hilbert space and \(J \colon L^2(\T^d) \mapsto U_1\) be a Hilbert--Schmidt embedding. For the existence of such embeddings we refer to~\cite[Chapter~2.5.1]{LiuWei2015SPDE}. Then, we define 
\begin{equation}
    W_j(t) : = \sum\limits_{n=1}^\infty \beta_{n,j}(t) J(e_n),
\end{equation}
where \((\beta_{n,j}, j=1,\ldots, d, n  \in \N)\) are a family of independent Brownian motions. 
This series converges in the space of square integrable martingale and defines a \(Q_1 := JJ^*\) Wiener process on \(U_1\)~\cite[Chapter 2.5]{LiuWei2015SPDE}.
Later the Hilbert space \(U_1\) will not play a role. 

For \(j=1,\ldots,d\) we introduce the operator \(B_j\colon [0,T] \mapsto L_2(L^2(\T^d), H^{-\lambda-2}(\T^d))\) given by 
\begin{equation}
    B_j(t)(u) =  \partial_{x_j} ( \sigma u \sqrt{\mu_t} ) , 
 \end{equation}
where the right hand side is to be understood in the sense that for smooth \(\varphi \) we have 
\begin{equation*}
    \langle  \partial_{x_j} ( \sigma u \sqrt{\mu_t} ) , \varphi \rangle  
    = - \langle \sigma u \sqrt{\mu_t}  ,  \partial_{x_j} \varphi \rangle _{L^2(\T^d)}.  
\end{equation*}
We want to show that \(B_j\) is well-defined. 

\begin{lemma}
Assuming $||\mu_t||_{L^1(\T^d)}=1$ for any $t$, we have that $B_j(t)$ is Hilbert-Schmidt, its adjoint operator is given by \(B_j(t)^* \colon H^{-\lambda-2}(\T^d)) \mapsto L^2(\T^d) \), 
\begin{equation}
    B_j(t)^*(f) :=  - \sqrt{\mu_t} e_k  \sum\limits_{k \in \Z^d} \langle    k \rangle^{-2(\lambda+2)}  \overline{ \langle \sigma \partial_{x_j} f, e_k \rangle}.
\end{equation}
and 
\begin{equation}
\label{eq:est_B}
\|B_j(t) \|_{L_2(L^2(\T^d), H^{-\lambda-2}(\T^d))} \leq C.
\end{equation}
\end{lemma}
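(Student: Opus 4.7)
The strategy is to compute everything explicitly in the Fourier basis $(e_k)_{k\in\Z^d}$ of $L^2(\T^d)$, which is adapted to the characterization of the $H^{-\lambda-2}(\T^d)$ inner product via Fourier coefficients. The two claims (the identification of the adjoint and the Hilbert--Schmidt bound) both reduce to a careful bookkeeping of such coefficients, combined with a Parseval--Fubini rearrangement.

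For the Hilbert--Schmidt bound, I would take the orthonormal basis $(e_k)_{k\in\Z^d}$ of $L^2(\T^d)$ and expand
\[
\|B_j(t)\|_{L_2(L^2(\T^d),H^{-\lambda-2}(\T^d))}^{2} = \sum_{k\in\Z^d} \|B_j(t) e_k\|_{H^{-\lambda-2}(\T^d)}^{2} = \sum_{k,m\in\Z^d} \langle m\rangle^{-2(\lambda+2)}\,\bigl|\langle B_j(t) e_k, e_m\rangle\bigr|^{2}.
\]
Distributional integration by parts yields $\langle B_j(t) e_k, e_m\rangle = -\langle \sigma e_k\sqrt{\mu_t}, \partial_{x_j} e_m\rangle_{L^{2}(\T^d)}$, and since $\partial_{x_j} e_m = 2\pi i m_j e_m$, the inner product collapses to a Fourier coefficient of $\sqrt{\mu_t}$, giving $|\langle B_j(t) e_k, e_m\rangle|^{2} = 4\pi^{2}\sigma^{2} m_j^{2}\,|\widehat{\sqrt{\mu_t}}(m-k)|^{2}$. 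Swapping the order of summation and applying Parseval in the $k$-variable gives $\sum_k |\widehat{\sqrt{\mu_t}}(m-k)|^{2} = \|\sqrt{\mu_t}\|_{L^{2}(\T^d)}^{2} = \int_{\T^d}\mu_t\,dx = 1$, so the total sum is controlled by $C\sigma^{2}\sum_m \langle m\rangle^{-2(\lambda+1)}$, which converges because $\lambda+1 > 3d/2$ by \eqref{eq: const}. This simultaneously proves the Hilbert--Schmidt property and the estimate \eqref{eq:est_B}.

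For the adjoint, I would start from the series representation of the $H^{-\lambda-2}(\T^d)$ inner product,
\[
\langle B_j(t) u, f\rangle_{H^{-\lambda-2}(\T^d)} = \sum_{k\in\Z^d} \langle k\rangle^{-2(\lambda+2)}\,\langle B_j(t) u, e_k\rangle\,\overline{\langle f, e_k\rangle},
\]
and rewrite $\langle B_j(t) u, e_k\rangle = -\langle \sigma u\sqrt{\mu_t}, \partial_{x_j} e_k\rangle_{L^{2}(\T^d)}$. Using that $\sigma$ and $\sqrt{\mu_t}$ are real and shifting the multiplication by $\sigma\sqrt{\mu_t}$ onto $\partial_{x_j}e_k$, this becomes $\langle u, -\sigma\sqrt{\mu_t}\,\partial_{x_j}e_k\rangle_{L^{2}(\T^d)}$, and relating $\langle \partial_{x_j}e_k,\cdot\rangle$ to $\langle \sigma\partial_{x_j}f,e_k\rangle$ via another integration by parts produces the stated formula for $B_j(t)^{*}f$ by identifying the resulting object as the unique element of $L^{2}(\T^d)$ such that $\langle u, B_j(t)^{*}f\rangle_{L^{2}(\T^d)}$ matches the sum above for every $u\in L^{2}(\T^d)$.

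The main obstacle is the low regularity of $\mu_t$: only $\mu_t\in L^{1}(\T^d)\cap\mathcal{P}(\T^d)$ is available, so $\sqrt{\mu_t}$ is merely in $L^{2}(\T^d)$ and one cannot afford any derivative on it. This is exactly why the Parseval step in the $k$-variable is decisive: it concentrates the entire dependence on $\mu_t$ into the single invariant $\|\sqrt{\mu_t}\|_{L^{2}(\T^d)}^{2}=1$, decoupled from the weighted sum in $m$, and the summability burden is entirely absorbed by the condition $\lambda>3d/2$ on the Sobolev index. All the interchange of summations and the integration-by-parts manipulations are justified by non-negativity (Tonelli) and by the smoothness of each $e_k$, so no approximation argument is needed.
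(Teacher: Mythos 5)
Your proposal is correct. The identification of the adjoint follows essentially the same route as the paper: expand the $H^{-\lambda-2}(\T^d)$ inner product in Fourier coefficients, use $\partial_{x_j}e_k=2\pi i k_j e_k$ to move the derivative onto $f$, and read off $B_j(t)^*f$ from the resulting $L^2$ pairing with $u$. For the Hilbert--Schmidt bound, however, you take a genuinely different and more direct path. The paper never sums $\|B_j(t)e_k\|^2$ over the basis of the source space; instead it derives the pointwise bound $\|B_j(t)^*f\|_{L^2(\T^d)}\le C\|f\|_{H^{-(2\lambda+3)}(\T^d)}\|\sqrt{\mu_t}\|_{L^2(\T^d)}$, sums this over an orthonormal basis of $H^{-\lambda-2}(\T^d)$, and invokes two auxiliary facts: the Hilbert--Schmidt embedding $H^{-\lambda-2}(\T^d)\hookrightarrow H^{-(2\lambda+3)}(\T^d)$ (Lemma \ref{lem:HS_embedding}, valid since $\lambda+1>d/2$) and the fact that the adjoint of a Hilbert--Schmidt operator is Hilbert--Schmidt. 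Your computation $\sum_{k,m}\langle m\rangle^{-2(\lambda+2)}4\pi^2\sigma^2 m_j^2|\widehat{\sqrt{\mu_t}}(m-k)|^2$, with the Tonelli--Parseval collapse $\sum_k|\widehat{\sqrt{\mu_t}}(m-k)|^2=\|\sqrt{\mu_t}\|_{L^2(\T^d)}^2=1$ and the summability of $\sum_m\langle m\rangle^{-2(\lambda+1)}$, reaches the same estimate \eqref{eq:est_B} without either auxiliary lemma; it also makes transparent that the only property of $\mu_t$ used is $\|\mu_t\|_{L^1(\T^d)}=1$, exactly as in the paper. The one cosmetic imprecision is your phrase ``converges because $\lambda+1>3d/2$'': the threshold actually needed is $2(\lambda+1)>d$, which of course holds under \eqref{eq: const}, so nothing is at stake.
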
 
\begin{proof}
To get the formula for the adjoint operator, 
let \(u \in L^2(\T^d)\) and \(f \in H^{-\lambda-2}(\T^d)\). Then,  
\begin{align*}
    \langle B_j(t) u, f \rangle_{ H^{-\lambda-2}(\T^d)}
&=- \sum\limits_{k \in \Z^d} \langle k \rangle^{-2(\lambda+2)} \sigma \langle  u \sqrt{\mu_t }, \partial_{x_j} e_k \rangle_{L^2(\T^d)} \overline{ \langle f, e_k \rangle } \\
&=  - \sum\limits_{k \in \Z^d} \langle k \rangle^{-2(\lambda+2)} 2\pi k_j i \sigma \langle u  \sqrt{\mu_t },  e_k \rangle_{L^2(\T^d)} \overline{ \langle f, e_k \rangle} \\
&= - \sum\limits_{k \in \Z^d} \langle k \rangle^{-2(\lambda+2)}  \sigma \langle  u \sqrt{\mu_t },  e_k \rangle_{L^2(\T^d)} \overline{\langle \partial_{x_j} f, e_k \rangle} \\
&= - \bigg\langle    u,  \sqrt{\mu_t} e_k  \sum\limits_{k \in \Z^d} \langle    k \rangle^{-2(\lambda+2)} \overline{  \langle \sigma \partial_{x_j} f, e_k \rangle}   \bigg\rangle_{L^2(\T^d)} .
\end{align*}
We estimate
\begin{align*}
     \norm{\sqrt{\mu_t}  e_k  \sum\limits_{k \in \Z^d}  \langle  k \rangle^{-2(\lambda+2)}   \langle \sigma \partial_{x_j} f, e_k \rangle    }_{L^2(\T^d)}^2
     &\le \sigma^2  \sum\limits_{k \in \Z^d}  \langle k \rangle^{-4(\lambda+2)}   |\langle \partial_{x_j} f, e_k \rangle|^2 |\langle \sqrt{\mu_t},e_k\rangle|^2 \\
     & \le C   \sum\limits_{k \in \Z^d}  \langle k \rangle^{-4\lambda-6}   |\langle  f, e_k \rangle|^2    |\langle \sqrt{\mu_t},e_k\rangle|^2 \\
     & \le C  \sum\limits_{k \in \Z^d}  \langle k \rangle^{-2(2\lambda+3)}   |\langle  f, e_k \rangle|^2  \sum\limits_{k \in \Z^d} |\langle  \sqrt{\mu_t},e_k\rangle|^2 \\
     &\le C  \norm{f}_{H^{-(2\lambda+3)}(\T^d)}^2 \norm{\sqrt{\mu_t}}_{L^2(\T^d)}^2 \\
     &\le C \norm{f}_{H^{-\lambda-2}(\T^d)}^2 \norm{\sqrt{\mu_t}}_{L^2(\T^d)}^2
\end{align*}
We show that the adjoint operator is Hilbert--Schmidt. 
Let \((f_l, l \in \N)\) be an orthonormal basis of \(H^{-\lambda-2}(\T^d)\). Repeating the above computation with \(f_l\) instead of \(f\), we find 
\begin{equation} \label{eq: hilbert_schmidt_b}
     \sum\limits_{l=1}^\infty \norm{\sqrt{\mu_t}  e_k  \sum\limits_{k \in \Z^d}  \langle  k \rangle^{-2(\lambda+2)}   \langle \sigma \partial_{x_j} f_l , e_k \rangle }_{L^2(\T^d)}^2
     \le C \norm{\sqrt{\mu_t}}_{L^2(\T^d)}^2 \sum\limits_{l=1}^\infty \norm{f_l}_{H^{-(2\lambda+3)}(\T^d)}^2
\end{equation}
But the embedding \(H^{-\lambda-2}(\T^d) \hookrightarrow H^{-(2\lambda+3)}(\T^d)\) is Hilbert--Schmidt because \(-\lambda-2+2\lambda+3  > d/2\); see Lemma \ref{lem:HS_embedding}.  
Since the adjoint operator of a Hilbert--Schmidt operator is Hilbert--Schmidt~\cite[Remark~B.06 (i)]{LiuWei2015SPDE}, it follows that \(B_j\) is well-defined and \eqref{eq:est_B} holds.
\end{proof} 

We define the stochastic integral  
\begin{equation*}
\int\limits_0^t B_j(s) \Id W_j(s) := \int\limits_0^t B_j(s) \circ J^{-1} \Id W_j(s),
\end{equation*}
where on the right hand side it is the classical stochastic integral with respect to a \(Q_1\)-Wiener process, where \(Q_1\) is of trace class. Since \(B_j\) is a deterministic operator, the stochastic integral is Gaussian. This follows by the standard approximation argument with respect to elementary functions. 
Let us define the Gaussian process \(\zeta\) with values in \( H^{-\lambda-2}(\T^d)\) by the sum of the integrals, which themselves are Gaussian, i.e. 
\begin{equation}
    \zeta(t) := \sum\limits_{j=1}^d  \int\limits_0^t B_j(s) \Id W_j(s) . 
\end{equation}

Let us demonstrate that the constructed stochastic integral coincides with the martingale term given by~\cite{Fernandez1997,Delarue2019,Xianliang2023}. 

\begin{lemma}
Assuming $||\mu_t||_{L^1(\T^d)}=1$ for any $t$, we have the following formula for the correlation: 
\begin{equation*}
\E[\zeta(t)(\varphi_1)\zeta(s)(\varphi_2)]
= \int\limits_0^{\min(s,t)} \langle \mu_r  ,  \sigma^2 D  \varphi_1  \cdot D \varphi_2 \rangle_{L^2(\T^d)} \Id r  
\end{equation*}
for any $0\leq t,s\leq T$ and \(\varphi_1, \varphi_2 \in C^\infty(\T^d)\). 
\end{lemma}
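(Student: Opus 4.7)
The plan is to reduce the identity to a straightforward application of the Itô isometry for stochastic integrals against the independent cylindrical Brownian motions $W_1,\dots,W_d$, once we have computed the action of $B_j(r)$ on the test functions $\varphi_1,\varphi_2$.

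First, I would pair $\zeta(t)$ with a smooth test function. Since the integral $\int_0^t B_j(r)\,\mathrm{d} W_j(r)$ takes values in $H^{-\lambda-2}(\T^d)$ and $\varphi_\ell\in C^\infty(\T^d)\hookrightarrow H^{\lambda+2}(\T^d)$, we can commute the pairing with the stochastic integral, obtaining
\begin{equation*}
  \zeta(t)(\varphi_\ell) \;=\; \sum_{j=1}^d \int_0^t \bigl\langle B_j(r)\,\cdot\,,\varphi_\ell\bigr\rangle\,\mathrm{d} W_j(r)
  \;=\; -\sum_{j=1}^d \int_0^t \bigl\langle \sigma\sqrt{\mu_r}\,\partial_{x_j}\varphi_\ell\,,\,\cdot\,\bigr\rangle_{L^2(\T^d)}\,\mathrm{d} W_j(r),
\end{equation*}
where in the second equality we used the definition $B_j(r)(u)=\partial_{x_j}(\sigma u\sqrt{\mu_r})$ and integration by parts, which is licit because $\varphi_\ell$ is smooth and $\sqrt{\mu_r}\in L^2(\T^d)$. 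Thus $\zeta(t)(\varphi_\ell)$ is the Itô integral of the deterministic $L^2(\T^d)$-valued integrand $h^\ell_j(r):=-\sigma\sqrt{\mu_r}\,\partial_{x_j}\varphi_\ell$ against the $j$-th cylindrical Brownian motion on $L^2(\T^d)$, summed over $j$.

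Next, I would apply the Itô isometry component-wise and use the independence of the Brownian motions $(W_j)_{j=1}^d$. Since the integrands are deterministic and $W_j\perp W_{j'}$ for $j\neq j'$, cross terms vanish and one gets
\begin{equation*}
  \E\bigl[\zeta(t)(\varphi_1)\,\zeta(s)(\varphi_2)\bigr]
  \;=\; \sum_{j=1}^d \int_0^{\min(s,t)} \bigl\langle h^1_j(r),\, h^2_j(r)\bigr\rangle_{L^2(\T^d)}\,\mathrm{d} r.
\end{equation*}
Substituting the explicit form of $h^\ell_j$ and summing in $j$ yields
\begin{equation*}
  \sum_{j=1}^d \bigl\langle \sigma\sqrt{\mu_r}\,\partial_{x_j}\varphi_1,\,\sigma\sqrt{\mu_r}\,\partial_{x_j}\varphi_2\bigr\rangle_{L^2(\T^d)}
  \;=\; \bigl\langle \mu_r,\,\sigma^2\, D\varphi_1\cdot D\varphi_2\bigr\rangle_{L^2(\T^d)},
\end{equation*}
which is precisely the desired integrand.

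The only mildly delicate point is justifying the interchange of pairing and stochastic integral, i.e.\ that $\langle \int_0^t B_j\,\mathrm{d} W_j,\varphi_\ell\rangle = \int_0^t \langle B_j(r)\,\cdot\,,\varphi_\ell\rangle \,\mathrm{d} W_j(r)$; this follows from the continuity of the linear functional $\langle\cdot,\varphi_\ell\rangle : H^{-\lambda-2}(\T^d)\to\R$ composed with the approximating step functions used in the construction of the stochastic integral, and the fact (proved in the previous lemma) that $B_j\in L^2([0,T];L_2(L^2(\T^d),H^{-\lambda-2}(\T^d)))$. No further obstacle arises.
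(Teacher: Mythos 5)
Your proposal is correct and follows essentially the same route as the paper: both commute the duality pairing $\langle\cdot,\varphi_\ell\rangle$ with the stochastic integral (the paper cites \cite[Lemma 2.4.1]{LiuWei2015SPDE} for this) and then apply the covariance formula for stochastic integrals with deterministic integrands; the paper's explicit computation of $(L_2\circ B_j(r))\circ(L_1\circ B_j(r))^*$ via \cite[Proposition 4.28]{Prato1992} is exactly your inner product $\langle h^1_j(r),h^2_j(r)\rangle_{L^2(\T^d)}$. Your identification of the integrand $-\sigma\sqrt{\mu_r}\,\partial_{x_j}\varphi_\ell$ is the same as the paper's adjoint $\bigl(L_\ell\circ B_j(r)\bigr)^*$, so the two arguments coincide up to presentation.
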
 
\begin{proof}
Define the linear functional on \( H^{-\lambda-2}(\T^d)\) by
$L_{i}(f) : = \langle f, \varphi_i \rangle $
for \(i=1,2\).
Applying~\cite[Lemma 2.4.1]{LiuWei2015SPDE} we obtain  
\begin{align*}
    L_{i}\bigg( \int\limits_0^t B_j(s) \Id W_j(s) \bigg) 
    =  \int\limits_0^t L_{i} \circ  B_j(s) \circ J^{-1} \Id W_j(s)
\end{align*}
and the process is real-valued. Consequently, we can compute 
\begin{align*}
    &\E \bigg[  L_{1}\bigg( \int\limits_0^t B_j(r) \Id W_j(r) \bigg)  L_{2}\bigg( \int\limits_0^s B_j(r) \Id W_j(r) \bigg) \bigg]  \\
    &\quad = \E \bigg[ \int\limits_0^t L_{1} \circ  B_j(r) \circ J^{-1} \Id W_j(r) \int\limits_0^s L_{2} \circ  B_j(r) \circ J^{-1} \Id W_j(r) \bigg]  \\
    &\quad = \int\limits_0^{\min(s,t)}  \big( L_{2} \circ  B_j(r) \circ J^{-1} \circ Q_1^{1/2} \big)  \circ \big( L_{1} \circ  B_j(r) \circ J^{-1} \circ  Q_1^{1/2} \big)^{*}  \Id r
\end{align*}
where the last equality follows by~\cite[Proposition 4.28]{Prato1992}. 
Recall, that \(Q_1\) is a non-negative, symmetric operator and, therefore, the square root \(Q_1^{1/2}\) is also symmetric, i.e. \(Q_1^{1/2} = (Q_1^{1/2})^*\). Hence, utilizing \((T^*)^{-1} = (T^{-1})^*\) for arbitrary linear operator \(T\), we obtain 
\begin{align*}
    &\big( L_{2} \circ  B_j(r) \circ J^{-1} \circ Q_1^{1/2} \big)  \circ \big( L_{1} \circ  B_j(r) \circ J^{-1} \circ  Q_1^{1/2} \big)^{*} \\
    &\quad = \big( L_{2} \circ  B_j(r) \circ J^{-1} \circ Q_1 \circ (J^{-1})^* \circ \big(L_{1} \circ  B_j(r)\big)^*  \\
    &\quad = \big( L_{2} \circ  B_j(r) \circ J^* \circ (J^{-1})^* \circ \big(L_{1} \circ  B_j(r)\big)^*  \\
     &\quad = \big( L_{2} \circ  B_j(r) \circ \big(L_{1} \circ  B_j(r)\big)^*  .
\end{align*}
Let us compute the adjoint operator. Let \(a \in \R\), \(u \in L^2(\T^d)\) , then 
\begin{align*}
    \langle \big(L_{1} \circ  B_j(r)\big)^*(a), u \rangle_{L^2(\T^d)}
    &= \langle a,  \big(L_{1} \circ  B_j(r)\big) (u )\rangle_{\R} \\
    &= \langle a, \langle \partial_{x_j} ( \sigma u \sqrt{\mu_t} ), \varphi_1 \rangle \rangle_{\R} \\
     &= -  \langle    u ,   a \sigma \sqrt{\mu_t} \partial_{x_j}  \varphi_1 \rangle_{L^2(\T^d)} ,
\end{align*}
which implies 
\begin{align*}
    \big( L_{2} \circ  B(r)  \big) \circ \big(L_{1} \circ  B(r)\big)^*(a)
    &=  -a L_{2} \circ B(r) \circ \big( \sigma \sqrt{\mu_t} \partial_{x_j} 
 \varphi_1 \big)\\
    &= -a L_{2}  \bigg(\partial_{x_j}  \big( \sigma^2 \mu_t\partial_{x_j} \varphi_1 \big) \bigg)\\
     &= a \langle  \sigma^2 \mu_t \partial_{x_j}  \varphi_1 , \partial_{x_j} 
 \varphi_2 \rangle_{L^2(\T^d)} . 
\end{align*}
Consequently, we find 
\begin{equation*}
    \E \bigg[  L_{1}\bigg( \int\limits_0^t B_j(r) \Id W_j(r) \bigg)  L_{2}\bigg( \int\limits_0^s B_j(r) \Id W_j(r) \bigg) \bigg] 
    = \int\limits_0^{\min(s,t)} \langle \mu_r  ,  \sigma^2 \partial_{x_j}  \varphi_1 \partial_{x_j} \varphi_2 \rangle_{L^2(\T^d)} \Id r . 
\end{equation*}
By the independence of \((W_j, j =1,\ldots,d)\), the claim follows by summing the above equality.
\end{proof}

\subsection{Solution to SPDE}


The goal of this section is to give a definition of solution to the SPDE~\eqref{eq: limiting_spde} and state its well-posedness. We treat it as an infinite dimensional SDE in the triple 
\[(H^{-\lambda-1}(\T^d), H^{-\lambda-2}(\T^d), H^{-\lambda-3}(\T^d)).\]  
    Notice that the sequence \(H^{-\lambda-1}(\T^d)\subset H^{-\lambda-2}(\T^d)\subset H^{-\lambda-3}(\T^d)\) is a normal triple, in the sense that 
    \begin{equation*}
        \langle f_1,f_2 \rangle_{H^{-\lambda-2}(\T^d) }
        \le \norm{f}_{H^{-\lambda-1}(\T^d) } \norm{f_2}_{H^{-\lambda-3}(\T^d) }
    \end{equation*}
    for all \(f_1 \in H^{-\lambda-1}(\T^d)\) and \(f_2 \in H^{-\lambda-2}(\T^d)\). 
    Hence, by~\cite[Section~2.5.2]{Rozovsky2018} there exists a canonical bilinear functional (CBF) given by 
    \begin{equation*}
        [\cdot,\cdot]_{-\lambda-2} \colon  H^{-\lambda-1}(\T^d) \times  H^{-\lambda-3}(\T^d) \to \R 
    \end{equation*}
    such that 
    \begin{equation}
    \label{eq: cbf_relation}
    [f_1,f_2]_{-\lambda-2} =\langle f_1,f_2 \rangle_{H^{-\lambda-2}(\T^d) }
    \end{equation}
    for \(f_1 \in H^{-\lambda-1}(\T^d)\) and \(f_2 \in H^{-\lambda-2}(\T^d)\).

Define the linear operator \(A \colon [0,T] \times   H^{-\lambda-1}(\T^d) \mapsto  H^{-\lambda-3}(\T^d)  \)
by 
\begin{equation} \label{eq: def_A}
    A(t,f) : =  -  \nabla \cdot (f b(t,\cdot,\mu_t) ) - \bigg\langle \nabla_x \cdot  \bigg(  \mu_t(x)  \frac{\delta b}{\delta m} (t,x,\mu_t , v ) \bigg), f(v)  \bigg\rangle  +  \frac{\sigma^2}{2}  \Delta  f  .
\end{equation}
As previously, the right hand side is defined by 
\begin{align*}
    \langle f, A'(t)(\varphi) \rangle , 
\end{align*}
where $A'(t,b,\mu_t,\sigma): H^{\lambda+3}(\T^d) \rightarrow H^{\lambda+1}(\T^d)$ is given by 
\begin{equation} \label{eq: definition_Aprime}
    A'(t)(\varphi):= b(t,\cdot,\mu_t) \cdot D \varphi(\cdot)+ \bigg\langle     \mu_t(x)  \frac{\delta b}{\delta m} (t,x,\mu_t , \cdot )  , D \varphi(x) \bigg \rangle  +   \frac{\sigma^2}{2} \Delta \varphi (\cdot),
\end{equation}
for \(\varphi \in C^\infty (\T^d)\). By the Assumption~\ref{ass: existence} the operator is well defined. Notice a similar results is stated for weighted fractional Sobolev spaces in~\cite[Inequality(5.12)]{Delarue2019} and~\cite{Jourdain1998}, which is partially proven in~\cite[p.~77,~Lemma~5.6]{Meleard1996}. 

The filtration $(\mathcal{F}_t,  0\leq t \leq T )$ and the noise are fixed in \S \ref{subsec: Gaussian_noise} above, thus the following is a  probabilistically strong definition of solution. 
    
\begin{definition} \label{def: spde_solution}
    Let \( \rho = (\rho_t, 0\leq t \leq T )\) be an $\mathcal{F}$-adapted process on \(H^{-\lambda-2}(\T^d)\). We say \(\rho\) is a solution to equation~\eqref{eq: limiting_spde} if $\rho\in S^2_{\cF}([0,T]; H^{-\lambda-2}(\T^d))$ and there exists a set \(\tilde \Omega \) of full measure such that  for all \(\omega \in \tilde \Omega \) the map \(t \mapsto \rho(t,\omega)\) is continuous with values in \(H^{-\lambda-2}(\T^d)\) 
     and for each \(\varphi \in C^\infty(\T^d)\) and \(t \in [0,T]\) it holds
    \begin{equation}
        \langle \rho_t,\varphi \rangle_{H^{-\lambda-2}(\T^d) }  = \langle \rho_0,\varphi \rangle_{H^{-\lambda-2}(\T^d) }   + \int\limits_0^t \langle  \rho_s, A'(s) (\varphi) \rangle_{H^{-\lambda-2}(\T^d) }  \Id s + \langle \zeta_t, \varphi \rangle_{H^{-\lambda-2}(\T^d) }  , \quad \P\text{-a.s.} 
    \end{equation}
\end{definition}

We choose to write the equation with respect to scalar product in $H^{-\lambda-2}(\T^d)$, instead of duality with test functions, mainly to use several results which are stated for SPDEs on Hilbert spaces in normal triplets. We show the following well-posedness result:

\begin{theorem}\label{theorem: existence_SPDE}
Under Assumptions~\ref{ass: inital_cond}and~\ref{ass: existence}, the equation~\eqref{eq: limiting_spde} admits a unique solution \(\rho\) in the sense of Definition~\ref{def: spde_solution}. Moreover, $\rho\in L^2_{\mathcal{F}}([0,T]; H^{-\lambda-1}(\T^d))$ and we have the bound
    \begin{equation}
    \label{eq:esti_spde}
        \E\Big[\sup\limits_{0 \le t \le T } \norm{\rho_t}^2_{H^{-\lambda-2}(\T^d)}\Big] + \norm{\rho}^2_{L^2_{\mathcal{F}}([0,T]; H^{-\lambda-1}(\T^d)) } \le C \E \big[ \norm{\rho_0}_{H^{-\lambda-2}(\T^d)}^2 \big].
    \end{equation}
\end{theorem}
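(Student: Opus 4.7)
The plan is to treat \eqref{eq: limiting_spde} as a \emph{linear} SPDE on the normal triple $(H^{-\lambda-1}(\T^d),H^{-\lambda-2}(\T^d),H^{-\lambda-3}(\T^d))$ and invoke the variational theory of Rozovsky or Liu--Röckner. Set $V=H^{-\lambda-1}(\T^d)$, $\mathcal{H}=H^{-\lambda-2}(\T^d)$, $V^*=H^{-\lambda-3}(\T^d)$, and recall from \eqref{eq: cbf_relation} that the canonical bilinear functional $[\cdot,\cdot]_{-\lambda-2}$ extends the $\mathcal{H}$-scalar product. The first step is to check that the linear operator $A(t,\cdot)\colon V\to V^*$ defined in \eqref{eq: def_A} is bounded, jointly measurable in $(t,f)$ and hemicontinuous; all three are immediate from the linearity of $A$ together with Assumption~\ref{ass: existence} and the fact that $\Delta\colon V\to V^*$ is bounded (on the Fourier side it is just multiplication by $-4\pi^2|k|^2$).

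The main analytic step is a coercivity estimate of the form
\begin{equation}\label{eq:coerc_plan}
 2\,[A(t,f),f]_{-\lambda-2} + \sum_{j=1}^d \|B_j(t)\|_{L_2(L^2(\T^d),\mathcal{H})}^2 \;\le\; -c\,\|f\|_V^2 + C\,\|f\|_\mathcal{H}^2,
\end{equation}
for some $c>0$ and all $f\in V$. The two transport-type contributions are handled via Assumption~\ref{ass: existence}(3): by Cauchy--Schwarz in $\mathcal{H}$,
\[
\bigl|[\,-\nabla\cdot(fb(t,\cdot,\mu_t)),f]_{-\lambda-2}\bigr|\le \|{-}\nabla\cdot(fb(t,\cdot,\mu_t))\|_{\mathcal{H}}\,\|f\|_\mathcal{H}\le C\,\|f\|_V\|f\|_\mathcal{H},
\]
and similarly for the $\delta b/\delta m$ term; Young's inequality then absorbs a small multiple of $\|f\|_V^2$ into the dissipative part. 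For the Laplacian, a direct Fourier computation
\[
 [\Delta f,f]_{-\lambda-2}\;=\;-4\pi^2 \sum_{k\in\Z^d}|k|^2\langle k\rangle^{-2(\lambda+2)}|\langle f,e_k\rangle|^2
\]
combined with the identity $\langle k\rangle^{-2(\lambda+1)}=(1+|k|^2)\langle k\rangle^{-2(\lambda+2)}$ yields
$[\Delta f,f]_{-\lambda-2}= -4\pi^2\bigl(\|f\|_V^2-\|f\|_\mathcal{H}^2\bigr)$, which provides the required strict dissipativity in the $V$-norm. The noise term is deterministic and uniformly bounded by \eqref{eq:est_B}, so it is absorbed into the $C\|f\|_\mathcal{H}^2$ remainder.

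With \eqref{eq:coerc_plan} in place, the linearity of $A$ makes boundedness and (local) monotonicity immediate, and the standard variational existence and uniqueness theorem (see e.g.\ \cite[Thm.~4.2.4]{LiuWei2015SPDE} or \cite[Ch.~3]{Rozovsky2018}) produces a unique $\mathcal{F}$-adapted solution $\rho\in L^2_\mathcal{F}([0,T];V)\cap S^2_\mathcal{F}([0,T];\mathcal{H})$ in the sense of Definition~\ref{def: spde_solution}; the continuity of $t\mapsto \rho_t$ in $\mathcal{H}$ is built into the variational framework. The a priori bound \eqref{eq:esti_spde} then follows by applying Itô's formula in the triple to $\|\rho_t\|_\mathcal{H}^2$,
\[
\|\rho_t\|_\mathcal{H}^2=\|\rho_0\|_\mathcal{H}^2+\int_0^t 2\,[A(s,\rho_s),\rho_s]_{-\lambda-2}\,\mathrm ds+\int_0^t \sum_j\|B_j(s)\|_{L_2}^2\,\mathrm ds+M_t,
\]
where $M_t$ is a local martingale, inserting \eqref{eq:coerc_plan}, taking expectation and applying Grönwall for the $\mathcal{H}$-estimate, and then using Burkholder--Davis--Gundy on $M_t$ to upgrade to the $\sup_t$ bound.

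The step most likely to require care is the coercivity estimate \eqref{eq:coerc_plan}: one must verify that the bounds of Assumption~\ref{ass: existence} are exactly sharp enough (a gain of one Sobolev derivative, from $H^{-\lambda-1}$ to $H^{-\lambda-2}$, in the transport terms) so that the Laplacian's dissipation in the $V$-norm is not destroyed after applying Young's inequality. Uniqueness is a direct consequence: the difference of two solutions solves the same equation with zero noise and zero initial datum, and \eqref{eq:coerc_plan} together with Grönwall forces it to vanish.
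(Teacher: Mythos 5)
Your proposal is correct, but it follows a genuinely different route from the paper. You apply the variational (Liu--R\"ockner/Rozovsky) existence theory \emph{directly} to the limiting equation on the normal triple $(H^{-\lambda-1},H^{-\lambda-2},H^{-\lambda-3})$: boundedness, hemicontinuity and monotonicity are immediate from linearity, your coercivity estimate \eqref{eq:coerc_plan} is exactly the uniform dissipation the paper isolates in Lemma~\ref{lemma2: dissapation} (your Fourier identity for $[\Delta f,f]_{-\lambda-2}$ and the absorption of the transport terms via \eqref{eq: b_ineq}--\eqref{eq: deriv_prob_inequ} and Young's inequality match the paper's computation), and the a priori bound and uniqueness follow from It\^o's formula, Gr\"onwall and BDG as you describe. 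The paper instead proves Theorem~\ref{theorem: existence_SPDE} \emph{jointly} with Theorem~\ref{thm: conv_prob}: it first constructs the mollified SDEs \eqref{eq: approx_solution}, derives the uniform bounds \eqref{eq: aux_uniform_bound}--\eqref{eq:unif_t}, obtains tightness of $(\rho^n)$ on $L^2([0,T];H^{-\lambda-2}(\T^d))$ via the compact embedding $H^{-\lambda-1}\hookrightarrow H^{-\lambda-2}$, and then runs the Gy\"ongy--Krylov/Skorokhod machinery, identifying the limit through the extension $\hat A$ of the bilinear form and concluding uniqueness (hence convergence in probability) by the same It\^o-plus-dissipation argument. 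Your route is shorter and self-contained for the statement at hand; the paper's detour is deliberate, since the convergence in probability of $\rho^n$ to $\rho$ is indispensable later (the test functions $\Phi$ are nonlinear), and the paper explicitly acknowledges that a direct variational proof of mere well-posedness would suffice for this theorem alone. Two small points you should make explicit if you write this up: (i) the identification of the variational solution (equation in $V^*$ tested against $V$) with Definition~\ref{def: spde_solution} (equation tested against $\varphi\in C^\infty(\T^d)$ via the operator $A'$) requires the duality/extension argument the paper carries out with $\hat A$ and \cite[Proposition~3.4]{Rozovsky2018}; and (ii) the cylindrical-Wiener noise with $L_2(L^2(\T^d),H^{-\lambda-2}(\T^d))$-valued, deterministic coefficients does fall within the scope of \cite[Theorem~4.2.4]{LiuWei2015SPDE}, which settles the "white noise" caveat the paper raises.
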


The proof is given in \S \ref{sec: well-posedness_SPDE}. 
However, the main property we require is not the mere existence and uniqueness of a strong solution, which could likely be obtained directly from the results in~\cite{Rozovsky2018} by verifying the properties of the operator \(A\) and extending the results to cover the white noise case. Instead, we construct an approximation of the SPDE whose solution converges in probability; see the main result Theorem \ref{thm: conv_prob}. In the classical framework, only weak convergence of finite-dimensional projections in the sense of functional analysis is typically available. The underlying reason is that, for linear equations, weak convergence suffices and stronger forms of convergence are not required. However, in order to establish the weak error in Theorem \ref{theorem: main_result} below, we aim to replace the limiting process \(\rho\) with its approximation $\rho^n$,  
since the test function \(\Phi\) is not linear. Moreover, convergence in probability allows to show existence of (probabilistically) strong solutions.

On the other hand, as explained in the Introduction,  we approximate with solutions to a mollified equation which can be written as  a SDE in the Hilbert space $H^{-\lambda-2}(\T^d)$. This in particular allows to apply It\^o formula for functions $\Phi(\rho^n_t)$, which is is crucial to compute the generator of the process in order to compare with the generator of the fluctuation process and prove the main result below. We also remark that, differently from all other results on fluctuations, which are based on tightness arguments, the initial condition lies in the same space of the noise; this is crucial to analyze the semigroup related to the equation and prove stability estimates in Section \ref{sec:generators}.



\subsection{Main result}
We can now state the main estimate of the paper. Recall that $\rho^N$ is the fluctuation process in \eqref{eq: def_fluctuation_process} and $\rho$ is the solution to SPDE \eqref{eq: limiting_spde} given by Theorem \ref{theorem: existence_SPDE}.
\begin{theorem}[Main result] \label{theorem: main_result}
Let the Assumptions~\ref{ass: inital_cond}--\ref{ass: convergence_ass} be satisfied. Then for any \(\Phi \in C_\ell^2(H^{-\lambda-2}(\T^d))\) we obtain 
\begin{equation}
\label{eq:main_estimate}
    \sup\limits_{0 \le t \le T} |\E[\Phi(\rho_t^N)] - \E[\Phi(\rho_t)]| \le C [\Phi]_{C^2(H^{-\lambda-2}(\T^d))}  \bigg( \frac{1}{\sqrt N } +   
      W_{1, H^{-\lambda-2}(\T^d)}\Big( \P_{\rho^N_0}, \P_{\rho_0}\Big) 
    \bigg), 
\end{equation}
where $C$ only depends on $d, \lambda, T$.    
\end{theorem}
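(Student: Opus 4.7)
The plan is to compare the generators of $\rho^N$ and $\rho$ by going through the mollified process $\rho^n$ introduced in Section~\ref{sec: SPDE_existence}, which satisfies a Hilbert-space valued SDE on $H^{-\lambda-2}(\T^d)$ (so It\^o's formula applies directly), and whose solutions converge in probability to $\rho$ by Theorem~\ref{thm: conv_prob}. First I would split
\begin{equation*}
\E[\Phi(\rho_t^N)] - \E[\Phi(\rho_t)]
= \bigl(\E[\Phi(\rho_t^N)] - \E[\Phi(\rho_t^n)]\bigr) + \bigl(\E[\Phi(\rho_t^n)] - \E[\Phi(\rho_t)]\bigr).
\end{equation*}
The second term vanishes as $n\to\infty$ by convergence in probability combined with the linear growth of $\Phi$ and uniform in $n$ moment bounds of the form~\eqref{eq:esti_spde}, so the whole task reduces to bounding the first term uniformly in~$n$.

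For the first term, fix $n,t$ and define the backward value function $u^n(s,f):=\E[\Phi(\rho^n_{s,t}(f))]$ on $[0,t]\times H^{-\lambda-2}(\T^d)$, where $\rho^n_{s,t}(f)$ is the flow starting at $f$ at time $s$. Using the regularity of the flow (studied in Section~\ref{sec: SPDE_existence}) and the fact that $\rho^n$ is a genuine It\^o process on the Hilbert space, $u^n$ is differentiable in $f$ and solves a backward Kolmogorov equation $\partial_s u^n + \mathcal{G}^n_s u^n = 0$ with terminal condition $\Phi$, where $\mathcal{G}^n$ is the generator of $\rho^n$ (computed in Proposition~\ref{prop:4.9}). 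Applying the classical finite-dimensional It\^o formula in $\R^{dN}$ to the map $\bm{x}\mapsto u^n(s,\sqrt{N}(\mu^N_{\bm x}-\mu_s))$, using the passage between flat derivatives on $\mathcal P(\T^d)$ and Fr\'echet derivatives on $H^{-\lambda-2}(\T^d)$ in Proposition~\ref{prop: generator_n_fluctuation}, and taking expectations yields
\begin{equation*}
\E[\Phi(\rho_t^N)] - \E[\Phi(\rho^n_{0,t}(\rho_0^N))]
= \int_0^t \E\bigl[(\mathcal{G}^N_s - \mathcal{G}^n_s) u^n(s,\rho_s^N)\bigr] \,\mathrm ds.
\end{equation*}
The initial-condition discrepancy $|\E[\Phi(\rho^n_{0,t}(\rho_0^N))] - \E[\Phi(\rho^n_{0,t}(\rho_0))]|$ is controlled by the $W_{1,H^{-\lambda-2}(\T^d)}$-distance between the laws of $\rho^N_0$ and $\rho_0$, using the Lipschitz stability of the flow $\rho^n_{0,t}$ (again established in Section~\ref{sec: SPDE_existence}) and the Lipschitz bound on $\Phi$ coming from $[\Phi]_{C^2}$; passing $n\to\infty$ gives the desired $W_1$ term in~\eqref{eq:main_estimate}.

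The core of the argument is then to estimate the integrand $(\mathcal{G}^N_s - \mathcal{G}^n_s) u^n$. The generator $\mathcal{G}^N$ contains the linearisation of $b(s,\cdot,\mu^N_s)$ around $\mu_s$ together with a second-order diagonal contribution $\tfrac1{N^2}\sum_i \partial^2_{\mu\mu}\Phi(\mu^N_{\bm x};x_i,x_i)$ produced by the expansion of pure second derivatives $\partial^2_{x_i x_i}\Phi(\mu^N_{\bm x})$; as noted in the introduction, after rescaling this term is precisely the trace term $\tfrac12\mathrm{Tr}[(\nabla^2 \Phi)\, Q^n_s]$ appearing in $\mathcal{G}^n$, up to a mollification error. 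The difference therefore splits into a first-order piece (difference between $b(s,\cdot,\mu^N_s)$ and its linearisation around $\mu_s$, which by Assumption~\ref{ass: convergence_ass}(2) contributes $\tfrac1N$ after taking expectation and using the relative-entropy bound of Assumption~\ref{ass: convergence_ass}(3) to control $\E\|\mu^N_s-\mu_s\|^2_{H^{-\lambda-1}(\T^d)}$) and a residual second-order / cross-derivative piece of order $\tfrac1N$ coming from the Taylor remainder involving the bounded third-type derivatives controlled by $[\Phi]_{C^2}$ and the semigroup-stability bounds for $\nabla u^n$, $\nabla^2 u^n$ derived in Section~\ref{sec:generators}. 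Both estimates must be uniform in $n$, so that the mollification error disappears in the limit and the bound $C[\Phi]_{C^2}/\sqrt N$ is retained.

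The main obstacle is precisely this uniformity in $n$: since the limiting semigroup on $H^{-\lambda-2}(\T^d)$ is not strongly continuous, one cannot just take $n\to\infty$ inside $u^n$ directly, and the stability bounds on $\nabla u^n$ and $\nabla^2 u^n$ must be proved without relying on strong continuity, following the technique adapted from~\cite{gess2024}. Once these bounds are in hand, together with the convergence in probability of $\rho^n\to\rho$ and a Gr\"onwall-type argument using~\eqref{eq: b_ineq}--\eqref{eq: deriv_prob_inequ}, passing $n\to\infty$ in both the flow-initial-condition term and the generator-difference integral yields~\eqref{eq:main_estimate}.
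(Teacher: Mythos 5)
Your overall architecture matches the paper's: mollify the limiting SPDE into a Hilbert-space SDE, reduce to comparing \(\rho^N\) with \(\rho^n\) via convergence in probability (Theorem~\ref{thm: conv_prob}), compare the generators through the value function \(T^n_{s,t}\Phi\), identify the \(\tfrac{1}{N^2}\partial^2_{\mu\mu}\) diagonal term with the trace term of \(\mathcal{G}^n\), handle the initial data by the Lipschitz flow and \(W_1\)-duality, and control the remainders by relative entropy. The paper implements the Duhamel-type identity not by asserting a backward Kolmogorov PDE for the composite \((s,\bm{x})\mapsto u^n(s,\sqrt{N}(\mu^N_{\bm x}-\mu_s))\) but by the telescoping-partition argument of Lemma~\ref{lemma: product} (combining \(\partial_s T^n_{s,t}\Phi\) from Proposition~\ref{lemma: ito_formula_approx} with the action of \(\mathcal{G}^N_s\) on the \emph{frozen} function \(T^n_{t_{i-1},t}\Phi\) from Proposition~\ref{prop:4_11}); your version needs joint regularity that you do not establish, but you correctly flag this as the technical obstacle and cite the right fix. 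You also omit the reduction to cylindrical \(\Phi\in FC^\infty(H^{-\lambda-2}(\T^d))\) (Lemma~\ref{lemma: approximation_hilbert_smooth}), which is what makes the derivative identifications and the trace representation legitimate; this is repairable but should be stated.

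The genuine gap is in your estimate of the bilinear remainder. After subtracting the linear part, the leading correction is \(\tfrac{1}{\sqrt N}\,R^2_s\) with \(R^2_s\) \emph{quadratic} in \(\rho^N_s\), i.e.\ of the form \(N\langle \varphi_s,(\mu^N_s-\mu_s)^{\otimes 2}\rangle\) where \(\varphi_s\) involves \(\tfrac{\delta b}{\delta m}\) evaluated along the random measure \(r\mu^N_s+(1-r)\mu_s\). Your proposed mechanism --- pair with \(\E\|\mu^N_s-\mu_s\|^2_{H^{-\lambda-1}(\T^d)}\) --- requires the kernel to be smooth in the \(v\)-variable so that it can be tested against a negative Sobolev norm; Assumption~\ref{ass: convergence_ass} only gives \(L^\infty\) bounds on \(\tfrac{\delta b}{\delta m}\) and \(\tfrac{\delta^2 b}{\delta m^2}\), so this duality is not available and the step fails as written (a crude bound would cost \(\|\rho^N\|_{TV}\sim\sqrt N\)). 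The paper instead exploits the cancellation structure of the centered kernel and controls the exponential moment under \(\mu_s^{\otimes N}\) via the Jabin--Wang law of large numbers (Theorem~\ref{theorem: large_deviation}), combined with the change of measure through the variational formula for relative entropy (Lemma~\ref{lemma: variational_formual}); see Lemmas~\ref{lemma: flat_reminder} and~\ref{lemma: second_flat_reminder}. Without this (or at least Pinsker plus tensorization, as the paper remarks), the \(O(1/\sqrt N)\) rate does not follow from your sketch.
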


The proof is given in Section \ref{sec: comparison}. As explained above, it is based on estimating the semigroups by using the generators of the processes which are computed in Section \ref{sec:generators}. It requires to use the approximation of the limiting SPDE and then to pass to the limit in probability, as $\Phi$ is nonlinear. Note that $\rho^n$ is Markovian, while $\rho^N$ and $\rho$ might not be. The reminder is estimated by using relative entropy methods and stability estimates on the limiting semigroup. We also need to approximate $\Phi$ with cylindrical functions. 

The main Theorem is written for $\Phi$ of linear growth because it includes the case of linear functionals, but notice that the convergence rate depends just on the supnorm of the derivatives of $\Phi$. We choose to consider the processes to be defined on the same probability space because we show (probabilistically) strong well-posedness to SPDE \eqref{eq: limiting_spde}, but we remark that the main result involves just the marginal distribution of the processes. 

In case of linear interaction, the main result, thanks to Lemma \ref{rem: convolution_estimate}  implies the following: 

\begin{corollary}
If $b(t,x,m) = K * m(x)$ and (iii)-(iv)-(v) of Lemma \ref{rem: convolution_estimate} hold, then  \eqref{eq:main_estimate} is satisfied. 
\end{corollary}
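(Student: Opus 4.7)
The plan is to derive the corollary as a direct consequence of Theorem~\ref{theorem: main_result} combined with Lemma~\ref{rem: convolution_estimate}. The task reduces to verifying that, under the three hypotheses (iii)-(iv)-(v), all of Assumptions~\ref{ass: inital_cond}--\ref{ass: convergence_ass} required by the main theorem hold in the linear interaction case $b(t,x,m) = K*m(x)$.

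First, since the torus $\T^d$ has finite Lebesgue measure, condition (iv) $K\in L^{\infty}(\T^d)$ immediately yields $K \in L^1(\T^d)$, which is condition (ii) of Lemma~\ref{rem: convolution_estimate}. Next, the last sentence of that lemma asserts that (iv)-(v) already imply (i)-(ii), in particular the existence of a probabilistically weak solution to the interacting particle system~\eqref{eq: interacting_particle_system} (this is the content of the relative entropy / Lacker estimate cited in the proof of the lemma). Combining with the assumed (iii), we therefore have (i)-(ii)-(iii) at our disposal, and the first part of Lemma~\ref{rem: convolution_estimate} delivers Assumptions~\ref{ass: existence} and~\ref{ass: coef_fokker}. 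Likewise, the second half of Lemma~\ref{rem: convolution_estimate} converts (iv)-(v) into Assumption~\ref{ass: convergence_ass}; the uniform relative-entropy bound needed there is, once more, the Lacker-type estimate, which applies precisely for bounded kernels and exchangeable chaotic initial data.

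Assumption~\ref{ass: inital_cond}, namely $\rho_0\in L^2_{\mathcal{F}_0}(H^{-\lambda-2}(\T^d))$, plays the role of a standing regularity requirement on the initial datum of the limiting SPDE and does not interact with the specific convolutional form of $b$, so it is carried over from the setup of Theorem~\ref{theorem: main_result}. Once all four Assumptions are in place, Theorem~\ref{theorem: main_result} applies verbatim and yields estimate~\eqref{eq:main_estimate}. I do not anticipate any technical obstacle: the corollary is purely a specialization, and Lemma~\ref{rem: convolution_estimate} has already done the substantive work of translating the linear-case hypotheses into the abstract framework; the only care needed is to trace the implications $(\text{iv})\Rightarrow(\text{ii})$ and $(\text{iv})\text{-}(\text{v})\Rightarrow(\text{i})$ so that the first part of Lemma~\ref{rem: convolution_estimate} can indeed be invoked from the reduced hypothesis list $(\text{iii})\text{-}(\text{iv})\text{-}(\text{v})$.
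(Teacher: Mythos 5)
Your proposal is correct and matches the paper's intended (and essentially unwritten) argument: the corollary is stated as an immediate consequence of Theorem~\ref{theorem: main_result} once Lemma~\ref{rem: convolution_estimate} is used to verify Assumptions~\ref{ass: existence}--\ref{ass: convergence_ass} from (iii)-(iv)-(v), with (iv) giving $K\in L^1(\T^d)$ on the compact torus and the cited Lacker-type entropy estimate supplying both weak existence and the uniform relative-entropy bound. Your observation that Assumption~\ref{ass: inital_cond} is a standing hypothesis carried over from the setup is the right way to read the statement.
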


We finally show how Theorem \ref{theorem: main_result} may imply convergence in distribution of $\rho^N$ to $\rho$ on the path space $C([0,T], H^{-\lambda-2}(\T^d))$, as proved in ~\cite{Fernandez1997,Delarue2019,Xianliang2023}, among others. Since convergence in distribution on the space of continuous functions is equivalent to convergence of finite dimensional distributions plus tightness \cite[Lemma 16.2]{Kallenberg2002}, 
we assume tightness in the following result.   

\begin{corollary}
\label{cor:conv_processes}
Let the Assumptions~\ref{ass: inital_cond}--\ref{ass: convergence_ass} be satisfied, the sequence $\rho^N$ be tight on $C([0,T], H^{-\lambda-2}(\T^d))$, $(\mu^N_t, 0\leq t\leq T)$ be a Markov process on $\mathcal{P}(\T^d)$ and 
 $\lim\limits_N \rho^N_0 =\rho_0$ in law on $H^{-\lambda-2}(\T^d)$. 
Then $\lim\limits_N \rho^N =\rho$ in law on $C([0,T],H^{-\lambda-2}(\T^d))$.
\end{corollary}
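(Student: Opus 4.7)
Since tightness is assumed, by Prokhorov's theorem and \cite[Lemma~16.2]{Kallenberg2002} (already invoked just before the corollary), it suffices to show convergence of all finite-dimensional distributions, i.e.\ $(\rho^N_{t_1},\ldots,\rho^N_{t_k}) \to (\rho_{t_1},\ldots,\rho_{t_k})$ in law on $(H^{-\lambda-2}(\T^d))^k$ for every $0 \le t_1 < \cdots < t_k \le T$. I would proceed by induction on $k$: the base case $k=1$ is a direct consequence of Theorem~\ref{theorem: main_result}, while the inductive step combines the Markov property of $(\mu^N_t)$ (assumed, and therefore also of $(\rho^N_t)$, since $\rho^N_t$ is a measurable function of $\mu^N_t$) with the Markov property of the Ornstein--Uhlenbeck-type limit $\rho$, together with a conditional application of the main Theorem on each subinterval $[t_{j-1},t_j]$.

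\textbf{Base case.} For any $\Phi \in C^2_b(H^{-\lambda-2}(\T^d)) \subset C^2_\ell(H^{-\lambda-2}(\T^d))$, Theorem~\ref{theorem: main_result} gives
\[
|\E[\Phi(\rho^N_t)]-\E[\Phi(\rho_t)]| \le C[\Phi]_{C^2(H^{-\lambda-2}(\T^d))}\bigl(1/\sqrt{N} + W_{1,H^{-\lambda-2}(\T^d)}(\P_{\rho^N_0},\P_{\rho_0})\bigr),
\]
and the right-hand side vanishes as $N\to\infty$: the first term is trivial, while the $W_1$-term follows from the assumed convergence in law of $\rho^N_0$ plus the uniform integrability of $\|\rho^N_0\|_{H^{-\lambda-2}(\T^d)}$ inherited from Assumption~\ref{ass: convergence_ass}(3). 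Since $C^2_b$ is convergence-determining on the separable Hilbert space $H^{-\lambda-2}(\T^d)$ (e.g.\ via approximation of bounded Lipschitz functions by smooth cylindrical ones, in the spirit of Appendix~\ref{sec:appendix}), this yields $\rho^N_t \to \rho_t$ in law for each fixed $t$.

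\textbf{Inductive step.} Let $Q^N_{s,t}$ and $Q_{s,t}$ denote the transition semigroups of $\rho^N$ and $\rho$, respectively. For $\Phi_1,\ldots,\Phi_k \in C^2_b$, the Markov property gives
\[
\E\Big[\prod_{j=1}^{k}\Phi_j(\rho^N_{t_j})\Big] = \E\Big[\prod_{j=1}^{k-1}\Phi_j(\rho^N_{t_j})\,\cdot\,(Q^N_{t_{k-1},t_k}\Phi_k)(\rho^N_{t_{k-1}})\Big].
\]
Adding and subtracting $(Q_{t_{k-1},t_k}\Phi_k)(\rho^N_{t_{k-1}})$ splits this into a main term, to which the inductive hypothesis applies with test functions $\Phi_1,\ldots,\Phi_{k-2},\Phi_{k-1}\cdot Q_{t_{k-1},t_k}\Phi_k$ (note $Q_{t_{k-1},t_k}\Phi_k \in C^2_b$ by the differentiability and stability estimates of the SPDE flow from Section~\ref{sec:generators}), and an error term controlled by the sup norms of $\Phi_1,\ldots,\Phi_{k-1}$ multiplied by $\E\bigl[|(Q^N_{t_{k-1},t_k}-Q_{t_{k-1},t_k})\Phi_k|(\rho^N_{t_{k-1}})\bigr]$. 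This error tends to zero by Theorem~\ref{theorem: main_result} applied to the particle system restarted at $t_{k-1}$ with initial configuration $(X^1_{t_{k-1}},\ldots,X^N_{t_{k-1}})$: Assumption~\ref{ass: convergence_ass}(3) is uniform in $t \in [0,T]$, so all hypotheses transfer verbatim to the subinterval $[t_{k-1},t_k]$.

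\textbf{Main obstacle.} The most delicate point is the legitimacy of applying Theorem~\ref{theorem: main_result} conditionally on $\mathcal{F}_{t_{k-1}}$, i.e.\ to the particle system restarted at time $t_{k-1}$ from the \emph{random} empirical measure $\mu^N_{t_{k-1}}$. This requires that the proof of the main Theorem in Section~\ref{sec: comparison}---which compares generators and builds on uniform relative entropy bounds---carries over from a deterministic starting time $0$ to any $\mathcal{F}_s$-measurable initial condition at any $s \in [0,T]$, a point that follows from the uniformity in time of Assumption~\ref{ass: convergence_ass}(3) but deserves an explicit verification. A self-contained alternative bypassing this technicality would be to use the one-dimensional convergence together with the Markov structure to show that every subsequential weak limit of $(\rho^N)$ solves \eqref{eq: limiting_spde} in the martingale-problem sense, and to conclude via the uniqueness part of Theorem~\ref{theorem: existence_SPDE}.
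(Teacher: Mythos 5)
Your proposal follows essentially the same route as the paper: tightness reduces the claim to convergence of finite-dimensional distributions, which is then proved by induction using the Markov property of $\rho^N$, an add-and-subtract of the limiting semigroup, and an application of Theorem~\ref{theorem: main_result} on the subinterval $[t_{k-1},t_k]$ that is uniform over deterministic initial conditions $f$ (for which the $W_1$ term vanishes, since both processes are restarted from the same $f$) --- precisely the resolution of the ``main obstacle'' you flag. The paper is merely more explicit on the two points you leave informal, namely the convergence-determining step (carried out via cylindrical approximations that converge uniformly on the compact sets supplied by tightness) and the fact that $\rho$ is not assumed Markovian (the needed identity is obtained by approximating with the Markovian $\rho^n$), but the strategy is identical.
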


The proof is also given in Section \ref{sec: comparison}.
Note that we can not apply \cite[Thm 19.25]{Kallenberg2002}, which also shows that strong convergence of the semigroups implies tightness, since the semigroups are not strongly continuous and further the space  $H^{-\lambda-2}(\T^d)$ is not locally compact and we do not characterize an invariant  core of the limiting semigroup. Nevertheless, we manage to prove convergence of finite-dimensional distributions, without making use of the martingale problem formulation nor of weak convergence arguments. Let us recall that a sufficient condition for $\mu^N$ to be Markovian is that the initial conditions $(X^1_0, \dots,X^N_0)$ are exchangeable  and weak uniqueness holds for the SDE \eqref{eq: interacting_particle_system}, so that the process $(X^1,\dots, X^N)$ is Markovian and thus Markovianity of $\mu^N$ follows by \cite[Prop. 2.3.3]{Dawson1993}.

\section{Well-posedness and approximation of SPDE} \label{sec: SPDE_existence}

\noindent \emph{Throughout this section Assumptions~\ref{ass: inital_cond},~\ref{ass: existence} are in force.}

\smallskip

The aim of this section is to construct an approximation of the SPDE \eqref{eq: limiting_spde} which writes as an SDE in the Hilbert space ${H}^{-\lambda-2}(\T^d)$, whose solution converges in probability to the solution to \eqref{eq: limiting_spde}. The main convergence result is Theorem \ref{thm: conv_prob}, which is proved in \S \ref{sec: well-posedness_SPDE} together with Theorem~\ref{theorem: existence_SPDE} on existence and uniqueness of strong solutions to \eqref{eq: limiting_spde}.

On a technical side, we remark that the approximating equation must be
formulated in a negative Sobolev space; in aprticular, the Laplacian reduces regularity by two derivatives, and any analysis must account for this loss. Nevertheless, it is intuitively clear that the dissipative nature of the Laplacian should help to gain regularity, rather than lose it.
Indeed, the Laplacian's dissipative structure yields an approximation sequence that converges not only in the weak topology of the Hilbert space, but also in the stronger topology of $L^2( [0,T], H^{-\lambda-2}(\T^d))$.

\subsection{Approximating SPDE} \label{subsec: mollification}
Let us introduce the following mollification. 
Let \(\tilde j\) be a smooth symmetric function on \(\T^d\) with compact support in the unit ball and \(\tilde j(0)=1\). 
Let us define the mollifier 
\begin{equation*}
    j_n(x) := \sum\limits_{k \in \Z^d} e^{2 \pi i x \cdot k} \tilde j\big( n^{-1}k \big). 
\end{equation*}
and the mollification operator. 
We have the following result on approximation of elements in \(H^{s}(\T^d)\)  via mollification. 
The proof follows by explicit computation.

\begin{lemma} 
\label{lemma: mollification}
    Let \(s \in \R \) and \(f,g \in H^{s}(\T^d)\).  The mollification operator \(j_n * f \in H^{\tilde s}(\T^d)\) for all \(\tilde s \in \R \). Moreover, 
    \begin{equation} 
    \label{eq: mollification_convergence}
    \begin{split}
    j_n * e_k &= e_k \tilde j(n^{-1}k), \\
    \langle j_n*f,e_k \rangle &= \langle f,e_k \rangle \tilde j(n^{-1}k),  \\
    \langle  j_n*f, g \rangle_{H^{s}(\T^d)} &=  \langle f, j_n*g \rangle_{H^{s}(\T^d)} , \\
        \lim\limits_{n \to \infty}\norm{j_n*f -f }_{H^{s}(\T^d)} &= 0, \\
        \norm{j_n*f}_{H^{s}(\T^d)}
        &\le C \norm{f}_{H^{s}(\T^d)}, \\
        \norm{j_n*f}_{H^{\tilde s}(\T^d)}
        &\le  C(n) \norm{\varphi}_{H^{s}(\T^d)} . 
    \end{split}
    \end{equation}
\end{lemma}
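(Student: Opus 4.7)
The plan is to exploit the fact that $j_n$ acts as a Fourier multiplier with symbol $k \mapsto \tilde j(n^{-1}k)$, and then reduce every claim to a statement about this multiplier. The hypotheses on $\tilde j$ (smooth, symmetric, compactly supported in the unit ball, with $\tilde j(0)=1$) translate respectively into boundedness, symmetry of the multiplier, a finite frequency cutoff at scale $n$, and pointwise convergence to $1$.

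First I would compute $j_n * e_k$ directly: expanding the definition of $j_n$ and using $\int_{\T^d} e^{2\pi i (k-k')\cdot y}\,\mathrm{d}y=\delta_{k,k'}$ immediately gives $j_n * e_k = \tilde j(n^{-1}k)\,e_k$. The Fourier coefficient identity $\langle j_n*f,e_k\rangle = \tilde j(n^{-1}k)\,\langle f,e_k\rangle$ then follows by expanding $f$ in the orthonormal basis $(e_k)$ (in the sense of distributions if $s<0$) and using linearity; this step also gives $j_n*f \in H^{\tilde s}(\T^d)$ for every $\tilde s \in \R$, because $\tilde j$ is compactly supported in the unit ball so $\tilde j(n^{-1}k)=0$ whenever $|k|>n$, truncating the Fourier series to finitely many modes.

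Next, the symmetry
\[
\langle j_n*f,g\rangle_{H^s(\T^d)}=\sum_{k\in\Z^d} \langle k\rangle^{2s}\,\tilde j(n^{-1}k)\,\langle f,e_k\rangle\,\overline{\langle g,e_k\rangle}=\langle f,j_n*g\rangle_{H^s(\T^d)}
\]
drops out once one uses that $\tilde j$ is real-valued (which follows from symmetry of $\tilde j$ on $\T^d$). For the norm bounds I would write
\[
\|j_n*f\|_{H^{\tilde s}(\T^d)}^2=\sum_{|k|\leq n}\langle k\rangle^{2\tilde s}|\tilde j(n^{-1}k)|^2|\langle f,e_k\rangle|^2,
\]
factoring $\langle k\rangle^{2\tilde s}=\langle k\rangle^{2s}\langle k\rangle^{2(\tilde s - s)}$ and bounding $\langle k\rangle^{2(\tilde s - s)}\leq \langle n\rangle^{2|\tilde s - s|}$ on the truncated range, together with the uniform bound $\|\tilde j\|_{L^\infty}<\infty$. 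This produces both estimates, the uniform one ($\tilde s=s$) and the $C(n)$-dependent one.

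For the convergence $\|j_n*f - f\|_{H^s(\T^d)}\to 0$ I would apply the dominated convergence theorem to the series
\[
\|j_n*f-f\|_{H^s(\T^d)}^2=\sum_{k\in\Z^d}\langle k\rangle^{2s}|\tilde j(n^{-1}k)-1|^2 |\langle f,e_k\rangle|^2.
\]
Each term converges to $0$ pointwise in $k$, since $\tilde j$ is continuous at $0$ with $\tilde j(0)=1$, and the integrand is dominated by $(1+\|\tilde j\|_{L^\infty})^2\,\langle k\rangle^{2s}|\langle f,e_k\rangle|^2$, which is summable because $f\in H^s(\T^d)$.

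No step is genuinely hard; the only mild subtlety is to ensure that the identities involving $\langle f,e_k\rangle$ are valid for $s<0$ (where $f$ is a distribution), which is resolved by defining $\langle f,e_k\rangle$ as the duality pairing and noting that $e_k\in C^\infty(\T^d)\subset H^{-s}(\T^d)$. All other manipulations are routine Fourier-series calculations.
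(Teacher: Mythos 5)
Your proof is correct and is exactly the explicit Fourier-multiplier computation the paper has in mind (the paper omits the proof, saying only that it "follows by explicit computation"): the identity $j_n*e_k=\tilde j(n^{-1}k)e_k$ reduces everything to properties of the symbol, the compact support of $\tilde j$ gives the frequency truncation and the $C(n)$ bound, and dominated convergence on the Fourier series gives the approximation property. The only cosmetic remark is that the multiplier $\tilde j(n^{-1}k)$ is real because $\tilde j$ is a real-valued bump function by construction, not as a consequence of its symmetry; this does not affect the argument.
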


We define the approximation \((A_n, n \in \N)\) of the operator \(A\) by 
\begin{equation} \label{eq: regularized_A}
   \begin{cases}
    A_n \colon [0,T] \times {H}^{-\lambda-1}(\T^d)   \mapsto {H}^{-\lambda-1}(\T^d)\\
    (t,f) \to \Big( \varphi \to  \langle f, j_n *(A'(t)(j_n* \varphi)) \rangle  \Big) 
   \end{cases}
   \end{equation}
We have the following properties on the operator $A_n$. For the latter coercivity condition, as explained above, we use the dissipation provided by the Laplacian. Here, $C(n)$ denotes a constant depending also on $n$, while $C$ does not depend on $n$.

\begin{lemma} 
\label{lemma2: dissapation}
    The operator \(A_n\) is linear on ${H}^{-\lambda-2}(\T^d)$ and, for any \(f \in {H}^{-\lambda-2}(\T^d)\) and  \(n  \ge 1 \), we have 
    \begin{equation} 
    \label{item: approx_bound_smaller_space}
    \norm{A_n(t,f)}_{H^{-\lambda-2}(\T^d)} \le C(n) \norm{f}_{{H}^{-\lambda-2}(\T^d)} . 
    \end{equation}
     Moreover, there exists $\delta>0$ small such that, for any \(f \in  H^{-\lambda-1}(\T^d) \),  
    \begin{equation}
    \label{lemma: dissapation}
    \sup\limits_{n \in \N}   \langle A_n(t,f), f \rangle_{H^{-\lambda-2}(\T^d)}  \le C \norm{f}_{{H}^{-\lambda-2}(\T^d)}^2 - \delta \norm{f}_{{H}^{-\lambda-1}(\T^d)}^2 .
    \end{equation}
\end{lemma}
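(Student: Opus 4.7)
The linearity of $A_n(t,\cdot)$ is immediate from its definition. The plan for both bounds is to exploit the self-adjointness of $j_n*$ in the $H^{-\lambda-2}(\T^d)$ pairing (Lemma~\ref{lemma: mollification}) to rewrite
\begin{equation*}
A_n(t,f) \;=\; j_n * A(t, j_n * f)
\end{equation*}
as a distribution, and then to combine the stability estimates \eqref{eq: b_ineq}--\eqref{eq: deriv_prob_inequ} with the mollifier bounds.

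For \eqref{item: approx_bound_smaller_space}, first note that $A(t,\cdot)\colon H^{-\lambda-1}(\T^d)\to H^{-\lambda-3}(\T^d)$ is bounded: the first two summands of~\eqref{eq: def_A} obey this by Assumption~\ref{ass: existence}(3), while $\norm{\Delta g}_{H^{-\lambda-3}(\T^d)} \le C\,\norm{g}_{H^{-\lambda-1}(\T^d)}$ by Parseval. Invoking the mollifier regularity gain $\norm{j_n*h}_{H^{\tilde s}(\T^d)}\le C(n)\norm{h}_{H^{s}(\T^d)}$ from Lemma~\ref{lemma: mollification} twice---first to send $f\in H^{-\lambda-2}$ into $H^{-\lambda-1}$, then to bring $A(t,j_n*f)\in H^{-\lambda-3}$ back into $H^{-\lambda-2}$---yields the claim with constant $C(n)$.

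For the coercivity, set $g:=j_n*f$ and use the symmetry of $j_n$ in $\langle\cdot,\cdot\rangle_{H^{-\lambda-2}(\T^d)}$ to reduce to estimating $\langle A(t,g), g\rangle_{H^{-\lambda-2}(\T^d)}$. Cauchy--Schwarz combined with \eqref{eq: b_ineq}--\eqref{eq: deriv_prob_inequ} bounds the two drift-type summands by $C\,\norm{g}_{H^{-\lambda-1}(\T^d)}\norm{g}_{H^{-\lambda-2}(\T^d)}$. The decisive coercive contribution comes from the Laplacian: via Plancherel and the identity $|k|^{2}=\langle k\rangle^{2}-1$,
\begin{equation*}
\tfrac{\sigma^{2}}{2}\,\langle\Delta g, g\rangle_{H^{-\lambda-2}(\T^d)} \;=\; -2\pi^{2}\sigma^{2}\,\norm{g}_{H^{-\lambda-1}(\T^d)}^{2} \,+\, 2\pi^{2}\sigma^{2}\,\norm{g}_{H^{-\lambda-2}(\T^d)}^{2}\,,
\end{equation*}
producing a negative quadratic term in the \emph{stronger} Sobolev norm. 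Young's inequality then absorbs the drift cross-terms into this coercive piece, yielding
\begin{equation*}
\langle A(t,g),g\rangle_{H^{-\lambda-2}(\T^d)} \;\le\; -\delta'\,\norm{g}_{H^{-\lambda-1}(\T^d)}^{2} + C\,\norm{g}_{H^{-\lambda-2}(\T^d)}^{2}
\end{equation*}
for some $\delta'>0$ independent of $n$.

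It remains to transfer these bounds back to $f$ uniformly in $n$. The bound $\norm{g}_{H^{-\lambda-2}(\T^d)}\le C\,\norm{f}_{H^{-\lambda-2}(\T^d)}$ is uniform in $n$ since $|\tilde j(n^{-1}k)|\le \norm{\tilde j}_{L^{\infty}}$. The main obstacle lies in the negative contribution: because $j_n$ acts as a low-pass filter, $\norm{g}_{H^{-\lambda-1}(\T^d)}$ need not be comparable to $\norm{f}_{H^{-\lambda-1}(\T^d)}$ at finite $n$. The presence of the supremum on the left hand side of~\eqref{lemma: dissapation} is what makes the statement tractable: since $\norm{j_n*f}_{H^{-\lambda-1}(\T^d)}^{2}\to\norm{f}_{H^{-\lambda-1}(\T^d)}^{2}$ as $n\to\infty$ by Lemma~\ref{lemma: mollification}, the worst-case level of mollification is controlled by the limiting bound $-\delta\,\norm{f}_{H^{-\lambda-1}(\T^d)}^{2} + C\,\norm{f}_{H^{-\lambda-2}(\T^d)}^{2}$, delivering the claimed estimate.
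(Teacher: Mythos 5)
Your treatment of \eqref{item: approx_bound_smaller_space} and the bulk of the coercivity computation follow the paper's route. The identity $A_n(t,f)=j_n*A(t,j_n*f)$ is the same starting point as the paper's Fourier-side manipulation; the paper then uses the single bound $\norm{A(t,g)}_{H^{-\lambda-2}}\le C\norm{g}_{H^{-\lambda}}$ together with one application of the mollifier gain $H^{-\lambda-2}\to H^{-\lambda}$, whereas you split the gain over two mollifications through $A:H^{-\lambda-1}\to H^{-\lambda-3}$ — both are fine. Your Plancherel identity for $\langle\Delta g,g\rangle_{H^{-\lambda-2}}$ is correct and is in fact cleaner than the paper's appeal to equivalence of norms, and the absorption of the drift cross-terms by Young's inequality matches the paper.

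The gap is in your final paragraph. After the absorption you have, correctly, $\langle A_n(t,f),f\rangle_{H^{-\lambda-2}}\le C\norm{f}_{H^{-\lambda-2}}^2-\delta'\norm{j_n*f}_{H^{-\lambda-1}}^2$, and you acknowledge that $\norm{j_n*f}_{H^{-\lambda-1}}$ is not comparable to $\norm{f}_{H^{-\lambda-1}}$ at finite $n$. Your resolution via the supremum is logically backwards: $\sup_n\langle A_n(t,f),f\rangle\le X$ is a statement that must hold for \emph{every} $n$, so the supremum is controlled by the \emph{worst} $n$, not by the limit $n\to\infty$. The worst case is precisely where the mollifier filters $f$ the most: since $\tilde j(n^{-1}k)$ vanishes for $|k|>n$, taking $f$ concentrated at frequencies above $n$ gives $j_n*f=0$, hence $\langle A_n(t,f),f\rangle=0$, while $-\delta\norm{f}_{H^{-\lambda-1}}^2+C\norm{f}_{H^{-\lambda-2}}^2<0$ for such high-frequency $f$. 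So the passage from $-\delta'\norm{j_n*f}_{H^{-\lambda-1}}^2$ to $-\delta\norm{f}_{H^{-\lambda-1}}^2$ cannot be justified by the convergence $\norm{j_n*f}_{H^{-\lambda-1}}\to\norm{f}_{H^{-\lambda-1}}$; that convergence is not monotone from below and gives no uniform-in-$n$ lower bound. For fairness: the paper's own proof also terminates with the coercive term $-3\delta\norm{j_n*f}_{H^{-\lambda-1}}^2$ and then asserts the conclusion without addressing this discrepancy, so you have faithfully reproduced the substantive part of the argument and correctly located its weak point — but the bridge you propose does not close it. The estimate that the argument actually delivers, and that should be carried into the a priori bound of Proposition~\ref{lemma: existence_approx_SPDE}, keeps $\norm{j_n*f}_{H^{-\lambda-1}}^2$ in the dissipative term.
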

\begin{proof}
The linearity of the operator \(A_n\) is an immediate consequence of the linearity of \(A\). By Lemma~\ref{lemma: mollification}, we find 
    \begin{align*}
        \norm{ A_n(t,f)}_{H^{-\lambda-2}(\T^d)}^2
        &= \sum\limits_{k \in \Z^d} \langle k \rangle^{-2(\lambda+2)} |\langle f, j_n * A'(t)(j_n* e_k) \rangle|^2 \\
        &=  \sum\limits_{k \in \Z^d} \langle k \rangle^{-2(\lambda+2)} |\langle A(t)( j_n* f),  (e_k) \rangle|^2 |\tilde j(n^{-1}k)|^2 \\
        &\le C \norm{A(j_n *f)}_{H^{-\lambda-2}(\T^d)}^2 \\
        &\le C \norm{j_n *f}_{H^{-\lambda}(\T^d)}^2 \\
        &\le C(n) \norm{f}_{H^{-\lambda-2}(\T^d)}^2, 
    \end{align*}
which proves~\eqref{item: approx_bound_smaller_space}. To prove \eqref{lemma: dissapation}, we compute    
    \begin{align*}
        \langle A_n(t,f), f \rangle_{H^{-\lambda-2}(\T^d)} 
        &= \sum\limits_{k \in \Z^d} \langle k \rangle^{-2(\lambda+2)} 
        \langle f, j_n * A'(t)(j_n* e_k) \rangle  \overline{ \langle f,  e_k\rangle} \\
        &= \sum\limits_{k \in \Z^d} \langle k \rangle^{-2(\lambda+2)} 
        \langle j_n * f,  A'(t)( e_k) \rangle  \overline{\langle j_n*f,  e_k\rangle} .
    \end{align*}
Now, \(A'(t)\) is linear. Let us recall the definition of \(A'(t)\) and compute each term individually
\begin{equation*}
     \langle j_n * f,  A'(t)( e_k) \rangle := \bigg\langle j_n * f,  b(t,\cdot,\mu_t) \cdot D e_k (\cdot)+ \bigg\langle     \mu_t(x)  \frac{\delta b}{\delta m} (t,x,\mu_t , \cdot )  , D e_k(x) \bigg \rangle  +   \frac{\sigma^2}{2} \Delta e_k (\cdot) \bigg \rangle ,
\end{equation*}
For the diffusion term, we find 
\begin{align*}
    \frac{\sigma^2}{2}\sum\limits_{k \in \Z^d} \langle k \rangle^{-2(\lambda+2)} 
        \langle j_n * f, \Delta e_k \rangle  \overline{ \langle j_n*f,  e_k\rangle}
    &= -  \frac{\sigma^2}{2} \sum\limits_{k \in \Z^d} \langle k \rangle^{-2(\lambda+2)} \sum\limits_{j=1}^d
        \langle j_n * f, \partial_{x_j} e_k \rangle \overline{ \langle j_n*f, \partial_{x_j}  e_k\rangle} \\
    &= - \frac{\sigma^2}{2}\sum\limits_{j=1}^d
        \norm{ \partial_{x_j} j_n * f}_{H^{-\lambda-2}(\T^d)}\\
    &\le -3\delta  \norm{j_n * f}_{H^{-\lambda-1}(\T^d)}+ C\norm{j_n * f}_{H^{-\lambda-2}(\T^d)}
\end{align*}
for some small \(\delta >0\). 
The last line follows by the equivalence of the norms 
\begin{equation*}
    \norm{j_n * f}_{H^{-\lambda-1}(\T^d)} \quad \mathrm{and} \quad \sum\limits_{j=1}^d  \norm{ \partial_{x_j} j_n * f}_{H^{-\lambda-2}(\T^d)}+  \norm{j_n * f}_{H^{-\lambda-2}(\T^d)}. 
\end{equation*}
For one of the first order terms we obtain
\begin{align*}
    &\bigg|\sum\limits_{k \in \Z^d} \langle k \rangle^{-2(\lambda+2)} 
        \langle j_n * f,b(t,x,\mu_t) \cdot \nabla e_k \rangle  \overline{\langle j_n*f,  e_k\rangle}\bigg| \\
    &\le \norm{f}_{H^{-\lambda-2}(\T^d)} \norm{\nabla \cdot ( b(t,x,\mu_t) j_n*f) }_{H^{-\lambda-2}(\T^d)} \\
    &\le  C \norm{f}_{H^{-\lambda-2}(\T^d)} \norm{f}_{H^{-\lambda-1}(\T^d)} \\
    &\le \delta \norm{f}_{H^{-\lambda-1}(\T^d)}^2 + \frac{C^2}{4\delta}  \norm{f}_{H^{-\lambda-2}(\T^d)}^2,
\end{align*}
where we used inequality~\eqref{eq: b_ineq}. 
For the second first order term we find 
\begin{align*}
&\bigg|\sum\limits_{k \in \Z^d} \langle k \rangle^{-2(\lambda+2)} 
        \bigg\langle f, j_n * \Big\langle    \mu_t(x)  \frac{\delta b}{\delta m} (t,x,\mu_t , \cdot ) \cdot   D e_k(x)  \Big\rangle \bigg\rangle  \overline{\langle j_n*f,  e_k \rangle}\bigg| \\
    &\le \norm{\bigg\langle \nabla_x \cdot  \bigg(  \mu_t(x)  \frac{\delta b}{\delta m} (t,x,\mu_t , v ) \bigg), f(v)  \bigg\rangle}_{ H^{-\lambda-2}(\T^d)}^2
    \norm{f}_{H^{-\lambda-2}(\T^d)}\\
    &\le \delta \norm{f}_{H^{-\lambda-1}(\T^d)}^2 + \frac{C^2}{4\delta}  \norm{f}_{H^{-\lambda-2}(\T^d)}^2,
\end{align*}
where we used inequality~\eqref{eq: deriv_prob_inequ}.
Combining all estimates proves \eqref{lemma: dissapation}. 
\end{proof}

We can now introduce the approximating SPDE 
\begin{equation} \label{eq: approx_solution}
    \rho_t^n = \rho^n_0 + \int\limits_0^t A_n(s, \rho_s^n) \Id s + \sum\limits_{j=1}^d \int\limits_0^t B_{j}(s)  \Id W_j(s) , \quad \P\text{-a.s.} 
\end{equation}
on \(H^{-\lambda-2}(\T^d)\) with initial data \(\rho_0 \in L^2_{\cF_0}( H^{-\lambda-2}(\T^d))\). We stress that it is written as a strong solution of an SDE in a Hilbert space.  
\begin{definition} \label{def: approximation_spde}
    A continuous \( H^{-\lambda-2}(\T^d)\)-valued \(\cF\)-adapted \emph{Markov} process \((\rho_t^n, 0 \leq t \leq T) \) is called a solution to~\eqref{eq: approx_solution} with initial value \(\rho_0^n \in L^2_{\cF_0}( H^{-\lambda-2}(\T^d))\) if 
    $\rho^n\in  S^2_{\cF}( [0,T];H^{-\lambda-2}(\T^d))$
    and equation~\eqref{eq: approx_solution} holds \(\P\text{-a.s.} \) in the space \({H}^{-\lambda-2}(\T^d)\). 
\end{definition}

We establish existence and uniqueness of the solution to the approximating SPDE, and also regularity.

\begin{proposition}[Existence of approximation SPDE~\eqref{eq: approx_solution}] \label{lemma: existence_approx_SPDE}
Let \(\rho_0^n \in L^2_{\cF_0}( H^{-\lambda-2}(\T^d))\), then there exists a unique solution of the SPDE~\eqref{eq: approx_solution} in the sense of Definition~\ref{def: approximation_spde}. Moreover \(\rho^n \in L^2_{\mathcal{F}}([0,T]; H^{-\lambda-1}(\T^d))\).  
\end{proposition}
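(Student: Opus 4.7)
The plan is to view equation~\eqref{eq: approx_solution} as a linear inhomogeneous SDE on the Hilbert space $H^{-\lambda-2}(\T^d)$ driven by the cylindrical Wiener processes $W_j$, with a deterministic and uniformly bounded operator-valued coefficient in front of the noise. By the estimate~\eqref{item: approx_bound_smaller_space} of Lemma~\ref{lemma2: dissapation}, the drift $A_n(t,\cdot)$ is a bounded linear operator on $H^{-\lambda-2}(\T^d)$ with norm bounded by $C(n)$, uniformly in $t\in[0,T]$, so that $f\mapsto A_n(t,f)$ is globally Lipschitz; by~\eqref{eq:est_B} the Hilbert--Schmidt norms of the $B_j(t)$ are uniformly bounded. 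A standard Picard iteration on $S^2_{\cF}([0,\tau];H^{-\lambda-2}(\T^d))$ for sufficiently small $\tau$, iterated to cover $[0,T]$, then yields existence and pathwise uniqueness of a continuous $\cF$-adapted solution in $S^2_{\cF}([0,T];H^{-\lambda-2}(\T^d))$; see e.g.~\cite[Chapter 7]{Prato1992} or~\cite[Chapter 4]{LiuWei2015SPDE}. The Markov property with respect to $(\cF_t)$ is then a direct consequence of pathwise uniqueness together with the fact that both coefficients are deterministic and depend only on~$t$.

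For the improved regularity $\rho^n\in L^2_{\cF}([0,T];H^{-\lambda-1}(\T^d))$, I would apply the infinite-dimensional It\^o formula (\cite[Theorem~4.2.5]{LiuWei2015SPDE}) to the map $f\mapsto\|f\|^2_{H^{-\lambda-2}(\T^d)}$, yielding
\begin{align*}
\|\rho^n_t\|^2_{H^{-\lambda-2}(\T^d)}
&= \|\rho^n_0\|^2_{H^{-\lambda-2}(\T^d)} + 2\int_0^t \langle A_n(s,\rho^n_s),\rho^n_s\rangle_{H^{-\lambda-2}(\T^d)}\,\d s \\
&\quad + \sum_{j=1}^d \int_0^t \|B_j(s)\|^2_{L_2(L^2(\T^d),H^{-\lambda-2}(\T^d))}\,\d s + M_t,
\end{align*}
where $M_t$ is a local martingale. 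After localizing with $\tau_k:=\inf\{t:\|\rho^n_t\|_{H^{-\lambda-2}}>k\}$, taking expectation, invoking the coercivity estimate~\eqref{lemma: dissapation} (whose crucial contribution is the negative term $-\delta\|\rho^n_s\|^2_{H^{-\lambda-1}(\T^d)}$) together with the uniform bound~\eqref{eq:est_B}, and applying Gronwall's lemma and Fatou as $k\to\infty$, one obtains
\begin{equation*}
\sup_{0\le t\le T}\E\bigl[\|\rho^n_t\|^2_{H^{-\lambda-2}(\T^d)}\bigr] + \delta\,\E\!\int_0^T\!\|\rho^n_s\|^2_{H^{-\lambda-1}(\T^d)}\,\d s \;\le\; C\bigl(1 + \E\bigl[\|\rho^n_0\|^2_{H^{-\lambda-2}(\T^d)}\bigr]\bigr),
\end{equation*}
which gives membership in $L^2_{\cF}([0,T];H^{-\lambda-1}(\T^d))$. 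A supremum-in-time bound for the $S^2_{\cF}$ norm of $\rho^n$ is recovered by a Burkholder--Davis--Gundy argument on $M_t$ before taking expectations.

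The main point is that the linearity and boundedness of $A_n$ on $H^{-\lambda-2}(\T^d)$, combined with the deterministic, Hilbert--Schmidt nature of the noise, reduce the existence theory to the classical framework of linear SDEs in Hilbert spaces, so that no compactness or martingale-problem arguments are required at this approximate level. The genuinely useful observation, already embedded in Lemma~\ref{lemma2: dissapation}, is that the dissipative structure of the Laplacian survives the mollification $j_n\ast\,\cdot\,$ and yields, \emph{uniformly in $n$}, the gain of one derivative from $H^{-\lambda-2}$ to $H^{-\lambda-1}$; this uniformity is what will later allow to pass to the limit $n\to\infty$ in the approximation and to identify the limit with the solution to~\eqref{eq: limiting_spde} in Theorem~\ref{theorem: existence_SPDE}.
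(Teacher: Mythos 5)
Your construction of the solution by Picard iteration, and the deduction of the Markov property from pathwise uniqueness and the deterministic, purely time-dependent coefficients, are fine and essentially coincide with the paper's route: the paper simply cites the variational existence theorem of Liu--R\"ockner with $V=H=H^{-\lambda-2}(\T^d)$ (where, the drift being a bounded linear operator by \eqref{item: approx_bound_smaller_space}, the variational framework collapses to the classical Lipschitz theory you invoke) together with Da Prato--Zabczyk for Markovianity.

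The regularity step, however, contains a genuine circularity. The coercivity estimate \eqref{lemma: dissapation} is stated only for $f \in H^{-\lambda-1}(\T^d)$, and it cannot hold beyond that class: for $f \in H^{-\lambda-2}(\T^d)\setminus H^{-\lambda-1}(\T^d)$ the left-hand side $\langle A_n(t,f),f\rangle_{H^{-\lambda-2}(\T^d)}$ is finite by \eqref{item: approx_bound_smaller_space} and Cauchy--Schwarz, while the right-hand side is $-\infty$. When you insert $\rho^n_s$ into \eqref{lemma: dissapation} inside the It\^o expansion you are therefore presupposing $\rho^n_s\in H^{-\lambda-1}(\T^d)$ for a.e.\ $s$, which is exactly the membership the argument is supposed to deliver; the stopping times $\tau_k$ control the local martingale but do not supply this. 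The gap is not cosmetic: since $A_n(t,f)=j_n*A(t,j_n*f)$ is band-limited (hence smooth), the mollified drift has no smoothing effect on the initial datum, so the extra derivative cannot be extracted a posteriori from the $H^{-\lambda-2}$-solution --- it must be built into the solution theory. This is precisely what the paper does differently: it invokes the coercive variational existence theorem of Rozovsky--Lototsky in the normal triple $(H^{-\lambda-1}(\T^d),H^{-\lambda-2}(\T^d),H^{-\lambda-3}(\T^d))$, which produces a solution $\tilde\rho^n$ belonging to $L^2_{\cF}([0,T];H^{-\lambda-1}(\T^d))$ by construction (the coercivity enters as a hypothesis of the Galerkin scheme, not as an a posteriori estimate), and then identifies $\tilde\rho^n=\rho^n$ by uniqueness in $H^{-\lambda-2}(\T^d)$. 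Your It\^o--Gronwall computation is in fact the content of the \emph{next} proposition in the paper (the uniform bound \eqref{eq: aux_uniform_bound}), where it is legitimate precisely because the $H^{-\lambda-1}$-regularity has already been secured. To repair your version you would need either to run the Galerkin approximation yourself and pass the uniform bound to the limit by lower semicontinuity, or to first treat initial data in $H^{-\lambda-1}(\T^d)$, for which the class is propagated, and then approximate.
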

\begin{proof}
    Thanks to the linearity of $A_n$ and \eqref{item: approx_bound_smaller_space} we may apply~\cite[Theorem~4.2.4]{LiuWei2015SPDE} (with $V=H=H^{-\lambda-2}(\T^d)$ therein) to find a solution in the sense of Definition~\ref{def: approximation_spde}. Notice that all assumptions on \(B_{j}\) in~\cite[Theorem~4.2.4]{LiuWei2015SPDE} are satisfied, since the noise is independent of the solution and \(B_j\) already satisfies all assumption necessary in the variational framework~\cite{LiuWei2015SPDE}. 
    Further, \(\rho^n\) is a Markov process by~\cite[Theorem 9.20]{Prato1992}. 

    Moreover, the operator \(A_n\) satisfies a strong dissipation/coercive condition by~\eqref{lemma: dissapation}. Applying~\cite[Theorem~3.1]{Rozovsky2018} we find a solution \((\tilde \rho_t^n, 0 \leq t \leq T)\), which lies in the space \(L^2([0,T]; H^{-\lambda-1}(\T^d))\) and satisfies the SPDE~\eqref{eq: approx_solution} in a weak sense, i.e. against a test function with respect to scalar product on \(H^{-\lambda-2}(\T^d)\). But obviously, \(\tilde \rho^n\) must coincide with \(\rho^n\), since \(\rho^n\) solves the equation in a strong sense, the operator \(A_n\) maps into \(H^{-\lambda-1}(\T^d)\) and the relation~\eqref{eq: cbf_relation} holds. 
\end{proof}

We have the following uniform in $n$ bounds on the solution:
\begin{proposition}
For any $\rho^n_0\in H^{-\lambda-2}(\T^d)$, the solution $\rho^n$ to~\eqref{eq: approx_solution} satisfies
\begin{equation} \label{eq: aux_uniform_bound}
          \sup\limits_{n \in \N} \E\big[ \sup\limits_{0 \le t \le T} \norm{\rho_t^n}_{H^{-\lambda-2}(\T^d)}^2 \big] + \norm{\rho_t^n}_{L^2_{\mathcal{F}}([0,T];H^{-\lambda-1}(\T^d))}^2 
          \le C  \E\big[\norm{\rho_0}_{H^{-\lambda-2}(\T^d)}^2 \big]. 
    \end{equation}
and further
\begin{equation}
\label{eq:unif_t}
\E\big[\norm{\rho_t^n-\rho_s^n}_{ H^{-\lambda-2}(\T^d)}^2\big] \leq C|t-s|.
\end{equation} 
\end{proposition}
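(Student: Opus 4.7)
My plan for the first (uniform-in-$n$) bound is the standard variational energy estimate. Proposition~\ref{lemma: existence_approx_SPDE} ensures that $\rho^n \in L^2_\cF([0,T]; H^{-\lambda-1}(\T^d)) \cap S^2_\cF([0,T]; H^{-\lambda-2}(\T^d))$, so the It\^o formula for the squared norm in the Gelfand triple $(H^{-\lambda-1}(\T^d), H^{-\lambda-2}(\T^d), H^{-\lambda-3}(\T^d))$, see e.g.~\cite[Theorem~4.2.5]{LiuWei2015SPDE}, produces
\[
\norm{\rho^n_t}_{H^{-\lambda-2}(\T^d)}^2 = \norm{\rho^n_0}_{H^{-\lambda-2}(\T^d)}^2 + 2\int\limits_0^t [A_n(s,\rho^n_s), \rho^n_s]_{-\lambda-2}\,\Id s + \sum_{j=1}^d \int\limits_0^t \norm{B_j(s)}_{L_2}^2\,\Id s + M^n_t,
\]
where $M^n_t$ is a real local martingale. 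The uniform-in-$n$ coercivity \eqref{lemma: dissapation} contributes a dissipative term $-2\delta \int_0^t \norm{\rho^n_s}^2_{H^{-\lambda-1}(\T^d)}\,\Id s$ on the right-hand side, while \eqref{eq:est_B} bounds the trace term by $C$. Taking expectation, moving the $H^{-\lambda-1}$-term to the left-hand side, and applying Gr\"onwall's lemma then yields both the $L^2_\cF([0,T]; H^{-\lambda-1}(\T^d))$ bound and the pointwise estimate $\sup_t \E\norm{\rho^n_t}^2_{H^{-\lambda-2}(\T^d)}\le C\E\norm{\rho^n_0}^2_{H^{-\lambda-2}(\T^d)}$, both with $C$ independent of $n$. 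Placing the supremum in $t$ inside the expectation is done by the Burkholder--Davis--Gundy inequality applied to $M^n_t$: its quadratic variation is controlled by $C\int_0^T \norm{\rho^n_s}^2_{H^{-\lambda-2}(\T^d)}\,\Id s$ thanks to \eqref{eq:est_B}, and a Cauchy--Schwarz/Young's inequality step then absorbs $\tfrac{1}{2}\E\sup_t\norm{\rho^n_t}^2_{H^{-\lambda-2}(\T^d)}$ into the left-hand side, producing the first estimate.

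For the time-continuity bound I would use the integral form
\[
\rho^n_t - \rho^n_s = \int\limits_s^t A_n(r,\rho^n_r)\,\Id r + \sum\limits_{j=1}^d \int\limits_s^t B_j(r)\,\Id W_j(r),
\]
apply $(a+b)^2 \le 2a^2 + 2b^2$, and handle each piece individually. For the stochastic integral, It\^o's isometry in $H^{-\lambda-2}(\T^d)$ combined with \eqref{eq:est_B} gives $\E\norm{\sum_j\int_s^t B_j(r)\,\Id W_j(r)}_{H^{-\lambda-2}(\T^d)}^2 = \sum_j\int_s^t\norm{B_j(r)}_{L_2}^2\,\Id r \le C|t-s|$. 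For the drift, Cauchy--Schwarz in time together with the continuity estimate \eqref{item: approx_bound_smaller_space} yield $\E\norm{\int_s^t A_n(r,\rho^n_r)\,\Id r}_{H^{-\lambda-2}(\T^d)}^2 \le C(n)(t-s)\int_s^t\E\norm{\rho^n_r}^2_{H^{-\lambda-2}(\T^d)}\,\Id r$, which is controlled by $C(n)T|t-s|$ via the first estimate already established. Summing the two contributions proves the second bound.

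The main technical subtlety is that \eqref{item: approx_bound_smaller_space} is uniform in $f$ but not in $n$, since the Laplacian loses two derivatives and the mollifier $j_n$ restores them at a cost that depends on $n$. Consequently the constant in the second estimate depends on $n$, which is sufficient for per-$n$ applications such as Kolmogorov sample-path continuity of $\rho^n$. If a uniform-in-$n$ version were required elsewhere, the same decomposition gives a uniform estimate in the larger space $H^{-\lambda-3}(\T^d)$, by using instead the uniform bound $\norm{A_n(r,f)}_{H^{-\lambda-3}(\T^d)} \le C\norm{f}_{H^{-\lambda-1}(\T^d)}$ (obtained exactly as in the proof of Lemma~\ref{lemma2: dissapation}, since the Laplacian now maps $H^{-\lambda-1}(\T^d)$ into $H^{-\lambda-3}(\T^d)$ uniformly) combined with the $L^2_\cF([0,T]; H^{-\lambda-1}(\T^d))$ regularity.
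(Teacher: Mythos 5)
Your argument for the first estimate \eqref{eq: aux_uniform_bound} is essentially the paper's: It\^o's formula for the squared norm, the uniform coercivity \eqref{lemma: dissapation}, the trace bound \eqref{eq:est_B}, Gr\"onwall, and a BDG absorption step to move the supremum inside the expectation. That part is fine.

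For the second estimate \eqref{eq:unif_t} there is a genuine gap. Your decomposition bounds the drift contribution through the operator norm of $A_n$ on $H^{-\lambda-2}(\T^d)$, i.e.\ through \eqref{item: approx_bound_smaller_space}, and as you yourself note this produces a constant $C(n)$. But the statement asserts a constant $C$ which, by the convention fixed right before Lemma~\ref{lemma2: dissapation} ($C(n)$ depends on $n$, $C$ does not), is uniform in $n$; and this uniformity is not optional, because \eqref{eq:unif_t} is used together with \eqref{eq: aux_uniform_bound} to verify the hypotheses of the compactness criterion of~\cite[Lemma~5.2]{Shang2024} and obtain tightness of the whole sequence $(\rho^n)_n$ in the proof of Theorems~\ref{theorem: existence_SPDE} and~\ref{thm: conv_prob}. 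An $n$-dependent modulus of time-continuity cannot feed a tightness argument. The paper avoids the loss by not passing through $\|A_n(r,\rho^n_r)\|_{H^{-\lambda-2}}$ at all: it applies It\^o's formula to $\|\rho^n_t-\rho^n_s\|^2_{H^{-\lambda-2}}$ directly, so that $A_n(u,\rho^n_u)$ only ever appears paired against the solution, where the uniform-in-$n$ estimate $\langle A_n(u,f),f\rangle_{H^{-\lambda-2}}\le C\|f\|^2_{H^{-\lambda-2}}$ from \eqref{lemma: dissapation} (together with the uniform bilinear bound $|\langle f,A'(u)\varphi\rangle_{H^{-\lambda-2}}|\le C\|f\|_{H^{-\lambda-1}}\|\varphi\|_{H^{-\lambda-1}}$ and the already-established $L^2_{\cF}([0,T];H^{-\lambda-1}(\T^d))$ bound) applies. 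Your fallback of a uniform estimate in the larger space $H^{-\lambda-3}(\T^d)$ is a sensible observation, but it proves a different inequality from the one stated and would require re-checking that the weaker time-regularity still suffices for the tightness lemma; as written, your proof does not establish \eqref{eq:unif_t}.
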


\begin{proof}
    The approximated equation satisfies the assumption to apply It\^{o}'s formula~\cite[Theorem~4.2.5]{LiuWei2015SPDE} in the Hilbert space \(H^{-\lambda-2}(\T^d)\), since the equality~\eqref{eq: approx_solution} also holds in \(H^{-\lambda-2}(\T^d)\). We obtain  
    \begin{align*}
        &\norm{\rho_t^n}_{ H^{-\lambda-2}(\T^d)}^2-  \norm{ \rho_0^n }_{ H^{-\lambda-2}(\T^d)}^2  \\
        &\quad = 
2\int\limits_0^t \langle A_n(s,\rho_s^n),\rho_s^n \rangle_{ H^{-\lambda-2}(\T^d)} + \sum\limits_{j=1}^d \norm{B_j(s)}_{L^2(L^2(\T^d),H^{-\lambda-2}(\T^d))}^2 \Id s + \; \mathrm{stochastic \; integral} \\
        &\quad \le C \norm{ \rho_0 }_{ H^{-\lambda-2}(\T^d)}^2  +
         \int\limits_0^t C \norm{\rho_s^n}_{{H}^{-\lambda-2}(\T^d)}^2 - \delta \norm{\rho_s^n}_{{H}^{-\lambda-1}(\T^d)}^2   \Id s+ C(T)+ \; \mathrm{stochastic \; integral },
    \end{align*}
    where we used Lemma~\ref{lemma: dissapation} in the last step, since \(\rho_s^n \in H^{-\lambda-1}(\T^d)\). After taking the expectation the stochastic integral vanishes. 
    Hence, after an application of Gronwall's lemma we obtain 
    \begin{equation*}
        \sup\limits_{0 \le t \le T} \E\big[\norm{\rho_t^n}_{ H^{-\lambda-2}(\T^d)}^2\big] \le C\E\big[\norm{\rho_0}_{H^{-\lambda-2}(\T^d)}^2 \big],
    \end{equation*}
    which after a bootstrap improves to \eqref{eq: aux_uniform_bound}. 
    Additionally, by the same computations we obtain 
    \begin{align*}
        &\E\big[\norm{\rho_t^n-\rho_s^n}_{ H^{-\lambda-2}(\T^d)}^2\big]\\
        &\quad\le   2\int\limits_s^t \E\big[\langle A_n(u,\rho_u^n),\rho_u^n \rangle_{ H^{-\lambda-2}(\T^d)}\big]  + \sum\limits_{j=1}^d \norm{B_j(u)}_{L^2(L^2(\T^d),H^{-\lambda-2}(\T^d))}^2 \Id u \\
        &\quad \quad +\sum\limits_{j=1}^d  \E\bigg[\int\limits_s^t \langle \rho^n_s , B_j(s) \Id W_j(s) \rangle_{H^{-\lambda-2}(\T^d)}\bigg] \\
        &\quad \le C \int\limits_s^t \E\big[\norm{\rho_u^n}_{H^{-\lambda-2}(\T^d)}^2+1\big] \Id u    \\
        &\quad \quad   +\sum\limits_{j=1}^d \E\bigg[ \int\limits_s^t \norm{\rho_u^n}_{H^{-\lambda-2}(\T^d)}^2 \Id u \bigg]^{\frac{1}{2}}\E\bigg[ \int\limits_s^t \norm{B_j(u)}_{L^2(L^2(\T^d),H^{-\lambda-2}(\T^d))}^2 \Id u\bigg]^{\frac{1}{2}} \\
        &\quad \le C|t-s|,
    \end{align*}
    where we used the Burkholder-Davis-Gundy (BDG) inequality for Hilbert space-valued martingales~\cite[Theorem1.1]{Marinelli2016} and the bound~\eqref{eq: aux_uniform_bound} in the last step. 
\end{proof}

\subsection{Well-posedness and stability of SPDE}
\label{sec: well-posedness_SPDE}

As explained above, to prove Theorem \ref{theorem: existence_SPDE}, the main novelty is that we establish convergence in probability of the approximations.

\begin{theorem}
\label{thm: conv_prob}
    Let \((\rho^n, n \in \N)\) be the solution to~\eqref{eq: approx_solution} and \(\rho\) the solution to~\eqref{eq: limiting_spde}, with initial conditions $\rho_0$. Then, \((\rho^n, n \in \N)\) converges in probability towards \(\rho\) on the space \(L^2([0,T],H^{-\lambda-2}(\T^d))\). 
\end{theorem}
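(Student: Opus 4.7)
The plan is to invoke the Gyöngy--Krylov criterion recalled in Appendix \ref{sec:appendix}: convergence in probability of $(\rho^n)$ in $L^2([0,T], H^{-\lambda-2}(\T^d))$ reduces to (i) tightness of the sequence of laws $\mathcal{L}(\rho^n)$ on $L^2([0,T], H^{-\lambda-2}(\T^d))$, and (ii) the property that every joint subsequential limit of $\mathcal{L}(\rho^{n_k}, \rho^{m_k})$ is concentrated on the diagonal. The fact that $\rho^n$ solves a genuine SDE in the Hilbert space $H^{-\lambda-2}(\T^d)$, whereas the limit $\rho$ only satisfies the weak equation of Definition \ref{def: spde_solution}, is what forces us to use this criterion rather than a direct energy comparison of $\rho^n$ with $\rho$.

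\textbf{Tightness.} The uniform estimate \eqref{eq: aux_uniform_bound} bounds $\rho^n$ in $L^2(\Omega; L^2([0,T], H^{-\lambda-1}(\T^d)))$, while \eqref{eq:unif_t} provides uniform equicontinuity in time with values in $H^{-\lambda-2}(\T^d)$. Since the embedding $H^{-\lambda-1}(\T^d) \hookrightarrow H^{-\lambda-2}(\T^d)$ is compact, an Aubin--Lions--Simon argument yields compact subsets of $L^2([0,T], H^{-\lambda-2}(\T^d))$ carrying arbitrarily large mass uniformly in $n$, whence tightness by Prokhorov.

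\textbf{Diagonal support.} Given two subsequences, extract a jointly convergent one and pass via Skorokhod representation to a common probability space on which $(\rho^{n_k}, \rho^{m_k}, W)$ converges almost surely in $L^2([0,T], H^{-\lambda-2}(\T^d))^2 \times C([0,T], U_1)$ to some $(\rho^\infty, \tilde\rho^\infty, W^\infty)$; the cylindrical Brownian motions are transferred without loss since the noise is additive and deterministic-coefficient. I then pass to the limit in the weak formulation
\[
\langle \rho^{n_k}_t, \varphi\rangle_{H^{-\lambda-2}(\T^d)} = \langle \rho_0, \varphi\rangle_{H^{-\lambda-2}(\T^d)} + \int_0^t \langle \rho^{n_k}_s, j_{n_k} * A'(s)(j_{n_k} * \varphi)\rangle_{H^{-\lambda-2}(\T^d)} \dd s + \langle \zeta_t, \varphi\rangle_{H^{-\lambda-2}(\T^d)}
\]
for $\varphi \in C^\infty(\T^d)$. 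Strong convergence of $\rho^{n_k}$ in $L^2([0,T], H^{-\lambda-2}(\T^d))$ combined with the convergence $j_{n_k} * A'(s)(j_{n_k} * \varphi) \to A'(s)(\varphi)$ in $C([0,T], H^{\lambda+1}(\T^d))$, guaranteed by Lemma \ref{lemma: mollification} together with Assumption \ref{ass: coef_fokker}, identifies $\rho^\infty$ as a solution of \eqref{eq: limiting_spde} in the sense of Definition \ref{def: spde_solution}; the same argument works for $\tilde\rho^\infty$.

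\textbf{Uniqueness and conclusion.} Since \eqref{eq: limiting_spde} is linear with additive noise, the difference $\rho^\infty - \tilde\rho^\infty$ is an adapted process satisfying the homogeneous deterministic equation $\partial_t w = A(t,w)$ with $w_0 = 0$. Applying the coercivity estimate of Lemma \ref{lemma2: dissapation} to the un-mollified operator $A$ (inequalities \eqref{eq: b_ineq} and \eqref{eq: deriv_prob_inequ} ensure the same bound holds with $A_n$ replaced by $A$), together with Gronwall, forces $w \equiv 0$. Hence $\rho^\infty = \tilde\rho^\infty$ almost surely, proving diagonal concentration. Gyöngy--Krylov then yields convergence in probability in $L^2([0,T], H^{-\lambda-2}(\T^d))$, and the limit furnishes the strong solution claimed in Theorem~\ref{theorem: existence_SPDE}. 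The main technical obstacle is ensuring that the weak/strong mode of convergence at hand is strong enough to close the nonlinear coercivity estimate at the level of the limit without an Itô formula for $\rho^\infty$; this is circumvented by using the weak formulation against test functions $\varphi \in C^\infty(\T^d)$ and exploiting linearity of the equation.
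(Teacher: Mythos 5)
Your overall architecture is exactly the paper's: tightness from \eqref{eq: aux_uniform_bound} and \eqref{eq:unif_t} plus compactness of $H^{-\lambda-1}(\T^d)\hookrightarrow H^{-\lambda-2}(\T^d)$, then Gy\"ongy--Krylov via Skorokhod representation, identification of the joint limits as solutions of \eqref{eq: limiting_spde}, and uniqueness of the limit by coercivity and Gronwall. The substitution of a generic Aubin--Lions--Simon argument for the citation of \cite[Lemma~5.2]{Shang2024} is harmless.

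The one genuine gap is in your uniqueness step. To run coercivity plus Gronwall on $w=\rho^\infty-\tilde\rho^\infty$ you must test the equation against $w$ itself, i.e.\ you need the energy identity
\[
\norm{w_t}_{H^{-\lambda-2}(\T^d)}^2 = 2\int_0^t \langle A(s,w_s), w_s\rangle_{H^{-\lambda-2}(\T^d)}\,\dd s ,
\]
and this is precisely what is \emph{not} available from the weak formulation against $\varphi\in C^\infty(\T^d)$: the limit equation only gives you $\langle w_t,\varphi\rangle$ for smooth $\varphi$, and $w_t$ is not smooth. Your closing remark that the obstacle is ``circumvented by using the weak formulation against test functions and exploiting linearity'' does not resolve this. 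What is actually needed (and what the paper does) is: (i) carry the uniform $L^2_{\cF}([0,T];H^{-\lambda-1}(\T^d))$ bound over to the limits by lower semicontinuity of the norm, so that $\langle A(s,w_s),w_s\rangle$ even makes sense; and (ii) realize the drift as an operator $\hat A(t)\colon H^{-\lambda-1}(\T^d)\to H^{-\lambda-3}(\T^d)$ acting through the canonical bilinear functional of the normal triple, so that \cite[Theorem~2.13]{Rozovsky2018} yields a continuous $H^{-\lambda-2}(\T^d)$-valued modification and the It\^o/energy formula in the variational framework. Without this Gelfand-triple step your Gronwall argument does not close. A secondary, more minor point: transferring the cylindrical Brownian motions across the Skorokhod representation and asserting that the representatives still solve \eqref{eq: approx_solution} deserves a word (the paper invokes L\'evy's characterization together with the generalized Yamada--Watanabe theorem), although the additive, deterministic-coefficient structure of the noise does make this step benign.
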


We prove these two main results together.

\begin{proof}[Proof of Theorems \ref{theorem: existence_SPDE} and \ref{thm: conv_prob}]
   Thanks to \eqref{eq: aux_uniform_bound} and \eqref{eq:unif_t},      
 we can apply~\cite[Lemma~5.2]{Shang2024} to obtain tightness of the laws provided by the sequence \((\rho_n, n \in \N)\) on the space \(L^2([0,T];H^{-\lambda-2}(\T^d))\), since \(H^{-\lambda-1} \hookrightarrow H^{-\lambda-2}(\T^d)\) is compact by~\cite[Proposition~4.6]{Triebel2006}. Our goal is to apply the Gyöngy-Krylov diagonal criterion, which we recall in Lemma~\ref{lemma: gyongy}. Hence, if we have the subsequence \((n_1(k),n_2(k))\), the laws of \(((\rho^{n_1(k)},\rho^{n_2(k)}), k \in \N)\) is still tight. And obviously the sequence 
 \[
 ((\rho^{n_1(k)},\rho^{n_2(k)}, \rho_0,  (W_j,j=1,\ldots,d)), k \in \N)\] is tight on 
 \((L^2([0,T];H^{-\lambda-2}(\T^d)))^2 \times L^2(H^{-\lambda-2}(\T^d)) \times C([0,T];U_1^d)\).

    
    By Prohorov's theorem there is  a further subsequence, which will not be renamed, such that the law of the tuple converges. Applying Skorhod's representation theorem, we can find a new probability space \((\tilde \Omega, \tilde \cF, \tilde \P)\) with random variables \((\tilde \rho^{n_1(k)},\tilde \rho^{n_2(k)},(\tilde W_{j,k},j=1,\ldots d))\) such that they have the same law as the tuple \((\rho^{n_1(k)},\rho^{n_2(k)},(W_j,j=1,\ldots,d))\) and converge almost everywhere to \((\tilde \rho^1,\tilde \rho^2,(\tilde W_j , j=1 , \ldots, d))\). By Levy's charterization for Brownian motion and the generalized Yamada--Watanabe theorem~\cite[Theorem~1.5]{Kurtz2014} we can demonstrate that \(W_j\) is a cylindrical Brownian motion and \((\tilde \rho^{n_1(k)},\tilde \rho^{n_2(k)})\) solve the SPDE~\eqref{eq: approx_solution}, respectively with their index of the subsequence. Additionally, by employing~\eqref{lemma: dissapation}, we can deduce that 
    \begin{equation*}
        \norm{\rho^{n_1(k)}}_{L^2_{\tilde \cF}([0,T];H^{-\lambda-1}(\T^d))}^2+   \norm{\rho^{n_1(k)}}_{L^2_{\tilde \cF}([0,T];H^{-\lambda-1}(\T^d))}^2 \le C. 
    \end{equation*}
    Since \(H^{-\lambda-1}(\T^d)\) is dense in \(H^{-\lambda-2}(\T^d)\) and we have the almost everywhere convergence in \(L^2([0,T];H^{-\lambda-2}(\T^d))^2\),  
    the lower semi-continuity of the norm provides
  \begin{equation*}
        \norm{\tilde \rho^1}_{L^2_{\tilde \cF}([0,T];H^{-\lambda-1}(\T^d))}^2+   \norm{\tilde \rho^2}_{L^2_{\tilde \cF}([0,T];H^{-\lambda-1}(\T^d))}^2 \le C.
    \end{equation*}
    Since our equation is linear, it is quite standard to demonstrate that the following equality holds 
       \begin{align} \label{eq: skorohod_spde}
        &\langle \tilde \rho_t^{i},\varphi \rangle_{H^{-\lambda-2}(\T^d)}-\langle \rho_0,\varphi \rangle_{H^{-\lambda-2}(\T^d)} \nonumber \\
        &\quad =  \int\limits_0^t \langle \tilde \rho_s^{i}, A'(t) \varphi \rangle_{H^{-\lambda-2}(\T^d)} \Id s + \int\limits_0^t \sum\limits_{j=1}^d \langle B_{j}(s), \varphi \rangle_{H^{-\lambda-2}(\T^d)} \Id \tilde W_j(s) , \quad \tilde \P\text{-a.s.} 
    \end{align}
for \(\varphi \in C^\infty(\T^d)\) and \(i=1,2\). 
For \(f \in H^{-\lambda-1}(\T^d)\) we find 
\begin{align*}
    &\langle f, A'(t) \varphi \rangle_{H^{-\lambda-2}(\T^d)} \\
    &\quad =  \bigg\langle   \nabla \cdot (f b(t,\cdot,\mu_t) ) +\bigg\langle \nabla_x \cdot  \bigg(  \mu_t(x)  \frac{\delta b}{\delta m} (t,x,\mu_t , v ) \bigg), f(v)  \bigg\rangle,\varphi \bigg \rangle_{H^{-\lambda-2}(\T^d)} \\
    &\quad \quad -\sigma^2  \sum\limits_{j=1}^d \langle \partial_{x_j} f, \partial_{x_j} \varphi \rangle_{H^{-\lambda-2}(\T^d)} .   
\end{align*}
The right hand side defines a bilinear map on \(H^{-\lambda-1}(\T^d)\) since we have the bound 
\begin{equation*}
    | \langle f, A'(t) \varphi \rangle_{H^{-\lambda-2}(\T^d)} | \le C \norm{f}_{H^{-\lambda-1}(\T^d)}\norm{\varphi}_{H^{-\lambda-1}(\T^d)}.
\end{equation*}
Hence, we can extend the map to \(\varphi \in H^{-\lambda-1}(\T^d)\). For fixed \(f\in H^{-\lambda-1}(\T^d)\) the map is linear on \(H^{-\lambda-1}(\T^d)\) and we can apply~\cite[Proposition~3.4]{Rozovsky2018} to the canonical bilinear form (CBF) \([\cdot, \cdot]_{-\lambda-2}: H^{-\lambda-1}(\T^d) \times H^{-\lambda-3}(\T^d) \to \R\) for the triple \((H^{-\lambda-3}(\T^d),H^{-\lambda-2}(\T^d),H^{-\lambda-1}(\T^d))\) to find a map \(\hat A(t) \colon H^{-\lambda-1}(\T^d) \to  H^{-\lambda-3}(\T^d)\) such that 
\begin{align*}
    [\varphi ,\hat A(t) f ]_{-\lambda-2}  &=  \bigg\langle   \nabla \cdot (f b(t,\cdot,\mu_t) ) +\bigg\langle \nabla_x \cdot  \bigg(  \mu_t(x)  \frac{\delta b}{\delta m} (t,x,\mu_t , v ) \bigg), f(v)  \bigg\rangle,\varphi \bigg \rangle_{H^{-\lambda-2}(\T^d)} \\
    & \quad -\sigma^2  \sum\limits_{j=1}^d \langle \partial_{x_j} f, \partial_{x_j} \varphi \rangle_{H^{-\lambda-2}(\T^d)}    . 
\end{align*}
Consequently, we can replace the drift in the SPDE~\eqref{eq: skorohod_spde} with \([\varphi ,\hat A(t) \tilde \rho_s^{i} ]_{-\lambda-2}\). 

Applying~\cite[Theorem~2.13]{Rozovsky2018} we find a modification of \(\tilde \rho^{i}\), which is continuous in \(H^{-\lambda-2}(\T^d)\) and we can apply It\^o's formula for the norm of the difference to obtain
\begin{equation*}
    \E\big[\sup\limits_{0 \le t \le T} \norm{\tilde \rho_t^{1}-\tilde \rho_t^2}_{H^{-\lambda-2}(\T^d)}^2\big] = 0 
\end{equation*}
by utilizing the dissipation and Gronwall's lemma as previously. This gives uniqueness of the solution to SPDE \eqref{eq: limiting_spde}. 
As a result we can apply Lemma~\ref{lemma: gyongy} and obtain a process \(\rho\) such that \((\rho_t^n, n \in \N)\) converges in probability on \(L^2([0,T],H^{-\lambda-2}(\T^d))\). 
Additionally, another application of It\^o's formula provides the estimate in Definition~\eqref{eq:esti_spde}. 
\end{proof}

\begin{remark}
    Similar arguments as in the proof Theorem~\ref{theorem: existence_SPDE} can be used for parabolic SPDE's of the form 
    \begin{equation*}
        \Id u(t) = \frac{1}{2} \sum\limits_{i,j=1}^d \partial_{x_i}(a_{i,j}\partial_{x_j} u(t) ) + b \cdot \nabla u(t) + c u(t) + B(t,u(x)) \Id W(t) 
    \end{equation*}
    with  sufficiently smooth coefficient functions \(a_{i,j},b,c,d\) and ellipticity of \(a_{i,j}\) to find a regular approximation which converges in probability.

    For a detailed discussion in the more difficult setting of the Navier–Stokes equations, we refer to \cite[Chapter 2.4]{Flandoli2023}.
\end{remark}

A further consequence of Theorems~\ref{theorem: existence_SPDE} and ~\ref{thm: conv_prob} and It\^o's formula is the following stability estimate.

\begin{proposition} \label{cor: stability_spde}
Let \(\rho^1 \), \(\rho^2\) be two solutions of SPDE~\eqref{eq: limiting_spde} with initial data \(\rho^1_0,\rho^2_0\) and for any $n$ let \(\rho^{n,1} \), \(\rho^{n,2}\) be two solutions of SPDE~\eqref{eq: approx_solution} with initial data 
\(\rho^{1,n}_0,\rho^{2,n}_0\). Then 
    \begin{align} 
    \label{eq: stability_spde}
         \E\big[\sup\limits_{0 \le t \le T } \norm{\rho_t^1-\rho_t^2}_{H^{-\lambda-2}(\T^d)}\big] + \norm{\rho^1-\rho^2}_{L^2_{\cF}([0,T]; H^{-\lambda-1}(\T^d)) } &\le C \E \norm{\rho_0^1-\rho^2_0}_{H^{-\lambda-2}(\T^d)}^2  ,\\
         \E\big[\sup\limits_{0 \le t \le T } \norm{\rho_t^{n,1}-\rho_t^{n,2}}_{H^{-\lambda-2}(\T^d)}\big] + \norm{\rho^{n,1}-\rho^{n,2}}_{L^2_{\cF}([0,T]; H^{-\lambda-1}(\T^d)) } &\le C \E\norm{\rho_0^{n,1}-\rho^{n,2}_0}_{H^{-\lambda-2}(\T^d)}^2 . 
    \end{align}
    Moreover, there exists a set $\tilde{\Omega}$ with $\P(\tilde{\Omega})$ such that for any $\omega\in\tilde{\Omega}$, for any $n$, 
    \begin{align}
    \label{eq:LIP_flow}
         \sup\limits_{0 \le t \le T } \norm{\rho_t^1-\rho_t^2}_{H^{-\lambda-2}(\T^d)} + \norm{\rho^1-\rho^2}_{L^2([0,T]; H^{-\lambda-1}(\T^d)) } &\le C  \norm{\rho_0^1-\rho^2_0}_{H^{-\lambda-2}(\T^d)}^2  ,\\
         \sup\limits_{0 \le t \le T } \norm{\rho_t^{n,1}-\rho_t^{n,2}}_{H^{-\lambda-2}(\T^d)} + \norm{\rho^{n,1}-\rho^{n,2}}_{L^2([0,T]; H^{-\lambda-1}(\T^d)) } &\le C \norm{\rho_0^{n,1}-\rho^{n,2}_0}_{H^{-\lambda-2}(\T^d)}^2 .
    \end{align} 
    where al random variables are evaluated on $\omega$. 
\end{proposition}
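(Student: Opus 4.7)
The plan is to exploit the linearity of both equations together with the additivity of the noise: since the Gaussian martingale part $\zeta_t = \sum_j \int_0^t B_j(s)\,dW_j(s)$ does not depend on the solution, the difference of two solutions driven by the same noise solves a deterministic evolution equation pathwise. This is what allows us to get both the expected bounds and the stronger pathwise estimates \eqref{eq:LIP_flow}.

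For the approximating SPDE, set $v^n := \rho^{n,1}-\rho^{n,2}$. By linearity of $A_n$ and since the additive noise cancels, $v^n$ satisfies
\begin{equation*}
v^n_t = v^n_0 + \int_0^t A_n(s, v^n_s)\,ds
\end{equation*}
in $H^{-\lambda-2}(\T^d)$, with $v^n\in S^2_{\cF}([0,T];H^{-\lambda-2}(\T^d))\cap L^2_{\cF}([0,T];H^{-\lambda-1}(\T^d))$, by Proposition \ref{lemma: existence_approx_SPDE}. Applying It\^o's formula in $H^{-\lambda-2}(\T^d)$ (here really a chain rule, as there is no stochastic integral) and using the uniform dissipation estimate \eqref{lemma: dissapation}, we obtain pathwise
\begin{equation*}
\|v^n_t\|_{H^{-\lambda-2}(\T^d)}^2 + 2\delta\int_0^t\|v^n_s\|_{H^{-\lambda-1}(\T^d)}^2\,ds \le \|v^n_0\|_{H^{-\lambda-2}(\T^d)}^2 + 2C\int_0^t\|v^n_s\|_{H^{-\lambda-2}(\T^d)}^2\,ds.
\end{equation*}
Gronwall's inequality then gives the pathwise bound \eqref{eq:LIP_flow} for $\rho^{n,1}-\rho^{n,2}$ on a set $\tilde\Omega_n$ of full measure; taking $\tilde\Omega := \bigcap_n \tilde\Omega_n$ (still of full probability) yields the uniform in $n$ statement. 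Taking expectations recovers the first moment bound.

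For the limiting SPDE, set $v := \rho^1-\rho^2$. By linearity of $A$ (see \eqref{eq: def_A}--\eqref{eq: definition_Aprime}) the difference solves, in the sense of Definition \ref{def: spde_solution},
\begin{equation*}
\langle v_t,\varphi\rangle_{H^{-\lambda-2}(\T^d)} = \langle v_0,\varphi\rangle_{H^{-\lambda-2}(\T^d)} + \int_0^t \langle v_s, A'(s)(\varphi)\rangle_{H^{-\lambda-2}(\T^d)}\,ds,
\end{equation*}
again without stochastic integral. Thanks to Theorem \ref{theorem: existence_SPDE}, $v\in S^2_{\cF}([0,T];H^{-\lambda-2}(\T^d))\cap L^2_{\cF}([0,T];H^{-\lambda-1}(\T^d))$, so we are in the setting of the normal triple $(H^{-\lambda-1},H^{-\lambda-2},H^{-\lambda-3})$. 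As in the proof of Theorems \ref{theorem: existence_SPDE} and \ref{thm: conv_prob}, we can apply It\^o's formula from \cite[Theorem 2.13]{Rozovsky2018} for the squared norm, using the CBF identity \eqref{eq: cbf_relation}. Passing to the limit $n\to\infty$ in the coercivity estimate \eqref{lemma: dissapation} --- which holds uniformly in $n$ --- and exploiting the convergence in probability of $\rho^n$ to $\rho$ on $L^2([0,T];H^{-\lambda-2}(\T^d))$ together with the lower semicontinuity of the $H^{-\lambda-1}$ norm, one transfers the dissipation inequality to the limiting operator $\hat A(t)$, obtaining
\begin{equation*}
[v_s, \hat A(s) v_s]_{-\lambda-2} \le C\|v_s\|_{H^{-\lambda-2}(\T^d)}^2 - \delta \|v_s\|_{H^{-\lambda-1}(\T^d)}^2.
\end{equation*}
A deterministic Gronwall argument then gives \eqref{eq:LIP_flow} pathwise on a set of full measure, and \eqref{eq: stability_spde} after taking expectations.

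The main obstacle is purely notational: one must verify that It\^o's formula in the variational framework of \cite{Rozovsky2018} legitimately yields a pathwise identity for the difference $v$ (as opposed to a mere in-expectation identity), but since the stochastic term vanishes and both $v$ and $v^n$ are already known to live in the correct spaces, this is immediate once uniqueness and regularity from Theorems \ref{theorem: existence_SPDE} and \ref{lemma: existence_approx_SPDE} are invoked.
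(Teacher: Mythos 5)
Your proposal is correct and follows essentially the same route the paper intends: the additive noise cancels in the difference, so Itô's formula for the squared $H^{-\lambda-2}$-norm in the variational triple together with the coercivity estimate \eqref{lemma: dissapation} and Gronwall gives both the pathwise and the in-expectation bounds. The only superfluous step is invoking the convergence in probability of $\rho^n$ to $\rho$ to transfer coercivity to $\hat A$; the same dissipation inequality for the limiting operator follows directly from \eqref{eq: b_ineq}--\eqref{eq: deriv_prob_inequ} by the computation in Lemma \ref{lemma2: dissapation}.
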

\noindent The last claim holds since the noise $\zeta(t)$ does not depend on the solution to the SPDE.

\subsection{Derivative of the Flow of SPDE}

Next, we want to analyze the regularity with respect to deterministic initial data. Let us introduce our guess for the Fréchet derivative. Let \(y\) solve the following infinite dimensional evolution equation 
\begin{equation}\label{eq: frechet_SPDE}
    \Id y_t(h) = A(t,y_t(h)) \Id t 
\end{equation}
with initial data \(y_0(h)= h\) for \(h \in  H^{-\lambda-2}(\T^d)\) deterministic. By the same arguments as in proof of Theorem~\ref{theorem: existence_SPDE} we deduce that~\eqref{eq: limiting_spde} has a unique solution with values in \( H^{-\lambda-2}(\T^d) \). 
It is clear that the operator \(h \mapsto (y_t(h), 0 \leq t \leq T)\) is a linear map from \( H^{-\lambda-2} (\T^d)\) to \( C([0,T];  H^{-\lambda-2}(\T^d))\), bounded by \eqref{eq:esti_spde}, and is independent of the solution to the SPDE ~\eqref{eq: limiting_spde}, in particular independent of its initial condition, since the SPDE is linear. We denote this operator by 
\[
\mathsf{Y} \in  L(H^{-\lambda-2} ,C([0,T];  H^{-\lambda-2}(\T^d))), \qquad \mathsf{Y}(h) = y_t(h). 
\] 
Moreover,  as in Proposition~\ref{cor: stability_spde} the map is continuous and is a candidate for the Fréchet derivative. 
We denote the \emph{flow} of the SPDE \eqref{eq: limiting_spde}  by $\rho_t(f)$, where $\rho$ is the solution, seen as a function of the initial condition $f\in H^{-\lambda-2}(\T^d))$, for any $t$ and almost every $\omega$; $\rho_t$ is an affine and random function of the initial condition, from $H^{-\lambda-2}(\T^d))$ to itself.  Note that  \eqref{eq:LIP_flow} yields that the flow is Lipschitz, for any $t$ and almost every $\omega$.

\begin{lemma}[Differentiability with respect to initial data]\label{lemma: diff_spde}
     The flow \((\rho_t, 0 \leq t \leq T)\) is infinitely many times Fréchet differentiable, for any $t$ and almost every $\omega$, with first derivative given by   
     \[
      \nabla \rho_t(f)(\omega) (h) = y_t(h),
     \]
     i.e. $\nabla \rho_t(f)(\omega) = \mathsf{Y}_t$ as linear operators on $H^{-\lambda-2}(\T^d)$.  
\end{lemma}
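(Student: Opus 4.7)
The plan is to exploit the linearity of \eqref{eq: limiting_spde} together with the fact that the noise is \emph{additive} (i.e.\ $\zeta$ does not depend on the initial condition). For two initial data $f_1,f_2\in H^{-\lambda-2}(\T^d)$, set $r_t:=\rho_t(f_1)-\rho_t(f_2)$. Since $A(t,\cdot)$ is linear and $\zeta(t)$ is independent of the unknown, subtracting the two integral equations provided by Definition~\ref{def: spde_solution} the noise cancels exactly, so $r$ satisfies, $\P$-a.s.,
\begin{equation*}
r_t = (f_1-f_2) + \int_0^t A(s,r_s)\,\d s,
\end{equation*}
which is precisely the deterministic equation \eqref{eq: frechet_SPDE} started at $f_1-f_2$. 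Noting that \eqref{eq: frechet_SPDE} is a particular case of \eqref{eq: limiting_spde} (with vanishing noise), the uniqueness provided by Theorem~\ref{theorem: existence_SPDE} forces $r_t = y_t(f_1-f_2) = \mathsf{Y}_t(f_1-f_2)$ on a $\P$-full measure set depending on $(f_1,f_2)$.

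The next step is to promote this pointwise-in-initial-data identity to a statement valid for all $f,h$ simultaneously on a common event $\tilde\Omega$. Here I would invoke the pathwise Lipschitz estimate \eqref{eq:LIP_flow} from Proposition~\ref{cor: stability_spde}, which provides a deterministic modulus of continuity for $f\mapsto \rho_t(f)(\omega)$ on a set of full measure. Fixing a countable dense subset $\mathcal{D}\subset H^{-\lambda-2}(\T^d)$ and intersecting the countably many $\P$-null exceptional sets coming from the uniqueness argument applied to pairs in $\mathcal{D}$, we obtain a single event $\tilde\Omega$ with $\P(\tilde\Omega)=1$ on which
\begin{equation*}
\rho_t(f+h)(\omega)-\rho_t(f)(\omega) = \mathsf{Y}_t(h) \quad \forall\,f,h\in\mathcal{D}.
\end{equation*}
The deterministic Lipschitz continuity of the flow together with the linearity and boundedness of $\mathsf{Y}_t$ (which follows from \eqref{eq:esti_spde} applied to the noiseless equation) extends the identity to all $f,h\in H^{-\lambda-2}(\T^d)$ on the same $\tilde\Omega$.

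Once this exact affine decomposition $\rho_t(f+h) = \rho_t(f) + \mathsf{Y}_t h$ is in hand, Fréchet differentiability is immediate: the Taylor remainder vanishes identically, so $\nabla\rho_t(f)(\omega) = \mathsf{Y}_t$ as bounded linear operators on $H^{-\lambda-2}(\T^d)$, as claimed. Moreover, since the first derivative $\mathsf{Y}_t$ is independent of $f$, the second Fréchet derivative vanishes, and by induction so do all higher derivatives; hence $\rho_t(\cdot)(\omega)$ is of class $C^\infty$ as an affine map.

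The principal obstacle is a bookkeeping one: the uniqueness argument produces an exceptional null set \emph{for each pair} $(f_1,f_2)$, and one needs a single full-measure event on which the identity holds for all initial data at once. This is where the pathwise stability estimate \eqref{eq:LIP_flow} is essential, since it allows passage from a countable dense set of initial conditions to all of $H^{-\lambda-2}(\T^d)$ by continuity; no compactness argument is needed, and the infinite-dimensional nature of the state space is handled purely through the linearity of the equation.
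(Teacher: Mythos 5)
Your proposal is correct and follows essentially the same route as the paper: the additive noise cancels in the difference of two solutions, the difference solves the deterministic linearized equation \eqref{eq: frechet_SPDE}, uniqueness identifies it with $y_t(h)=\mathsf{Y}_t h$, and the exact affine structure yields Fréchet differentiability with all higher derivatives vanishing. Your extra step handling the pair-dependent null sets via a countable dense set and the pathwise Lipschitz estimate \eqref{eq:LIP_flow} is a sound refinement of a point the paper's own proof passes over silently.
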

\begin{proof}
Let \(h \in  H^{-\lambda-2}(\T^d)\), \((\rho^h_t, 0 \leq t \leq T)\) be a solution to the fluctuation SPDE~\eqref{eq: limiting_spde} with initial datum \(\rho_0+  h \) and \((\rho_t,0 \leq t \leq T)\) the solution with initial datum \(\rho_0\). Additionally, let \((y_t, 0 \leq t \leq T)\) be the solution to~\eqref{eq: frechet_SPDE} with initial datum \(h\). 
Notice that \(\rho^h_s-\rho_s\) solves~\eqref{eq: frechet_SPDE} with initial datum \( h\). Hence,  \(\rho^h_s-\rho_s- y_s\) solves~\eqref{eq: frechet_SPDE} with initial datum zero. By uniqueness of the solution~\eqref{eq: frechet_SPDE} it follows that 
\begin{equation*}
    \rho^h_s-\rho_s-y_s = 0,  
\end{equation*}
which implies that  \((\rho_t, 0 \leq t \leq T)\) is Fréchet differentiable with respect to the initial datum.  
Notice that, since \(\rho\) is a affine-linear SDPE, the Fréchet derivative is independent of the point it is taken. Hence, it follows that \(\rho\) is infinitely many times Fréchet differentiable.
\end{proof}

All above results remain valid if we let the process start at time \(s > 0\) with an initial deterministic condition \(f \in H^{-\lambda-2}(\T^d)\). Accordingly, we denote by \(\rho_{s,\cdot}(f)\) and \(\rho^n_{s,\cdot}(f)\) the solutions to~\eqref{eq: limiting_spde} and~\eqref{eq: approx_solution}, respectively, starting at time \(s\) with initial condition \(f\). To keep the notation concise, we omit the index \(s\) whenever \(s=0\) and the initial condition \(f\), whenever the process starts from the initial condition \(\rho_0 \in L^2_{\cF_0}(H^{\lambda-2}(\T^d))\). The same convention applies to the processes \(y\), \(y^n\), \(\mathsf{Y}\), and \(\mathsf{Y}^n\).
Moreover, when the initial condition is not essential for the context, we omit it from the notation. Similarly, if the process starts at time zero, we continue to use the original notation without additional indices.

Notice that with the same arguments, we can prove the differentiability of the approximation process \(\rho^n\). In the following we will use \(\mathsf{Y}^n\) for the derivative map and \(y^n\) for the process, if we replace \(\rho\) by \(\rho^n\) and, consequently, \(A\) by \(A_n\) in~\eqref{eq: frechet_SPDE}. Observe that \(\mathsf Y^n \colon H^{-\lambda-2}(\T^d) \to C([0,T];  H^{-\lambda-2}(\T^d)) \).


\begin{lemma}[Properties of the derivative]\label{cor: properties_frechet}
Let \(\mathsf Y^n_{s,t} \), \((y_{s,t}^n, n\in \N)\) be given as above.
Then, for \(t \in [0,T]\), \(u,s \in [0,t]\), we have 
\begin{align}
\label{der_1}
    \sup\limits_{n \in \N} \norm{\mathsf{Y}_{s,t}^n (h)}_{ H^{-\lambda-2}(\T^d)} = \sup\limits_{n \in \N} \norm{y_{s,t}^n(h)}_{ H^{-\lambda-2}(\T^d)} 
    &\le C \norm{h}_{ H^{-\lambda-2}(\T^d)}, \\
    \label{der_2}
     \norm{Y^n_{u,t}(h) - Y^n_{s,t}(h)}_{ H^{-\lambda-2}(\T^d)}
    &\le C(n) |u-s|\norm{h}_{ H^{-\lambda-2}(\T^d)}  . 
\end{align}
\end{lemma}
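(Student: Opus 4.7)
The plan is to exploit the linearity of $y^n_{s,\cdot}$ and the semigroup structure of the deterministic linear evolution equation $\partial_t y^n = A_n(t, y^n)$ on $H^{-\lambda-2}(\T^d)$.

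For the first estimate, the equality of norms is immediate from the definition of $\mathsf{Y}^n_{s,t}$ as the evaluation at time $t$ of the linear flow $h\mapsto y^n_{s,\cdot}(h)$, so only the bound needs proof. Since $y^n_{s,\cdot}(h)$ satisfies the noise-free analogue of \eqref{eq: approx_solution} with initial condition $h$ at time $s$, I would apply the chain rule (or It\^o's formula as in the proof of \eqref{eq: aux_uniform_bound}) to $t\mapsto \|y^n_{s,t}(h)\|^2_{H^{-\lambda-2}(\T^d)}$ to get
\[
\frac{d}{dt}\|y^n_{s,t}(h)\|^2_{H^{-\lambda-2}(\T^d)} = 2\langle A_n(t,y^n_{s,t}(h)), y^n_{s,t}(h)\rangle_{H^{-\lambda-2}(\T^d)} \leq C\|y^n_{s,t}(h)\|^2_{H^{-\lambda-2}(\T^d)},
\]
where the inequality is the coercivity \eqref{lemma: dissapation} of Lemma \ref{lemma2: dissapation}, which crucially holds uniformly in $n$ (the dissipative term $-\delta\|y^n_{s,t}(h)\|^2_{H^{-\lambda-1}(\T^d)}$ can simply be discarded here). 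Gronwall's lemma then gives the bound with a constant $C = e^{CT}$ independent of $n$.

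For the time-Lipschitz bound, I would use the flow/semigroup identity
\[
y^n_{s,t}(h) = y^n_{u,t}(y^n_{s,u}(h)) \qquad \text{for } s\leq u \leq t,
\]
which follows from uniqueness of solutions to the linear equation \eqref{eq: frechet_SPDE} (the approximated version). Combined with the linearity of $y^n_{u,t}$, this yields
\[
y^n_{u,t}(h) - y^n_{s,t}(h) = y^n_{u,t}\bigl(h - y^n_{s,u}(h)\bigr),
\]
so applying the uniform bound from the first part reduces the problem to estimating $\|h - y^n_{s,u}(h)\|_{H^{-\lambda-2}(\T^d)}$. Writing
\[
h - y^n_{s,u}(h) = -\int_s^u A_n(r,y^n_{s,r}(h))\,dr
\]
and using the $n$-dependent operator bound \eqref{item: approx_bound_smaller_space} together with the uniform estimate just proved gives
\[
\|h - y^n_{s,u}(h)\|_{H^{-\lambda-2}(\T^d)} \leq \int_s^u C(n)\|y^n_{s,r}(h)\|_{H^{-\lambda-2}(\T^d)}\,dr \leq C(n)|u-s|\,\|h\|_{H^{-\lambda-2}(\T^d)},
\]
which yields \eqref{der_2}. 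No real obstacle is expected: the first bound is essentially a repetition of the argument for \eqref{eq: aux_uniform_bound} in the noise-free case, and the second is a standard consequence of the semigroup identity, the only mild subtlety being that the time regularity must lose the uniformity in $n$ since only the $n$-dependent bound \eqref{item: approx_bound_smaller_space} is available for $A_n$ in the $H^{-\lambda-2}$ topology.
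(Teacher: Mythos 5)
Your proposal is correct and follows essentially the same route as the paper: the uniform bound comes from the coercivity estimate \eqref{lemma: dissapation} (discarding the dissipative term) plus Gronwall, and the time-Lipschitz bound comes from the flow identity $y^n_{s,t}(h)=y^n_{u,t}(y^n_{s,u}(h))$, the first bound, the integral representation of $h-y^n_{s,u}(h)$, and the $n$-dependent operator bound \eqref{item: approx_bound_smaller_space}. Your observation that uniformity in $n$ is necessarily lost in the second estimate matches the paper exactly.
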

\begin{proof}
The first inequality follows readily from ~\eqref{lemma: dissapation}. We have  
\begin{align*}
    \norm{y_{s,t}^n(h)}_{H^{-\lambda-2}(\T^d)}^2
    &\le \norm{h}_{H^{-\lambda-2}(\T^d)}^2 +\int\limits_s^t \langle A_n(r,y_{s,r}^n(h)),y_{s,r}^n(h) \rangle_{H^{-\lambda-2}(\T^d)} \Id r \\
   & \le \norm{h}_{H^{-\lambda-2}(\T^d)}^2 +  C \int\limits_s^t \norm{y_{s,r}^n(h)}_{H^{-\lambda-2}(\T^d)}^2 \Id r,
\end{align*}
which after an application of Gronwall's lemma provides the first inequality in our satement. 
For the second inequality, let w.l.o.g. \(s < u\). Then, 
\begin{align*}
    \norm{Y^n_{s,t}(h) - Y^n_{u,t}(h)}_{ H^{-\lambda-2}(\T^d)}
    &= \norm{y_{s,t}^n(h)-y_{u,t}^n(h)}_{ H^{-\lambda-2}(\T^d)}\\
    &=\norm{y_{u,t}^n(y^n_{s,u}(h))-y_{u,t}^n(h)}_{ H^{-\lambda-2}(\T^d)}\\
    &\le C \norm{y^n_{s,u}(h)-h}_{ H^{-\lambda-2}(\T^d)}\\
    &\le C \int\limits_s^u \norm{A_n(r,y^n_{u,r}(h))}_{ H^{-\lambda-2}(\T^d)} \Id r \\
    &\le C(n) \int\limits_s^u \norm{y^n_{u,r}(h)}_{ H^{-\lambda-2}(\T^d)}  \Id r \\
    &\le C(n) |u-s|\norm{h}_{ H^{-\lambda-2}(\T^d)} , 
\end{align*}
where we used the flow property of the infinite dimensional evolution equation \(y^n\).
\end{proof}

\section{Semigroups and Generators}
\label{sec:generators}


\noindent \emph{Throughout this section Assumptions~\ref{ass: inital_cond}-\ref{ass: coef_fokker} are in force.} 
\smallskip

The first goal of this section is to study regularity and stability of the semigroup related to ~\eqref{eq: approx_solution} and thus to compute the generator of the  SPDE~\eqref{eq: approx_solution}. Unfortunately, we can not directly apply the framework of Da Prato, Zabczyk~\cite[Chapter~4]{Prato1992} for the SPDE~\eqref{eq: limiting_spde}, since the semigroup will not be strongly continuous and is only a weak solution, i.e. the equation holds only in the sense of distributions. Hence, we proceed in a different way, and want first to compute the time and space derivatives of the semigroup corresponding to the approximation SPDE~\eqref{eq: approx_solution}. Then we apply It\^o formula for Hilbert-valued SDEs and we study the Sobolev regularity of the Fréchet derivatives, viewed as functions of one or two variables. Hence, we employ such regularity to apply It\^o formula for flows of measures and the restriction to empirical measures, thus characterizing the generator of the fluctuation process $\rho^N$. 

Throughout this section we  assume that  \(\Phi \in FC^\infty(H^{-\lambda-2}(\T^d))\), where the set is given in Definition \ref{def: cylindrical_class} in the appendix, and we fix the representation
\begin{equation} \label{eq: representation_cylindrical} 
\Phi(f) = g( \langle f, \varphi_1 \rangle_{H^{-\lambda-2}} , \ldots,\langle f, \varphi_m   \rangle_{H^{-\lambda-2}} ) 
\end{equation} 
for \( g \in C_c^\infty(\R^m), \varphi_1, \ldots, \varphi_m \in C^\infty(\T^d)  \). 
Note that $\Phi\in C^\infty_b(H^{-\lambda-2}(\T^d))$ and this set of cylindrical functions approximate continuous functions, uniformly on compacts; see Lemma \ref{lemma: approximation_hilbert_smooth}. Since the space $H^{-\lambda-2}(\T^d)$ remains fixed, we may omit it in the notation $[\Phi]_{C^2}$.

\subsection{Regularity of approximating semigroup}

Fix a \(t \in [0,T]\). Let us introduce the ``semigroup'' to SPDE~\eqref{eq: limiting_spde} by
\begin{equation} \label{eq: def_approx_semigroup}
    (s,f) \mapsto T_{s,t}^n\Phi(f):= \E[\Phi(\rho_{s,t}^n( f)] ,
\end{equation}
where \(0 \le s \le t \le T\), \(f \in H^{-\lambda-2}(\T^d)\), \(\Phi \in C_\ell(H^{-\lambda-2}(\T^d))\)
and \(\rho_{s,t}^n( f)\) is the solution of the SPDE~\eqref{eq: approx_solution} with initial data \( f \) at initial time \(s\). 
Recall that \(\rho^n\) is a Markov process and we have the following Chapman--Kolmogorov equation
\begin{equation} \label{eq: chapman}
    T_{s,t}^n\Phi( f) = T_{s,u}^n ( T_{u,t}^n \Phi ) ( f) , \quad 0 \le s \le u \le t \le T
\end{equation}
for \(f \in H^{-\lambda-2}(\T^d)\). 

We have the following explicit computation of the space derivative of \(T_{s,t}^n \Phi\) for \(\Phi \in FC^\infty(H^{-\lambda-2}(\T^d))\), which is similar to Lemma \ref{lemma: derivative}.

\begin{lemma} \label{lemma: derivative_test_semigroup}
Let \(\Phi \in  FC^\infty(H^{-\lambda-2}(\T^d))\). 
Then, for every \(s \in [0,t]\) we have the following formulas for the Fréchet derivatives: 
\begin{align*}
\begin{cases}
    \nabla T_{s,t}^n\Phi (  \cdot):  H^{-\lambda-2}(\T^d) \to  H^{-\lambda-2}(\T^d)  \\
    f \mapsto  \sum\limits_{i=1}^m \E\big[\partial_{x_i} g( \langle \rho_{s,t}^n( f), \varphi_1\rangle_{H^{-\lambda-2}}  , \ldots,\langle \rho_{s,t}^n( f), \varphi_m  \rangle_{H^{-\lambda-2}} ) \big] (\mathsf{Y}_{s,t}^n)^* \varphi_i
\end{cases}
\end{align*}
and 
\begin{align*}
\begin{cases}
    \nabla^2 T_{s,t}^n\Phi (\cdot):  H^{-\lambda-2}(\T^d) \to L( H^{-\lambda-2}(\T^d), H^{-\lambda-2}(\T^d))\\
    f \mapsto \!\bigg(h \!\mapsto\!\! \sum\limits_{i,j=1}^m  \!\E\big[\partial_{x_i}\partial_{x_j} g( \langle \rho_{s,t}^n( f), \varphi_1\rangle_{H^{-\lambda-2}} , \ldots,\langle \rho_{s,t}^n(f), \varphi_m  \rangle_{H^{-\lambda-2}} )\big] 
    \langle   (\mathsf{Y}_{s,t}^n)^* \varphi_j, h \rangle_{H^{-\lambda-2}}  (\mathsf{Y}_{s,t}^n)^* \varphi_i 
 \bigg).
    \end{cases}
\end{align*}
\end{lemma}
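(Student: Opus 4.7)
The plan is to compute the Fréchet derivatives of the map $f \mapsto \Phi(\rho_{s,t}^n(f))$ pointwise in $\omega$ via the chain rule, and then exchange differentiation and expectation by dominated convergence. The key observation is that, because equation~\eqref{eq: approx_solution} is affine in the initial datum, its flow decomposes as $\rho_{s,t}^n(f)(\omega) = \mathsf{Y}_{s,t}^n f + \rho_{s,t}^n(0)(\omega)$, where $\mathsf{Y}_{s,t}^n \in L(H^{-\lambda-2}(\T^d))$ is deterministic (independent of $f$ and $\omega$) by Lemma~\ref{lemma: diff_spde}. Hence $f \mapsto \rho_{s,t}^n(f)$ is affine and its Fréchet derivative equals $\mathsf{Y}_{s,t}^n$ everywhere, while all higher derivatives vanish.

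Writing $\Phi(f) = g(\langle f,\varphi_1\rangle_{H^{-\lambda-2}},\ldots,\langle f,\varphi_m\rangle_{H^{-\lambda-2}})$ with $g \in C^\infty_c(\R^m)$, I would first note that each inner product $f \mapsto \langle f,\varphi_i\rangle_{H^{-\lambda-2}}$ is linear and continuous on $H^{-\lambda-2}(\T^d)$, so by Riesz representation its gradient is simply $\varphi_i$ (viewed as an element of $H^{-\lambda-2}(\T^d)$ through the identification used in \S\ref{sec:notation}). Composing with the affine map $f \mapsto \rho_{s,t}^n(f)$ and applying the chain rule $\omega$-wise, the derivative in direction $h$ is
\begin{equation*}
    \sum_{i=1}^m \partial_{x_i} g\bigl(\langle \rho_{s,t}^n(f),\varphi_1\rangle_{H^{-\lambda-2}},\ldots,\langle \rho_{s,t}^n(f),\varphi_m\rangle_{H^{-\lambda-2}}\bigr)\, \langle \mathsf{Y}_{s,t}^n h,\varphi_i\rangle_{H^{-\lambda-2}},
\end{equation*}
and using $\langle \mathsf{Y}_{s,t}^n h,\varphi_i\rangle_{H^{-\lambda-2}} = \langle h, (\mathsf{Y}_{s,t}^n)^*\varphi_i\rangle_{H^{-\lambda-2}}$ identifies the claimed gradient. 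The second derivative comes from differentiating a second time: the affine term contributes nothing, so only the Hessian of $g$ enters, yielding the bilinear form in the statement and, by Riesz representation again, the displayed expression for $\nabla^2 T_{s,t}^n\Phi(f)(h)$.

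To pass derivatives inside the expectation I would apply standard dominated convergence. The partial derivatives of $g$ are bounded (since $g \in C^\infty_c$), and by Lemma~\ref{cor: properties_frechet} we have $\|\mathsf{Y}_{s,t}^n\|_{L(H^{-\lambda-2})} \le C$ uniformly in $n$, so $\|(\mathsf{Y}_{s,t}^n)^*\varphi_i\|_{H^{-\lambda-2}} \le C\|\varphi_i\|_{H^{-\lambda-2}}$. This provides an integrable bound independent of $f$ on the difference quotients, so the limit can be taken inside $\E[\cdot]$, producing the two formulas.

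No serious obstacle is expected here; the computation is routine once one exploits affinity of the flow. The mildly delicate point is keeping track of which slot in the Hilbert scalar product the adjoint acts on when one rewrites $\langle \mathsf{Y}_{s,t}^n h, \varphi_i\rangle_{H^{-\lambda-2}}$, but this is taken care of by the convention introduced in \S\ref{sec:notation} identifying gradients with elements of $H^{-\lambda-2}(\T^d)$ via Riesz. The whole argument transfers verbatim to the unmollified semigroup $T_{s,t}$ associated with~\eqref{eq: limiting_spde} since $\mathsf{Y}_{s,t}$ enjoys the same linearity and boundedness properties.
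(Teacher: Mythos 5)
Your argument is correct and matches the paper's (commented-out) proof: both compute the derivative $\omega$-wise by the chain rule using the affinity of the flow and $\nabla\rho^n_{s,t}(f)=\mathsf{Y}^n_{s,t}$ from Lemma~\ref{lemma: diff_spde}, rewrite $\langle \mathsf{Y}^n_{s,t}h,\varphi_i\rangle_{H^{-\lambda-2}}=\langle h,(\mathsf{Y}^n_{s,t})^*\varphi_i\rangle_{H^{-\lambda-2}}$, and interchange expectation and differentiation using the boundedness of the derivatives of $g$ together with the uniform bound on $\mathsf{Y}^n_{s,t}$ from Lemma~\ref{cor: properties_frechet}. Your explicit decomposition $\rho^n_{s,t}(f)(\omega)=\mathsf{Y}^n_{s,t}f+\rho^n_{s,t}(0)(\omega)$ with $\mathsf{Y}^n_{s,t}$ deterministic is a slightly more detailed justification of the same step.
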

\begin{remark} \label{remark: infinite_diff}
    Even without computing the derivatives explicitly, we obtain the fact that \(T_{s,t}^n(\cdot)\) is infinitely many times Fréchet differentiable by the chain rule, since \(\Phi \in  FC^\infty(H^{-\lambda-2}(\T^d))\) and \(f \mapsto  \rho_{s,t}^n(f)\) are infinitely many times Fréchet differentiable by Lemma \ref{lemma: diff_spde}. Additionally, all derivatives are bounded because \(\Phi \in  FC^\infty(H^{-\lambda-2}(\T^d)) \) and \(f \mapsto  \rho_{s,t}^n(f)\) is affine linear. The claim also holds for \(T_{s,t}^n\Phi \) replaced by \(T_{s,t} \Phi \). 
\end{remark}
The next result strengthens the above remark by showing that the regularity can be made uniform in \(n \in \N\).

\begin{lemma}[Regularity of derivatives]
\label{cor: reg_derivative}
Let \(\Phi \in  C_{\ell}^2( H^{-\lambda-2}(\T^d) ) \). 
    For the functions \(\nabla T_{s,t}^n(f)\), \(\nabla^2 T_{s,t}^n(f)\) we have the following estimates, where $C$ is independent of $n$ and of $s,t$:
    \begin{align}
        \sup\limits_{f \in  H^{-\lambda-2}(\T^d) } \norm{\nabla T_{s,t}^n\Phi ( f)}_{ H^{-\lambda-2}(\T^d)} \le C [\Phi]_{C^1}    , 
        \label{eq:est_1}\\
        \sup\limits_{f \in  H^{-\lambda-2}(\T^d)} \norm{\nabla^2 T_{s,t}^n\Phi (f)}_{L( H^{-\lambda-2}(\T^d), H^{-\lambda-2}) } \le C [\Phi]_{C^2} .
        \label{eq:est_2}
    \end{align}
\end{lemma}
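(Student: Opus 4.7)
The plan is to apply the chain rule to the defining identity $T^n_{s,t}\Phi(f)=\E[\Phi(\rho^n_{s,t}(f))]$, represent the derivatives of $T^n_{s,t}\Phi$ through the flow derivative $\mathsf{Y}^n_{s,t}$ of the approximating SPDE, and then invoke the uniform-in-$n$ operator bound \eqref{der_1}.

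First, I would recall that since the SPDE \eqref{eq: approx_solution} is affine and the noise is independent of the initial condition, the map $f\mapsto \rho^n_{s,t}(f)$ is affine, whose Fréchet derivative is the deterministic-in-$f$ (but random) linear operator $\mathsf{Y}^n_{s,t}\in L(H^{-\lambda-2}(\T^d),H^{-\lambda-2}(\T^d))$ of Lemma \ref{lemma: diff_spde}. All higher derivatives in $f$ vanish. By \eqref{der_1} the operator norm of $\mathsf{Y}^n_{s,t}$ is bounded by a constant $C$ independent of $n$ and $(s,t)\in[0,T]^2$; the same bound then holds for the adjoint $(\mathsf{Y}^n_{s,t})^*$.

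Next, for $\Phi\in C^2_\ell(H^{-\lambda-2}(\T^d))$, one differentiates $\Phi(\rho^n_{s,t}(f))$ in $f$ pathwise via the chain rule, which gives for $h,k\in H^{-\lambda-2}(\T^d)$
\begin{align*}
\nabla\big[\Phi(\rho^n_{s,t}(f))\big] &= (\mathsf{Y}^n_{s,t})^*\nabla\Phi(\rho^n_{s,t}(f)),\\
\big\langle\nabla^2[\Phi(\rho^n_{s,t}(f))]\,h,k\big\rangle_{H^{-\lambda-2}} &= \big\langle\nabla^2\Phi(\rho^n_{s,t}(f))\,\mathsf{Y}^n_{s,t}h,\,\mathsf{Y}^n_{s,t}k\big\rangle_{H^{-\lambda-2}}.
\end{align*}
Since $\nabla\Phi$ and $\nabla^2\Phi$ are uniformly bounded by $[\Phi]_{C^1}$ and $[\Phi]_{C^2}$, and since $\|\mathsf{Y}^n_{s,t}\|$ is uniformly bounded (independently of $\omega$, by \eqref{der_1} and the fact that the noise is independent of the initial condition), the integrands are dominated, so expectation and Fréchet derivative can be exchanged via dominated convergence applied to the difference quotients. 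This yields
\begin{equation*}
\nabla T^n_{s,t}\Phi(f)=\E\big[(\mathsf{Y}^n_{s,t})^*\nabla\Phi(\rho^n_{s,t}(f))\big],\qquad \nabla^2 T^n_{s,t}\Phi(f)\,h=\E\big[(\mathsf{Y}^n_{s,t})^*\nabla^2\Phi(\rho^n_{s,t}(f))\mathsf{Y}^n_{s,t}h\big].
\end{equation*}

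Finally, I would take norms in $H^{-\lambda-2}(\T^d)$: using $\|\nabla\Phi(z)\|_{H^{-\lambda-2}}\leq [\Phi]_{C^1}$ together with $\|(\mathsf{Y}^n_{s,t})^*\|\leq C$ gives \eqref{eq:est_1}, while $\|\nabla^2\Phi(z)\|_{L(H,H)}\leq [\Phi]_{C^2}$ combined with $\|\mathsf{Y}^n_{s,t}\|\cdot\|(\mathsf{Y}^n_{s,t})^*\|\leq C^2$ yields \eqref{eq:est_2}. The main (mild) obstacle is justifying the interchange of $\E$ and $\nabla^2$ for a general $\Phi\in C^2_\ell$ rather than the cylindrical class of Lemma \ref{lemma: derivative_test_semigroup}; this is handled by the affine structure of the flow, which makes the remainder terms in the Taylor expansion of $\Phi(\rho^n_{s,t}(f+\epsilon h))$ controlled purely by $\|\nabla^2\Phi\|_\infty\|\mathsf{Y}^n_{s,t}h\|^2$, giving the required domination without any growth issue.
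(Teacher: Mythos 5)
Your proposal is correct and follows essentially the same route as the paper: apply the chain rule pathwise using the Fréchet derivative $\mathsf{Y}^n_{s,t}$ of the affine flow, exchange expectation and differentiation, and conclude from the uniform-in-$n$ bound \eqref{der_1} together with the boundedness of $\nabla\Phi$ and $\nabla^2\Phi$. The only difference is that you make the dominated-convergence justification for interchanging $\E$ and the derivative explicit, which the paper leaves implicit.
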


\begin{proof}
For the moment, let us view \(\nabla T_{s,t}^n\Phi(f)\) as a linear map from \(H^{-\lambda-2}(\T^d)\) to \(\R\), and similarly for \(\nabla \Phi\). The advantage of this viewpoint is that we can apply the classical chain rule to obtain 
\begin{equation*}
    \nabla T_{s,t}^n\Phi ( f)(h)
    = \E \big[ \nabla \Phi(\rho_{s,t}^n(f)) \circ \mathsf{Y}^n_{s,t}(h) \big] 
\end{equation*}
for each \( h \in H^{-\lambda-2}(\T^d)\). 
Consequently, 
    \begin{align*}
       |\nabla T_{s,t}^n\Phi ( f)(h)|
       &\le \E\big[|\nabla \Phi(\rho_{s,t}^n(f)) \circ \mathsf{Y}^n_{s,t}(h) |\big] \\
       &\le [\Phi]_{C^1}    \norm{\mathsf{Y}^n_{s,t}(h)}_{H^{-\lambda-2}(\T^d)}\\
       &\le C [\Phi]_{C^1}   \norm{h}_{H^{-\lambda-2}(\T^d)}, 
    \end{align*}
    where we used \eqref{der_1} in the second step; thus \eqref{eq:est_1} follows. 
For the second derivative, we have 
\begin{align*}
   | \langle \nabla^2 T_{s,t}^n\Phi (f)& (\tilde{h}), h\rangle_{H^{-\lambda-2}(\T^d)} | 
   = |\E \langle \nabla^2 \Phi(\rho_{s,t}^n(f)) 
    \big( \mathsf{Y}^n_{s,t}(\tilde{h}) \big) ,  \mathsf{Y}^n_{s,t}(h)  \rangle_{H^{-\lambda-2} (\T^d)}| \\
    & \leq  \E \norm{\nabla^2 \Phi(\rho_{s,t}^n(f)) }_{L(H^{-\lambda-2}(\T^d), H^{-\lambda-2}(\T^d)}  
    \norm{\mathsf{Y}^n_{s,t}(\tilde{h})}_{H^{-\lambda-2}(\T^d)} 
    \norm{\mathsf{Y}^n_{s,t}(h)}_{H^{-\lambda-2}(\T^d)} \\
    & \leq C^2 \sup_{f\in H^{-\lambda-2}(\T^d) } 
    \norm{\nabla^2 \Phi(f) }_{L(H^{-\lambda-2}(\T^d), H^{-\lambda-2}(\T^d)}
    \norm{\tilde{h}}_{H^{-\lambda-2}(\T^d)} 
    \norm{h}_{H^{-\lambda-2}(\T^d)},
\end{align*}
which yields \eqref{eq:est_2}.    
\end{proof}
We have the following regularity with respect to initial time. 
\begin{lemma}[Continuity of derivatives with respect to time] \label{cor: con_semigroup_time}
Let \(t \in [0,T], u,s \in [0,t]\),\(\Phi \in  FC^\infty(H^{-\lambda-2}(\T^d))\) and \(f \in  H^{-\lambda-2}(\T^d)\). 
    For the functions \(\nabla T_{s,t}^n\Phi(f)\), \(\nabla^2 T_{s,t}^n\Phi( f ) \) we have the following estimates with respect to time 
    \begin{align}
    \label{der_semi_1}
        \norm{\nabla T_{u,t}^n\Phi(f)- \nabla T_{s,t}^n\Phi(f) }_{ H^{-\lambda-2}(\T^d)}
        &\le C(n) |u-s|(1+\norm{f}_{ H^{-\lambda-2}(\T^d) }), \\
        \label{der_semi_2}
     \norm{\nabla^2 T_{u,t}^n\Phi(f )- \nabla^2 T_{s,t}^n\Phi( f )}_{L( H^{-\lambda-2}(\T^d), H^{-\lambda-2}(\T^d))  } &\le C(n) |u-s| (1+\norm{f}_{ H^{-\lambda-2}(\T^d) }).
    \end{align}
\end{lemma}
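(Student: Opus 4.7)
The plan is to combine the explicit formulas for the Fréchet derivatives provided by Lemma \ref{lemma: derivative_test_semigroup} with the Lipschitz-in-time estimate \eqref{der_2} for the derivative process $\mathsf{Y}^n$ and an It\^o formula in $H^{-\lambda-2}(\T^d)$. Writing the first derivative as
\[
\nabla T_{s,t}^n \Phi(f) \;=\; \sum_{i=1}^m \alpha_i(s,f)\,(\mathsf{Y}_{s,t}^n)^* \varphi_i, \qquad
\alpha_i(s,f) := \E\big[\partial_{x_i} g\big(\langle \rho_{s,t}^n(f), \varphi_1\rangle_{H^{-\lambda-2}}, \dots, \langle \rho_{s,t}^n(f), \varphi_m\rangle_{H^{-\lambda-2}}\big)\big],
\]
the difference at the two times $s$ and $u$ splits into $\sum_i [\alpha_i(u,f) - \alpha_i(s,f)]\,(\mathsf{Y}^n_{u,t})^*\varphi_i$ plus $\sum_i \alpha_i(s,f)\,[(\mathsf{Y}^n_{u,t})^* - (\mathsf{Y}^n_{s,t})^*]\varphi_i$. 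The second term is immediately controlled by $C(n)|u-s|\sum_i \|\varphi_i\|_{H^{-\lambda-2}}$ via \eqref{der_2}, since $\alpha_i$ is uniformly bounded by $\|g\|_{C^1}$.

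For the first term, the core observation is that $\alpha_i(s,f) = T_{s,t}^n \Psi_i(f)$, where $\Psi_i := \partial_{x_i} g \circ L$ with $L(h) := (\langle h,\varphi_1\rangle_{H^{-\lambda-2}},\dots,\langle h,\varphi_m\rangle_{H^{-\lambda-2}})$. Since $\Psi_i \in FC^\infty(H^{-\lambda-2}(\T^d))$, both $\nabla \Psi_i$ and $\nabla^2 \Psi_i$ are bounded. Using the Markov property of $\rho^n$ and the flow identity $\rho_{s,t}^n(f) = \rho_{u,t}^n(\rho_{s,u}^n(f))$, one writes
\[
\alpha_i(s,f) \;=\; \E\big[\alpha_i\big(u, \rho_{s,u}^n(f)\big)\big].
\]
Since the Markov semigroup $T^n_{u,t}$ preserves smoothness (Remark \ref{remark: infinite_diff}) and the bounds of Lemma \ref{cor: reg_derivative} hold for $T^n_{u,t}\Psi_i$, the scalar function $h \mapsto \alpha_i(u,h)$ is $C^2$ with bounded first and second derivatives, with bounds depending only on $g,\varphi_1,\dots,\varphi_m$.

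Now I would apply It\^o's formula~\cite[Theorem~4.2.5]{LiuWei2015SPDE} to the real-valued functional $r\mapsto \alpha_i(u,\rho_{s,r}^n(f))$ on $[s,u]$, using that $\rho^n$ solves the $H^{-\lambda-2}$-valued SDE \eqref{eq: approx_solution}. After taking expectations the stochastic integral vanishes, giving
\[
\alpha_i(u,f) - \alpha_i(s,f) \;=\; - \int_s^u \E\Big[\big\langle \nabla \alpha_i(u,\rho_{s,r}^n(f)), A_n(r,\rho_{s,r}^n(f))\big\rangle_{H^{-\lambda-2}} + \tfrac{1}{2}\sum_{j=1}^d \mathrm{Tr}\big(B_j(r)^* \nabla^2 \alpha_i(u,\rho_{s,r}^n(f)) B_j(r)\big)\Big]\,dr.
\]
Combining the boundedness of $\nabla \alpha_i(u,\cdot),\nabla^2\alpha_i(u,\cdot)$, the linear bound \eqref{item: approx_bound_smaller_space} for $A_n$, the Hilbert–Schmidt estimate for $B_j$, and the a priori bound $\sup_{r\in[s,u]}\E\|\rho_{s,r}^n(f)\|_{H^{-\lambda-2}} \le C(1+\|f\|_{H^{-\lambda-2}})$ following from \eqref{eq: aux_uniform_bound}, each integrand is $O\big(C(n)(1+\|f\|_{H^{-\lambda-2}})\big)$, and integration over $[s,u]$ yields the desired Lipschitz-in-time bound \eqref{der_semi_1}.

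For the second derivative \eqref{der_semi_2}, the same plan applies: the explicit second formula in Lemma \ref{lemma: derivative_test_semigroup} is linear in scalar coefficients $\beta_{ij}(s,f):= \E\big[\partial_{x_i}\partial_{x_j} g\big(L(\rho_{s,t}^n(f))\big)\big]$ multiplying $(\mathsf{Y}^n_{s,t})^*\varphi_j \otimes (\mathsf{Y}^n_{s,t})^*\varphi_i$, and one splits analogously into a $(\mathsf{Y}^n_{u,t})^* - (\mathsf{Y}^n_{s,t})^*$ contribution controlled by \eqref{der_2} and a $\beta_{ij}(u,f)-\beta_{ij}(s,f)$ contribution controlled by the same It\^o/Markov argument applied to the cylindrical functional $\partial_{x_i}\partial_{x_j}g \circ L \in FC^\infty$. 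The main technical subtlety, and the step requiring care, is ensuring that the growth in $f$ entering the right-hand side remains genuinely linear; this relies on the affine-linear structure of the flow $\rho_{s,\cdot}^n(f)$, which guarantees that only the drift contribution $A_n(r,\rho_{s,r}^n(f))$ produces the factor $(1+\|f\|_{H^{-\lambda-2}})$, while the noise contributes only a bounded trace term independent of $f$.
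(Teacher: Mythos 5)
Your proposal is correct, and the opening decomposition is exactly the paper's: the difference splits into the part coming from $(\mathsf{Y}^n_{u,t})^*-(\mathsf{Y}^n_{s,t})^*$, handled by \eqref{der_2}, and the part coming from the time increment of the scalar coefficients $\alpha_i$. Where you genuinely diverge is in the treatment of that second part. The paper bounds $|\alpha_i(u,f)-\alpha_i(s,f)|$ by the Lipschitz continuity of $g$, the flow identity $\rho^n_{s,t}(f)=\rho^n_{u,t}(\rho^n_{s,u}(f))$ and the stability estimate \eqref{eq: stability_spde}, reducing everything to $\E\|\rho^n_{s,u}(f)-f\|$, which is then estimated directly from the equation via $\int_s^u\|A_n(r,\cdot)\|\,\Id r$. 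You instead write $\alpha_i(s,f)=T^n_{s,t}\Psi_i(f)$ with $\Psi_i=\partial_{x_i}g\circ L\in FC^\infty(H^{-\lambda-2}(\T^d))$, invoke Chapman--Kolmogorov \eqref{eq: chapman} to get $\alpha_i(s,f)=\E[\alpha_i(u,\rho^n_{s,u}(f))]$, and apply It\^o's formula to the time-independent $C^2_b$ functional $\alpha_i(u,\cdot)$ along $\rho^n_{s,\cdot}(f)$, so that after taking expectations the increment is an integral of drift and trace terms, each bounded by $C(n)(1+\|f\|_{H^{-\lambda-2}(\T^d)})$ using \eqref{item: approx_bound_smaller_space}, \eqref{eq:est_B}, \eqref{eq: trace_ineq} and \eqref{eq: aux_uniform_bound}. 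Your route is slightly longer but is actually the more careful one on the crucial point of the rate: in the paper's chain the step from $\E\|\rho^n_{s,u}(f)-f\|$ to $\E\int_s^u\|A_n(r,\rho^n_{s,r}(f))\|\,\Id r$ silently discards the martingale increment $\sum_j\int_s^u B_j(r)\Id W_j(r)$, which, if retained, contributes only $O(|u-s|^{1/2})$ (as the paper itself acknowledges in the analogous term $(\tilde{II})$ of Proposition \ref{lemma: ito_formula_approx}); in your argument the stochastic integral vanishes in expectation and the noise enters only through the It\^o correction, which integrates to $O(|u-s|)$, so you genuinely obtain the Lipschitz rate claimed in \eqref{der_semi_1}--\eqref{der_semi_2}. (For the downstream use in Proposition \ref{lemma: ito_formula_approx} a rate $|u-s|^{1/2}$ would in fact suffice, but your version matches the statement as written.) The extension to the second derivative via the coefficients $\beta_{ij}$ and $\partial_{x_i}\partial_{x_j}g\circ L\in FC^\infty$ is sound and mirrors the paper's remark that the affine structure produces no new types of terms.
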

\begin{proof}
W.l.o.g we assume \(s <u<t\). 
For the first inequality we have 
\begin{align*} 
&\norm{\nabla T_{u,t}^n\Phi( f)- \nabla T_{s,t}^n\Phi( f) }_{H^{-\lambda-2}(\T^d)} \\
&\quad \le \sum\limits_{i=1}^m \E\bigg[ \big\|(\mathsf{Y}^n_{u,t})^* \varphi_i- (\mathsf{Y}^n_{s,t})^* \varphi_i  \big\|_{ H^{-\lambda-2}}   |\partial_{x_i}  g( \langle \rho_{u,t}^n(f), \varphi_1\rangle_{H^{-\lambda-2}} , \ldots,\langle \rho_{u,t}^n( f), \varphi_m  \rangle_{H^{-\lambda-2}} )| \bigg]   \\
&\quad +  \sum\limits_{i=1}^m  \E\bigg[ \norm{ (\mathsf{Y}^n_{s,t})^* \varphi_i}_{_{H^{-\lambda-2}}}    \big|\partial_{x_i}  g( \langle \rho_{u,t}^n(f), \varphi_1\rangle_{H^{-\lambda-2}} , \ldots,\langle \rho_{u,t}^n( f), \varphi_m  \rangle_{H^{-\lambda-2}} ) \\
&\quad \quad \qquad - \partial_{x_i}  g( \langle \rho_{s,t}^n(f), \varphi_1\rangle_{H^{-\lambda-2}} , \ldots,\langle \rho_{s,t}^n(f), \varphi_m  \rangle_{H^{-\lambda-2}} ) \big| \bigg]   \\
&\quad =:(I)+(II)
\end{align*}
For the first term we find 
\begin{align*}
(I) & \le C \sum\limits_{i=1}^m \E\bigg[ \big\|(\mathsf{Y}^n_{u,t})^* \varphi_i-(\mathsf{Y}^n_{s,t})^* \varphi_i  \big\|_{ H^{-\lambda-2}}\bigg]\\
   &\quad =  C \sum\limits_{i=1}^m  \sup\limits_{\norm{h}_{  H^{-\lambda-2} } \le 1 }
   | \langle \mathsf Y^n_{u,t}(h)- \mathsf Y^n_{s,t}(h) , \varphi_i \rangle_{ H^{-\lambda-2}} | \\
   &\quad \le C(n) |u-s|\sum\limits_{i=1}^m  \norm{\varphi_i}_{ H^{-\lambda-2} },
\end{align*}
where we used \eqref{der_2}.
For the second term \((II)\) we use the fact that \(\rho^n\) is a unique solution, which implies
\begin{equation*}
    \rho_{s,t}^n(f) = \rho_{u,t}^n(\rho_{s,u}^n(f)), \quad \P\text{-a.s.};
\end{equation*}
see for instance~\cite[Remark~4.2.11]{LiuWei2015SPDE}. We obtain 
\begin{align*}
    (II) &\le C\norm{ g}_{C_b^2(\R^m)} \sum\limits_{j=1}^m \E\big[ |\langle\rho_{s,t}^n(f)- \rho_{u,t}^n(f) , \varphi_j \rangle_{H^{-\lambda-2}(\T^d)} |\big]  \\
    &\le C \E\big[ \norm{\rho_{u,t}^n(\rho_{s,u}^n(f))- \rho_{u,t}^n(f)}_{ H^{-\lambda-2}(\T^d) }  \big] \\
    &\le C \E\big[\norm{\rho_{s,u}^n( f)- f}_{ H^{-\lambda-2}(\T^d) }  \big] \\
    &\le C  \E\bigg[ \int\limits_{s}^u \norm{A_n(r,\rho_{u,r}^n( f)  ) }_{ H^{-\lambda-2}(\T^d) }  \Id r  \bigg] \\
    &\le C(n)  \E\bigg[ \int\limits_{s}^u \norm{\rho_{u,r}^n(f)   }_{ H^{-\lambda-2}(\T^d) }  \Id r  \bigg] \\
     &\le C(n) |u-s| \norm{f}_{ H^{-\lambda-2}(\T^d) },
\end{align*}
where we used the uniform bound~\eqref{eq: aux_uniform_bound} in third and last step and \eqref{item: approx_bound_smaller_space} in the fifth step. Putting the inequalities for \((I)\) and \((II)\) together, the claim follows. 

For the second inequality we notice that by the affine-linear nature of the SPDE~\eqref{eq: limiting_spde}, we do not produce any new terms in the derivative (see Lemma~\ref{lemma: derivative_test_semigroup}). Instead, we have now three terms, which can be handled in the exact same way as before. 
\end{proof}

\subsection{Generator of approximating SPDE}

In this section, we derive the generator of the approximation SPDE \((\rho_t^n,0 \leq t \leq T)\) corresponding to the drift operator \(A_n\). 
Let us recall that the SPDE \(\rho^n\) is a Markov process~\cite[Theorem 9.20]{Prato1992}, and we can apply It\^{o}'s formula~\cite[Theorem~4.32]{Prato1992} for general smooth $\Phi$.  
Following~\cite[Chapter~9.3]{Prato1992} with the necessary modification to include the time non-homogeneus case, we compute the generator for the backward in time propagation.

\begin{proposition}
\label{lemma: ito_formula_approx}
Let \(\Phi \in  FC^\infty(H^{-\lambda-2}(\T^d))\) and \(T_{s,t}^n\) be given by~\eqref{eq: def_approx_semigroup}. Then, we have 
\begin{equation}
    \partial_s T_{s,t}^n\Phi(f) = - \frac{1}{2} \sum\limits_{j=1}^d  \mathrm{Tr}\big( \nabla^2 T_{s,t}^n\Phi(f) B_j(s) B_j^*(s) \big)- \langle A^n(s,f) , \nabla T_{s,t}^n\Phi (f)) \rangle_{{H}^{-\lambda-2}(\T^d)} 
\end{equation}    
for any $0\leq s<t \leq T$,  and we set
\begin{equation*}
   \mathcal{G}_s^n T_{s,t}^n\Phi(f) : =  - \partial_s T_{s,t}^n\Phi(f). 
\end{equation*}
In particular, \(f \mapsto \partial_s T_{s,t}^n \Phi(f) \) as a map from \(H^{-\lambda-2}(\T^d)\) to \(\R\) is continuous. 
\end{proposition}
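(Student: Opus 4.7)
The strategy combines the Markov property of the approximating flow $\rho^n$ with It\^o's formula on the Hilbert space $H^{-\lambda-2}(\T^d)$. This works because, in contrast to the limiting SPDE, equation~\eqref{eq: approx_solution} is a genuine Hilbert-space SDE with a linear bounded drift $A_n$ and an additive, trace-class noise, so the standard infinite-dimensional It\^o calculus of~\cite[Ch.~4]{Prato1992} applies. The rough plan is first to convert the $s$-derivative of $T^n_{s,t}\Phi$ into an Itô expansion of $T^n_{u,t}\Phi$ along the flow for small $u-s$, then to pass to the limit using the uniform bounds and time-continuity derived in \S\ref{sec:generators}.

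More concretely, I would start from the Chapman--Kolmogorov identity~\eqref{eq: chapman}, which for $0\le s<s+h\le t$ reads
\begin{equation*}
T^n_{s,t}\Phi(f) \;=\; \E\!\left[T^n_{s+h,t}\Phi(\rho^n_{s,s+h}(f))\right].
\end{equation*}
Setting $F_h := T^n_{s+h,t}\Phi$, Remark~\ref{remark: infinite_diff} and Lemma~\ref{cor: reg_derivative} ensure that $F_h\in C^2_b(H^{-\lambda-2}(\T^d))$ with bounds independent of $h$. I would then apply~\cite[Theorem~4.32]{Prato1992} to $F_h(\rho^n_{s,\cdot}(f))$ on $[s,s+h]$, obtaining the expansion
\begin{align*}
F_h(\rho^n_{s,s+h}(f))-F_h(f)
&= \int_s^{s+h}\! \langle A_n(r,\rho^n_{s,r}(f)),\, \nabla F_h(\rho^n_{s,r}(f))\rangle_{H^{-\lambda-2}(\T^d)} \,\d r \\
&\quad + \tfrac{1}{2}\sum_{j=1}^d \int_s^{s+h}\! \mathrm{Tr}\!\left(\nabla^2 F_h(\rho^n_{s,r}(f)) B_j(r) B_j^*(r)\right) \d r + M_{s+h},
\end{align*}
where $M$ is a square-integrable martingale started at zero. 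The trace is well defined because $B_j(r)$ is Hilbert--Schmidt by \S\ref{subsec: Gaussian_noise}, so $B_j(r)B_j^*(r)$ is trace class on $H^{-\lambda-2}(\T^d)$, while $\nabla^2 F_h$ is a uniformly bounded operator. Taking expectations eliminates $M$ and, after dividing by $-h$, yields a prelimit identity for $h^{-1}\bigl(T^n_{s,t}\Phi(f)-T^n_{s+h,t}\Phi(f)\bigr)$.

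The main obstacle is then the passage to the limit $h\downarrow 0$, which is delicate because the integrand depends on $h$ \emph{both} through the terminal argument $F_h = T^n_{s+h,t}\Phi$ and through the time variable $r$ of the flow. I would handle the first dependence via Lemma~\ref{cor: con_semigroup_time}, which provides the Lipschitz-in-$u$ continuity of $\nabla T^n_{u,t}\Phi$ and $\nabla^2 T^n_{u,t}\Phi$ in the relevant norms, and the second via the continuity of $r\mapsto \rho^n_{s,r}(f)$ from Proposition~\ref{cor: stability_spde} together with Lemma~\ref{cor: properties_frechet}. Combined with the $n$-uniform bounds of Lemma~\ref{cor: reg_derivative} and the linear growth of $A_n$ from Lemma~\ref{lemma2: dissapation}, a dominated convergence argument yields
\begin{equation*}
-\partial_s T^n_{s,t}\Phi(f) \;=\; \langle A_n(s,f),\, \nabla T^n_{s,t}\Phi(f)\rangle_{H^{-\lambda-2}(\T^d)} + \tfrac{1}{2}\sum_{j=1}^d \mathrm{Tr}\!\left(\nabla^2 T^n_{s,t}\Phi(f) B_j(s) B_j^*(s)\right),
\end{equation*}
and an analogous argument with $h\uparrow 0$ (using the flow backward in time inside the expectation) identifies the left derivative. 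The continuity in $f$ of $\partial_s T^n_{s,t}\Phi(f)$ then follows by inspection of the explicit formulas from Lemma~\ref{lemma: derivative_test_semigroup}: the stability estimate~\eqref{eq: stability_spde} gives continuity of $f\mapsto \rho^n_{s,t}(f)$ in $L^2$, boundedness and linearity of $A_n$ handle the first term, while for the trace term one uses the Hilbert--Schmidt property of $B_j$ to reduce matters to continuity of $f\mapsto \langle \nabla^2 T^n_{s,t}\Phi(f) e_k, e_l\rangle$ for an orthonormal basis, which again comes from Lemma~\ref{lemma: derivative_test_semigroup}.
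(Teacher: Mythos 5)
Your proposal follows essentially the same route as the paper: Chapman--Kolmogorov, the Hilbert-space It\^o formula of~\cite[Theorem~4.32]{Prato1992} applied to $T^n_{s+h,t}\Phi(\rho^n_{s,\cdot}(f))$, and a limit $h\downarrow 0$ justified by the uniform bounds on the derivatives of the semigroup and their Lipschitz-in-time continuity. One point you gloss over: the integrand depends on $r$ through a \emph{third} channel besides $F_h$ and the flow, namely the explicit time-dependence of the coefficients $A_n(r,\cdot)$ (via $b(r,\cdot,\mu_r)$ and $\mu_r$) and $B_j(r)$ (via $\sqrt{\mu_r}$); identifying the limit as $\langle A_n(s,f),\nabla T^n_{s,t}\Phi(f)\rangle$ and $\mathrm{Tr}(\nabla^2 T^n_{s,t}\Phi(f)B_j(s)B_j^*(s))$ requires controlling $A_n(r,\cdot)-A_n(s,\cdot)$ and $B_j(r)-B_j(s)$, which is exactly where Assumption~\ref{ass: coef_fokker} enters (the terms $(III)$, $(\tilde{III})$, $(\tilde{IV})$ in the paper's proof). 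This is an easily repaired omission rather than a flaw in the approach, but your dominated-convergence step is incomplete without it.
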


\begin{proof} 
    Let us recall that $\rho^n$ solves SDE~\eqref{eq: approx_solution} 
in the space \( H^{-\lambda-2}(\T^d)\). 
 We can apply the Chapman--Kolmogorov equality~\eqref{eq: chapman} for \(\rho^n\). For small enough \(\epsilon >0\) we find 
    \begin{align*}
    T_{s,t}^n\Phi(f) &= \E \big[T_{s+\epsilon,t}^n \Phi(\rho_{s,s+\epsilon}(f)) \big]\\
        & = T_{s+\epsilon,t}^n\Phi( f)+ \int\limits_{s}^{s+\epsilon} \E\big[ \langle A_n(r, \rho_{s,r}^n(f)), \nabla T_{s+\epsilon,t}^n\Phi (\rho_{s,r}^n(f)) \rangle_{{H}^{-\lambda-2}(\T^d)} \big] \Id r \\
        &\quad  + \frac{1}{2} \sum\limits_{j=1}^d 
        \int\limits_{s}^{s+\epsilon}  \E\big[  \mathrm{Tr}\big( \nabla^2 T_{s+\epsilon,t}^n\Phi( \rho_{s,r}^n(f))  B_{j}(r) B_{j}^*(r) \big] \big) \Id r,
    \end{align*}
    where we applied It\^{o}'s formula~\cite[Theorem~4.32]{Prato1992} in the last step. 
This implies 
\begin{align}\label{eq: ito_aux1}
\begin{split}
    \frac{1}{\epsilon}\big(T_{s+\epsilon,t}^n\Phi-T_{s,t}^n\Phi\big)  & = - \frac{1}{\epsilon}\int\limits_{s}^{s+\epsilon} \E\big[ \langle A_n(r, \rho_{s,r}^n(f)), \nabla T_{s+\epsilon,t}^n\Phi (\rho_{s,r}^n(f)) \rangle_{{H}^{-\lambda-2}(\T^d)} \big] \Id r \\
    &\quad -\frac{1}{2\epsilon} \sum\limits_{j=1}^d 
        \int\limits_{s}^{s+\epsilon}  \E\big[  \mathrm{Tr}\big( \nabla^2 T_{s+\epsilon,t}^n \Phi ( \rho_{s,r}^n(f))  B_{j}(r) B_{j}^*(r) \big) \big] \Id r.     
\end{split}
\end{align}
It remains to demonstrate the the integrands are continuous with respect to \(r \in [s, s+\epsilon]\), so that the claim will then follow by sending $\epsilon$ to zero.
For the first order term we find  
\begin{align*}
     &\big|\E\big[ \langle A_n(r, \rho_{s,r}^n(f)), \nabla T_{s+\epsilon,t}^n \Phi (\rho_{s,r}^n(f)) \rangle_{H^{-\lambda-2}(\T^d)} - \langle A_n(s,f), \nabla T_{s,t}^n\Phi (f) \rangle_{H^{-\lambda-2}(\T^d)} \big] \big|\\
    &\quad \le \big| \E\big[ \langle A_n(r, \rho_{s,r}^n(f)), \nabla T_{s+\epsilon,t}^n \Phi (\rho_{s,r}^n(f))- \nabla T_{s,t}^n \Phi (\rho_{s,r}^n( f)) \rangle_{H^{-\lambda-2}(\T^d)} \big] \big| \\
    &\quad \quad + \big| \E\big[ \langle A_n(r, \rho_{s,r}^n(f)), \nabla T_{s,t}^n\Phi (\rho_{s,r}^n(f))- \nabla T_{s,t}^n\Phi(f) \rangle_{H^{-\lambda-2}(\T^d)} \big] \big|\\
    &\quad \quad+ \big| \E\big[ \langle A_n(r, \rho_{s,r}^n(f))-A_n(s,\rho_{s,r}^n(f)) , \nabla T_{s,t}^n \Phi(f) \rangle_{H^{-\lambda-2}(\T^d)} \big] \big| \\
    &\quad \quad+ \big| \E\big[ \langle A_n(s, \rho_{s,r}^n(f))-A_n(s,f) , \nabla T_{s,t}^n \Phi (f) \rangle_{H^{-\lambda-2}(\T^d)} \big] \big| \\
    &\quad =:(I)+(II)+(III)+(IV). 
\end{align*}
For the first term \((I)\) we obtain 
\begin{align*}
    (I) &= \big| \E\big[ \langle A_n(r, \rho_{s,r}^n(f)), \nabla T_{s+\epsilon,t}^n \Phi (\rho_{s,r}^n(f))- \nabla T_{s,t}^n \Phi (\rho_{s,r}^n( f)) \rangle_{H^{-\lambda-2}(\T^d)} \big] \big|  \\
    &\le C(n) \E\big[\sup\limits_{s \le t \le T} \norm{\rho_{s,t}(f)}_{{H}^{-\lambda-2}(\T^d))}^2 \big]^{\frac{1}{2}}  \E\big[\norm{\nabla T_{s+\epsilon,t}^n \Phi (\rho_{s,r}^n(f))- \nabla T_{s,t}^n \Phi (\rho_{s,r}^n( f)) }^2_{{H}^{-\lambda-2}(\T^d)}\big ]^{\frac{1}{2}}\\
    &\le C(n) \epsilon \norm{f}_{{H}^{-\lambda-2}(\T^d)} 
\end{align*}
where we used property\eqref{item: approx_bound_smaller_space}, the uniform bound~\eqref{eq: aux_uniform_bound} and Lemma~\ref{cor: con_semigroup_time}.
For the second term we find by similar computations 
\begin{align*}
    (II) &\le C\big( n,\norm{f}_{ H^{-\lambda-2}(\T^d)} \big) \sum\limits_{i=1}^n\norm{\varphi_i}_{ H^{-\lambda-2}(\T^d)}\\
    &\quad \cdot \E\big[ \big| g(\langle \rho_{s,t}^n(\rho_{s,r}^n(f)) , \varphi_1 \rangle_{H^{-\lambda-2}(\T^d)}  , \ldots, \langle \rho_{s,t}^n(\rho_{s,r}^n(f)) , \varphi_m \rangle_{H^{-\lambda-2}(\T^d)}  ) \\
    &\quad - g(\langle \rho_{s,t}^n(f) , \varphi_1 \rangle_{H^{-\lambda-2}(\T^d)}  , \ldots, \langle \rho_{s,t}^n(f) , \varphi_m \rangle_{H^{-\lambda-2}(\T^d)}  ) \big| \big] \\
    &\le  C\big( n,m,\norm{f}_{ H^{-\lambda-2}(\T^d)} \big) 
    \norm{g}_{C_b^2(\R^m)} \sum\limits_{j=1}^m \E\big[|\langle \rho^n_{s,t}(\rho_{s,r}^n(f))-\rho^n_{s,t}(f) , \varphi_j \rangle_{H^{-\lambda-2}(\T^d)} | \big]  \\
    &\le  C\big( n,m,\norm{f}_{ H^{-\lambda-2}(\T^d)} \big)  \E\big[\norm {\rho_{s,t}^n(\rho_{s,r}^n(f))-\rho_{s,t}^n(f)}_{H^{-\lambda-2}(\T^d)}^2\big]^{\frac{1}{2}}  \\
    &\le  C\big( n,m,\norm{f}_{ H^{-\lambda-2}(\T^d)} \big)  \E\big[\norm {\rho_{s,r}^n(f)-f}_{H^{-\lambda-2}(\T^d)}^2\big]^{\frac{1}{2}} \\
    &\le  C\big( n,m,\norm{f}_{ H^{-\lambda-2}(\T^d)} \big) \int\limits_s^{s+\epsilon} \E\big[\norm{A_n(r,\rho_{s,r}(f))}_{ H^{-\lambda-2}(\T^d) }^2 \big]^{\frac{1}{2}}\Id r   \\
    &\quad \quad + \sum\limits_{j=1}^d  \E \bigg[ \norm{\int\limits_{s}^{s+\epsilon} B_j(r) \Id W_j(r) }_{ H^{-\lambda-2}(\T^d)}^2 \bigg ]^{\frac{1}{2}} \\
    &\le  C\big( n,m,\norm{f}_{ H^{-\lambda-2}(\T^d)} \big) \bigg( \int\limits_s^{s+\epsilon} \E\big[\norm{\rho_{s,r}(f) }_{ H^{-\lambda-2}(\T^d)}^2\big] +1 \Id r \big)^{\frac{1}{2}} \\
    &\le C\big( n,m,\norm{f}_{ H^{-\lambda-2}(\T^d)} \big) \norm{f}_{ H^{-\lambda-2}(\T^d)} \epsilon^{\frac{1}{2}},
\end{align*}
where we used~\eqref{eq: stability_spde} in the fourth step, property \eqref{item: approx_bound_smaller_space}, the BDG-inequality in the sixth step and the bound ~\eqref{eq: aux_uniform_bound} in the last step.
For the third term we find 
\begin{align*}
    (III) &\le \E\bigg[ \norm{\nabla T_{s,t}^n\Phi(f) }_{H^{-\lambda-2}}\bigg( 
    \norm{j_n * \big( \nabla\cdot ( (b(r,\cdot,\mu_r)-b(s,\cdot,\mu_s)) j_n* \rho_{s,r}^n(f)  \big) }_{H^{-\lambda-2}(T^d)}\\
    &\quad + \norm{j_n*  \bigg\langle \nabla_x \cdot  \bigg(  \mu_r(\cdot )  \frac{\delta b}{\delta m} (r,\cdot ,\mu_r , v )- \mu_s(\cdot)  \frac{\delta b}{\delta m} (s,\cdot,\mu_s , v ) \bigg), j_n* \rho_{s,r}^n(v)  \bigg\rangle  \bigg\rangle }_{H^{-\lambda-2}(\T^d)} \bigg) \bigg] 
\end{align*}
Applying Lemma~\ref{lemma: product_distr} and Assumption~\ref{ass: coef_fokker} we obtain 
\begin{align*}
      &\norm{j_n * \big( \nabla\cdot ( (b(r,\cdot,\mu_r)-b(s,\cdot,\mu_s)) j_n* \rho_{s,r}^n(f)\big)  }_{H^{-\lambda-2}(T^d)}\big) \\
      &\quad \le C \norm{  (b(r,\cdot,\mu_r)-b(s,\cdot,\mu_s)) j_n* \rho_{s,r}^n(f) }_{H^{-\lambda-1}(T^d)} \\
      &\quad \le C(n)   \norm{\rho_{s,r}^n(f) }_{H{-\lambda-2}(T^d)} \norm{ b(r,\cdot,\mu_r)-b(s,\cdot,\mu_s)}_{H^{\lambda'}(\T^d)}.
\end{align*}
This implies 
\begin{align*}
    &\E\big[ \norm{\nabla T_{s,t}^n\Phi(f) }_{H^{-\lambda-2}(T^d)}\big( 
    \norm{\nabla\cdot ( (b(r,\cdot,\mu_r)-b(s,\cdot,\mu_s))  \rho_{s,r}^n(f) }_{H{-\lambda-2}(T^d)}\big)\big]  \\
    &\quad \le  C \norm{\nabla T_{s,t}^n\Phi(f) }_{H^{-\lambda-2}(T^d)}
    \norm{ b(r,\cdot,\mu_r)-b(s,\cdot,\mu_s)}_{H^{\lambda'}(\T^d)} \sup\limits_{s \le r \le T} \E\big[\norm{\rho_{s,r}^n(f) }_{H^{-\lambda-2}(T^d)}^2\big]^{\frac{1}{2}}. 
\end{align*}
The right hand side vanishes as \(\epsilon \to 0\) and, therefore, \(r \to s\) by continuity of \(b\) and Assumption~\ref{ass: coef_fokker}. For the flat derivative term we have 
\begin{align*}
    &\norm{j_n*  \bigg\langle \nabla_x \cdot  \bigg(  \mu_r(\cdot )  \frac{\delta b}{\delta m} (r,\cdot ,\mu_r , v )- \mu_s(\cdot)  \frac{\delta b}{\delta m} (s,\cdot,\mu_s , v ) \bigg), j_n* \rho_{s,r}^n(v)  \bigg\rangle  \bigg\rangle}_{H^{-\lambda-2}(\T^d)} \\
    &\quad 
    \le \norm{j_n*  \bigg\langle \nabla_x \cdot  \bigg(  (\mu_r(\cdot ) -\mu_s(\cdot)) \frac{\delta b}{\delta m} (s,\cdot ,\mu_s, v )\bigg), j_n* \rho_{s,r}^n(v)  \bigg\rangle  \bigg\rangle}_{H^{-\lambda-2}(\T^d)} \\
    &\quad \quad + \norm{j_n*  \bigg\langle \nabla_x \cdot  \bigg(  \mu_r(\cdot ) \Big(  \frac{\delta b}{\delta m} (r,\cdot ,\mu_r , v )-  \frac{\delta b}{\delta m} (s,\cdot,\mu_s , v )  \Big) \bigg), j_n* \rho_{s,r}^n(v)  \bigg\rangle  \bigg\rangle}_{H^{-\lambda-2}(\T^d)}
\end{align*}
For the first term we find 
\begin{align*}
    &\norm{j_n*  \bigg\langle \nabla_x \cdot  \bigg(  (\mu_r(\cdot ) -\mu_s(\cdot)) \frac{\delta b}{\delta m} (s,\cdot ,\mu_s, v )\bigg), j_n* \rho_{s,r}^n(v)  \bigg\rangle  \bigg\rangle}_{H^{-\lambda-2}(\T^d)} \\
    &\quad \le \norm{ \bigg\langle  \bigg(  (\mu_r(\cdot ) -\mu_s(\cdot)) \frac{\delta b}{\delta m} (s,\cdot ,\mu_s, v )\bigg), j_n* \rho_{s,r}^n(v)  \bigg\rangle  \bigg\rangle}_{H^{-\lambda-1}(\T^d)} \\
    &\quad \le C(n) \norm{  (\mu_r(x ) -\mu_s(x)) \norm{  \frac{\delta b}{\delta m} (s,x ,\mu_s,v)}_{H^{\lambda'}(\T^d)_v } }_{H^{-\lambda-2}(\T^d)_x}
    \norm{\rho_{s,r}^n }_{H^{-\lambda-2}(\T^d)}
\end{align*}
%
For the fourth term we obtain 
\begin{equation*}
    (IV) \le  C(n) \E\big[ \norm{\rho_{s,r}^n(f) -f }_{{H}^{-\lambda-2}(\T^d)}^2 \big]^{\frac{1}{2}} 
    \le C(n)\norm{f}_{ H^{-\lambda-2}(\T^d)}  \epsilon ,
\end{equation*}
where the second inequality follows from similar computations as in the second term. 
Combining all estimates for \((I),(II),(III),(IV)\) on the interval \(r \in [s, s+\epsilon]\), we obtain 
\begin{equation*}
     \lim_{\epsilon \to 0} \big|\E\big[ \langle A_n(r, \rho_{s,r}^n(f)), \nabla T_{s+\epsilon,t}\Phi (\rho_{s,r}^n(f)) \rangle_{{H}^{-\lambda-2}(\T^d)} - \langle A_n(s,f), \nabla T_{s,t}\Phi (f) \rangle_{{H}^{-\lambda-2}(\T^d)} \big] \big| = 0,
\end{equation*}
and, consequently, 
\begin{equation} \label{eq: ito_aux2}
\lim\limits_{\epsilon \to 0 } \frac{1}{\epsilon}\int\limits_{s}^{s+\epsilon} \E\big[ \langle A_n(r, \rho_{s,r}^n(f)), \nabla T_{s+\epsilon,t}^n\Phi (\rho_{s,r}^n(f)) \rangle_{{H}^{-\lambda-2}(\T^d)} \big] \Id r = \langle A_n(s,f), \nabla T_{s,t}^n\Phi (f) \rangle_{{H}^{-\lambda-2}(\T^d)} \big) .    
\end{equation}

For the second order term with the trace we have 
\begin{align*}
    &\big| \E\big[  \mathrm{Tr}\big( \nabla^2 T_{s+\epsilon,t}^n\Phi( \rho_{s,r}^n(f))  B_{j}(r) B_{j}^*(r) \big) \big] -  \E\big[  \mathrm{Tr}\big( \nabla^2 T_{s,t}^n( f)  B_{j}(s) B_{j}^*(s) \big) \big]\big| \\
    &\quad \le \big| \E\big[  \mathrm{Tr}\big( \big( \nabla^2 T_{s+\epsilon,t}^n\Phi( \rho_{s,r}^n(f))-\nabla^2 T_{s,t}^n\Phi( \rho_{s,t}^n(f))\big)   B_{j}(r) B_{j}^*(r) \big) \big] \big| \\
    &\quad \quad + \big| \E\big[  \mathrm{Tr}\big( \big( \nabla^2 T_{s,t}^n\Phi( \rho_{s,r}^n(f)) - T_{s,t}^n\Phi(f)\big)  B_{j}(r) B_{j}^*(r) \big) \big] \big| \\
    &\quad \quad + \big| \E\big[  \mathrm{Tr}\big( T_{s,t}^n\Phi(f)  \big( B_{j}(r)- B_{j}(s)\big)  B_{j}^*(r) \big) \big] \big| \\
    &\quad \quad + \big| \E\big[  \mathrm{Tr}\big( T_{s,t}^n\Phi(f)   B_{j}(s)  \big( B_{j}^*(r)- B_{j}^*(s) \big) \big] \big| \\
    &\quad =: (\tilde I) +(\tilde {II}) +(\tilde{III}) +(\tilde {IV} ) 
\end{align*}
for \(r \in [s,s+\epsilon]\). Recall the following inequality~\cite[Appendix~B]{LiuWei2015SPDE} for the trace 
\begin{equation} \label{eq: trace_ineq}
     |\mathrm{Tr}\big( \nabla^2 T_{s,t}^n\Phi(f)  B_{j}(r) B_{j}^*(r) \big)|  \le \norm{B_{j}(r)}^2_{L_2(L^2(\T^d),  H^{-\lambda-2}(\T^d))} \norm{\nabla^2 T_{s,t}^n\Phi(f)}_{L({H}^{-\lambda-2}(\T^d),{H}^{-\lambda-2}(\T^d)) }.
\end{equation}
This inequality, in combination with Lemma~\ref{cor: con_semigroup_time} proves 
\begin{equation*}
    (\tilde I) \le C(n) \sup\limits_{s \le r \le T} \norm{B_{j}(r)}^2_{L_2(L^2(\T^d),  H^{-\lambda-2}(\T^d))}  \epsilon
    \le C(n) \epsilon . 
\end{equation*}
For the second term, we find 
\begin{align*}
    (\tilde {II}) \le C \E\big[ \norm{\nabla^2 T_{s,t}^n\Phi(\rho_{s,r}(f))-\nabla^2 T_{s,t}^n\Phi(f)}_{L({H}^{-\lambda-2}(\T^d),{H}^{-\lambda-2}(\T^d)) }\big] .
\end{align*}
By Remark~\ref{remark: infinite_diff} the map \(f \mapsto T_{s,t}^n\Phi(f)\) is infinitely many times differentiable with bounded derivatives. Hence, applying the mean-value theorem we find 
\begin{align*}
    (\tilde {II})  \le C \norm{T_{s,t}^n\Phi}_{C_b^3(H^{-\lambda-2}(\T^d)} \E\big[\norm{\rho_{s,r}(f)- f }_{H^{-\lambda-2}(\T^d)}\big] 
    \le C(n)\norm{T_{s,t}^n\Phi}_{C_b^3(H^{-\lambda-2}(\T^d)} \epsilon^{\frac{1}{2}},
\end{align*}
where the last step follows by similar computations as in for the term \((II)\). 
For the third term \((\tilde{III})\) we utilize again~\eqref{eq: trace_ineq} to obtain
\begin{align*}
    (\tilde{III})
    &\le C \norm{B_{j}(r)-B_{j}(s)}^2_{L_2(L^2(\T^d),  H^{-\lambda-2}(\T^d))} \norm{B^*_{j}(r)}^2_{L_2(L^2(\T^d),  H^{-\lambda-2}(\T^d))} \\
    &\le C \norm{B^*_{j}(r)-B^*_{j}(s)}^2_{L_2(L^2(\T^d),  H^{-\lambda-2}(\T^d))} \sup\limits_{0 \le r \le T} \norm{B^*_{j}(r)}^2_{L_2(L^2(\T^d),  H^{-\lambda-2}(\T^d))} \\
    &\le C \norm{\sqrt{\mu_r}-\sqrt{\mu_s}}_{L^2(\T^d)} \sup\limits_{0 \le r \le T}  \norm{\sqrt{\mu_r}}_{L^2(\T^d)} \\
    &\le C \norm{\sqrt{\mu_r}-\sqrt{\mu_s}}_{L^2(\T^d)} ,
\end{align*}
where we used~\eqref{eq: hilbert_schmidt_b} in the third step. 
By Assumption~\ref{ass: coef_fokker} the last term vanishes as \(r \to s \). 
The fourth term \((\tilde{IV})\) follows by analogous computations, which proves the continuity. Hence, we find 
\begin{equation*}
    \lim\limits_{\epsilon \to 0 } \frac{1}{2\epsilon} \sum\limits_{j=1}^d 
        \int\limits_{s}^{s+\epsilon}  \E\big[ \mathrm{Tr}\big( \nabla^2 T_{s+\epsilon,t}^n \Phi ( \rho_{s,r}^n(f))  B_{j}(r) B_{j}^*(r) \big) \big] \Id r  =  \mathrm{Tr}\big( \nabla^2 T_{s,t}^n \Phi (f)  B_{j}(s) B_{j}^*(s) \big)   . 
\end{equation*}
Combining it with equalities~\eqref{eq: ito_aux1},~\eqref{eq: ito_aux2} proves the equation stated in the Lemma. 

 For the continuity of \(f \mapsto \partial_s T_{s,t}^n \Phi(f) \) we notice that \(\Phi\) is smooth, \(A_n\) is bounded and continuous in the space variable and the trace operator has a continuity property by inequality~\eqref{eq: trace_ineq}. 
\end{proof}

In the following, it is important to view the derivative as a function of $x$, in $H^{\lambda +2}(\T^d)$. We define
\begin{equation}\label{eq: hat_first}
    \nabla \widehat T^{n}_{s,t}\Phi(f) (x)
:= \sum\limits_{k \in \Z^d} \langle k \rangle^{-2(\lambda+2)} \overline{ \langle \nabla T_{s,t}^n \Phi(f) ,e_k \rangle }  e_k(x).   
\end{equation}
A simple computation and duality~\cite[Proposition~1.58]{Bahouri2011} provides the following: 
\begin{lemma} \label{cor: dual_identification_first_derivative}
Let \(\Phi \in  FC^\infty(H^{-\lambda-2}(\T^d))\) and \(f,h \in H^{-\lambda-2}(\T^d)\), then
    \begin{equation}
    \label{eq:identity_first_der}
        \langle \nabla T_{s,t}^n \Phi(f)  , h \rangle_{H^{-\lambda-2}(\T^d)}
        =  \langle \nabla \hat T_{s,t}^n \Phi(f)  , h \rangle. 
    \end{equation}
    As a consequence, for any \(f \in H^{-\lambda-2}(\T^d)\)
     \begin{equation}
     \label{eq:reg_1_Phi}
    \norm{\nabla \hat T_{s,t}^n\Phi ( f) }_{H^{\lambda+2}(\T^d)}
    =   \norm{\nabla T_{s,t}^n \Phi(f) }_{H^{-\lambda-2}(\T^d)} \le C [\Phi]_{C^1}.  
    \end{equation}
    Moreover, the function \(x \mapsto \nabla \hat T_{s,t}^n\Phi (f)(x) \in B_{\infty,\infty}^{\lambda+2-d/2}( \T^d)\) and 
    \begin{align}
    \nonumber
        &\sup\limits_{n \in \N} \sup\limits_{f \in H^{-\lambda-2}(\T^d)} \norm{\nabla \hat T_{s,t}^n\Phi ( f) }_{B_{\infty,\infty}^{\lambda+2-d/2}(\T^d)} \\
        \label{cor: besov_reg_first_order_function}
        &\quad \le C
        \sup\limits_{n \in \N} \sup\limits_{f \in H^{-\lambda-2}(\T^d)} \norm{\nabla \hat T_{s,t}^n\Phi ( f) }_{H^{\lambda+2}(\T^d)}  \le   C [\Phi]_{C^1}  .  
    \end{align}
\end{lemma}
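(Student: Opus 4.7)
The plan is to prove the three assertions by direct Fourier-series computation, combined with the identification of negative Sobolev norms as weighted $\ell^2$ norms of Fourier coefficients and one standard Besov embedding.

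First I would verify the duality identity \eqref{eq:identity_first_der}. Since $h\in H^{-\lambda-2}(\T^d)$ has Fourier expansion $h=\sum_k \langle h,e_k\rangle e_k$, and since the definition \eqref{eq: hat_first} sets
\[
\nabla \widehat T^{n}_{s,t}\Phi(f)(x)=\sum_{k\in\Z^d}\langle k\rangle^{-2(\lambda+2)}\overline{\langle \nabla T_{s,t}^n\Phi(f),e_k\rangle}\,e_k(x),
\]
the distributional pairing $\langle \nabla\widehat T^n_{s,t}\Phi(f),h\rangle$ (interpreting the smooth function $\nabla\widehat T^n_{s,t}\Phi(f)$ as a distribution acting on $h$ via the $L^2$ pairing extended by duality, as explained in \S\ref{sec:notation}) is computed by summing against the Fourier coefficients of $h$. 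Comparing this with the definition of the $H^{-\lambda-2}$ scalar product, both sides equal
\[
\sum_{k\in\Z^d}\langle k\rangle^{-2(\lambda+2)}\langle \nabla T_{s,t}^n\Phi(f),e_k\rangle\,\overline{\langle h,e_k\rangle},
\]
which proves \eqref{eq:identity_first_der}. I expect this step to be a one-line Parseval computation once the conventions are unpacked.

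Next, for the norm identity in \eqref{eq:reg_1_Phi} I would compute directly from \eqref{eq: hat_first}:
\[
\|\nabla \widehat T^{n}_{s,t}\Phi(f)\|_{H^{\lambda+2}(\T^d)}^2
=\sum_{k\in\Z^d}\langle k\rangle^{2(\lambda+2)}\,\langle k\rangle^{-4(\lambda+2)}|\langle \nabla T_{s,t}^n\Phi(f),e_k\rangle|^2
=\|\nabla T_{s,t}^n\Phi(f)\|_{H^{-\lambda-2}(\T^d)}^2.
\]
The weights $\langle k\rangle^{-2(\lambda+2)}$ were chosen exactly so that this cancellation occurs; this is the Riesz-type identification already implicit in the definition of $\nabla$ as an element of $H^{-\lambda-2}(\T^d)$ via Lemma~\ref{lemma: derivative_test_semigroup}. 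The uniform bound by $C[\Phi]_{C^1}$ then follows immediately from \eqref{eq:est_1} in Lemma~\ref{cor: reg_derivative}.

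Finally, for \eqref{cor: besov_reg_first_order_function} I would invoke the standard Sobolev/Besov embedding $H^{s}(\T^d)\hookrightarrow B^{s-d/2}_{\infty,\infty}(\T^d)$, valid for any $s\in\R$, applied with $s=\lambda+2$ (this should be available via Appendix~\ref{sec:appendix} or \cite{Schmeisser1987,Triebel2006}). Since $\lambda>3d/2$ in \eqref{eq: const}, the exponent $\lambda+2-d/2$ is strictly positive, so the Besov norm controls H\"older-type norms of the function $x\mapsto\nabla\widehat T^n_{s,t}\Phi(f)(x)$. Chaining the embedding with \eqref{eq:reg_1_Phi} yields the uniform bound by $C[\Phi]_{C^1}$. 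No step appears to be a serious obstacle; the only mild subtlety is to confirm that the two interpretations of $\langle \nabla\widehat T^n_{s,t}\Phi(f),h\rangle$ (as a distributional pairing and as the $L^2$-extended pairing used throughout) agree, which is handled by the Fourier computation in the first paragraph.
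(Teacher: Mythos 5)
Your proposal is correct and follows essentially the same route as the paper: the duality identity \eqref{eq:identity_first_der} by a direct Fourier/Parseval computation (the paper invokes "a simple computation and duality" via \cite[Proposition~1.58]{Bahouri2011}), the norm identity by the cancellation of the weights $\langle k\rangle^{\pm 2(\lambda+2)}$ combined with \eqref{eq:est_1}, and the Besov bound by the embedding $H^{\lambda+2}(\T^d)\hookrightarrow B^{\lambda+2-d/2}_{\infty,\infty}(\T^d)$ from \cite[Corollary~3.5.3]{Schmeisser1987}. No gaps.
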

\noindent Note that the latter claim follows by Lemma~\ref{cor: reg_derivative} and the Sobolev embedding~\cite[Corollary~3.5.3]{Schmeisser1987}.

It is important to write also the second derivative  as a function of two variables: for \(\Phi \in  FC^\infty(H^{-\lambda-2}(\T^d))\) we define 
\begin{align} \label{eq: def_second_derivative_function}
    \nabla^2 \hat T_{s,t}^n \Phi(f)(x,y)  &:=  \sum\limits_{i_1,i_2=1}^m \E\big[\partial_{x_{i_1}}\partial_{x_{i_2}} g( \langle \rho_{s,t}^n( f), \varphi_1\rangle_{H^{-\lambda-2}} , \ldots,\langle \rho_{s,t}^n(f), \varphi_m  \rangle_{H^{-\lambda-2}} )\big] \nonumber  \\
    &\quad \cdot \sum\limits_{k,l \in \Z^d} \langle k \rangle^{-2(\lambda+2)}\langle l \rangle^{-2(\lambda+2)} e_k(x)e_l(y) \overline{\langle (\mathsf{Y}_{s,t}^n)^* \varphi_{i_1}, e_k \rangle }  \overline{
    \langle  (\mathsf{Y}_{s,t}^n)^* \varphi_j , e_l \rangle }.
   \end{align}
   
\noindent Our aim is to derive an alternative representation for the term involving the trace.

\begin{lemma} \label{lemma: generatpr_trace_computation}
For \(\Phi \in  FC^\infty(H^{-\lambda-2}(\T^d))\) we have  
\begin{equation*}
    \frac{1}{2} \sum\limits_{j=1}^d   \mathrm{Tr}\big( \nabla^2 T_{s,t}^n\Phi(f) B_j(s) B_j^*(s) \big)  = \frac{\sigma^2}{2} \sum\limits_{j=1}^d   \int_{\T^d}  \partial_{x_j}  \partial_{y_j} \nabla^2 \hat T_{s,t}^n \Phi(f)( x, y) \big|_{x=y} \Id \mu_s(x)  
\end{equation*} 
\end{lemma}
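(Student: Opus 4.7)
The plan is to reduce the trace, via the cyclic property and a suitable orthonormal basis of $L^2(\T^d)$, to a sum which collapses onto the diagonal by completeness, thereby producing the factor $d\mu_s$.

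First, since $B_j(s)\colon L^2(\T^d)\to H^{-\lambda-2}(\T^d)$ is Hilbert--Schmidt by~\eqref{eq:est_B} and $\nabla^2 T^n_{s,t}\Phi(f)$ is bounded on $H^{-\lambda-2}(\T^d)$ by Lemma~\ref{cor: reg_derivative}, the cyclic property of the trace (\cite[Appendix~B]{LiuWei2015SPDE}) gives
\begin{equation*}
\mathrm{Tr}\bigl(\nabla^2 T^n_{s,t}\Phi(f)\, B_j(s) B_j^*(s)\bigr) = \mathrm{Tr}_{L^2(\T^d)}\bigl(B_j^*(s)\,\nabla^2 T^n_{s,t}\Phi(f)\,B_j(s)\bigr).
\end{equation*}
Choosing a real orthonormal basis $(\zeta_k)_{k\in\mathbb{N}}$ of $L^2(\T^d)$ and using the adjoint relation $\langle B_j^*(s) v,u\rangle_{L^2} = \langle v, B_j(s) u\rangle_{H^{-\lambda-2}}$, this trace becomes $\sum_k \langle \nabla^2 T^n_{s,t}\Phi(f)\,B_j(s)\zeta_k,\,B_j(s)\zeta_k\rangle_{H^{-\lambda-2}(\T^d)}$.

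Next I would establish the second-order analogue of Lemma~\ref{cor: dual_identification_first_derivative}, namely
\begin{equation*}
\langle \nabla^2 T^n_{s,t}\Phi(f)\,h_1,h_2\rangle_{H^{-\lambda-2}(\T^d)} = \langle \nabla^2\hat T^n_{s,t}\Phi(f)(x,y),\,h_1(x)\otimes h_2(y)\rangle,
\end{equation*}
the right-hand side being the distributional pairing on $\T^d\times\T^d$. This follows by plugging the Fourier expansion of $(\mathsf{Y}^n_{s,t})^*\varphi_i$ into the bilinear-form formula of Lemma~\ref{lemma: derivative_test_semigroup}, comparing with~\eqref{eq: def_second_derivative_function}, using~\eqref{eq:identity_first_der} twice, and exploiting the symmetry of the Hessian of $g$. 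Specializing to $h_1=h_2=B_j(s)\zeta_k=\partial_{x_j}(\sigma\zeta_k\sqrt{\mu_s})$ and integrating by parts in both $x$ and $y$ yields
\begin{equation*}
\langle \nabla^2 T^n_{s,t}\Phi(f)\,B_j(s)\zeta_k,\,B_j(s)\zeta_k\rangle_{H^{-\lambda-2}} = \sigma^2\!\int_{\T^d}\!\!\int_{\T^d}\partial_{x_j}\partial_{y_j}\nabla^2\hat T^n_{s,t}\Phi(f)(x,y)\,\zeta_k(x)\zeta_k(y)\sqrt{\mu_s(x)\mu_s(y)}\,dx\,dy.
\end{equation*}
Summing over $k$ and invoking $\sum_k\zeta_k(x)\zeta_k(y)=\delta(x-y)$ collapses the double integral onto the diagonal and produces $\sigma^2\int_{\T^d}\partial_{x_j}\partial_{y_j}\nabla^2\hat T^n_{s,t}\Phi(f)(x,y)|_{x=y}\,d\mu_s(x)$. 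Summing over $j$ and dividing by two gives the statement of the lemma.

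The main obstacle is justifying the restriction to the diagonal $\{x=y\}$. From~\eqref{eq: def_second_derivative_function} together with the uniform bound on $\mathsf{Y}^n_{s,t}$ from Lemma~\ref{cor: properties_frechet}, $\nabla^2\hat T^n_{s,t}\Phi(f)$ lies in the anisotropic space $H^{\lambda+2}(\T^d)\otimes H^{\lambda+2}(\T^d)$, hence in $H^{\lambda+2}(\T^d\times\T^d)$; differentiating once in $x$ and once in $y$ yields a function in $H^{\lambda+1}(\T^d\times\T^d)$, and the regularity result for Sobolev functions on the diagonal recalled in Appendix~\ref{sec:appendix} produces a well-defined continuous trace on the diagonal since $\lambda+1-d/2>d/2$ under~\eqref{eq: const}. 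This continuity, coupled with the Hilbert--Schmidt bound~\eqref{eq:est_B}, also legitimizes the interchange of the $k$-sum with the distributional pairing.
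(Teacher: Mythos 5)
Your proposal is correct and follows essentially the same route as the paper: cyclic property of the trace, identification of $B_j^*(s)\,\nabla^2 T^n_{s,t}\Phi(f)\,B_j(s)$ as an integral operator on $L^2(\T^d)$ with kernel $\sigma^2\sqrt{\mu_s(x)\mu_s(y)}\,\partial_{x_j}\partial_{y_j}\nabla^2\hat T^n_{s,t}\Phi(f)(x,y)$, and then trace equals the integral of the kernel on the diagonal. The only step to phrase with more care is the collapse $\sum_k\zeta_k(x)\zeta_k(y)=\delta(x-y)$, which is not automatic for general continuous kernels but is rigorous here because the kernel is a finite sum of smooth tensor products coming from the cylindrical structure of $\Phi$ (so the $k$-sum reduces to Parseval's identity); the paper handles this same point by invoking the classical trace formula for trace-class integral operators with continuous kernel.
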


\begin{proof} 
We start with the cyclic property~\cite[Proposition~B.0.10]{LiuWei2015SPDE}, 
\begin{equation*}
    \mathrm{Tr}\big( \nabla^2 T_{s,t}^n \Phi(f) B_j(s) B_j^*(s) \big)
    = \mathrm{Tr}\big(  B_j^*(s) \nabla^2 T_{s,t}^n  \Phi(f) B_j(s)\big).
\end{equation*}
Recalling the representation\eqref{eq: representation_cylindrical}, denote by \begin{equation} \label{eq: tau_def}
    \tau_{i_1,i_2} = \E\big[\partial_{x_{i_1}}\partial_{x_{i_2}} g( \langle \rho_{s,t}^n( f), \varphi_1\rangle_{H^{-\lambda-2}} , \ldots,\langle \rho_{s,t}^n(f), \varphi_m  \rangle_{H^{-\lambda-2}} )\big]
    \end{equation}
Let \(u \in L^2(\T^d)\), then 
\begin{align*}
    &\langle \nabla^2 T_{s,t}^n \Phi(f)B_j(s)(u), e_k \rangle \\
    &\quad =\sum\limits_{i_1,i_2=1}^m \tau_{i_1,i_2} \langle (\mathsf{Y}_{s,t}^n)^* \varphi_{i_1}, e_k \rangle \langle  (\mathsf{Y}_{s,t}^n)^* \varphi_{i_2}, B_j(s) u  \rangle_{H^{-\lambda-2}} \\
    &\quad = -\sigma \sum\limits_{i_1,i_2=1}^m \tau_{i_1,i_2} \langle (\mathsf{Y}_{s,t}^n)^* \varphi_{i_1}, e_k \rangle  \sum\limits_{l \in \Z^d} \langle l \rangle^{-2(\lambda+2)} \overline{ \langle \sqrt{\mu_s} u , \partial_{x_{i_2}} e_l \rangle_{L^2(\T^d)} }
    \langle  (\mathsf{Y}_{s,t}^n)^* \varphi_{i_2} , e_l \rangle 
\end{align*}
Now, recall that 
\begin{equation*}
    B_j^*(s)(f) = \sigma \sum\limits_{k \in \Z^d} \langle k \rangle^{-2(\lambda+2)} \overline{\langle f, \partial_{x_j} e_k \rangle} \sqrt{\mu_s} e_k 
     = -\sigma \sum\limits_{k \in \Z^d} \langle k \rangle^{-2(\lambda+2)} 2\pi i k_j \overline{\langle f,  e_k \rangle} \sqrt{\mu_s}  e_k 
\end{equation*}
Consequently, we find 
\begin{align*}
     &B_j^*(s) \nabla^2 T_{s,t}^n  \Phi(f) B_j(s)(u)(x) \\
     &\quad = -\sigma \sum\limits_{k \in \Z^d} \langle k \rangle^{-2(\lambda+2)} 2\pi i k_j \overline{ \langle  \nabla^2 T_{s,t}^n  \Phi(f) B_j(s),  e_k \rangle} \sqrt{\mu_s}(x)e_k(x) \\
    &\quad = \sigma^2 \sum\limits_{k \in \Z^d} \langle k \rangle^{-2(\lambda+2)} \sqrt{\mu_s}(x)  \partial_{x_{j}} e_k(x) \\ 
    &\quad \quad \sum\limits_{i_1,i_2=1}^m \tau_{i_1,i_2} \overline{\langle (\mathsf{Y}_{s,t}^n)^* \varphi_{i_1}, e_k \rangle}  \sum\limits_{l \in \Z^d} \langle l \rangle^{-2(\lambda+2)} \langle \sqrt{\mu_s} u , \partial_{x_{j}} e_l \rangle_{L^2(\T^d)} \overline{
    \langle  (\mathsf{Y}_{s,t}^n)^* \varphi_{i_2} , e_l \rangle } \\
    &\quad =  \int_{\T^d} \sqrt{\mu_s(x)} \sqrt{\mu_s(y)} u(y)  \partial_{x_{j}}  \partial_{y_{j}} \nabla^2 \hat T_{s,t}^n \Phi(f)(x,y) \Id y . 
\end{align*}
The change of integral, derivative and series is permitted, since 
\begin{align} \label{eq: continuity_represent}
\begin{split}
    &\norm{\sum\limits_{l\in \Z^d} \langle l \rangle^{-2(\lambda+2)}e_l \overline{ (\mathsf{Y}_{s,t}^n)^* \varphi_i, e_l \rangle }  }_{H^{\lambda+1}(\T^d)}\\
    &\quad = \sum\limits_{k\in \Z^d } \langle k \rangle^{2(\lambda+1)} \bigg| \bigg\langle \sum\limits_{l\in \Z^d} \langle l \rangle^{-2(\lambda+2)}e_l \overline{\langle(\mathsf{Y}_{s,t}^n)^* \varphi_i, e_l \rangle } , e_k \bigg \rangle\bigg|^2 \\
    &\quad = \sum\limits_{k\in \Z^d } \langle k \rangle^{2(\lambda+1)} \big| \big\langle \langle k \rangle^{-2(\lambda+2)}e_k \overline{\langle  (\mathsf{Y}_{s,t}^n)^* \varphi_i, e_k \rangle } , e_k \big \rangle\big|^2 \\
    &\quad = \sum\limits_{k\in \Z^d }\langle k \rangle^{-2(\lambda+3)} \big|  \langle  (\mathsf{Y}_{s,t}^n)^*\varphi_i , e_k \rangle \big|^2
    \le \norm{(\mathsf{Y}_{s,t}^n)^* \varphi_i }_{H^{-\lambda-3}(\T^d)}
     \le C \norm{\varphi_i }_{H^{-\lambda-2}(\T^d)}.
     \end{split}
\end{align}
The last term is bounded and by the Sobolev embedding the function 
\begin{equation*}
    \partial_{y_j}  \sum\limits_{l\in \Z^d} \langle l \rangle^{-2(\lambda+2)}e_l(y) \overline{\langle(\mathsf{Y}_{s,t}^n(f))^* \varphi_i, e_l \rangle }
\end{equation*}
is continuous and, consequently, the product is continuous. 
Hence, $B_j(t)^*\nabla^2 T_{s,t}^n \Phi(f)B_j(t) $ is an integral operator (from $L^2(\T^d)$ to itself), which is of trace class by construction of \(B_j\). 
Applying~\cite[page~102, Proposition~3.1]{Duflo1972}, we obtain the claim. 
\end{proof}

We state a useful result on the identification of the second derivative, as well as estimates on the Sobolev norms.

\begin{lemma}\label{cor: besov_reg_second_order_function}
    Let \(\Phi \in FC^\infty(H^{-\lambda-2}(\T^d))\). For any $h, \tilde{h}\in H^{-\lambda-2}(\T^d)$ we have 
    \footnote{The notation $h\otimes \tilde{h}$ denotes the distribution 
    on the product space, such that $\langle\varphi(x,y),h\otimes\tilde h\rangle = \langle \langle \varphi(x,y), h(x) \rangle h(y) \rangle$.
    }
    \begin{equation}
    \label{eq:second_der}
    \langle \nabla^2 T_{s,t}^n\Phi ( f) (\tilde{h}), h \rangle_{H^{-\lambda-2}(\T^d)} 
    =\langle \nabla^2 \hat T_{s,t}^n\Phi ( f), h \otimes \tilde h \rangle.
    \end{equation}
     The function \((x,y) \mapsto \nabla^2 \widehat T_{s,t}^n\Phi ( f)(x,y)\) belongs to 
    \(H^{\tilde \lambda+2-d/2}(\T^d \times \T^d)\) for all \(\tilde \lambda \in (3d/2, \lambda)\). More precisely, we have the estimate 
    \begin{equation}
    \label{eq:reg_2_Phi}
        \sup\limits_{n \in \N} \sup\limits_{f \in H^{-\lambda-2}(\T^d)} \norm{\nabla^2 \hat T_{s,t}^n\Phi ( f) }_{H^{\tilde \lambda+2-d/2}(\T^d\times \T^d)} \le C [\Phi]_{C^2}  .  
    \end{equation}
    for a constant $C$ depending on  $\tilde\lambda$. Moreover,
    \begin{equation}
    \label{ineq:diag}
          \sup\limits_{n \in \N} \sup\limits_{f \in H^{-\lambda-2}(\T^d)}  \norm{\partial_{x_j}  \partial_{y_j} \nabla^2 \hat T_{s,t}^n \Phi(f)( x, y) \big|_{x=y}}_{H^{\tilde \lambda-d}(\T^d)}
         \le C [\Phi]_{C^2}  . 
    \end{equation}
\end{lemma}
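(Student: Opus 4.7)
The plan is to establish the three assertions in sequence, mirroring the first-order result in Lemma \ref{cor: dual_identification_first_derivative} but carefully tracking the tensor-product structure in two variables.

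For \eqref{eq:second_der}, I would start from the explicit formula for $\nabla^2 T^n_{s,t}\Phi(f)(\tilde h)$ in Lemma \ref{lemma: derivative_test_semigroup} and pair it with $h$ in the $H^{-\lambda-2}(\T^d)$ scalar product. Expanding both inner products $\langle (\mathsf{Y}^n_{s,t})^*\varphi_{i_1}, h\rangle_{H^{-\lambda-2}}$ and $\langle (\mathsf{Y}^n_{s,t})^*\varphi_{i_2}, \tilde h\rangle_{H^{-\lambda-2}}$ in the Fourier basis produces two independent weights $\langle k\rangle^{-2(\lambda+2)}$ and $\langle l\rangle^{-2(\lambda+2)}$, and the resulting double series matches exactly the definition~\eqref{eq: def_second_derivative_function} of $\nabla^2\widehat T^n_{s,t}\Phi(f)(x,y)$ tested distributionally against $h\otimes \tilde h$, as in the derivation of \eqref{eq:identity_first_der}.

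For \eqref{eq:reg_2_Phi}, I would first extract a pointwise bound on the Fourier coefficients of $\nabla^2 \widehat T^n_{s,t}\Phi(f)$ on $\T^d\times\T^d$. Testing the identity \eqref{eq:second_der} against the normalized modes $h=\langle k\rangle^{\lambda+2}e_k$ and $\tilde h=\langle l\rangle^{\lambda+2}e_l$, which have unit norm in $H^{-\lambda-2}(\T^d)$, and invoking the uniform operator bound \eqref{eq:est_2} yields
\[
|\widehat{\nabla^2 \widehat T^n_{s,t}\Phi(f)}(k,l)| \le C [\Phi]_{C^2} \langle k\rangle^{-(\lambda+2)}\langle l\rangle^{-(\lambda+2)}.
\]
Applying Parseval on $\T^d\times\T^d$ with $s=\tilde\lambda+2-d/2$ and splitting $\langle(k,l)\rangle^{2s} \le C(\langle k\rangle^{2s}+\langle l\rangle^{2s})$ reduces the computation to factorized one-dimensional tails of the form $\sum_k \langle k\rangle^{2s-2(\lambda+2)}\sum_l\langle l\rangle^{-2(\lambda+2)}$ plus the symmetric counterpart. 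Both converge: the second because $\lambda+2 > d/2$ by \eqref{eq: const}, and the first precisely when $s < \lambda+2-d/2$, i.e.\ $\tilde\lambda<\lambda$. The independence in $n$, $s$, $t$, $f$ follows from that of the bound in Lemma \ref{cor: reg_derivative}.

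For \eqref{ineq:diag}, I would chain two standard reductions. The mixed derivative $\partial_{x_j}\partial_{y_j}$ costs one derivative in each variable, so $\partial_{x_j}\partial_{y_j}\nabla^2\widehat T^n_{s,t}\Phi(f) \in H^{\tilde\lambda-d/2}(\T^d\times\T^d)$ with the same $[\Phi]_{C^2}$-bound. Restriction to the diagonal $\{x=y\}\subset\T^d\times\T^d$, a submanifold of codimension $d$, then costs a further $d/2$ derivatives by the Sobolev trace theorem on the diagonal recalled in the appendix; this requires $\tilde\lambda-d/2 > d/2$, i.e.\ $\tilde\lambda > d$, which is guaranteed by $\tilde\lambda > 3d/2$. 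Composing the two losses yields the target space $H^{\tilde\lambda-d}(\T^d)$ with constant still controlled by $[\Phi]_{C^2}$. The main technical point is the balance of Sobolev exponents: the constraint $\tilde\lambda\in(3d/2,\lambda)$ emerges precisely from requiring simultaneous convergence of the Fourier tails in \eqref{eq:reg_2_Phi} (forcing $\tilde\lambda < \lambda$) and applicability of the diagonal trace in \eqref{ineq:diag} (forcing $\tilde\lambda > 3d/2$).
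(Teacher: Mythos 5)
Your proposal is correct and follows essentially the same route as the paper: identify the two bilinear forms by Fourier expansion, deduce the pointwise bound $|\langle \nabla^2 \hat T_{s,t}^n\Phi(f), e_{k}\otimes e_{l}\rangle|\le C[\Phi]_{C^2}\langle k\rangle^{-(\lambda+2)}\langle l\rangle^{-(\lambda+2)}$ by testing against normalized modes and invoking Lemma \ref{cor: reg_derivative}, sum the series (your additive splitting of $\langle(k,l)\rangle^{2s}$ is an inessential variant of the paper's multiplicative one), and conclude via the diagonal-restriction Lemma \ref{lemma: diagonal}. One small correction: the hypothesis of Lemma \ref{lemma: diagonal} is $s>d$ (continuity on the $2d$-dimensional product space), so the trace step requires $\tilde\lambda - d/2 > d$, i.e.\ exactly $\tilde\lambda>3d/2$, not the weaker $\tilde\lambda>d$ you state mid-argument — though your closing remark already identifies $\tilde\lambda>3d/2$ as the operative constraint, so the argument is unaffected.
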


\begin{proof}

On the one hand, we have 
    \begin{equation}
    \begin{split}
    \langle \nabla^2 T_{s,t}^n\Phi ( f)(\tilde h), h \rangle_{H^{-\lambda-2}(\T^d)}
    &=   \sum\limits_{i,j=1}^m  \E\big[\partial_{x_i}\partial_{x_j} g( \langle \rho_{s,t}^n( f), \varphi_1\rangle_{H^{-\lambda-2}} , \ldots,\langle \rho_{s,t}^n(f), \varphi_m  \rangle_{H^{-\lambda-2}} )\big]\\
    &\quad \quad \cdot \langle   (\mathsf{Y}_{s,t}^n)^* \varphi_j, \tilde  h \rangle_{H^{-\lambda-2}} \langle  (\mathsf{Y}_{s,t}^n)^* \varphi_i , h \rangle_{H^{-\lambda-2} }  .
    \end{split}
    \end{equation}
On the other hand, assuming the additional regularity \(h, \tilde h \in C^\infty(\T^d)\), we find 
\begin{align*}
   & \langle \nabla^2 \hat T_{s,t}^n\Phi ( f), h \otimes \tilde h \rangle  \\
    &\quad =  \sum\limits_{i,j=1}^m \E\big[\partial_{x_i}\partial_{x_j} g( \langle \rho_{s,t}^n( f), \varphi_1\rangle_{H^{-\lambda-2}} , \ldots,\langle \rho_{s,t}^n(f), \varphi_m  \rangle_{H^{-\lambda-2}} )\big] \\ 
    &\quad \quad \cdot \sum\limits_{k,l\in\Z^d}
\langle e_k (\cdot_1) e_l(\cdot_2) , h \otimes \tilde h(\cdot_1,\cdot_2) \rangle  \overline{\langle  (\mathsf{Y}_{s,t}^n(f))^* \varphi_j, e_l \rangle \langle    (\mathsf{Y}_{s,t}^n(f))^* \varphi_i , e_k \rangle} \langle k\rangle^{-2(\lambda+2)} \langle l\rangle^{-2(\lambda+2)}  \\
   &\quad =  \sum\limits_{i,j=1}^m  \E\big[\partial_{x_i}\partial_{x_j} g( \langle \rho_{s,t}^n( f), \varphi_1\rangle_{H^{-\lambda-2}} , \ldots,\langle \rho_{s,t}^n(f), \varphi_m  \rangle_{H^{-\lambda-2}} )\big]
 \\   &\quad \quad \cdot \langle (\mathsf{Y}_{s,t}^n(f))^* \varphi_j, \tilde h \rangle_{H^{-\lambda -2}} \langle  (\mathsf{Y}_{s,t}^n(f))^* \varphi_i , h \rangle_{H^{-\lambda -2}}  .
\end{align*}
Consequently, we obtain \eqref{eq:second_der} and thus
\begin{align*}
    &\sup\limits_{\norm{ \tilde h}_{H^{-\lambda -2}(\T^d) } \le 1   } \sup\limits_{\norm{ h}_{H^{-\lambda -2}(\T^d) } \le 1} 
    |\langle \nabla^2 \hat T_{s,t}^n\Phi ( f), h \otimes \tilde h \rangle |\\ 
    &\le \sup\limits_{\norm{ \tilde h}_{H^{-\lambda -2}(\T^d) } \le 1   } \norm{\nabla^2 T_{s,t}^n\Phi ( f)(\tilde h)}_{H^{-\lambda-2}(\T^d)}
    \le  C [\Phi]_{C^2}, 
\end{align*}
where we used Lemma~\ref{cor: reg_derivative}.
In particular, for \(k = (k_1,k_2) \in \Z^d \times \Z^d\) we have 
\begin{align*}
    |\langle \nabla^2 \hat T_{s,t}^n\Phi ( f), e_k  \rangle |^2
    &\le (\langle k_1 \rangle\langle k_2 \rangle)^{-2(\lambda +2)}  \sup\limits_{k_1,k_2 \in \Z^d } |\langle \nabla^2 \hat T_{s,t}^n\Phi ( f), \langle k_1 \rangle^{\lambda + 2} e_{k_1} \otimes \langle k_2 \rangle^{\lambda + 2} e_{k_2}  \rangle |^2 \\
    &\le C \langle k_1 \rangle^{-2(\lambda+2)} \langle k_2 \rangle^{-2(\lambda +2)} [\Phi]_{C^2}^2,
\end{align*}
where we used the fact that \(\langle k_j \rangle^{\lambda + 2} \norm{e_{k_j}}_{H^{-\lambda-2}(\T^d)} =  1   \) for \(k \in \Z^d , j = 1,2\). 
Consequently, let \(\epsilon > 0\) we obtain 
\begin{align*}
&\norm{\nabla^2 \hat  T_{s,t}^n\Phi ( f)}_{H^{\lambda+2-d/2-\epsilon/2}(\T^d \times \T^d)}^2 \\
&\quad = \sum\limits_{k \in \Z^{d} \times \Z^d} \langle k \rangle^{2(\lambda+2-d/2-\epsilon/2)}
|\langle \nabla^2 \hat T_{s,t}^n\Phi ( f), e_k  \rangle |^2 \\
&\quad \le C [\Phi]_{C^2}^2
\sum\limits_{k_1 \in \Z^d }\sum\limits_{k_2 \in \Z^d }
\langle k_1 \rangle^{2(\lambda+2-d/2-\epsilon/2)- 2(\lambda+2)}\langle k_2 \rangle^{2(\lambda+2-d/2-\epsilon/2)- 2(\lambda+2)} \\
&\quad \le   C [\Phi]_{C^2}^2 \sum\limits_{k_1 \in \Z^d }\sum\limits_{k_2 \in \Z^d } 
\langle k_1 \rangle^{-d-\epsilon} \langle k_2 \rangle^{-d-\epsilon}
\le    C [\Phi]_{C^2}^2 . 
\end{align*}
Taking supremum over \(f \in H^{-\lambda-2}(\T^d) \) and \(n \in \N\) proves the first claim.
The second claim follows by Lemma~\ref{lemma: diagonal}. 
\end{proof}

Putting together Proposition \ref{lemma: ito_formula_approx} with Lemmas and \ref{cor: dual_identification_first_derivative} and \ref{lemma: generatpr_trace_computation} we obtain the following expression for the generator of $\rho^n$, with respect to the initial time. 

\begin{proposition}
\label{prop:4.9}
For any \(\Phi \in  FC^\infty(H^{-\lambda-2}(\T^d))\) and $0\leq s<t\leq T$, we have
\begin{equation}
\mathcal{G}_s^n T_{s,t}^n\Phi(f) = 
 \frac{\sigma^2}{2} \sum\limits_{j=1}^d   \int_{\T^d}  \partial_{x_j}  \partial_{y_j} \nabla^2 \hat T_{s,t}^n \Phi(f)( x, y) \big|_{x=y} \Id \mu_s(x) 
 + \langle A^n(s,f) , \nabla \widehat{T}_{s,t}^n\Phi (f)) \rangle
\end{equation} 
\end{proposition}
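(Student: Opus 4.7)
The plan is to simply assemble the three ingredients that have already been prepared earlier in the section. By the definition of the backward generator in Proposition \ref{lemma: ito_formula_approx}, we have
\begin{equation*}
\mathcal{G}_s^n T_{s,t}^n\Phi(f) = -\partial_s T_{s,t}^n\Phi(f) = \frac{1}{2}\sum_{j=1}^d \mathrm{Tr}\bigl(\nabla^2 T_{s,t}^n\Phi(f)\,B_j(s)B_j^*(s)\bigr) + \langle A^n(s,f), \nabla T_{s,t}^n\Phi(f)\rangle_{H^{-\lambda-2}(\T^d)}.
\end{equation*}
Hence it suffices to rewrite each of the two summands in the claimed form.

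For the second-order term, Lemma \ref{lemma: generatpr_trace_computation} yields directly
\begin{equation*}
\frac{1}{2}\sum_{j=1}^d \mathrm{Tr}\bigl(\nabla^2 T_{s,t}^n\Phi(f)\,B_j(s)B_j^*(s)\bigr) = \frac{\sigma^2}{2}\sum_{j=1}^d \int_{\T^d} \partial_{x_j}\partial_{y_j}\nabla^2 \hat T_{s,t}^n\Phi(f)(x,y)\Big|_{x=y}\,d\mu_s(x),
\end{equation*}
which is exactly the first term on the right-hand side of the proposition. For the first-order term, I will apply the identity \eqref{eq:identity_first_der} of Lemma \ref{cor: dual_identification_first_derivative} with $h = A_n(s,f)$, which is permissible because Lemma~\ref{lemma2: dissapation} ensures $A_n(s,f)\in H^{-\lambda-2}(\T^d)$. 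This gives
\begin{equation*}
\langle A_n(s,f), \nabla T_{s,t}^n\Phi(f)\rangle_{H^{-\lambda-2}(\T^d)} = \langle A_n(s,f), \nabla \widehat T_{s,t}^n\Phi(f)\rangle,
\end{equation*}
which matches the second term in the statement.

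Combining these two identities with the It\^o expression above produces the formula for $\mathcal{G}_s^n T_{s,t}^n\Phi(f)$ exactly as stated. No genuine obstacle arises here, since all the work is absorbed in Proposition \ref{lemma: ito_formula_approx} (where It\^o's formula is applied and the continuity in $s$ is verified), in Lemma \ref{lemma: generatpr_trace_computation} (where the cyclic property of the trace and the integral-operator representation are used), and in Lemma \ref{cor: dual_identification_first_derivative} (which relies on the Riesz/duality identification between $H^{-\lambda-2}(\T^d)$ and $H^{\lambda+2}(\T^d)$). The proposition is therefore essentially a repackaging of those results in a form convenient for the subsequent comparison with the generator of $\rho^N$ in Section~\ref{sec: comparison}.
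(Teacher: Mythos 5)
Your proposal is correct and follows exactly the route the paper takes: the statement is obtained by combining Proposition \ref{lemma: ito_formula_approx} with Lemma \ref{lemma: generatpr_trace_computation} for the trace term and Lemma \ref{cor: dual_identification_first_derivative} for the drift term, and your observation that \eqref{item: approx_bound_smaller_space} justifies taking $h=A_n(s,f)\in H^{-\lambda-2}(\T^d)$ in \eqref{eq:identity_first_der} is the only point needing a check. Nothing further is required.
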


\subsection{On the notion of derivative} \label{subsec: notion_derivative}
The aim here is to provide a link between the flat derivative on $\mathcal{P}(\T^d)$ and the Fréchet derivative in \(H^{-\lambda-2}(\T^d)\). 
We provide it for smooth function \(\Phi \in FC^\infty(H^{-\lambda-2}(\T^d)) \), which allows us to change infinite sums and derivatives without problems, and actually we write for $T^n_{s,t}\Phi$ because it is the function for which  we characterize above its derivatives as regular functions of $x$, and also it is what is needed need below in the proof of the main result. 

\begin{lemma} \label{lemma: notion_of_derivatives}
Let $\Phi \in FC^\infty(H^{-\lambda-2}(\T^d))$. Then, when restricted to probability measures, we have, for any $m\in \mathcal{P}(\T^d)$,
\begin{align}
    \nabla \hat T_{s,t}^n \Phi(m)( x) &=\tfrac{\delta}{\delta m} T_{s,t}^n \Phi(m; x)   \qquad \forall m\in \mathcal{P}(\T^d), x\in \T^d, \label{eq: first_flat_derivatives} \\ 
   \nabla^2\hat T_{s,t}^n \Phi(m) (x, y) &= \tfrac{\delta^2}{\delta m^2}T_{s,t}^n  \Phi(m; x,y)  \qquad \forall m\in \mathcal{P}(\T^d), x,y\in \T^d. \label{eq: second_flat_derivatives}
\end{align}
Moreover, the functions 
\begin{align*}
\mathcal{P}(\T^d) \ni m &\mapsto \tfrac{\delta}{\delta m} T_{s,t}^n \Phi(m; \cdot) \in H^{\lambda+2}(\T^d) \\ 
\mathcal{P}(\T^d) \ni m &\mapsto \tfrac{\delta^2}{\delta m^2} T_{s,t}^n \Phi(m; \cdot,\cdot) \in H^{\tilde\lambda+2-d/2}(\T^d\times \T^d),
\end{align*} 
are continuous, for all $\tilde{\lambda}\in (\tfrac32 d, \lambda)$.  
\end{lemma}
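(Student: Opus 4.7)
The plan is to leverage the Fréchet differentiability of $T^n_{s,t}\Phi$ on $H^{-\lambda-2}(\T^d)$ together with the Riesz-type identifications from Lemma~\ref{cor: dual_identification_first_derivative} and Lemma~\ref{cor: besov_reg_second_order_function}, and to exploit the Sobolev embeddings $H^{\lambda+2}(\T^d)\hookrightarrow C(\T^d)$ and $H^{\tilde\lambda+2-d/2}(\T^d\times\T^d)\hookrightarrow C(\T^d\times\T^d)$ (both valid by the constraints \eqref{eq: const} and the hypothesis $\tilde\lambda>3d/2$). The dual statement of these embeddings is precisely the continuous inclusion $\mathcal{P}(\T^d)\hookrightarrow H^{-\lambda-2}(\T^d)$, so everything we have proved on the Hilbert space applies when we restrict to probability measures.

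For \eqref{eq: first_flat_derivatives}, I would fix $m,\tilde m\in\mathcal{P}(\T^d)$, set $m_r:=(1-r)\tilde m+rm$, and apply the fundamental theorem of calculus along the line segment:
\begin{equation*}
T^n_{s,t}\Phi(m)-T^n_{s,t}\Phi(\tilde m) = \int_0^1 \langle \nabla T^n_{s,t}\Phi(m_r),\, m-\tilde m\rangle_{H^{-\lambda-2}(\T^d)}\,\Id r.
\end{equation*}
The identity \eqref{eq:identity_first_der} rewrites the inner product as the distributional pairing $\langle \nabla\hat T^n_{s,t}\Phi(m_r),\, m-\tilde m\rangle$, and since $\nabla\hat T^n_{s,t}\Phi(m_r)\in H^{\lambda+2}(\T^d)\subset C(\T^d)$ this pairing collapses to the Lebesgue--Stieltjes integral $\int_{\T^d}\nabla\hat T^n_{s,t}\Phi(m_r)(x)\,\Id(m-\tilde m)(x)$. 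Comparing with the definition of flat derivative yields \eqref{eq: first_flat_derivatives}.

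For \eqref{eq: second_flat_derivatives} I would iterate. Using \eqref{eq: first_flat_derivatives} as the starting point, I differentiate the map $f\mapsto \nabla\hat T^n_{s,t}\Phi(f)(x)$ on $H^{-\lambda-2}(\T^d)$ for fixed $x$: by Remark~\ref{remark: infinite_diff} and the explicit formula in Lemma~\ref{lemma: derivative_test_semigroup}, this is smooth in $f$ because the $f$-dependence factors through the expectation $\tau_{i_1}(f)=\E[\partial_{x_{i_1}}g(\langle\rho^n_{s,t}(f),\varphi_k\rangle_{H^{-\lambda-2}})_{k=1,\dots,m}]$ (recall that $\mathsf{Y}^n_{s,t}$ is independent of the initial condition by affine-linearity of the SPDE). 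Differentiating in a direction $h$ and evaluating at $x$ reproduces exactly the coefficients appearing in the explicit formula \eqref{eq: def_second_derivative_function}, so by \eqref{eq:second_der} the result equals $\langle \nabla^2\hat T^n_{s,t}\Phi(f)(x,\cdot),h\rangle_{H^{-\lambda-2}(\T^d)}$. A second application of the fundamental theorem of calculus along $m_r$, combined with Sobolev embedding applied to $\nabla^2\hat T^n_{s,t}\Phi(m_r)(x,\cdot)\in H^{\tilde\lambda+2-d/2}(\T^d)\subset C(\T^d)$, turns the pairing into integration against $m-\tilde m$ and gives \eqref{eq: second_flat_derivatives}.

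For the continuity claims, the key observation is again that $m$ enters the formulas \eqref{eq: hat_first} and \eqref{eq: def_second_derivative_function} only through the scalar quantities $\tau_{i_1}(m)$ and $\tau_{i_1,i_2}(m)$ defined in \eqref{eq: tau_def}. The stability estimate of Proposition~\ref{cor: stability_spde} together with the continuity of $\mathcal{P}(\T^d)\hookrightarrow H^{-\lambda-2}(\T^d)$ ensures that $m\mapsto\rho^n_{s,t}(m)$ is continuous in $L^2(\Omega;H^{-\lambda-2}(\T^d))$, so dominated convergence (using $g\in C_c^\infty(\R^m)$) makes $m\mapsto\tau_{i_1}(m),\tau_{i_1,i_2}(m)$ continuous. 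Since $\nabla\hat T^n_{s,t}\Phi(m)$ and $\nabla^2\hat T^n_{s,t}\Phi(m)$ are \emph{finite} linear combinations, with these continuous scalar coefficients, of \emph{$m$-independent} elements lying in $H^{\lambda+2}(\T^d)$ and $H^{\tilde\lambda+2-d/2}(\T^d\times\T^d)$ respectively (with norms controlled uniformly by \eqref{eq:reg_1_Phi} and \eqref{eq:reg_2_Phi}), the continuity in the stated Sobolev norms follows immediately. The main subtlety is identifying the Hilbert-space duality pairing with integration against a measure, which is precisely where the Sobolev embeddings $H^{\lambda+2}\hookrightarrow C$ enter; the rest is a bookkeeping exercise on the explicit formulas.
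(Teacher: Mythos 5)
Your proposal is correct and follows essentially the same route as the paper: both rest on the duality identifications \eqref{eq:identity_first_der} and \eqref{eq:second_der}, the Sobolev regularity of $\nabla \hat T^n_{s,t}\Phi$ and $\nabla^2\hat T^n_{s,t}\Phi$ from Lemmas~\ref{cor: dual_identification_first_derivative} and~\ref{cor: besov_reg_second_order_function}, and the continuity of the embedding $\mathcal{P}(\T^d)\hookrightarrow H^{-\lambda-2}(\T^d)$. The only differences are cosmetic — you phrase the first identity via the fundamental theorem of calculus along the segment $m_r$ rather than via difference quotients, and you prove continuity by exploiting the explicit finite-linear-combination structure of the cylindrical derivatives rather than by composing continuous maps — but the key ingredients and the logic are the same.
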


\begin{proof}
We recall that 
\begin{align*}
    \lim_{\epsilon \to 0} \frac{T_{s,t}^n \Phi(m+\epsilon (\tilde{m}-m))-\Phi(m)}{\epsilon}
    &= \int_{\T^d} \frac{\delta}{\delta m} T_{s,t}^n \Phi(m;x) (\tilde{m}-m)(dx), \qquad m, \tilde{m}\in \mathcal{P}(\T^d) \\
    \lim_{\epsilon \to 0} \frac{T_{s,t}^n \Phi(f+\epsilon h)-T_{s,t}^n\Phi(f)}{\epsilon}
    &= \Big\langle \nabla T_{s,t}^n \Phi(f), h \Big\rangle_{H^{-\lambda-2}(\T^d)} , \qquad f,h\in H^{-\lambda-2}(\T^d).
\end{align*}
Now identity \eqref{eq:identity_first_der}, choosing \(f=m, \; h = \tilde m -m \) provides~\eqref{eq: first_flat_derivatives}. 
For the second identity~\eqref{eq: second_flat_derivatives}, we recall that 
\begin{equation*}
\lim_{\epsilon \to 0} \frac{\tfrac{\delta}{\delta m}T_{s,t}^n\Phi(m+\epsilon (\tilde{m}-m);x)-\tfrac{\delta}{\delta m}T_{s,t}^n\Phi(m;x)}{\epsilon}
    = \int_{\T^d} \frac{\delta^2}{\delta m^2} T_{s,t}^n\Phi(m;x,y) (\tilde{m}-m)(dy),
\end{equation*}
for \( m, \tilde{m}\in \mathcal{P}(\T^d), x\in\T^d\). 
On the other hand, utilizing the explicit formula~\eqref{eq: hat_first} and Lemma~\ref{lemma: derivative_test_semigroup}, we have 
\begin{align*}
&    \lim_{\epsilon \to 0} \frac{\nabla \hat T_{s,t}^n \Phi(f+\epsilon h)(x)-\nabla  \hat T_{s,t}^n  \Phi(f)(x)}{\epsilon} \\
& \quad = \lim_{\epsilon \to 0} \sum\limits_{k \in \Z^d } \langle k \rangle^{-2(\lambda+2)} \epsilon^{-1}
\overline{ \langle \nabla T_{s,t}^n \Phi(f+\epsilon h)-\nabla  T_{s,t}^n  \Phi(f), e_k \rangle } e_k(x) \\
&\quad  = \lim_{\epsilon \to 0} \sum\limits_{k \in \Z^d } \sum\limits_{i=1}^m  \langle k \rangle^{-2(\lambda+2)}\epsilon^{-1}
\Big( \E\big[\partial_{x_i} g\big( \langle \rho_{s,t}^n(f+\epsilon h), \varphi_1 \rangle_{H^{-\lambda-2}} ,\ldots , \langle \rho_{s,t}^n(f+\epsilon h), \varphi_m \rangle_{H^{-\lambda-2}} \big)  \\
 &\quad \quad  - \partial_{x_i} g\big( \langle \rho_{s,t}^n(f), \varphi_1 \rangle_{H^{-\lambda-2}} ,\ldots , \langle \rho_{s,t}^n(f), \varphi_m \rangle_{H^{-\lambda-2}} \big) \big] \langle \overline{(\mathsf{Y}_{s,t}^n)^* \varphi_i, e_k \rangle} e_k(x) .
\end{align*}
Now, an application of the chain rule provides
\begin{align*}
    &\lim_{\epsilon \to 0} \frac{\nabla \hat T_{s,t}^n \Phi(f+\epsilon h)(x)-\nabla  \hat T_{s,t}^n  \Phi(f)(x)}{\epsilon} \\
    &\quad =\sum\limits_{k \in \Z^d } \sum\limits_{i,j=1}^m  \langle k \rangle^{-2(\lambda+2)} \E\big[\partial_{x_i}\partial_{x_j} g( \langle \rho_{s,t}^n( f), \varphi_1\rangle_{H^{-\lambda-2}} , \ldots,\langle \rho_{s,t}^n(f), \varphi_m  \rangle_{H^{-\lambda-2}} )\big] \\
    &\quad\quad   \cdot \langle   (\mathsf{Y}_{s,t}^n)^* \varphi_j, h \rangle_{H^{-\lambda-2}} \overline{ \langle (\mathsf{Y}_{s,t}^n)^* \varphi_i, e_k \rangle }  e_k(x) \\
&\quad = \langle \nabla^2 \hat T_{s,t}^n \Phi(f)(x,\cdot) , h \rangle ,
\end{align*}
where we used again~\cite[Proposition~1.58]{Bahouri2011} to make sense of the last line and it should be understood as the action of \(h\) on the second variable of \( \nabla^2 \hat T_{s,t}^n \Phi(f)\) for fixed \(x \in \T^d\). 
Hence, ~\eqref{eq: second_flat_derivatives} follows if we let $f=m$ and $h=\tilde{m}-m$.
To prove the last claim, recall first that the maps 
\begin{align*}
H^{-\lambda-2}(\T^d)\ni f &\mapsto \nabla  T_{s,t}^n \Phi(f) \in H^{-\lambda-2}(\T^d), \\
H^{-\lambda-2}(\T^d)\ni f &\mapsto \nabla^2  T_{s,t}^n \Phi(f) \in L(H^{-\lambda-2}(\T^d), H^{-\lambda-2}(\T^d))
\end{align*}
are continuous. Thus the claim follows since the embedding $\mathcal{P}(\T^d) \hookrightarrow H^{-\lambda-2}(\T^d)$ is continuous ($\mathcal{P}(\T^d)$ being equipped with $W_1$, and the maps 
\begin{align*}
H^{-\lambda-2}(\T^d)\ni T_{s,t}^n \Phi(f) &\mapsto \hat{T}_{s,t}^n \Phi(f) \in H^{\lambda2}(\T^d), \\
L(H^{-\lambda-2}(\T^d), H^{-\lambda-2}(\T^d))\ni \nabla^2  T_{s,t}^n \Phi(f) &\mapsto \nabla^2  \hat{T}_{s,t}^n \Phi(f) \in H^{\tilde\lambda+2-d/2}(\T^d\times \T^d)
\end{align*}
are linear and continuous by \eqref{eq:reg_1_Phi} and (the proof of) 
\eqref{eq:reg_2_Phi}.
\end{proof}

\subsection{Generator of the fluctuation process}
\label{sec:generator_fluctuation}

We compute the generator of the fluctuation process in \eqref{eq: def_fluctuation_process}
\begin{equation*}
    \rho_t^N := \sqrt{N}(\mu_t^N-\mu_t) , 
\end{equation*}
on the particular test function which we require.
Recall that we have a weak solution to the particle system \eqref{eq: interacting_particle_system} by Assumption \ref{ass: existence}.
We crucially utilize the results from Section~\ref{subsec: notion_derivative} to calculate the generator of  \(\rho_t^N\) and then to connect it to the generator of the classical SPDE~\eqref{eq: limiting_spde}.

We recall how the derivative of functions of probability measures behave when evaluated on empirical measures. For $\bm{x}=(x_1,\dots,x_N)\in (\T^d)^N$, denote $\mu^N_{\bm{x}}= \frac1N \sum_{i=1}^N \delta_{x_i}$.
For a function $U:\mathcal{P}_2(\T^d)\rightarrow \R$ let $u^N: (\T^d)^N\rightarrow \R$ given by $u^N(\bm{x})=U(\mu^N_{\bm{x}})$. 
We say that $U$ is \emph{fully} $C^2$ on $\mathcal{P}(\T^d)$ if it has two flat derivatives and the functions
\begin{align*}
\mathcal{P}(\T^d) \ni m &\mapsto \tfrac{\delta}{\delta m} T_{s,t}^n \Phi(m; \cdot) \in C^2(\T^d) \\ 
\mathcal{P}(\T^d) \ni m &\mapsto \tfrac{\delta^2}{\delta m^2} T_{s,t}^n \Phi(m; \cdot,\cdot) \in C^2(\T^d\times \T^d),
\end{align*} 
are continuous. In this case, \cite[Proposition 3.1]{chassagneux2022probabilistic} states that  
 $u^N\in C^2(\T^{Nd})$ and we have
\begin{align}
    \partial_{x_i} u^N(\bm{x})&= \frac1N \partial_\mu U(\mu^N_{\bm{x}}; x_i), \\
    \partial^2_{x_i x_i} u^N(\bm{x}) &=
    \frac1N D_x \partial_\mu U(\mu^N_{\bm{x}}; x_i)
    +\frac{1}{N^2} \partial^2_{\mu\mu} U(\mu^N_{\bm{x}}; x_i, x_i), \\
    \partial^2_{x_i x_j} u^N(\bm{x}) &=
    \frac{1}{N^2} \partial^2_{\mu\mu} U(\mu^N_{\bm{x}}; x_i, x_j), \quad i\neq j, 
\end{align}
where we denote the Lions' derivatives
\[
\partial_\mu U (m;x) = D_x \frac{\delta}{\delta m} U(m;x), \qquad
\partial^2_{\mu\mu} U (m;x,y) = D_x D_y \frac{\delta^2}{\delta m^2} U(m;x,y).
\]

\noindent We also recall that, under the same condition, the process $(\mu_t)_t$ is deterministic and  satisfies
\begin{equation*}
    \tfrac{d}{dt} U(\mu_t) = 
    \int_{\T^d} \Big( b(t,x, \mu_t) \cdot \partial_\mu U(\mu_t; x) 
    + \frac{\sigma^2}{2} 
    \Tr[  D_x \partial_\mu U(\mu_t; x) ] 
    \Big) \mu_t(dx).
\end{equation*}
thanks to It\^o formula for flows of probability measures; 
see \cite[Theorem 3.3]{chassagneux2022probabilistic}.

Note that $T^n_{s,t} \Phi$ has the required full $C^2$ regularity on $\mathcal{P}(\T^d)$ thanks to the regularity in Lemma \ref{lemma: notion_of_derivatives} and the Sobolev embeddings. We can now compute the (forward) generator of the fluctuation process.

\begin{proposition}
\label{prop:4_11}
    For any \(\Phi \in FC^\infty(H^{-\lambda-2}(\T^d))\), any (random) initial condition $\rho^N_0$ and for any $0\leq t_1 <t_2\leq t_3\leq T$, we have 
\begin{equation}
\label{eq: fluc_gen_N}
\begin{split}
\E[T^n_{t_1,t_3} \Phi(\rho^N_{t_2})] &- \E[T^n_{t_1,t_3}\Phi(\rho^N_{t_1})] 
 =\int_{t_1}^{t_2} \E\big[\mathcal{G}^N_s T_{t_1,t_3}^n \Phi (\rho_s^N) \big] ds\\
 &:= \int_{t_1}^{t_2} \E\bigg[ \sqrt{N} \int_{\T^d} \Big( b(s, x, \mu_s^N ) - b(s, x,\mu_s) \Big) \cdot  D ( \nabla \hat T_{t_1,t_3}^n  \Phi(\rho_{s}^N)(x) \Id \mu_s(x)   \nonumber \\
    & \quad + \Big \langle   b(s, \cdot, \mu_s^N ) \cdot  D  \nabla \hat T_{t_1,t_3}^n  \Phi(\rho_{s}^N)(\cdot) 
    + \frac{\sigma^2}{2} 
    \Delta \, \nabla \hat T_{t_1,t_3}^n  \Phi(\rho_{s}^N)(\cdot) 
    , \rho_{s}^N \Big\rangle   \nonumber \\
    & \quad +
    \frac{\sigma^2}{2} \sum\limits_{j=1}^d \int_{\T^d} \partial_{x_j} \partial_{y_j}
     ( \nabla^2 \hat T_{t_1,t_3}^n  \Phi(\rho_{s}^N)(x,y)_{\big|x=y}  
    \Id \mu^N_s ( x) \bigg]  ds .
\end{split}
\end{equation}
\end{proposition}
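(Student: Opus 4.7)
The plan is to apply the standard It\^o formula on $\T^{Nd}$ to a suitable function of the particles $(X^1_s,\dots,X^N_s)$, and then convert partial derivatives in $x_i$ into flat derivatives on $\mathcal{P}(\T^d)$ evaluated at the empirical measure, exactly as recalled at the beginning of \S\ref{sec:generator_fluctuation}. First, I freeze $t_1,t_3$ and define
\[
F_s(\bm{x}):= T^n_{t_1,t_3}\Phi\big(\sqrt{N}(\mu^N_{\bm{x}}-\mu_s)\big), \qquad \bm{x}\in\T^{Nd},\quad s\in[t_1,t_2],
\]
so that $T^n_{t_1,t_3}\Phi(\rho^N_s)=F_s(\bm{X}_s)$. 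Setting $\Psi_s(m):=T^n_{t_1,t_3}\Phi(\sqrt{N}(m-\mu_s))$, Lemma~\ref{lemma: notion_of_derivatives}, together with the Sobolev embedding $H^{\tilde\lambda+2-d/2}(\T^d\times\T^d)\hookrightarrow C^2(\T^d\times\T^d)$ granted by $\tilde\lambda>3d/2$, gives that $\Psi_s$ is fully $C^2$ on $\mathcal{P}(\T^d)$ with flat derivatives
\[
\tfrac{\delta\Psi_s}{\delta m}(m;x)=\sqrt{N}\,\nabla\widehat T^n_{t_1,t_3}\Phi(\sqrt{N}(m-\mu_s))(x),\qquad
\tfrac{\delta^2\Psi_s}{\delta m^2}(m;x,y)=N\,\nabla^2\widehat T^n_{t_1,t_3}\Phi(\sqrt{N}(m-\mu_s))(x,y).
\]
Using the chain rule from \cite[Proposition~3.1]{chassagneux2022probabilistic}, this yields
\[
\partial_{x_i}F_s(\bm{x})=\tfrac{1}{\sqrt{N}}D_x\nabla\widehat T^n_{t_1,t_3}\Phi(\rho^N_{\bm x})(x_i),
\qquad
\partial^2_{x_i x_i}F_s(\bm{x})=\tfrac{1}{\sqrt{N}}D^2_x\nabla\widehat T^n_{t_1,t_3}\Phi(\rho^N_{\bm x})(x_i)+\tfrac{1}{N}D_xD_y\nabla^2\widehat T^n_{t_1,t_3}\Phi(\rho^N_{\bm x})(x_i,x_i),
\]
where I write $\rho^N_{\bm{x}}=\sqrt{N}(\mu^N_{\bm x}-\mu_s)$.

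Next I handle the time derivative $\partial_s F_s$. Because $(\mu_s)$ solves the Fokker--Planck equation~\eqref{eq: fokker--planck} in the distributional sense and $\nabla\widehat T^n_{t_1,t_3}\Phi(\rho^N_{\bm x})$ is smooth enough in $x$ (again by Lemma~\ref{lemma: notion_of_derivatives} and the Sobolev embedding), identity \eqref{eq:identity_first_der} combined with a distributional integration by parts gives
\[
\partial_s F_s(\bm{x})=-\sqrt{N}\int_{\T^d}\Big(b(s,x,\mu_s)\cdot D\nabla\widehat T^n_{t_1,t_3}\Phi(\rho^N_{\bm x})(x)+\tfrac{\sigma^2}{2}\Delta\nabla\widehat T^n_{t_1,t_3}\Phi(\rho^N_{\bm x})(x)\Big)\mu_s(dx).
\]
Applying the standard It\^o formula to $F_s(\bm{X}_s)$ on the interval $[t_1,t_2]$ and taking expectation (the stochastic integral is a true martingale since the derivatives are bounded uniformly in $\bm{x}$ thanks to Lemma~\ref{cor: besov_reg_first_order_function} and Lemma~\ref{cor: besov_reg_second_order_function}) yields
\[
\E[T^n_{t_1,t_3}\Phi(\rho^N_{t_2})]-\E[T^n_{t_1,t_3}\Phi(\rho^N_{t_1})]=\int_{t_1}^{t_2}\E\Big[\partial_s F_s(\bm{X}_s)+\sum_{i=1}^N\big(b(s,X^i_s,\mu^N_s)\cdot\partial_{x_i}F_s+\tfrac{\sigma^2}{2}\Tr\partial^2_{x_ix_i}F_s\big)(\bm{X}_s)\Big]\,ds.
\]

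The last step is purely algebraic. Rewriting each sum $\frac{1}{N}\sum_i(\cdot)(X^i_s)$ as $\int(\cdot)d\mu^N_s$ and using the crucial identity $\sqrt{N}\mu^N_s=\rho^N_s+\sqrt{N}\mu_s$, the drift contribution becomes
\[
\sqrt{N}\!\int\!b(s,x,\mu^N_s)\cdot D\nabla\widehat T^n_{t_1,t_3}\Phi(\rho^N_s)(x)\,\mu_s(dx)+\int\!b(s,x,\mu^N_s)\cdot D\nabla\widehat T^n_{t_1,t_3}\Phi(\rho^N_s)(x)\,\rho^N_s(dx),
\]
and similarly for the Laplacian part. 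The singular $\sqrt{N}$ terms built from $\mu_s$ combine with $\partial_sF_s$ to produce precisely
\[
\sqrt{N}\int\big(b(s,x,\mu^N_s)-b(s,x,\mu_s)\big)\cdot D\nabla\widehat T^n_{t_1,t_3}\Phi(\rho^N_s)(x)\,\mu_s(dx),
\]
while the remaining terms assemble into the $\rho^N_s$-pairing and the trace against $\mu^N_s$ appearing in \eqref{eq: fluc_gen_N}. This yields the stated formula.

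The main obstacle is twofold: first, justifying that $T^n_{t_1,t_3}\Phi$ admits the required fully $C^2$ calculus on $\mathcal{P}(\T^d)$ so that \cite[Proposition~3.1]{chassagneux2022probabilistic} applies and partial derivatives commute with empirical-measure arguments; second, controlling the distributional time derivative of $\mu_s$ (the Fokker--Planck equation holds only in the sense of distributions, and $b(s,\cdot,\mu_s)$ is not necessarily smooth). Both difficulties are resolved by using the regularity of $\nabla\widehat T^n_{t_1,t_3}\Phi$ and $\nabla^2\widehat T^n_{t_1,t_3}\Phi$ as $C^2$ functions of $x$ (resp.\ $(x,y)$) coming from Lemmas~\ref{cor: dual_identification_first_derivative}--\ref{cor: besov_reg_second_order_function}, so that the integration by parts against $\partial_s\mu_s$ is legitimate and all integrals are absolutely convergent uniformly in $\omega$.
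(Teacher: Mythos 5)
Your proposal is correct and follows essentially the same route as the paper: apply the finite-dimensional It\^o formula to $\bm{x}\mapsto T^n_{t_1,t_3}\Phi(\sqrt{N}(\mu^N_{\bm{x}}-\mu_s))$, convert the spatial derivatives via the empirical-measure calculus of \cite[Proposition~3.1]{chassagneux2022probabilistic} together with the identifications of Lemma~\ref{lemma: notion_of_derivatives}, and cancel the singular $\sqrt{N}\,\mu_s$-terms against the time derivative. The only cosmetic difference is that you compute $\partial_s F_s$ by pairing $\nabla\widehat T^n_{t_1,t_3}\Phi$ with the distributional Fokker--Planck equation and integrating by parts, whereas the paper invokes the It\^o formula for deterministic flows of measures; these are the same computation.
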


\begin{proof}
We apply It\^o formula to the function $\psi:[0,T]\times (\T^d)^N \rightarrow \R$:
\[
 \psi(t,x_1,\dots,x_N)  := T_{t_1,t_3}^n  \Phi \big( \sqrt{N} (\mu^N_{\bm{x}} - \mu_t) \big).  
\]
For the time derivative, we have  
\begin{align*}
  \partial_t \psi(t,\bm{x}) &=  \frac{d}{dt} \Psi_{\bm{x}}(\mu_t) \\
  &= \int_{\T^d} \Big( b(x, \mu_t) \cdot D \frac{\delta}{\delta m} \Psi_{\bm{x}}(\mu_t; x) 
+ \frac{\sigma^2}{2} 
    \Tr[  D^2_x \frac{\delta}{\delta m} \Psi_{\bm{x}}(\mu_t; x) ] 
    \Big) \mu_t(dx)
\end{align*}
where $\Psi_{\bm{x}}(m):= T_{t_1,t_3}^n  \Phi \big( \sqrt{N} (\mu^N_{\bm{x}} - m) \big)$.  Thanks to the above results we have 
\[
\frac{\delta}{\delta m} \Psi_{\bm{x}}(m; x) = -\sqrt{N} \nabla \hat T_{t_1,t_3}^n \Phi(\sqrt{N}(\mu^N_{\bm{x}}-m))(x)
\]
and thus 
\begin{align*}
    \partial_t \psi(t,\bm{X}_t) &= -\sqrt{N} \int_{\T^d} \Big( b(x, \mu_t) \cdot D \hat T_{t_1,t_3}^n \nabla \Phi(\rho^N_t; x) 
    + \frac{\sigma^2}{2} 
    \Delta \, \nabla \hat T_{t_1,t_3}^n \Phi(\rho^N_t; x)  
    \Big) \mu_t(dx)
\end{align*}
The space derivatives are given, letting $\Psi_t(m) =\hat T_{t_1,t_3}^n\Phi(\sqrt{N}(m-\mu_t))$, by
\begin{align*}
    \partial_{x_i} \psi(t,\bm{x})&= 
    \frac1N D\frac{\delta}{\delta m}  \Psi_t(\mu^N_{\bm{x}})(x_i)\\
    &= \frac{1}{\sqrt{N}} D \nabla \hat T_{t_1,t_3}^n \Phi(\sqrt{N}(\mu^N_{\bm{x}}-\mu_t))(x_i) \\
    \partial^2_{x_i x_i} \psi(t,\bm{x})&= 
    \frac1N D^2_x\frac{\delta}{\delta m}  \Psi_t(\mu^N_{\bm{x}})(x_i) 
    +\frac{1}{N^2} D_x D_y \frac{\delta}{\delta m}  \Psi_t(\mu^N_{\bm{x}})(x_i, x_i)\\
    &= \frac{1}{\sqrt{N}} D^2_x \nabla \hat T_{t_1,t_3}^n \Phi(\sqrt{N}(\mu^N_{\bm{x}}-\mu_t))(x_i) 
    +\frac1N D_x D_y \nabla^2 \hat T_{t_1,t_3}^n \Phi(\sqrt{N}(\mu^N_{\bm{x}}-\mu_t))(x_i, x_i) .
\end{align*}
Notice that we use above a  slightly different version of \eqref{eq: first_flat_derivatives}-\eqref{eq: second_flat_derivatives}, whereas $T_{t_1,t_3}^n \Phi$ is evaluated on an affine function of a measure, but the result follows in the same way. 
We apply It\^o formula and take expectation to get
\begin{align*}
\E&[T^n_{t_1,t_3} \Phi(\rho^N_{t_2})] - \E[T^n_{t_1,t_3}\Phi(\rho^N_{t_1})] \\
 &= \int_{t_1}^{t_2} \E\bigg[ \partial_t 
 \psi(t,\bm{X}_t) 
 + \sum_{i=1}^N b(t,X^i_t,\mu^N_t) \cdot \partial_{x_i} \psi(t,\bm{X}_t) 
 + \frac{\sigma^2}{2} \sum_{i=1}^N \Delta_{x_i} \psi(t,\bm{X}_t)  
 \bigg] dt \\
&= \int_{t_1}^{t_2} \E\bigg[ - \sqrt{N}
\int_{\T^d} \Big( b(t,x, \mu_t) \cdot D \nabla \Phi(\rho^N_t; x) 
    + \frac{\sigma^2}{2} 
    \Delta \, \nabla \Phi(\rho^N_t; x)  
    \Big) \mu_t(dx) \\ 
&\qquad + \sqrt{N} \frac1N \sum_{i=1}^N 
\Big( b(t,X^i_t, \mu^N_t) \cdot D \nabla \Phi(\rho^N_t; X^i_t) 
    + \frac{\sigma^2}{2} 
    \Delta \, \nabla \Phi(\rho^N_t; X^i_t)  
    \Big)     
     \\
    &\qquad + \frac{\sigma^2}{2} \frac1N \sum_{i=1}^N \sum\limits_{j=1}^d  \partial_{x_j} \partial_{y_j}
      \nabla^2 \hat T_{t_1,t_3}^n  \Phi(\rho_{s}^N)(X^i_t,X^i_t)  \bigg]dt \\
&= \int_{t_1}^{t_2} \E\bigg[ - \sqrt{N}
\int_{\T^d} \Big( b(x, \mu_t) \cdot D \nabla \Phi(\rho^N_t; x) 
    + \frac{\sigma^2}{2} 
    \Delta \, \nabla \Phi(\rho^N_t; x)  
    \Big) \mu_t(dx) \\ 
&\qquad + \sqrt{N} \int_{\T^d} 
\Big( b(t,x, \mu^N_t) \cdot D \nabla \Phi(\rho^N_t; x) 
    + \frac{\sigma^2}{2} 
    \Delta \, \nabla \Phi(\rho^N_t; x)  
    \Big) \mu^N_t(dx)     
     \\
    &\qquad + \frac{\sigma^2}{2} \int_{\T^d}\sum\limits_{j=1}^d  \partial_{x_j} \partial_{y_j}
      \nabla^2 \hat T_{t_1,t_3}^n  \Phi(\rho_{t}^N)(x,x) \mu^N_t(dx) \bigg]dt \\
 &= \int_{t_1}^{t_2} \E\bigg[ \sqrt{N} \int_{\T^d} \Big( b(t, x, \mu_s^N ) - b(t, x,\mu_t) \Big) \cdot  D  \nabla \hat T_{t_1,t_3}^n  \Phi(\rho_{t}^N)(x) \Id \mu_t(x)   \nonumber \\
    & \qquad + \sqrt{N} \int_{\T^d}  \Big( b(t, x, \mu_t^N ) \cdot  D  \nabla \hat T_{t_1,t_3}^n  \Phi(\rho_{t}^N)(x)  
    + \frac{\sigma^2}{2} 
    \Delta \, \nabla \hat T_{t_1,t_3}^n  \Phi(\rho_{t}^N)(x) 
    \Big) \Id (\mu^N_t-\mu_t)(x)  \\
    & \qquad +
    \frac{\sigma^2}{2} \sum\limits_{j=1}^d \int_{\T^d} \partial_{x_j} \partial_{y_j}
      \nabla^2 \hat T_{t_1,t_3}^n  \Phi(\rho_{t}^N)(x,y)_{\big|x=y}  
    \Id \mu^N_t ( x) \bigg]  dt, 
\end{align*}
which gives the claim. 
\end{proof}

Utilizing flat derivative we can rewrite it as follows:     
\begin{proposition}\label{prop: generator_n_fluctuation}
Let \(\Phi \in FC^\infty(H^{-\lambda-2}(\T^d))\). Then 
 \begin{align*} 
    &\E\big[\mathcal{G}^N_s T_{t_1,t_3}^n \Phi (\rho_{s}^N )\big]\\
    &\quad =\E\bigg[ \int_{\T^d}  \int_{\T^d} \frac{\delta b}{\delta m} \bigg(s, x, \mu_s , v \bigg)  \Id \rho_{s}^N(v) \cdot D ( \nabla \hat T_{t_1,t_3}^n  \Phi(\rho_{s}^N))(x) \Id \mu_s(x) \\
    & \quad \quad  + \Big\langle  b(s, \cdot, \mu_s) \cdot  D ( \nabla \hat T_{t_1,t_3}^n  \Phi(\rho_{s}^N))(x) 
    + \frac{\sigma^2}{2} 
    \Delta ( \nabla \hat T_{t_1,t_3}^n  \Phi(\rho_{s}^N))(x) 
    , \rho_{s}^N \Big\rangle \\
    &\quad \quad +
    \frac{\sigma^2}{2} \sum\limits_{j=1}^d \int_{\T^d} \partial_{x_j} \partial_{y_j}
     ( \nabla^2 \hat T_{t_1,t_3}^n  \Phi(\rho_{s}^N))(x,y)_{\big|x=y}  
    \Id \Big(\mu_s + \frac{\rho_s^N}{\sqrt{N}}\Big) ( x) \\
    & \quad \quad  + \frac{1}{\sqrt{N}} \Big \langle \int\limits_0^1 \int_{\T^d} \frac{\delta b}{\delta m} (s,\cdot,  r\mu_s^N +(1-r)  \mu_s  , v ) \Id  \rho_{s}^N (v) \Id r \cdot D ( \nabla \hat T_{t_1,t_3}^n  \Phi(\rho_s^N))(x)  , \rho_s^N \Big \rangle \\
    &\quad \quad  +  \int_{\T^d}  \int\limits_0^1 \frac{r}{\sqrt{N}}  \int_{\T^d} \int\limits_0^1 \int_{\T^d} \frac{\delta^2 b}{\delta m^2} (s,x, r' r\mu_s^N + (1-rr')  \mu_s   ,v,v') \Id v' \Id \rho_s^N(v') \Id r' \Id  \rho_s^N (v) \dd r \\
    &\quad \quad \cdot D( \nabla \hat T_{t_1,t_3}^n  \Phi(\rho_s^N))(x)  \Id \mu_s(x) \bigg]. 
     \end{align*}
     Further, the first three term in the above representation can be written as
    \begin{align}
    \nonumber
     &\int_{\T^d}  \int_{\T^d} \frac{\delta b}{\delta m} \bigg(s, x, \mu_s , v \bigg)  \Id \rho_{s}^N(v) \cdot D ( \nabla \hat T_{t_1,t_3}^n  \Phi(\rho_{s}^N))(x) 
     \mu_s( \dd x) 
     + \Big\langle  b(s, \cdot, \mu_s) \cdot  D ( \nabla \hat T_{t_1,t_3}^n  \Phi(\rho_{s}^N))(x)  \\
     \label{remark: vannishing}
    &\qquad \qquad + \frac{\sigma^2}{2} 
    \Delta ( \nabla \hat T_{t_1,t_3}^n  \Phi(\rho_{s}^N))(x) 
    , \rho_{s}^N \Big\rangle 
     =    \langle \rho_s^N , A'(s) \nabla \hat T^{n}_{t_1,t_3}\Phi(\rho_s^N)\rangle .
    \end{align}
Moreover, the map \(r \mapsto \mathcal G_s^N T_{r,t}^n\Phi(\rho_s^N)\) is continuous on \([0,t]\). 
\end{proposition}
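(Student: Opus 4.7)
The starting point is Proposition \ref{prop:4_11}, whose right-hand side contains the singular combination $\sqrt N(b(s,x,\mu^N_s)-b(s,x,\mu_s))$ together with the drift $b(s,\cdot,\mu_s^N)$. My plan is to expand both through flat-derivative Taylor formulas along the interpolating segment $r\mapsto r\mu^N_s+(1-r)\mu_s$, exploiting the scaling identity $\mu^N_s-\mu_s=\rho^N_s/\sqrt N$ to convert the $\sqrt N$ factor into finite-order remainders.

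For the first piece I would apply the definition of flat derivative to obtain
\begin{equation*}
\sqrt N\bigl(b(s,x,\mu_s^N)-b(s,x,\mu_s)\bigr)=\int_0^1\!\!\int_{\T^d}\tfrac{\delta b}{\delta m}(s,x,r\mu_s^N+(1-r)\mu_s,v)\,d\rho_s^N(v)\,dr,
\end{equation*}
and then peel off the leading order by a second flat-derivative expansion of $\tfrac{\delta b}{\delta m}(s,x,r\mu_s^N+(1-r)\mu_s,v)-\tfrac{\delta b}{\delta m}(s,x,\mu_s,v)$ along the further segment $r'\mapsto r'r\mu^N_s+(1-r'r)\mu_s$; the signed increment is $r(\mu^N_s-\mu_s)=(r/\sqrt N)\rho^N_s$, which produces precisely the $r/\sqrt N$ weight appearing in the last term of the claimed representation. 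For the middle piece of Proposition \ref{prop:4_11}, I would write $b(s,\cdot,\mu_s^N)=b(s,\cdot,\mu_s)+N^{-1/2}\int_0^1\!\!\int \tfrac{\delta b}{\delta m}(s,\cdot,r\mu_s^N+(1-r)\mu_s,v)\,d\rho_s^N(v)\,dr$: one contribution enters the principal generator while the other yields the stated $1/\sqrt N$ correction. The last line of Proposition \ref{prop:4_11} is recast simply by $\mu_s^N=\mu_s+\rho_s^N/\sqrt N$. All Fubini exchanges are legitimate thanks to the uniform boundedness of $\tfrac{\delta b}{\delta m}$ and $\tfrac{\delta^2 b}{\delta m^2}$ in Assumption \ref{ass: convergence_ass}, combined with the Sobolev smoothness of $\nabla\hat T^n_{t_1,t_3}\Phi(\rho_s^N)$ given by Lemma \ref{cor: dual_identification_first_derivative}.

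The identity \eqref{remark: vannishing} is a direct consequence of the definition \eqref{eq: definition_Aprime} of $A'(s)$: inserting $\varphi=\nabla\hat T^n_{t_1,t_3}\Phi(\rho_s^N)$ and pairing with $\rho_s^N$, the Laplacian and the $b(s,\cdot,\mu_s)\cdot D\varphi$ terms reproduce Terms 2 and 3 verbatim, while the middle summand $\langle \mu_s(x)\tfrac{\delta b}{\delta m}(s,x,\mu_s,\cdot),D\varphi(x)\rangle_x$, after swapping the $v$-action of $\rho_s^N$ with the $\mu_s$-integral via Fubini, matches Term 1 exactly.

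For the final continuity statement I would fix $\omega$ and invoke Lemma \ref{cor: con_semigroup_time}: both $r\mapsto \nabla T^n_{r,t}\Phi(\rho_s^N)$ in $H^{-\lambda-2}(\T^d)$ and $r\mapsto \nabla^2 T^n_{r,t}\Phi(\rho_s^N)$ in $L(H^{-\lambda-2},H^{-\lambda-2})$ are Lipschitz (with constants depending on $n$). Combined with \eqref{eq:reg_1_Phi}, \eqref{eq:reg_2_Phi} and \eqref{ineq:diag}, every integrand in the expanded expression for $\mathcal G^N_s T^n_{r,t}\Phi(\rho_s^N)$ depends continuously on $r$ in the relevant Sobolev spaces, and all pairings with $\rho_s^N$ and $\mu_s$ are dominated by $\omega$-dependent constants; continuity follows. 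The only mildly delicate point in the whole argument is the bookkeeping of the $\sqrt N$ factor against the two $1/\sqrt N$ factors generated by the successive flat-derivative expansions, which is essentially algebraic.
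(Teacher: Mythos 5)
Your proposal is correct and follows essentially the same route as the paper: starting from Proposition \ref{prop:4_11}, expanding $\sqrt{N}\,(b(s,x,\mu^N_s)-b(s,x,\mu_s))$ and then $\tfrac{\delta b}{\delta m}(s,x,r\mu^N_s+(1-r)\mu_s,v)-\tfrac{\delta b}{\delta m}(s,x,\mu_s,v)$ via the two successive flat-derivative Taylor formulas, using $\mu^N_s-\mu_s=\rho^N_s/\sqrt{N}$ to generate the $1/\sqrt{N}$ remainders, reading off \eqref{remark: vannishing} from the definition \eqref{eq: definition_Aprime} of $A'(s)$, and deducing the continuity in $r$ from Lemma~\ref{cor: con_semigroup_time} together with the uniform Sobolev bounds on $\nabla\hat T^n$ and $\nabla^2\hat T^n$. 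The only cosmetic difference is that the paper reduces the continuity claim explicitly to continuity of $r\mapsto\|\nabla\hat T^n_{r,t}\Phi(\rho^N_s)\|_{C^2(\T^d)}$ and of the diagonal second-derivative term in $L^\infty$ (via $\lambda+2-d/2>2$ and the explicit formula \eqref{eq: def_second_derivative_function}), whereas you state the transfer from operator-norm continuity to the pointwise integrands a little more loosely; the substance is the same.
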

\begin{proof}
The above formula is basically~\eqref{eq: fluc_gen_N}. We only need to notice that by the definition of the flat derivative we have 
\begin{equation*}
    b(s,x, \mu_s^N ) - b(s,x,\mu_s) 
    = \frac{1}{\sqrt{N}}\int\limits_0^1 \int_{\T^d} \frac{\delta b}{\delta m} (s,x, r\mu_s^N +(1-r)  \mu_s  , v )  \Id \rho_s^N(v) \Id r
\end{equation*}
and 
\begin{align*}
     &\frac{\delta b}{\delta m} (s,x, r\mu_s^N +(1-r)  \mu_s  , v )  - \frac{\delta b}{\delta m} \bigg(s,x, \mu_s , v \bigg) \\
     &\quad = \frac{r}{\sqrt{N}}\int\limits_0^1 \int_{\T^d} \frac{\delta^2 b}{\delta m^2} (s,x, r' r\mu_s^N + (1-rr')  \mu_s   ,v,v') \Id v' \Id \rho_s^N( v') \Id r' .
\end{align*}
Plugging both formulas in~\eqref{eq: fluc_gen_N} we prove the first claim. 
For the continuity, we notice that \(\rho_s^N\) is a finite signed measure, all terms are bounded and, therefore, it is enough to demonstrate that 
\(r \mapsto \norm{\nabla \hat{T}_{r,t}^n \Phi(\rho_s^N) }_{C^2(\T^d)}\) and \(r \mapsto \norm{\partial_{x_j} \partial_{y_j}
     ( \nabla^2 \hat T_{r,t}^n  \Phi(\rho_{s}^N))(x,y)_{\big|x=y} }_{L^\infty(\T^d)} \) are continuous. 
Combining the estimates from~\eqref{cor: besov_reg_first_order_function} with Lemma~\ref{cor: con_semigroup_time} and \(\lambda+2-d/2 > 2\) proves the continuity of \(r \mapsto \norm{\nabla \hat{T}_{r,t}^n \Phi(\rho_s^N) }_{C^2(\T^d)}\). In order to prove the continuity of the second term we recall~\eqref{eq: def_second_derivative_function}, which is the definition of \(\nabla^2 \hat T_{r,t}^n  \Phi\). Then, the continuity follows by similar computations as in Lemma~\ref{cor: con_semigroup_time}. 
\end{proof}

Notice that  Lemma~\ref{lemma: generatpr_trace_computation} implies that the trace term (fourth term) is actually the trace term computed in the generator of \(\rho_s^n\). 

\begin{remark}
So far, we have computed the generator for the specific function \(T_{t_1,t_3}^n \Phi\). However, the above arguments can be extended to general sufficiently smooth functions \(\Phi\) in place of \(T_{t_1,t_3}^n \Phi\). 
The only subtle point arises in Lemma~\ref{lemma: notion_of_derivatives}, where we must identify the second flat derivative with the classical second derivative of the function. While this identification can, in principle, be justified using the Riesz representation theorem twice, doing so would require careful considerations regarding measurability.
To keep the computation explicit and avoid technical complications, we chose to compute the generator directly for \(T_{t_1,t_3}^n \Phi\), which will be used later in Section~\ref{sec: comparison}.
\end{remark}

\section{Proof of the main result} 
\label{sec: comparison}

The aim of this section is to prove our Main Theorem~\ref{theorem: main_result} by comparing the generators of \(\rho^N\) in \eqref{eq: def_fluctuation_process} and \(\rho^n\) in \eqref{eq: approx_solution}. The strategy lies in utilizing the convergence in probability provided by Theorem~\ref{thm: conv_prob} to reduce the problem to the difference between \(\rho^N\) and \(\rho^n\), and then to approximate $\Phi\in C^2_\ell(H^{-\lambda-2}(\T^d))$ with $FC^\infty(H^{-\lambda-2}(\T^d))$ by Lemma~\ref{lemma: approximation_hilbert_smooth}.
 As mentioned in the introduction, the semigroups are not strongly continuous and therefore the results such as~\cite[Lemma~1.2.5]{Ethier1986} or~\cite[Theorem~2.11]{Kolokoltsov2010}, which would consist in writing
 \[
 \E[\Phi(\rho_t^N(f))-\Phi(\rho^n_t(f))] 
 =\int\limits_0^t \frac{d}{ds} T^N_{0,s} T^n_{s,t}\Phi (f)
    = \int\limits_0^t T^N_{0,s} (\mathcal{G}_s^N -\mathcal{G}_s)T^n_{s,t} \Phi (f) \Id s ,
 \] are not applicable. 
%
%
%
 Instead, we use the technique provided by~\cite[Lemma~2.2]{gess2024}, exploiting the continuity in time of $\mathcal{G}_s$ proved above. 
Let us recall that $\rho^N$ is fixed, given by \eqref{eq: def_fluctuation_process} with an initial condition $\rho^N_0$, and might not be a Markov process. 

\begin{lemma} \label{lemma: product}
    Under Assumptions~\ref{ass: inital_cond}-\ref{ass: coef_fokker}, let \(\Phi \in FC^\infty(H^{-\lambda-2}(\T^d))\) 
    and let $(\rho^n_t(\rho^N_0), 0\leq t\leq T)$ solve \eqref{eq: approx_solution} with  $\rho^n_0=\rho^N_0$
    Then, for all \(0 \leq t \leq T\) and \(n,N\in \N\), we have 
    \begin{equation*}
        \E\big[\Phi(\rho_t^N)\big]  - 
         \E[\Phi(\rho^n_t(\rho^N_0))]
        =  \int\limits_0^t \E \big[\mathcal{G}_s^{N} T^n_{s,t}\Phi( \rho^N_s )+ \partial_s T_{s,t}^n\Phi(\rho_{s}^N)\big] \Id s . 
    \end{equation*}
\end{lemma}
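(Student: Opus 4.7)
I will introduce the interpolating real-valued function
\[F(s) := \E\bigl[T^n_{s,t}\Phi(\rho^N_s)\bigr], \qquad s \in [0,t],\]
and observe that $F(t)=\E[\Phi(\rho^N_t)]$ since $T^n_{t,t}\Phi=\Phi$, while the Markov property of $\rho^n$ together with the tower rule give $F(0)=\E[T^n_{0,t}\Phi(\rho^N_0)]=\E[\Phi(\rho^n_t(\rho^N_0))]$. The target identity thus amounts to expressing $F(t)-F(0)$ as the stated time integral. Since the semigroup $s\mapsto T^n_{s,t}$ is not strongly continuous one cannot directly differentiate $F$ in the spirit of Trotter--Kato; instead I will recover the integral as the limit of a Riemann-type sum, following the strategy of~\cite[Lemma~2.2]{gess2024}.

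\textbf{Splitting.} Fix a partition $0=s_0<s_1<\cdots<s_K=t$, telescope $F(t)-F(0)=\sum_k(F(s_{k+1})-F(s_k))$, and decompose each increment into a ``time'' and a ``space'' piece:
\begin{align*}
F(s_{k+1})-F(s_k) &= \E\bigl[\bigl(T^n_{s_{k+1},t}\Phi-T^n_{s_k,t}\Phi\bigr)(\rho^N_{s_{k+1}})\bigr] \\
&\quad + \E\bigl[T^n_{s_k,t}\Phi(\rho^N_{s_{k+1}})-T^n_{s_k,t}\Phi(\rho^N_{s_k})\bigr] =: I_k + J_k.
\end{align*}
In $I_k$ the random argument is frozen at $\rho^N_{s_{k+1}}$, hence the deterministic identity from Proposition~\ref{lemma: ito_formula_approx} combined with Fubini gives
\[I_k = \int_{s_k}^{s_{k+1}} \E\bigl[\partial_s T^n_{s,t}\Phi(\rho^N_{s_{k+1}})\bigr]\,ds.\]
In $J_k$ the smooth cylindrical test function $T^n_{s_k,t}\Phi\in FC^\infty(H^{-\lambda-2}(\T^d))$ is frozen, and the It\^o-type formula for the fluctuation process from Proposition~\ref{prop: generator_n_fluctuation} (applied with $t_1=s_k$, $t_2=s_{k+1}$, $t_3=t$) produces
\[J_k = \int_{s_k}^{s_{k+1}} \E\bigl[\mathcal{G}^N_r T^n_{s_k,t}\Phi(\rho^N_r)\bigr]\,dr.\]

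\textbf{Passing to the limit and main obstacle.} Summing over $k$ and sending $\max_k(s_{k+1}-s_k)\to 0$, I expect $\sum_k I_k \to \int_0^t\E[\partial_s T^n_{s,t}\Phi(\rho^N_s)]\,ds$ and $\sum_k J_k \to \int_0^t\E[\mathcal{G}^N_s T^n_{s,t}\Phi(\rho^N_s)]\,ds$, which together yield the lemma. For the $I_k$-limit I will use the continuity of $f\mapsto \partial_s T^n_{s,t}\Phi(f)$ proved at the end of Proposition~\ref{lemma: ito_formula_approx}, combined with the a.s.\ continuity of $r\mapsto \rho^N_r$ inherited from the particle trajectories in~\eqref{eq: interacting_particle_system}. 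For the $J_k$-limit I will use the continuity of $r\mapsto \mathcal{G}^N_r T^n_{s,t}\Phi(\rho^N_r)$ stated at the end of Proposition~\ref{prop: generator_n_fluctuation}. Uniform integrability is provided by the linear-growth bound $|\partial_s T^n_{s,t}\Phi(f)|\le C(n,\Phi)(1+\|f\|_{H^{-\lambda-2}(\T^d)})$, which follows from Lemma~\ref{cor: reg_derivative} and the definition of $A_n$, together with the uniform second moment bound on $\rho^N_s$ coming from Assumption~\ref{ass: convergence_ass}(3) via Pinsker's inequality. The main obstacle is that the \emph{same} partition must be used in both sums and that both continuity statements must be exploited simultaneously; this is precisely why the time-continuity of the fluctuation generator and the spatial continuity of $\partial_s T^n_{s,t}\Phi$ were recorded in the form given in Propositions~\ref{lemma: ito_formula_approx} and~\ref{prop: generator_n_fluctuation}, and why freezing $\rho^N_{s_{k+1}}$ in $I_k$ but letting $\rho^N_r$ evolve in $J_k$ is the natural decomposition.
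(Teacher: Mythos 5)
Your proposal is correct and follows essentially the same route as the paper's proof: the identical telescoping over a partition, the identical splitting of each increment into a time piece with frozen spatial argument (evaluated via Proposition~\ref{lemma: ito_formula_approx}) and a space piece with frozen test function (evaluated via Proposition~\ref{prop:4_11}), and the same passage to the limit using the continuity of $f \mapsto \partial_s T^n_{s,t}\Phi(f)$ and of $r \mapsto \mathcal{G}^N_s T^n_{r,t}\Phi(\rho^N_s)$ recorded in Propositions~\ref{lemma: ito_formula_approx} and~\ref{prop: generator_n_fluctuation}.
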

\begin{proof}
    Consider the partition \(0 = t_0 < t_1 \cdots <t_n = t\). 
    Since $\rho^n$ is a Markov process, we have
    \[
\E[\Phi(\rho^n_t(\rho^N_0))] = \E [T_{0,t}^n\Phi(\rho^N_0)],
    \]
    and we obtain  
    \begin{align*}
        &\E\big[\Phi(\rho_t^N)\big]  - \E[\Phi(\rho^n_t(\rho^N_0))] \\
        &\quad = \sum\limits_{i=1}^n \E\big[T_{t_{i},t}^n\Phi(\rho_{t_{i}}^N) - T_{t_{i-1},t}^n\Phi(\rho_{t_{i-1}}^N)\big] \\
        &\quad = \sum\limits_{i=1}^n \E \big[ T_{t_{i-1},t}^n\Phi(\rho_{t_{i}}^N) -T_{t_{i-1},t}^n\Phi(\rho_{t_{i-1}}^N)\big]+ \E\big[ T_{t_{i},t}^n\Phi(\rho_{t_{i}}^N) - T_{t_{i-1},t}^n\Phi(\rho_{t_{i}}^N)\big] \\
        &\quad = \sum\limits_{i=1}^n \int\limits_{t_{i-1}}^{t_i}  \E\big[ \mathcal G_s^{N}T_{t_{i-1},t}^n\Phi( \rho^N_s ) \big] \Id s  + \int\limits_{t_{i-1}}^{t_i} \E \big[ \partial_s T_{s,t}^n\Phi(\rho_{t_i}^N) \big] \Id s ,
    \end{align*}
   where we apply Proposition \ref{prop:4_11} in the last line. The last integrals converge towards
    \begin{equation*}
    \int\limits_0^t \E \big[ G_s^{N}T_{s,t}^n\Phi ( \rho^N_s )+ \partial_s T_{s,t}^n\Phi (\rho_{s}^N)\big] \Id s , 
    \end{equation*}
    since  \(r \mapsto \partial_s T_{s,t}^n\Phi (\rho_{r}^N) \) is continuous by Proposition~\ref{lemma: ito_formula_approx} and the fact that \(r \mapsto \rho_{r}^N\) is continuous in \(H^{-\lambda-2}(\T^d) \), while continuity of \(r \mapsto G_s^{N}T_{r,t}^n\Phi ( \rho^N_s ) \) is shown in Proposition~\ref{prop: generator_n_fluctuation}. 
\end{proof}

We can now turn to our initial goal, which was to estimate the weak error by the associated generators. The following proposition gives the explicit expression of the reminder, which will be estimated in the next results.  

\begin{proposition} \label{prop: reminder}
    Under Assumptions~\ref{ass: inital_cond}-\ref{ass: coef_fokker}, let \(\Phi \in FC^\infty(H^{-\lambda-2 }(\T^d))\)
    and let $(\rho^n_t, 0\leq t\leq T)$ solve \eqref{eq: approx_solution} with  $\rho^n_0=\rho_0$. Then 
\begin{equation}
 \E\big[ \Phi(\rho_t^N)- \Phi(\rho^n_t) \big] 
= R^0_t(N,n,\Phi) + \frac{1}{\sqrt{N}} \int\limits_0^t \E\big[(R^1_s + R^2_s +R^3_s +R^4_s)(N,n,\Phi)\big] \Id s,
\end{equation}
with 
\begin{align*}
R^0_t(N,n,\Phi) &= \int_{H^{-\lambda-2 }(\T^d)} T_{0,t}^n \Phi(f) \Id \big(\P_{\rho_0^N}-\P_{\rho_0}\big)(f) \\ 
R^1_s(N,n,\Phi)&= \frac{\sigma^2}{2} \sum\limits_{j=1}^d \int_{\T^d} \partial_{x_j} \partial_{y_j}
( \nabla^2 \hat T_{s,t}^n  \Phi(\rho_{s}^N))(x,y)_{\big|x=y}  \Id \rho_s^N ( x)  \\
  R^2_s(N,n,\Phi) &  =  \Big \langle \int\limits_0^1 \int_{\T^d} \frac{\delta b}{\delta m} (s,\cdot,  r\mu_s^N +(1-r)  \mu_s  , v )\Id  \rho_{s}^N (v) \Id r \cdot D  ( \nabla \hat T_{s,t}^n  \Phi(\rho_s^N))(\cdot)  , \rho_s^N \Big \rangle \\
   R^3_s(N,n,\Phi) &  =\!\int_{\T^d} \! \int\limits_0^1\!\! r \!\!\int_{\T^d} \!\int\limits_0^1 \!\!\int_{\T^d} \!\frac{\delta^2 b}{\delta m^2} \bigg(\!s,x,r r'\mu_s^N\!\!+\!(1\!-\!rr')\mu_s ,v,v'\!\bigg)\! \Id v' \Id \rho_s^N(v') \Id r' \!\Id  \rho_s^N\! (v) \dd r  \cdot \!D ( \nabla \hat T_{s,t}^n  \Phi(\rho_s^N))(x)  \Id\mu_s(x)  \\
   R^4_s(N,n,\Phi) &= \langle \rho_s^N , A'(s) \nabla \hat T^{n}_{s,t}\Phi(\rho_s^N)\rangle - \langle A_n(s, \rho_s^N) , \nabla T_{s,t}^n \Phi (\rho_s^N) \rangle_{H^{-\lambda-2}(\T^d)}.  
\end{align*}
\end{proposition}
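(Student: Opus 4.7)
The plan is to combine the backward-in-time Itô-type expansion from Lemma~\ref{lemma: product} with the two explicit generator formulas established in Section~\ref{sec:generators}. First I would apply Lemma~\ref{lemma: product} with the approximating SPDE \emph{started at the random initial condition} $\rho^n_0 = \rho_0^N$, obtaining
\begin{equation*}
\E[\Phi(\rho_t^N)] - \E[\Phi(\rho^n_t(\rho^N_0))] = \int_0^t \E\bigl[\mathcal{G}_s^N T^n_{s,t}\Phi(\rho_s^N) + \partial_s T^n_{s,t}\Phi(\rho_s^N)\bigr]\, ds.
\end{equation*}
To bridge this with the $\rho^n_t$ appearing in the statement (which solves~\eqref{eq: approx_solution} starting from $\rho_0$), I would use the Markov property of the approximating SPDE (Definition~\ref{def: approximation_spde}) to write
\begin{equation*}
\E[\Phi(\rho^n_t(\rho_0^N))] - \E[\Phi(\rho^n_t)] = \E[T^n_{0,t}\Phi(\rho^N_0)] - \E[T^n_{0,t}\Phi(\rho_0)] = \int_{H^{-\lambda-2}(\T^d)} T^n_{0,t}\Phi(f)\, d(\P_{\rho^N_0} - \P_{\rho_0})(f),
\end{equation*}
which is precisely $R^0_t(N,n,\Phi)$.

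Next I would substitute the explicit expressions for both terms of the integrand. Since $\partial_s T^n_{s,t}\Phi = -\mathcal{G}^n_s T^n_{s,t}\Phi$ by Proposition~\ref{lemma: ito_formula_approx}, using Lemma~\ref{lemma: generatpr_trace_computation} to rewrite the trace in diagonal form yields
\begin{equation*}
\partial_s T^n_{s,t}\Phi(\rho_s^N) = -\frac{\sigma^2}{2}\sum_{j=1}^d \int_{\T^d}\partial_{x_j}\partial_{y_j}\nabla^2 \hat T^n_{s,t}\Phi(\rho_s^N)(x,y)\Big|_{x=y}\, d\mu_s(x) - \langle A_n(s,\rho_s^N),\, \nabla T^n_{s,t}\Phi(\rho_s^N)\rangle_{H^{-\lambda-2}(\T^d)}.
\end{equation*}
On the other hand, Proposition~\ref{prop: generator_n_fluctuation} combined with the identification~\eqref{remark: vannishing} of its first three terms expresses $\mathcal{G}_s^N T^n_{s,t}\Phi(\rho_s^N)$ as the pairing $\langle \rho_s^N, A'(s)\nabla \hat T^n_{s,t}\Phi(\rho_s^N)\rangle$, plus the full second-order trace against $d(\mu_s + \rho_s^N/\sqrt N)$, plus the fifth and sixth terms which are precisely $\tfrac{1}{\sqrt N}(R^2_s + R^3_s)$.

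Adding the two expressions, the two trace contributions against $d\mu_s$ cancel identically, while the remaining piece of the trace against $d\rho_s^N/\sqrt N$ produces exactly $\tfrac{1}{\sqrt N}\, R^1_s$. The surviving first-order contribution is the mollification discrepancy $\langle \rho_s^N, A'(s)\nabla \hat T^n_{s,t}\Phi(\rho_s^N)\rangle - \langle A_n(s,\rho_s^N), \nabla T^n_{s,t}\Phi(\rho_s^N)\rangle_{H^{-\lambda-2}(\T^d)} = R^4_s$; together with $\tfrac{1}{\sqrt N}(R^2_s+R^3_s)$ this yields the claimed decomposition. The most delicate point is to justify that the two $d\mu_s$-trace terms are interpretable as integration of the \emph{same} continuous function of $x$ and therefore cancel pointwise in $\omega$: this rests on the Sobolev regularity of $\nabla^2 \hat T^n_{s,t}\Phi$ established in Lemma~\ref{cor: besov_reg_second_order_function}, in particular the bound~\eqref{ineq:diag} which legitimates the diagonal restriction $\partial_{x_j}\partial_{y_j}\nabla^2 \hat T^n_{s,t}\Phi(\rho_s^N)(x,y)|_{x=y}$ as a function in $H^{\tilde\lambda-d}(\T^d)$ against which one may integrate both $\mu_s$ and the signed measure $\rho_s^N/\sqrt N$. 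Integrability of the integrand over $[0,t]$ and the legitimacy of exchanging limits with the time integral is then guaranteed by the continuity in $s$ furnished by Propositions~\ref{lemma: ito_formula_approx} and~\ref{prop: generator_n_fluctuation}.
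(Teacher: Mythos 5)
Your proposal is correct and follows essentially the same route as the paper: split off $R^0_t$ via the Markov property of $\rho^n$, apply Lemma~\ref{lemma: product} with initial datum $\rho^N_0$, and then identify $\mathcal{G}^N_s T^n_{s,t}\Phi - \mathcal{G}^n_s T^n_{s,t}\Phi$ term by term using Proposition~\ref{prop: generator_n_fluctuation} with~\eqref{remark: vannishing} and the generator formula of Proposition~\ref{prop:4.9} (which is exactly your combination of Proposition~\ref{lemma: ito_formula_approx} with Lemma~\ref{lemma: generatpr_trace_computation}), so that the $\mu_s$-trace contributions cancel and the $\rho^N_s/\sqrt{N}$ part yields $R^1_s$. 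Your observation that the mollification discrepancy appears as $R^4_s$ without an extra $\tfrac{1}{\sqrt N}$ prefactor is in fact the computationally accurate bookkeeping; since $R^4$ vanishes as $n\to\infty$ this is immaterial for the sequel.
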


\begin{proof}
We want to apply Lemma~\ref{lemma: product}. We utilize the Markov property of $\rho^n$ 
to rewrite the left hand side as  
\begin{align*} 
     \E\big[ \Phi(\rho_t^N)\big] &- \E\big[ \Phi(\rho_t^n)\big]   =  \E\big[ \Phi(\rho_t^N)\big] - 
        \int_{H^{-\lambda-2}(\T^d)} T_{0,t}^n \Phi(f) \Id \P_{\rho_0}(f)  \\
      &= \E\big[ \Phi(\rho_t^N)\big] - \E\big[ \Phi(\rho_t^n(\rho^N_0))\big] + \int_{H^{-\lambda-2}(\T^d)} T_{0,t}^n \Phi(f) \Id \P_{\rho_0^N}(f) - \int_{H^{-\lambda-2}(\T^d)} T_{0,t}^n \Phi(f) \Id \P_{\rho_0}(f) 
\end{align*}
Hence, applying Lemma~\ref{lemma: product} we get 
\begin{equation*}
    \E\big[ \Phi(\rho_t^N)- \Phi(\rho^n_t) \big] =
    \int\limits_0^t \E\big[ \big(\mathcal G_s^N  - \mathcal{G}_s^n  \big) T_{s,t}^n \Phi (\rho_s^N) \big] \Id s
    + R^0_t (N,n,\Phi).
\end{equation*}
Therefore, applying Proposition \ref{prop:4.9} and   Proposition~\ref{prop: generator_n_fluctuation} with~\eqref{remark: vannishing}, we obtain 
the claim.
\end{proof}

\begin{remark} \label{remark: second_flat_vanish}
    Notice that for the classical interaction drift \(b(t,x,m) = K*m(x)\) with some interaction force kernel \(K\), we have \(\tfrac{\delta^2 b}{\delta m^2}(t,x,m,v) = 0\) and the term $R^3(N,n,\Phi)$ vanishes. 

     Notice also that $R^4(N,n,\Phi)$ depends on $A_n$, while the other terms depend on $n$ only via derivatives of $T^n_{s,t}\Phi$. 
\end{remark}
We now estimate the reminder terms and thus prove Theorem~\ref{theorem: main_result}.
We start with the initial condition. 
\begin{lemma}
\label{lem:r0}
Under Assumptions~\ref{ass: inital_cond}-\ref{ass: coef_fokker}, for $\Phi\in C_{\ell}^2( H^{-\lambda-2}(\T^d)$, we have
\begin{equation}
|R^0_t(N,n,\Phi)| \leq C [\Phi]_{C^1(H^{-\lambda-2}(\T^d))} W_{1, H^{-\lambda-2}(\T^d)}\Big( \P_{\rho^N_0}, \P_{\rho_0}\Big).
\end{equation} 
\end{lemma}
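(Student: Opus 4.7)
The strategy is to apply the Kantorovich--Rubinstein dual characterization of the $W_1$ distance, which reduces the claim to showing that the map $f\mapsto T_{0,t}^n\Phi(f)$ is Lipschitz on $H^{-\lambda-2}(\T^d)$ with a Lipschitz constant of size $C[\Phi]_{C^1}$, uniformly in $n$ and $t\in[0,T]$.

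The first and essentially only substantive step is precisely this Lipschitz bound, and it is already at our disposal. Indeed, by Remark~\ref{remark: infinite_diff} the map $f \mapsto T_{0,t}^n\Phi(f)$ is Fréchet differentiable on $H^{-\lambda-2}(\T^d)$, and Lemma~\ref{cor: reg_derivative} (applied to $\Phi\in C^2_\ell$, using density/approximation as needed) yields the uniform gradient bound
\[
\sup_{n\in\N}\sup_{t\in[0,T]}\sup_{f\in H^{-\lambda-2}(\T^d)} \|\nabla T_{0,t}^n \Phi(f)\|_{H^{-\lambda-2}(\T^d)} \le C [\Phi]_{C^1(H^{-\lambda-2}(\T^d))}.
\]
By the fundamental theorem of calculus along the segment joining $f$ to $g$ in $H^{-\lambda-2}(\T^d)$,
\[
| T_{0,t}^n\Phi(f) - T_{0,t}^n\Phi(g)|
\le \int_0^1 |\langle \nabla T_{0,t}^n\Phi(g+r(f-g)),\, f-g\rangle_{H^{-\lambda-2}(\T^d)}|\,\mathrm{d}r
\le C[\Phi]_{C^1}\, \|f-g\|_{H^{-\lambda-2}(\T^d)}.
\]

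The second step is to invoke Kantorovich--Rubinstein duality for the $1$-Wasserstein distance on the Polish (separable Hilbert) space $H^{-\lambda-2}(\T^d)$. Assumption~\ref{ass: inital_cond} gives $\rho_0\in L^2_{\mathcal{F}_0}(H^{-\lambda-2}(\T^d))$, and in the regimes of interest the law of $\rho^N_0$ also has finite first moment (otherwise both sides of the inequality are infinite and the bound is trivial), so the duality applies. Dividing by the Lipschitz constant, it yields
\[
|R^0_t(N,n,\Phi)| = \Bigl| \int_{H^{-\lambda-2}(\T^d)} T_{0,t}^n\Phi(f) \,\mathrm{d}(\P_{\rho^N_0}-\P_{\rho_0})(f)\Bigr|
\le C [\Phi]_{C^1(H^{-\lambda-2}(\T^d))} W_{1,H^{-\lambda-2}(\T^d)}(\P_{\rho^N_0},\P_{\rho_0}),
\]
which is the desired estimate.

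No real obstacle is expected: the whole argument is a direct consequence of the uniform-in-$n$ gradient bound already proved in Lemma~\ref{cor: reg_derivative} combined with standard duality. The only mild technical point is to ensure integrability so that Kantorovich--Rubinstein is applicable, which is guaranteed by Assumption~\ref{ass: inital_cond} together with the fact that if $W_{1,H^{-\lambda-2}(\T^d)}(\P_{\rho^N_0},\P_{\rho_0}) = +\infty$ the estimate holds vacuously.
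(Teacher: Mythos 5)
Your proof is correct and follows essentially the same route as the paper: Kantorovich--Rubinstein duality combined with the uniform-in-$n$ gradient bound \eqref{eq:est_1} from Lemma~\ref{cor: reg_derivative}, which gives the Lipschitz constant $C[\Phi]_{C^1}$ for $T^n_{0,t}\Phi$. The only superfluous remark is the appeal to ``density/approximation as needed'': Lemma~\ref{cor: reg_derivative} is already stated for $\Phi\in C^2_\ell(H^{-\lambda-2}(\T^d))$, so it applies directly.
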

\begin{proof}
We recall that by Kantorovich-Rubenstein duality 
\[
W_{1, H^{-\lambda-2}(\T^d)}\Big( \P_{\rho^N_0}, \P_{\rho_0}\Big) = \sum_{\Psi \mathrm{ Lipschitz, \,[\Phi]_{C^1(H^{-\lambda-2}(\T^d))}} \leq 1} \int_{H^{-\lambda-2}(\T^d)} \Psi(f) \Id (\P_{\rho^N_0}- \P_{\rho_0})(f). 
\]
Then 
\[
R^0_t(N,n,\Phi) \leq [\nabla  T_{s,t}^n \Phi]_{C^1(H^{-\lambda-2}(\T^d))} W_{1, H^{-\lambda-2}(\T^d)}\Big( \P_{\rho^N_0}, \P_{\rho_0}\Big)
\leq C [\Phi]_{C^1(H^{-\lambda-2}(\T^d))} W_{1, H^{-\lambda-2}(\T^d)}\Big( \P_{\rho^N_0}, \P_{\rho_0}\Big), 
\] 
where we used the bound \eqref{eq:est_1} in Lemma~\ref{cor: reg_derivative}. 
\end{proof} 

We then show that the last term vanishes with $n$.

\begin{lemma} \label{lemma: vanish_first_order}
Under Assumptions~\ref{ass: inital_cond}-\ref{ass: coef_fokker}, for  \(\Phi \in  FC^\infty(H^{-\lambda-2}(\T^d))\), we have
    \begin{equation*}
         \lim_{ n\to \infty}  
         \int_0^t\E|R^4_s(N,n,\Phi)| \Id s
        = 0 
    \end{equation*}
\end{lemma}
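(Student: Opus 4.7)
The plan is to exploit the algebraic identity $A_n(s,f)= j_n*A(s,j_n*f)$, which follows from \eqref{eq: regularized_A} by self-adjointness of the mollifier $j_n*$ and the adjoint relation $\langle A(s,g),\varphi\rangle = \langle g,A'(s)\varphi\rangle$. Combining this with Lemma~\ref{cor: dual_identification_first_derivative} (which converts $\langle A_n(s,\rho_s^N),\nabla T_{s,t}^n\Phi(\rho_s^N)\rangle_{H^{-\lambda-2}(\T^d)}$ into the distributional pairing $\langle A_n(s,\rho_s^N), \psi_n\rangle$ for $\psi_n := \nabla\hat T_{s,t}^n\Phi(\rho_s^N)$) gives the compact representation
\[
R^4_s(N,n,\Phi) = \big\langle \rho_s^N,\; A'(s)\psi_n - j_n*A'(s)(j_n*\psi_n)\big\rangle.
\]
Since $\psi_n$ is uniformly bounded in $H^{\lambda+2}(\T^d)$ by Lemma~\ref{cor: dual_identification_first_derivative}, it then suffices to show that the right-hand side vanishes in $L^1(\Omega)$ uniformly in $s\in[0,t]$.

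Next I would decompose $A'(s)$ according to \eqref{eq: definition_Aprime} into its three constituents. The Laplacian piece is the easy one: since $\Delta$ and $j_n*$ are both Fourier multipliers they commute, so after using self-adjointness of $j_n*$ the Laplace contribution becomes $\tfrac{\sigma^2}{2}\langle (I-j_n^2*)\rho_s^N, \Delta\psi_n\rangle$, where $j_n^2 := j_n*j_n$, which is bounded by $C[\Phi]_{C^1}\|(I-j_n^2*)\rho_s^N\|_{H^{-\lambda}(\T^d)}$. For each first-order operator $\mathcal{L}_i$ appearing in \eqref{eq: definition_Aprime}, I would write
\[
\mathcal{L}_i\psi_n - j_n*\mathcal{L}_i(j_n*\psi_n) = \mathcal{L}_i(\psi_n-j_n*\psi_n) + (I-j_n*)\mathcal{L}_i(j_n*\psi_n),
\]
pair with $\rho_s^N$, and transfer $j_n*$ to $\rho_s^N$ in the second summand by self-adjointness. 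Using Assumption~\ref{ass: existence}(3) by duality to move the divergence onto $\rho_s^N$, each contribution is controlled by one of $C[\Phi]_{C^1}\|\rho_s^N\|_{H^{-\lambda-1}}\|\psi_n-j_n*\psi_n\|_{H^{\lambda+1}}$ or $C[\Phi]_{C^1}\|(I-j_n*)\rho_s^N\|_{H^{-\lambda-1}}\|\psi_n\|_{H^{\lambda+2}}$.

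It then remains to verify two uniform-in-$N$ vanishing statements. On the measure side, the moment bound $\sup_{N,s}\E|\langle \rho_s^N, e_k\rangle|^2 \leq C$ uniform in $k$ (which follows from Assumption~\ref{ass: convergence_ass}(3) together with a Pinsker-type comparison between $\bar\mu_t^N$ and $\mu_t^{\otimes N}$) combined with dominated convergence applied to the Fourier expansion yields $\E\|(I-j_n^{\#}*)\rho_s^N\|_{H^{-r}(\T^d)}^2\to 0$ uniformly in $N,s$, for any $r>d/2$ and $j_n^{\#}\in\{j_n,j_n^2\}$; indeed the multiplier factors $(1-\tilde j(n^{-1}k))^2$ tend to zero pointwise and are dominated by the summable majorant $C(1+|k|^2)^{-r}$. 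On the test-function side, the uniform bound $\|\psi_n\|_{H^{\lambda+2}(\T^d)}\leq C[\Phi]_{C^1}$ and the summability of $(1+|k|^2)^{-1}$ imply $\|\psi_n-j_n*\psi_n\|_{H^{\lambda+1}(\T^d)}\to 0$ pointwise in $\omega$, with a deterministic majorant, hence also in $L^1(\Omega)$. A final dominated-convergence argument in $s\in[0,t]$ concludes.

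The principal obstacle is the first-order part: unlike the Laplacian, the multiplication operator $b(s,\cdot,\mu_s)\cdot D$ is not a Fourier multiplier and does not commute with mollification, so no clean cancellation occurs. This forces us to accept a one-derivative gap between $\psi_n$ and $j_n*\psi_n$ (measured in $H^{\lambda+1}$ rather than $H^{\lambda+2}$), and it is the summability provided by that single extra factor $(1+|k|^2)^{-1}$, together with the uniform $H^{\lambda+2}$-bound on $\psi_n$ from Lemma~\ref{cor: dual_identification_first_derivative}, that makes the whole scheme work. A secondary delicate point is the double dependence of $\psi_n$ on $n$ (through the semigroup $T^n_{s,t}$ and through the random point $\rho_s^N$), but both are absorbed by the $n$-uniform estimate of Lemma~\ref{cor: dual_identification_first_derivative}.
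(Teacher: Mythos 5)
Your decomposition is algebraically the same as the paper's: unwinding $\langle A_n(s,\rho^N_s),\nabla T^n_{s,t}\Phi(\rho^N_s)\rangle_{H^{-\lambda-2}(\T^d)}$ into $\langle A(s)(j_n*\rho^N_s), j_n*\psi_n\rangle$ and splitting off two mollification commutator errors plus the exact limit term is precisely what the proof of Lemma~\ref{lemma: vanish_first_order} does (there written in Fourier modes, with \emph{both} errors placed on the measure side). The key ingredients also coincide: the $n$-uniform $H^{\lambda+2}$-bound on $\psi_n$ from Lemma~\ref{cor: dual_identification_first_derivative}, and the fact that $\rho^N_s$ actually lives in $H^{-\lambda}(\T^d)$, so the derivatives lost by $A$ can be absorbed.

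Two caveats. First, for the first-order terms your claimed bound $C[\Phi]_{C^1}\|\rho^N_s\|_{H^{-\lambda-1}}\|\psi_n-j_n*\psi_n\|_{H^{\lambda+1}}$ does not follow from Assumption~\ref{ass: existence}(3) by duality as written: moving the divergence onto $\rho^N_s$ and using \eqref{eq: b_ineq} controls the pairing by $\|\rho^N_s\|_{H^{-\lambda-1}}\|\varphi\|_{H^{\lambda+2}}$, and $\|\psi_n-j_n*\psi_n\|_{H^{\lambda+2}}$ is only bounded, not vanishing. To gain the needed derivative you must use the stability estimate shifted up by one index (i.e.\ $\|\nabla\cdot(b f)\|_{H^{-\lambda-1}}\le C\|f\|_{H^{-\lambda}}$, pairing at the level $H^{-\lambda}\times H^{\lambda}$), which holds in the cases treated but is not literally what the assumption supplies; the paper avoids the issue by loading the entire operator $A$ onto $j_n*\rho^N_s-\rho^N_s$ and using $\|j_n*\rho^N_s-\rho^N_s\|_{H^{-\lambda}}\to 0$. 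Second, your appeal to Assumption~\ref{ass: convergence_ass}(3) for a uniform-in-$N$ moment bound is both outside the stated hypotheses (only Assumptions~\ref{ass: inital_cond}--\ref{ass: coef_fokker} are in force) and unnecessary: the limit is taken for fixed $N$, where $\|\rho^N_s\|_{H^{-\lambda}}\le C\sqrt{N}$ deterministically, so pointwise convergence plus dominated convergence in $\omega$ and $s$ suffices.
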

\begin{proof}
Notice that 
\begin{align*}
\langle A_n(s, \rho_s^N) , \nabla T_{s,t}^n \Phi (\rho_s^N) \rangle_{H^{-\lambda-2}(\T^d)}
& =  \sum\limits_{k \in \Z^d} \langle k \rangle^{-2(\lambda+2)} \langle A(s) (j_n*\rho_s^N-\rho_s^N),j_n*e_k \rangle 
 \overline{\langle \nabla T_{s,t}^n \Phi (\rho_s^N) , e_k \rangle} \\
&\quad + \langle k \rangle^{-2(\lambda+2)} \langle A(s) (\rho_s^N),j_n* e_k-e_k  \rangle 
 \overline{\langle \nabla T_{s,t}^n \Phi (\rho_s^N) ,e_k\rangle } \\
&\quad + \langle k \rangle^{-2(\lambda+2)} \langle \rho_s^N ,A'(s)(e_k) \rangle 
 \overline{\langle \nabla T_{s,t}^n \Phi (\rho_s^N) , e_k \rangle}.
\end{align*}
The first term can be estimated by 
\begin{align*}
    &\E\big[\norm{\tilde j}_{L^\infty} \norm{A(s)(j_n*\rho_s^N-\rho_s^N)}_{H^{-\lambda-2}(\T^d)}    \norm{\nabla T_{s,t}^n \Phi (\rho_s^N)}_{H^{-\lambda-2}(\T^d)}\big] \\
    &\quad \le C \E\big[ \norm{j_n*\rho_s^N-\rho_s^N}_{H^{-\lambda}(\T^d)}\big], 
\end{align*}
where we applied Lemma~\ref{cor: reg_derivative} in the last step. 
The right hand side vanishes for each \(N \in \N\) by the properties of mollification, the fact that \(\rho_s^N \in H^{-\lambda}(\T^d)\) and the dominated convergence theorem. Similarly, the second term vanishes. Hence, we obtain 
    \begin{align*}
         &\limsup_{ n\to \infty} |\langle A_n(s, \rho_s^N) , \nabla T_{s,t}^n \Phi ( \rho_s^N) \rangle_{H^{-\lambda-2}(\T^d)} -  \langle  \rho_s^N, A'(t) \nabla \hat T^{n}_{s,t}\Phi(f)\rangle| \\
        &\quad \le \limsup_{ n\to \infty} \bigg|\sum\limits_{k \in \Z^d} \langle k \rangle^{-2(\lambda+2)} \langle \rho_s^N ,A'(s)(e_k) \rangle 
 \langle \nabla T_{s,t}^n \Phi ( \rho_s^N) , e_k \rangle - \langle  \rho_s^N, A'(t) \nabla \hat T^{n}_{s,t}\Phi( \rho_s^N)\rangle\bigg|.
    \end{align*}
    Now, we only need to notice that 
    \begin{equation*}
        \sum\limits_{k \in \Z^d} \langle k \rangle^{-2(\lambda+2)} \langle  \rho_s^N ,A'(s)(e_k) \rangle 
\overline{ \langle \nabla T_{s,t}^n \Phi ( \rho_s^N) , e_k \rangle}
 =\langle  \rho_s^N, A'(t) \nabla \hat T^{n}_{s,t}\Phi( \rho_s^N)\rangle
    \end{equation*}
    for each \(n \in \N\) and the claim is proven. Indeed, by Sobolev's embedding the series of derivatives converge uniformly and we can exchange the series and the derivatives in the operator \(A'\). 
\end{proof}

We next estimate the remaining terms using Assumption~\ref{ass: convergence_ass}. We start with the second order term. Recall that \(\bar \mu_t^N\) is the joint distribution of the whole \(N\)-particle system~\eqref{eq: interacting_particle_system}.

\begin{lemma} \label{lemma: second_ord_rest}
Under Assumptions~\ref{ass: inital_cond}-\ref{ass: convergence_ass}, for \(\Phi \in FC^\infty(H^{-\lambda-2}(\T^d))\), we have
\begin{equation*}
    \int_0^t\E |R^1_s(N,n,\Phi)| \Id s
    \le C [\Phi]_{C^2}. 
\end{equation*}

\end{lemma}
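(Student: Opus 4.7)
The strategy is to bound $R^1_s$ by treating it as a duality pairing on $\T^d$ between the signed measure $\rho_s^N$ and the diagonal function extracted from $\nabla^2 \hat T^n_{s,t}\Phi$, and to control moments of $\rho_s^N$ in a suitable negative Sobolev norm via the relative entropy bound in Assumption~\ref{ass: convergence_ass}.

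Fix $\tilde\lambda \in (3d/2, \lambda)$ and set $F^{N,n}_s(x) := \tfrac{\sigma^2}{2} \sum_{j=1}^d \partial_{x_j}\partial_{y_j} \nabla^2 \hat T^n_{s,t}\Phi(\rho^N_s)(x,y)\big|_{x=y}$, so that $R^1_s = \langle \rho^N_s, F^{N,n}_s\rangle$. Estimate \eqref{ineq:diag} of Lemma~\ref{cor: besov_reg_second_order_function} delivers a pointwise-in-$\omega$ bound $\|F^{N,n}_s\|_{H^{\tilde\lambda-d}(\T^d)} \leq C [\Phi]_{C^2}$ with $C$ independent of $n$, $s$ and of the realization of $\rho^N_s$. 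By Sobolev duality $|R^1_s| \leq \|\rho^N_s\|_{H^{-(\tilde\lambda-d)}(\T^d)} \|F^{N,n}_s\|_{H^{\tilde\lambda-d}(\T^d)}$, so after the Cauchy-Schwarz inequality the task reduces to proving $\sup_{s\in[0,T]} \sup_{N} \E\|\rho^N_s\|^2_{H^{-(\tilde\lambda-d)}(\T^d)} < \infty$.

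Expanding along the Fourier basis gives $\E\|\rho^N_s\|^2_{H^{-(\tilde\lambda-d)}} = \sum_k \langle k\rangle^{-2(\tilde\lambda-d)} \E|\langle \rho_s^N, e_k\rangle|^2$. Writing $\langle \rho^N_s, e_k\rangle = N^{-1/2} \sum_{i=1}^N (e_k(X^i_s) - \int e_k\Id\mu_s)$ and noting that the real and imaginary parts of each summand are mean-zero under $\mu_s$ and bounded by $2$, Hoeffding's inequality under the i.i.d.\ product reference $\mu_s^{\otimes N}$ yields a sub-Gaussian exponential moment $\E^{\mu_s^{\otimes N}}[\exp(|\langle \rho_s^N, e_k\rangle|^2/C_0)] \leq C_1$ with constants independent of $k$ and $N$. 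The Donsker-Varadhan variational inequality combined with Assumption~\ref{ass: convergence_ass}(3) transfers this to the actual particle law, yielding $\E|\langle \rho^N_s, e_k\rangle|^2 \leq C$ uniformly in $N$, $s$ and $k$. Since $\tilde\lambda - d > d/2$, the series $\sum_k \langle k\rangle^{-2(\tilde\lambda-d)}$ converges; assembling the bounds and integrating over $s\in[0,t]$ yields the claim.

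The main obstacle is the passage from the sub-Gaussian Fourier-mode bound under the i.i.d.\ reference $\mu_s^{\otimes N}$ to the same bound under the correlated law $\bar\mu^N_s$: a crude estimate using the Sobolev embedding $\|F^{N,n}_s\|_{L^\infty} \lesssim [\Phi]_{C^2}$ together with the trivial total variation control $|\rho^N_s|(\T^d) \lesssim \sqrt{N}$ only yields a bound of order $\sqrt{N}$, so one must exploit both the Sobolev regularity of the diagonal function and the oscillatory cancellation encoded in the dual Sobolev norm of $\rho^N_s$, which is precisely what the entropic variational principle allows.
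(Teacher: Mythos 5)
Your proposal is correct and follows essentially the same route as the paper: bound the diagonal function in $H^{\tilde\lambda-d}(\T^d)$ uniformly via \eqref{ineq:diag}, pair by Sobolev duality with $\rho^N_s$, and reduce to a uniform bound on $\E\|\rho^N_s\|^2_{H^{-(\tilde\lambda-d)}(\T^d)}$ controlled by the relative entropy of Assumption~\ref{ass: convergence_ass}(3). The only difference is that where the paper invokes \cite[Lemma~2.6]{Xianliang2023} as a black box for this moment bound, you re-derive it directly (Fourier expansion, sub-Gaussian bounds under $\mu_s^{\otimes N}$, and the entropy variational inequality of Lemma~\ref{lemma: variational_formual}), which is precisely how that cited lemma is proved.
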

\begin{proof}
    Recall that for each \(n \in \N \), \(j\), \(\omega \in \Omega\) and \(s \in [0,t]\) the function \(x \mapsto \partial_{x_j} \partial_{y_j} \hat T^n_{s,t}\Phi (\rho_s^N(\omega))(x,y)\big|_{x=y} \) is bounded in \(H^{\tilde \lambda - d}(\T^d) \) by Lemma~\ref{cor: besov_reg_second_order_function}. Therefore, for \(\tilde \lambda \in (3/2 d , \lambda ) \) we have 
    \begin{align*}
        &\bigg|\int\limits_0^t \E\bigg[ \int\limits_{\T^{d}} \partial_{x_j}\partial_{y_j} \nabla^2 \hat T^n_{s,t}\Phi(\rho_s^N)(x,y)_{\big| x= y} \Id \rho_s^N(x) \bigg]  \Id s \bigg| \\
        &\quad \le \sqrt{N}  \int\limits_0^t  \E\big[\norm{ \partial_{x_j} \partial_{y_j} \hat T^n_{s,t}\Phi (\rho_s^N(\omega))(x,y)\big|_{x=y} }_{H^{\tilde \lambda -d}(\T^d) }
        \norm{\mu_s^N-\mu_s}_{H^{-(\tilde \lambda -d)}(\T^d) } \big] 
       \Id s  \\
        &\quad \le C \sqrt{N} \int\limits_0^t  \sup\limits_{f \in H^{-\lambda-2}(\T^d)} \norm{\nabla^2 \hat T^n_{s,t}\Phi(f)(\cdot, \cdot)\big|_{x=y}}_{H^{\tilde \lambda -d}(\T^d) } \E\big[  \norm{\mu_s^N-\mu_s}_{H^{-(\tilde \lambda -d)}(\T^d) }^2 \big]^{\frac{1}{2}} \Id s  \\
        &\quad \le C T \sqrt{N} [\Phi]_{C^2} \sup\limits_{0 \le t \le T }  \E\big[\norm{\mu_s^N-\mu_s}_{H^{-(\tilde \lambda -d)}(\T^d) }^2 \big]^{\frac{1}{2}} \Id s, 
    \end{align*}
where the last inequality follows by~\eqref{ineq:diag}. Notice that \(\tilde \lambda -d > d/2\) and we can apply~\cite[Lemma~2.6]{Xianliang2023} to find 
\begin{equation} \label{eq: second_order_rest_term_aux}
    \int_0^t \E |R^1_s(N,n,\Phi)| \Id s 
    \le C T [\Phi]_{C^2} \Big( \sup\limits_{0 \le t \le T} \mathcal{H}(\bar \mu_t^N | \mu_t^{\otimes N} )^{\frac{1}{2}} +1 \Big). 
\end{equation}
The right-hand side is bounded by Assumption~\ref{ass: convergence_ass}, which proves the Lemma.  
\end{proof}

Next, we analyze the rest terms concerning the flat derivative. We employ relative entropy bounds; thus  we need to recall some results from the theory of mean-fields limits via the relative entropy method. 
Let us recall the change of measure via the variational formula of the relative entropy~\cite[Lemma~1]{JabinWang2018}. 
\begin{lemma}\label{lemma: variational_formual}
    Let \(\nu_1,\nu_2\) be two probability densities on \(\T^{dN}\) and \(\varphi \in L^\infty(\T^{dN})\). Then for any \(\kappa > 0\) we have 
    \begin{equation} \label{eq: variational_formula}
        \int_{\T^{dN}} \varphi \Id \nu_1 \le 
        \frac{1}{\kappa N }\bigg( \mathcal{H}(\nu_1 \vert \nu_2) + \log\bigg( \int\limits_{\T^{dN}} \exp(\kappa N \varphi) \Id \nu_2  \bigg) \bigg).  
    \end{equation}
\end{lemma}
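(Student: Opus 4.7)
The statement is the classical Donsker--Varadhan variational formula for the relative entropy, reflecting convex duality between $\mathcal{H}(\cdot\,|\,\nu_2)$ and the log-moment-generating functional $\varphi \mapsto \log \int e^{\varphi} d\nu_2$. The only peculiarity here is the normalization: since in this paper $\mathcal{H}(\nu_1|\nu_2)$ denotes the normalized entropy $\frac{1}{N}\int \log(\nu_1/\nu_2)\,d\nu_1$, the statement as written is really the standard formula multiplied through by the factor $\frac{1}{\kappa N}$. Once this normalization is acknowledged, no conceptual obstacle remains.

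The plan is to apply Jensen's inequality to the concave logarithm. First I would write, assuming momentarily that $\nu_1, \nu_2 > 0$ so that the logarithm of $\nu_1/\nu_2$ is well-defined (the general case follows by the usual convention $0 \cdot \log 0 = 0$ and absolute continuity considerations),
\begin{equation*}
\kappa N \varphi \;=\; \log\!\left(\frac{e^{\kappa N \varphi} \nu_2}{\nu_1}\right) \;+\; \log\!\left(\frac{\nu_1}{\nu_2}\right).
\end{equation*}
Integrating against $\nu_1$ and using the normalization convention for $\mathcal{H}$, the second term contributes exactly $N\mathcal{H}(\nu_1|\nu_2)$. For the first term, Jensen's inequality applied to the concave function $\log$ with probability measure $\nu_1$ yields
\begin{equation*}
\int_{\T^{dN}} \log\!\left(\frac{e^{\kappa N \varphi} \nu_2}{\nu_1}\right) d\nu_1 \;\le\; \log\!\left(\int_{\T^{dN}} \frac{e^{\kappa N \varphi} \nu_2}{\nu_1}\, d\nu_1\right) \;=\; \log\!\left(\int_{\T^{dN}} e^{\kappa N \varphi}\, d\nu_2\right).
\end{equation*}
Dividing by $\kappa N$ gives the claimed bound.

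The hypothesis $\varphi \in L^\infty(\T^{dN})$ ensures the right-hand side is finite and all integrals are well-defined, and the inequality is independent of the sign of $\varphi$. The only subtlety, which is routine to handle, is the case when $\nu_1$ is not absolutely continuous with respect to $\nu_2$: then $\mathcal{H}(\nu_1|\nu_2) = +\infty$ and the inequality is trivial, so one may restrict attention to the absolutely continuous case where the manipulation above is fully rigorous.
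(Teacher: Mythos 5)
Your overall strategy is the right one -- this is the Gibbs/Donsker--Varadhan inequality, and the decomposition $\kappa N\varphi = \log\bigl(e^{\kappa N\varphi}\nu_2/\nu_1\bigr)+\log(\nu_1/\nu_2)$ followed by Jensen applied to $\log$ under $\nu_1$ is exactly the standard proof. Note also that the paper itself offers no proof: it recalls the statement verbatim from Jabin--Wang \cite[Lemma 1]{JabinWang2018}, so any self-contained argument is additional content.

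There is, however, a concrete error in your normalization bookkeeping, and as written your derivation does not conclude the stated inequality. You assert that the paper's $\mathcal{H}$ is the \emph{normalized} entropy $\frac1N\int\log(\nu_1/\nu_2)\,d\nu_1$, so that the entropy term contributes $N\mathcal{H}(\nu_1|\nu_2)$ after integration. But then dividing the resulting inequality
\begin{equation*}
\kappa N\int\varphi\,d\nu_1 \;\le\; N\,\mathcal{H}(\nu_1|\nu_2)+\log\int e^{\kappa N\varphi}\,d\nu_2
\end{equation*}
by $\kappa N$ yields $\int\varphi\,d\nu_1\le \frac{1}{\kappa}\mathcal{H}(\nu_1|\nu_2)+\frac{1}{\kappa N}\log\int e^{\kappa N\varphi}\,d\nu_2$, whose entropy term is $\frac{1}{\kappa}\mathcal{H}$, not the claimed $\frac{1}{\kappa N}\mathcal{H}$. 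Under your convention the lemma as stated would in fact be false for large $N$. The resolution is that the paper's $\mathcal{H}$ is the \emph{unnormalized} relative entropy $\int\log(\nu_1/\nu_2)\,d\nu_1$: this is consistent with Jabin--Wang's Lemma 1 once one unfolds their normalized $H_N=\frac1N\int\rho_N\log(\rho_N/\bar\rho_N)$ and the extra $\frac1N$ they place in front of the log-moment term (so that $\frac1\eta H_N+\frac{1}{\eta N}\log(\cdots)=\frac{1}{\eta N}(H+\log(\cdots))$ with $H$ unnormalized), and with the way Assumption 2.4-(3) is verified in the paper via Lacker's bound, which controls the unnormalized entropy of the full $N$-particle system uniformly in $N$. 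With that convention, the second term in your decomposition contributes exactly $\mathcal{H}(\nu_1|\nu_2)$ and dividing by $\kappa N$ gives the lemma verbatim; the remainder of your argument (Jensen, the $L^\infty$ hypothesis, and the reduction to the absolutely continuous case) is fine.
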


We also recall the following large deviation estimate by Jabin, Wang~\cite[Theorem~4]{JabinWang2018} in combination with \cite[Remark~2.2]{Xianliang2023}. 
\begin{theorem}\label{theorem: large_deviation}
    Consider \(\mu \in L^1(\T^d)\) with \(\mu \geq 0\) and \(\int_{\T^d} \mu \,dx = 1\). Consider  
further any \(\varphi(x, z) \in L^\infty(\T^d)\) with  
\begin{equation} \label{eq: deviation_p_bound}
\gamma := \hat C \norm{\varphi}_{L^\infty(\T^{2d})} < 1,    
\end{equation}
where \(\hat C\) is a universal constant. Assume that \(\varphi\) satisfies the following cancellations:
\[
\int_{\T^d} \varphi(x, z) \mu(x) \Id x = 0 \quad \forall z, \quad  
\int_{\T^d} \varphi(x, z) \mu(z) \Id z = 0 \quad \forall x.
\]
Then,
\[
\int_{\T^{dN}} \mu^{\otimes N}(x_1,\ldots,x_N) \exp \left(\left| \frac{1}{N} \sum_{i,j=1}^{N} \varphi(x_i, x_j) \right |\right) \Id x_1 \cdots \Id x_N  
\leq \frac{2}{1 - \gamma} < \infty,
\]
where we recall that \(\mu^{\otimes N} (t, x_1,\ldots,x_N) = \prod_{i=1}^{N} \mu(t, x_i)\).
\end{theorem}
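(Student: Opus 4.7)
The plan is to follow the exponential-moment/combinatorial strategy originally developed by Jabin and Wang. First I would remove the absolute value by the trivial inequality $e^{|y|} \le e^{y} + e^{-y}$ and observe that replacing $\varphi$ by $-\varphi$ preserves both the cancellation hypothesis and the $L^\infty$ bound; it therefore suffices to prove
\[
I_N := \int_{\T^{dN}} \mu^{\otimes N}(x_1,\ldots,x_N) \exp\Big( \tfrac{1}{N} \sum_{i,j=1}^{N} \varphi(x_i, x_j)\Big)\,dx_1\cdots dx_N \le \tfrac{1}{1-\gamma},
\]
for a suitable universal constant $\hat C$, and then conclude by adding the two one-sided bounds.

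Next I would Taylor expand the exponential and interchange summation and integration (justified by absolute convergence via $\|\varphi\|_\infty<\infty$),
\[
I_N = \sum_{k=0}^\infty \frac{1}{k!\, N^k} \sum_{(i_1,j_1),\ldots,(i_k,j_k)\in\{1,\ldots,N\}^2} \int_{\T^{dN}} \prod_{\ell=1}^k \varphi(x_{i_\ell}, x_{j_\ell}) \prod_{l=1}^N \mu(x_l)\,dx_l.
\]
The integral factorizes across the $N$ variables, and the key observation is: if some index $l\in\{1,\ldots,N\}$ appears as the \emph{first} argument in exactly one factor and does not appear as a second argument in any factor, then integrating $\mu(x_l)$ against that unique factor yields $0$ by the cancellation $\int \varphi(x,z)\mu(x)\,dx=0$; symmetrically for the other marginal. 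Consequently, only those index configurations $((i_1,j_1),\ldots,(i_k,j_k))$ survive for which every index $l$ that is touched at all is touched at least twice (counting both first- and second-argument occurrences).

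The third step is the combinatorial heart of the argument. Encode each configuration as a directed multigraph on $\{1,\ldots,N\}$ with $k$ edges, and group configurations by their isomorphism type. The constraint ``every used vertex has degree $\ge 2$'' implies that a multigraph with $k$ edges uses at most $k$ distinct vertices. Choosing which vertices of $\{1,\ldots,N\}$ are used contributes at most $\binom{N}{p}\le N^p/p!$ for $p\le k$, and the number of ways to decorate $p$ chosen vertices with $k$ directed edges so that every vertex has degree $\ge 2$ admits (after careful symmetrization, as in Jabin--Wang) a bound of the form $C_0^k\, k!$. Combined with the trivial $\prod_\ell|\varphi(x_{i_\ell},x_{j_\ell})|\le \|\varphi\|_\infty^k$ and the factor $1/(k! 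N^k)$, one gets
\[
I_N \;\le\; \sum_{k=0}^\infty \bigl(\hat C\,\|\varphi\|_\infty\bigr)^k \;=\;\frac{1}{1-\gamma},
\]
provided $\hat C\,\|\varphi\|_\infty<1$.

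The main obstacle I expect is the third step, i.e.\ obtaining the correct combinatorial bound with a constant $\hat C$ that is genuinely \emph{universal} (independent of $N$, $k$, $d$ and $\mu$). A naive counting of degree-$\ge 2$ multigraphs gives an extra factorial growth in $k$ that ruins the summability; the delicate point of Jabin--Wang is to carry out the counting \emph{after} symmetrizing and iterating the cancellation, so that each additional edge contributes only a bounded factor. Once this combinatorial estimate is in place, the geometric sum closes the argument, and the factor of $2$ in the stated constant $2/(1-\gamma)$ comes precisely from adding the bounds for the two signs of $\varphi$.
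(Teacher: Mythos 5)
This theorem is not proved in the paper at all: it is recalled verbatim from Jabin--Wang \cite[Theorem~4]{JabinWang2018} (adapted to the torus via \cite[Remark~2.2]{Xianliang2023}), so there is no in-paper argument to compare yours against. Your outline does follow the correct, and essentially the only known, strategy for this result: the reduction \(e^{|y|}\le e^{y}+e^{-y}\) (which correctly accounts for the factor \(2\) in \(2/(1-\gamma)\)), the Taylor expansion of the exponential, and the observation that the two cancellation hypotheses kill every configuration in which some index has total multiplicity one, so that only multi-indices in which every used index appears at least twice survive. Those steps are sound as written.

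The genuine gap is exactly where you place it yourself: the combinatorial counting lemma in your third step is asserted, not proved, and it is the entire mathematical content of the theorem. Bounding \(\prod_\ell|\varphi(x_{i_\ell},x_{j_\ell})|\le\|\varphi\|_\infty^k\) and then counting all degree-\(\ge 2\) configurations crudely gives, for \(p\le k\) used vertices, at most \(\binom{N}{p}\,p^{2k}\) configurations, and \(\binom{N}{p}p^{2k}/(k!\,N^{k})\) grows like \(k^{k}\), so the series does not close; the claimed bound \(C_0^{k}k!\) for the number of admissible decorated multigraphs is precisely the delicate estimate of Jabin--Wang (their counting lemma, which partitions indices by multiplicity and exploits that the number of distinct indices is at most \(k\), gaining the compensating \(1/p!\) and a controlled multinomial factor). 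Without reproducing that lemma, the proposal is a faithful roadmap to the cited proof rather than a proof. If the intent is to re-derive the theorem rather than cite it, this counting step must be carried out in full; if the intent is to use it as the paper does, a citation suffices and the sketch is a reasonable account of why the result holds.
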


\begin{lemma} \label{lemma: flat_reminder}
Under Assumptions~\ref{ass: inital_cond}-\ref{ass: convergence_ass}, for \(\Phi \in FC^\infty(H^{-\lambda-2}(\T^d))\), we have  
    \begin{align*}
        \int_0^t \E |R^2_s(N,n,\Phi)| \Id s \le C  [\Phi]_{C^1}. 
    \end{align*}
\end{lemma}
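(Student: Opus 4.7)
The plan is to view $R^2_s$ as a quadratic form in $\mu^N_s-\mu_s$ with an $\omega$-dependent but uniformly $L^\infty$-bounded kernel, and then exploit the relative entropy bound of Assumption~\ref{ass: convergence_ass}(3) via the Jabin--Wang large deviation framework (Theorem~\ref{theorem: large_deviation}) combined with the variational formula (Lemma~\ref{lemma: variational_formual}).

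First, I would substitute $\rho^N_s=\sqrt N(\mu^N_s-\mu_s)$ and introduce, for each $r\in[0,1]$,
\[
\psi^\omega_{s,r}(y,v):=\tfrac{\delta b}{\delta m}\big(s,y,r\mu^N_s+(1-r)\mu_s,v\big)\cdot D\big(\nabla\hat T^n_{s,t}\Phi(\rho^N_s)\big)(y),
\]
so that
\[
R^2_s \;=\; N\int_0^1\!\!\int_{\T^d}\!\!\int_{\T^d}\psi^\omega_{s,r}(y,v)\,(\mu^N_s-\mu_s)(dy)(\mu^N_s-\mu_s)(dv)\,dr.
\]
The factor $\tfrac{\delta b}{\delta m}$ is uniformly bounded in $L^\infty(\T^{2d})$ by Assumption~\ref{ass: convergence_ass}(1), while $D\nabla\hat T^n_{s,t}\Phi(\rho^N_s)$ is uniformly bounded in $L^\infty(\T^d)$ by $C[\Phi]_{C^1}$, as a consequence of Lemma~\ref{cor: dual_identification_first_derivative} and the Sobolev embedding $H^{\lambda+2}(\T^d)\hookrightarrow C^1(\T^d)$ (valid since $\lambda+2-d/2>1$ from~\eqref{eq: const}). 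Thus $\|\psi^\omega_{s,r}\|_{L^\infty(\T^{2d})}\le C[\Phi]_{C^1}$ uniformly in $\omega, n, s, t, r$.

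Next, I would center the kernel against $\mu_s$ by setting
\[
\tilde\psi^\omega_{s,r}(y,v):=\psi^\omega_{s,r}(y,v)-\!\!\int\!\psi^\omega_{s,r}(y',v)\mu_s(dy')-\!\!\int\!\psi^\omega_{s,r}(y,v')\mu_s(dv')+\!\!\int\!\!\int\!\psi^\omega_{s,r}\,d\mu_s\,d\mu_s,
\]
which preserves the uniform $L^\infty$ bound (up to the factor $4$) and yields vanishing marginals against $\mu_s$ in both variables. The double cancellation then gives
\[
\int\!\!\int \psi^\omega_{s,r}\,(\mu^N_s-\mu_s)^{\otimes 2}=\int\!\!\int \tilde\psi^\omega_{s,r}\,\mu^N_s\otimes\mu^N_s,
\]
and therefore
\[
R^2_s\;=\;\int_0^1 \frac{1}{N}\sum_{i,j=1}^N\tilde\psi^{\bm X_s}_{s,r}(X^i_s,X^j_s)\,dr.
\]
Finally, I would apply Lemma~\ref{lemma: variational_formual} with $\nu_1=\bar\mu^N_s$, $\nu_2=\mu_s^{\otimes N}$ and test function $\pm R^2_s/N$ seen as a function of $\bm x\in\T^{dN}$ (through $\mu^N_{\bm x}$ and $\rho^N_{\bm x}$). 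The uniform entropy bound of Assumption~\ref{ass: convergence_ass}(3) controls the relative entropy term, and the remaining task is to bound the exponential moment
\[
\int_{\T^{dN}}\exp\!\Big(\kappa\sum_{i,j=1}^N\tilde\psi^{\bm x}_{s,r}(x_i,x_j)\Big)\,d\mu_s^{\otimes N}(\bm x)
\]
for $\kappa>0$ chosen small enough that $\hat C\kappa\cdot C[\Phi]_{C^1}<1$, which is furnished by Theorem~\ref{theorem: large_deviation}. Integration in $s$ over $[0,t]$ then yields $\int_0^t\E|R^2_s|\,ds\le C[\Phi]_{C^1}$.

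The main obstacle is that Theorem~\ref{theorem: large_deviation} is stated for a \emph{deterministic} kernel $\varphi$, whereas $\tilde\psi^{\bm x}_{s,r}$ depends on the configuration $\bm x$ through $\mu^N_{\bm x}$ and $\rho^N_{\bm x}$. The simplest remedy is to revisit the Jabin--Wang proof, observing that only the uniform $L^\infty$ bound on the centered kernel enters, so the same exponential-integrability estimate persists for the $\bm x$-dependent kernel. If this is not transparent, an alternative is to Taylor-expand $\psi^\omega_{s,r}=\psi^0_{s,r}+\psi^{\mathrm{rem}}_{s,r}$, with deterministic leading part $\psi^0_{s,r}(y,v)=\tfrac{\delta b}{\delta m}(s,y,\mu_s,v)\,D\nabla\hat T^n_{s,t}\Phi(0)(y)$, apply Theorem~\ref{theorem: large_deviation} verbatim to $\psi^0_{s,r}$, and absorb the remainder using the bounded second flat derivative from Assumption~\ref{ass: convergence_ass}(2) together with the Lipschitz dependence of $\nabla\hat T^n_{s,t}\Phi$ on its argument in a weaker norm.
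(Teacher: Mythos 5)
Your overall strategy coincides with the paper's: double-center the kernel so that $\int\!\!\int\psi\,(\mu^N_s-\mu_s)^{\otimes2}=\int\!\!\int\tilde\psi\,\mu^N_s\otimes\mu^N_s$, change measure with Lemma~\ref{lemma: variational_formual}, invoke the exponential law of large numbers of Theorem~\ref{theorem: large_deviation}, and close with the uniform entropy bound of Assumption~\ref{ass: convergence_ass}(3). The $\kappa$-bookkeeping and the uniform $L^\infty$ bounds on $\tfrac{\delta b}{\delta m}$ and on $D\nabla\hat T^n_{s,t}\Phi$ (via \eqref{eq:reg_1_Phi} and Sobolev embedding) are also correct.

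The gap is exactly the obstacle you flag at the end, and neither of your remedies repairs it. By keeping the factor $D\nabla\hat T^n_{s,t}\Phi(\rho^N_s)(y)$ inside the kernel, $\tilde\psi^{\bm x}_{s,r}$ depends on the configuration not only through the measure argument of $\tfrac{\delta b}{\delta m}$ but also through the random point $\rho^N_s$ at which the derivative of $T^n_{s,t}\Phi$ is evaluated. Theorem~\ref{theorem: large_deviation} is not a statement about $L^\infty$ bounds alone: its proof is a moment expansion of the exponential under $\mu_s^{\otimes N}$ that uses the independence of the coordinates and the exact cancellations $\int\varphi(\cdot,v)\,d\mu_s=\int\varphi(x,\cdot)\,d\mu_s=0$ to kill every term in which an index appears only once; when $\varphi$ is itself a function of $\bm x$, the factorization and these cancellations in the expansion break down, so the claim that ``only the uniform $L^\infty$ bound enters'' is not accurate. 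The Taylor-expansion fallback does not help either: the remainder $D\nabla\hat T^n_{s,t}\Phi(\rho^N_s)-D\nabla\hat T^n_{s,t}\Phi(0)$ is controlled only by $[\Phi]_{C^2}\,\|\rho^N_s\|_{H^{-\lambda-2}}$, which is $O(1)$ and random, so the remainder term has the same size and the same problematic structure as the term you started from. The paper's way out is a Fourier decoupling that your argument is missing: expand $D\nabla\hat T^n_{s,t}\Phi(\rho^N_s)(y)=\sum_k\langle k\rangle^{-2(\lambda+2)}\overline{\langle\nabla T^n_{s,t}\Phi(\rho^N_s),e_k\rangle}\,De_k(y)$, bound the random coefficients uniformly by $\sup_f\|\nabla T^n_{s,t}\Phi(f)\|_{H^{-\lambda-2}}\le C[\Phi]_{C^1}$ (Lemma~\ref{cor: reg_derivative}), and sum over $k$ using $\sum_k\langle k\rangle^{-\lambda-1}<\infty$ (here $\lambda+1>d$). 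This reduces the problem to the kernels $\tfrac{\delta b}{\delta m}(s,x,r\mu^N_s+(1-r)\mu_s,v)\,e_k(x)$, whose only residual randomness is the measure argument of the flat derivative --- the same residual dependence the paper itself tolerates when applying Theorem~\ref{theorem: large_deviation} --- with the $\Phi$-dependent factor now deterministic.
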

\begin{proof}
We start by rewriting the term as a function integrated by the product measure \(\mu_s^N \otimes \mu_s^N\): 
\begin{align*}
    & \bigg|  \E  \bigg[ \Big \langle \int\limits_0^1 \int_{\T^d} \frac{\delta b}{\delta m} (s,\cdot,r \mu_s^N + (1-r)\mu_s , v ) \Id  \rho_{s}^N (v) \Id r \cdot D ( \nabla \hat T_{s,t}^n  \Phi(\rho_s^N))(\cdot)  , \rho_s^N \Big \rangle \bigg] \bigg| \\
     &\quad  = \bigg|  \E  \bigg[ \Big \langle \int\limits_0^1 \sum\limits_{k \in \Z^d } \langle k \rangle^{-2(\lambda +2) } \langle  \nabla T_{s,t}^n  \Phi(\rho_s^N), e_k \rangle  \int_{\T^d} \frac{\delta b}{\delta m} (s,\cdot,r \mu_s^N + (1-r)\mu_s , v ) \Id  \rho_{s}^N (v) \Id r \cdot D e_k(\cdot)   , \rho_s^N \Big \rangle \bigg] \bigg| \\
    &\quad  \le C      \int\limits_0^1 \sum\limits_{k \in \Z^d } |k| \sup\limits_{f \in H^{-2-\lambda}(\T^d)} |\langle  \nabla T_{s,t}^n  \Phi(f), e_k \rangle_{H^{-\lambda-2}}| \E \bigg[ \bigg| \Big \langle \int_{\T^d} \frac{\delta b}{\delta m} (s,\cdot,r \mu_s^N + (1-r)\mu_s , v ) \Id  \rho_{s}^N (v) \Id r  e_k(\cdot)   , \rho_s^N \Big \rangle \bigg] \bigg| \\
    &\quad  \le C \int\limits_0^1 \sum\limits_{k \in \Z^d }  \langle k \rangle^{-\lambda-1} \sup\limits_{f \in H^{-\lambda-2}(\T^d)} \norm{ \nabla T_{s,t}^n  \Phi(f)}_{H^{-\lambda-2}(\T^d)} \E \bigg[ \bigg| \Big \langle \int_{\T^d} \frac{\delta b}{\delta m} (s,\cdot,r \mu_s^N + (1-r)\mu_s , v ) \Id  \rho_{s}^N (v) \Id r  e_k(\cdot)   , \rho_s^N \Big \rangle \bigg] \bigg| \\
    &\quad \le  C [\Phi]_{C^1} N \sup\limits_{k \in \Z^d } \int\limits_0^1 \E \big[ \big|  \langle  \varphi_k(s,\cdot,r,\cdot )  , \mu_s^N \otimes \mu_s^N \rangle  \big| \big]  \Id r 
\end{align*}
with 
\begin{align*}
&\varphi_k(s,x,r,v) \\
&\quad := 
    \frac{\delta b}{\delta m} (s,x,r \mu_s^N + (1-r)\mu_s , v ) e_k (x) \rangle  - \langle  \frac{\delta b}{\delta m} (s,\cdot ,r \mu_s^N + (1-r)\mu_s , v ) e_k(\cdot)  , \mu_s \rangle \\
    &\quad \quad  - \langle \frac{\delta b}{\delta m} (s,x,r \mu_s^N + (1-r)\mu_s , \cdot  )  e_k(x) ,\mu_s \rangle + \langle \frac{\delta b}{\delta m} (s,\cdot,r \mu_s^N + (1-r)\mu_s , \cdot  )  e_k(\cdot) ,\mu_s \otimes \mu_s \rangle. 
\end{align*}
In the above computation we utilized the fact that \(\lambda +1 > d\) in order for the series to converge. 
    Applying the variational formula~\eqref{eq: variational_formula} for the relative entropy, we find 
    \begin{align*}
    & N \int\limits_0^1 \E\big[ |\langle  \varphi_k(s,\cdot,r,\cdot)  , \mu_s^N \otimes \mu_s^N \rangle| \big] \Id r
     \le  \frac{1}{\kappa }   \mathcal H(\bar\mu_s^N \vert  \mu_s^{\otimes N} ) \\
          &\quad +   \frac{1}{\kappa } \int\limits_0^1 \log \bigg( \int_{\T^{2N}} \bar\mu_s^{\otimes N} \exp\bigg(\kappa N \bigg|  \frac{1}{N^2}\sum\limits_{i,j=1}^N \varphi_k(s,x_i,r,x_j)  \bigg| \bigg)\Id \bm x \bigg)  \Id r 
    \end{align*}
    for every \(\kappa > 0\). Our goal is to apply Theorem~\ref{theorem: large_deviation}. Notice that the cancellation property 
    \[
\int_{\T^d} \varphi(s,x,r,v) \mu(x) \Id x = 0 \quad \forall v, \quad  
\int_{\T^d} \varphi(s,x,r,v) \mu(z) \Id v = 0 \quad \forall x
\]
holds. For the smallness condition~\eqref{eq: deviation_p_bound} we have 
\begin{equation*}
    |\varphi(s,x,r,v)|
    \le 4 \norm{\frac{\delta b}{\delta m} (s,\cdot,r \mu_s^N + (1-r)\mu_s , \cdot  ) }_{L^\infty(\T^d \times \T^d)} 
    \le C   , 
\end{equation*}
where we applied the boundedness of the flat derivative provideed by Assumption~\ref{ass: convergence_ass}.
Hence, choosing \(\kappa\) small enough, we can guarantee the smallness condition~\eqref{eq: deviation_p_bound} for the function \(\kappa \varphi(s,x,r,v)\) and we obtain 
\[
 \E |R^2_s(N,n,\Phi)| \le C [\Phi]_{C^1} \big( \mathcal H(\bar\mu_s^N \vert  \mu_s^{\otimes N} ) +1 \big)
\]
 Integration over the variable \(s\) and utilizing that the right hand side is uniformly bounded in time and in \(N \in \N\) by Assumption~\ref{ass: convergence_ass}, we deduce the claim.
\end{proof} 
\begin{remark}
The proof could alternatively be carried out using Pinsker's inequality, which relates the total variation norm to the relative entropy. However, the current line of proof will play a central role in more complex models, such as the Vortex model discussed in Section~\ref{sec: application}. 
\end{remark}

We now come to estimate the last term.

\begin{lemma} \label{lemma: second_flat_reminder}
Let Assumptions~\ref{ass: inital_cond}-\ref{ass: convergence_ass} hold and \(\Phi \in FC^\infty(H^{-\lambda-2}(\T^d))\). Then, the following inequality holds 
    \begin{equation*}
        \int_0^t \E|R^3_s(N,n,\Phi)| \Id s  \le  C [\Phi]_{C^1}. 
    \end{equation*}
\end{lemma}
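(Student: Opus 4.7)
The strategy is essentially parallel to the proof of Lemma~\ref{lemma: flat_reminder}, combining a Fourier expansion in the basis $(e_k)_{k\in\Z^d}$, the variational formula for the relative entropy (Lemma~\ref{lemma: variational_formual}), and the Jabin--Wang large deviation estimate (Theorem~\ref{theorem: large_deviation}). The novelty compared to Lemma~\ref{lemma: flat_reminder} is that the integrand now involves the second flat derivative $\tfrac{\delta^2 b}{\delta m^2}$ and a \emph{double} integration against $\rho_s^N$, while the $x$-integration is against the (deterministic) density $\mu_s$.

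The plan is to first Fourier-expand
\begin{equation*}
D\big(\nabla \hat T^n_{s,t}\Phi(\rho_s^N)\big)(x)
= 2\pi i \sum_{k\in\Z^d} k\, \langle k\rangle^{-2(\lambda+2)}\, \overline{\langle \nabla T^n_{s,t}\Phi(\rho_s^N),e_k\rangle}\, e_k(x),
\end{equation*}
and to integrate the factor $e_k(x)$ against $\mu_s$ before doing anything else. This yields, for each $r,r'\in[0,1]$ and $k\in\Z^d$,
\begin{equation*}
H_{k,r,r'}(v,v') := \int_{\T^d} \tfrac{\delta^2 b}{\delta m^2}\bigl(s,x, rr'\mu_s^N+(1-rr')\mu_s,v,v'\bigr)\, e_k(x)\, \mu_s(\dd x),
\end{equation*}
whose $L^\infty$ norm is bounded by $\|\tfrac{\delta^2 b}{\delta m^2}\|_{L^\infty}$ \emph{uniformly in $k$}, thanks to Assumption~\ref{ass: convergence_ass}(2). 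Together with the identity $\rho_s^N\otimes \rho_s^N = N(\mu_s^N-\mu_s)^{\otimes 2}$, this leads to
\begin{equation*}
|R^3_s(N,n,\Phi)| \le C\, [\Phi]_{C^1} \sum_{k\in\Z^d} |k|\,\langle k\rangle^{-\lambda-2}\, \sup_{r,r'\in[0,1]} N\, \big|\langle \tilde H_{k,r,r'}, \mu_s^N\otimes \mu_s^N \rangle\big|,
\end{equation*}
where $\tilde H_{k,r,r'}$ denotes the double-centering of $H_{k,r,r'}$ (subtracting the marginals with respect to $\mu_s$ and the double integral), which still has $L^\infty$ norm bounded independently of $k,r,r'$, and the bound on $|\langle \nabla T^n_{s,t}\Phi, e_k\rangle|$ coming from Lemma~\ref{cor: reg_derivative} via $\|e_k\|_{H^{\lambda+2}}=\langle k\rangle^{\lambda+2}$ has been absorbed.

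Next I would apply the variational formula~\eqref{eq: variational_formula} with a small parameter $\kappa>0$, and Theorem~\ref{theorem: large_deviation} to the function $\kappa \tilde H_{k,r,r'}$; since its $L^\infty$ norm is uniform in $k,r,r'$, a single choice of $\kappa$ small enough makes the smallness condition~\eqref{eq: deviation_p_bound} hold for all modes and yields a uniform bound on the exponential moment, giving
\begin{equation*}
N\,\E\big|\langle \tilde H_{k,r,r'}, \mu_s^N\otimes \mu_s^N \rangle\big| \le C\Big(1+\mathcal{H}\big(\bar\mu_s^N\mid \mu_s^{\otimes N}\big)\Big).
\end{equation*}
The series $\sum_k |k|\,\langle k\rangle^{-\lambda-2}$ converges since $\lambda>3d/2>d-1$. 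Integrating in $s\in[0,t]$ and using Assumption~\ref{ass: convergence_ass}(3) yields the claim.

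The main technical point is precisely the one carried out in Step~2: one must perform the $x$-integration against $\mu_s$ \emph{before} invoking the large deviation estimate, so that the $D e_k$ factor contributes a clean $|k|$ outside and the remaining test function on $\T^d\times\T^d$ has an $L^\infty$ bound that does not depend on the Fourier index. The randomness of the measure argument $rr'\mu_s^N+(1-rr')\mu_s$ inside $\tfrac{\delta^2 b}{\delta m^2}$ is harmless because only the $L^\infty$ norm is used, and the cancellation properties required by Theorem~\ref{theorem: large_deviation} follow from the explicit double-centering of $H_{k,r,r'}$ against $\mu_s$ in each variable.
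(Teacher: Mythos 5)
Your proposal is correct and rests on exactly the same core machinery as the paper's proof: double-centering against $\mu_s$, the variational formula of Lemma~\ref{lemma: variational_formual}, Theorem~\ref{theorem: large_deviation} applied with a single $\kappa$ chosen via the uniform bound on $\tfrac{\delta^2 b}{\delta m^2}$ from Assumption~\ref{ass: convergence_ass}(2), the identity $\rho_s^N\otimes\rho_s^N=N(\mu_s^N-\mu_s)^{\otimes 2}$, and finally the relative entropy bound of Assumption~\ref{ass: convergence_ass}(3). The one place where you diverge is the treatment of the factor $D(\nabla\hat T^n_{s,t}\Phi(\rho_s^N))(x)$: you Fourier-expand it, integrate $e_k(x)$ against $\mu_s$ to form $H_{k,r,r'}$, run the entropy/large-deviation argument mode by mode, and sum using $\sum_k|k|\langle k\rangle^{-\lambda-2}<\infty$. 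The paper instead observes that, by \eqref{cor: besov_reg_first_order_function} and $\lambda+2-d/2>2$, this factor is bounded in $L^\infty(\T^d)$ by $C[\Phi]_{C^1}$ and pulls it out immediately, keeping $x$ as a frozen parameter of the test function $(v,v')\mapsto\tfrac{\delta^2 b}{\delta m^2}(s,x,\cdot,v,v')$. The Fourier decomposition is genuinely needed for $R^2$ (where the $x$-variable is paired with the random $\rho_s^N$), but for $R^3$ the $x$-integration is against the deterministic $\mu_s$, so the paper's shortcut is available and slightly cleaner; your route costs nothing but the extra summability check, which you verify correctly. One small presentational point: the pathwise $\sup_{r,r'}$ inside your bound for $|R^3_s|$ does not commute with the expectation under which the fixed-$(r,r')$ large-deviation estimate is stated; keep the $\dd r\,\dd r'$ integrals and apply the estimate for each $(r,r')$ before integrating, as in Lemma~\ref{lemma: flat_reminder}.
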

\begin{proof}
    By~\eqref{cor: besov_reg_first_order_function} we can estimate the term in our statement by  
    \begin{align*}
    &\int\limits_0^t    \int\limits_0^1 r \int\limits_0^1 \int_{\T^d}  \E\bigg[   \bigg| \int_{\T^d}  \int_{\T^d} \frac{\delta^2 b}{\delta m^2} \bigg(s,x,r r'\mu_s^N+(1-rr')\mu_s ,v,v'\bigg) \Id v' \Id \rho_s^N(v')  \Id  \rho_s^N (v) \bigg| \bigg]   \Id \mu_s(x) \Id r' \dd r \Id s  \cdot   C [\Phi]_{C^1}.
    \end{align*}
    We observe that we are in a similar situation as in Lemma~\ref{lemma: flat_reminder}. By replacing the components of the function
\[
(x,v) \mapsto \frac{\delta b}{\delta m} \left(s,\cdot, r \mu_s^N + (1 - r)\mu_s , v \right) e_k(x) 
\]
with each component of the vector valued function
\[
(x,v,v') \mapsto \frac{\delta^2 b}{\delta m^2} \left(s,x, r r'\mu_s^N + (1 - r r')\mu_s , v, v'\right),
\]
w  e can carry out the same steps as in Lemma~\ref{lemma: flat_reminder}, now using the integration variables \((v,v')\) while keeping \(x\) fixed. Indeed, under Assumption~\ref{ass: convergence_ass}, we know that the second flat derivative is uniformly bounded. Therefore, by following the same reasoning as in Lemma~\ref{lemma: flat_reminder}, that is, performing a change of measure through the variational formula~\eqref{eq: variational_formula}, applying Theorem~\ref{theorem: large_deviation}, and using the bound on the relative entropy \(\mathcal H(\bar\mu_s^N \vert  \mu_s^{\otimes N})\), we get the result. 
\end{proof}

Having estimated all reminder terms in Proposition~\ref{prop: reminder}, we are finally in the position to prove our Main Theorem~\ref{theorem: main_result}, thanks to the  approximation (in probability) of $\rho$ by $\rho^n$ by Theorem ~\ref{thm: conv_prob}, and of \(\Phi \in C_\ell^2(H^{-\lambda-2}(\T^d)) \) by cylindrical functions by Lemma~\ref{lemma: approximation_hilbert_smooth}.

\begin{proof}[Proof of Theorem~\ref{theorem: main_result}] 

We first pass to the limit $n\to\infty$ in Proposition~\ref{prop: reminder}. Thanks to the convergence in probability on $L^2([0,T],H^{-\lambda-2}(\T^d))$ of $\rho^n$ to $\rho$, guaranteed by Theorem ~\ref{thm: conv_prob}, and dominated convergence, we obtain that 
\begin{equation*}
    \lim\limits_{n \to \infty}\E\big[ \Phi_m(\rho_t^n)] = \E\big[ \Phi_m(\rho_t)]. 
\end{equation*} 
for any \(\Phi_m \in FC^\infty(H^{-\lambda-2}(\T^d)\).
Therefore Proposition~\ref{prop: reminder} together with lemmas ~\ref{lem:r0}-~\ref{lemma: vanish_first_order}-
 ~\ref{lemma: second_ord_rest}-~\ref{lemma: flat_reminder}-~\ref{lemma: second_flat_reminder} provide
\begin{equation}
\label{eq:main_m}
 \big|\E \big[ \Phi_m(\rho_t^N)- \Phi_m(\rho_t) \big] \big|
 \le C [\Phi_m]_{C^2}\bigg( \frac{1}{\sqrt N } +   W_{1, H^{-\lambda-2}(\T^d)}\Big( \P_{\rho^N_0}, \P_{\rho_0}\Big) \bigg)
\end{equation} 
for any \(\Phi_m \in FC^\infty(H^{-\lambda-2}(\T^d))\). Indeed, convergence holds for almost every $t$, but then the estimate holds for any $t$ since the processes are continuous. 

    Then we show the estimate for a given $\Phi\in C^2_\ell(H^{-\lambda-2}(\T^d))$.  
     By Lemma~\ref{lemma: approximation_hilbert_smooth} we can find a sequence \((\Phi_m, m \in \N)\) in \( FC^\infty( H^{-\lambda-2}(\T^d))\) converging pointwise to \(\Phi\). 
Then by the linear growth of \((\Phi_m, m \in \N)\) and ~\eqref{eq:Phi_n_2}, we have 
\begin{equation*}
  |\Phi_m(\rho_t^N)- \Phi_m(\rho_t)| \le 4\|\Phi\|_{C_\ell(H^{-\lambda-2}(\T^d))} \big(1+ \norm{\rho_t^N}_{H^{-\lambda-2}(\T^d)}+   \norm{\rho_t}_{H^{-\lambda-2}(\T^d)} \big) . 
\end{equation*}
We demonstrate the uniform integrability of the right hand side by showing that it is square integrable. 
Utilizing~\cite[Lemma~2.6]{Xianliang2023} we find 
\begin{equation*}
    \E\bigg[  \norm{\rho_t^N}_{H^{-\lambda-2}(\T^d)}^2+   \norm{\rho_t}_{H^{-\lambda-2}(\T^d)}^2  \bigg]
    \le C(\big( \mathcal{H}(\bar \mu_t^N | \mu_t^{\otimes N} )^{\frac{1}{2}} +1  + \E\big[ \norm{\rho_t}_{H^{-\lambda-2}(\T^d)}^2 \big] \big),
\end{equation*}
where the right hand side is finite by Assumption~\ref{ass: convergence_ass} and the inequality~\eqref{eq:esti_spde}. 
Hence, we can utilize Vitali's convergence theorem~\cite[Theorem~4.5.4]{Bogachev2007} to obtain 
\begin{equation*}
    \big|\E\big[\Phi(\rho_t^N)- \Phi(\rho_t) \big] \big| 
    = \lim\limits_{m \to \infty} \big|\E \big[ \Phi_m(\rho_t^N)- \Phi_m(\rho_t) \big] \big|. 
\end{equation*}
Therefore we take the limit in the left hand side of~\eqref{eq:main_m} and use the bound~\eqref{eq:Phi_n_2} to get~\eqref{eq:main_estimate}. 
\end{proof}

We thus also prove Corollary~\ref{cor:conv_processes} on convergence in law of the processes.

\begin{proof}[Proof of Corollary ~\ref{cor:conv_processes} ]
Thanks to \cite[Lemma 16.2]{Kallenberg2002}, it is enough to prove convergence of the finite-dimensional distributions of $\rho^N$ to $\rho$. Let us fix $k\in\N$, $0=t_0< t_1<\dots<t_k\leq T$ and $\Phi_0, \Phi_1, \dots, \Phi_k \in C_b(H^{-\lambda-2}(\T^d))$. It is sufficient to show that 
\begin{equation}
\label{eq:conv_fdd}
\lim_{N\to \infty} \E^N\big[\Phi_0(\rho^N_{t_0})\Phi_1(\rho^N_{t_1}) \cdots \Phi_k(\rho^N_{t_k})\big] 
= \E\big[\Phi_0(\rho_{t_0})\Phi_1(\rho_{t_1}) \cdots \Phi_k(\rho_{t_k})\big]; 
\end{equation}
note that $\E^N$ depends on $N$ as we consider weak solutions of the particle system, while $\rho$ is a strong solution in a given probability space.  For this proof, we suppose that $\rho$ lives in a given probability  space, independent of all spaces where the $\rho^N$-s live.  
We proceed by induction, similarly to the proof of \cite[Thm 19.25]{Kallenberg2002}. The step zero holds true by assumption. Suppose then that \eqref{eq:conv_fdd} holds up to $t_{k-1}$. Let $(\Phi^m_k)_m$ be a sequence in $C_b(H^{-\lambda-2}(\T^d))$ given by lemma \ref{lemma: approximation_hilbert_smooth}, which approximates $\Phi_k$ uniformly on compact sets, and preserves the uniform norm. 
%
Theorem \ref{theorem: main_result} and the proof above also yield
\begin{equation}
\label{eq:conv_semiN}
|\E^N[\Phi^m_k(\rho_{t_{k-1},t_k}^N(f))] - \E[\Phi^m_k(\rho_{t_{k-1},t_k}(f))]| \le \frac{C}{\sqrt N } [\Phi^m_k]_{C^2(H^{-\lambda-2}(\T^d))} ,
\end{equation}
for any $f$ in $S^N_{k-1}:= \sqrt{N}(\mathcal{P}^N(\T^d)-\mu_{t_{k-1}})$ which is the state space where $\rho^N_{t_{k-1}}$ lives, where $C$ is independent of $f$ and $\mathcal{P}^N(\T^d)$ is the set of empirical measures. 
Since  $\mu^N$ is Markovian, the fluctuation process $\rho^N$ is also Markovian as it is a deterministic transformation of $\mu^N$ at any $t$. Hence, 
using the Markov property, \eqref{eq:conv_fdd} rewrites as 
\begin{equation*}
\lim_{N\to \infty} \E^N\big[\Phi_0(\rho^N_{t_0}) \cdots \Phi_{k-1}(\rho^N_{t_{k-1}}) 
T^N_{t_{k-1},t_k}\Phi_k(\rho^N_{t_k-1})\big] 
= \E\big[\Phi_0(\rho_{t_0}) \cdots \Phi_{k-1}(\rho_{t_{k-1}}) 
T_{t_{k-1},t_k}\Phi_k(\rho_{t_k-1}) \big]
\end{equation*} 
where $T^N$ and $T$ denote the semigroups of $\rho^N$ and $\rho$. Notice that the right hand side can still be derived, even though $\rho$ might not be Markovian, by approximating $\Phi_k(\rho_{t_k})$ with $T^n_{t_{k-1},t_k}\Phi_k(\rho_{t_k-1})$. 
We bound the difference above by 
\begin{align*}
&\big| \E^N\big[\Phi_0(\rho^N_{t_0}) \cdots \Phi_{k-1}(\rho^N_{t_{k-1}}) 
T^N_{t_{k-1},t_k}\Phi_k(\rho^N_{t_k-1})\big] 
- \E^N\big[\Phi_0(\rho^N_{t_0}) \cdots \Phi_{k-1}(\rho^N_{t_{k-1}}) 
T_{t_{k-1},t_k}\Phi_k(\rho^N_{t_k-1}) \big] \big|\\
&\quad + \big|\E^N\big[\Phi_0(\rho^N_{t_0}) \cdots \Phi_{k-1}(\rho^N_{t_{k-1}}) 
T_{t_{k-1},t_k}\Phi_k(\rho^N_{t_k-1})\big] 
- \E\big[\Phi_0(\rho_{t_0}) \cdots \Phi_{k-1}(\rho_{t_{k-1}}) 
T_{t_{k-1},t_k}\Phi_k(\rho_{t_k-1}) \big]\big|
\end{align*}
The second term goes to zero, as $N\to\infty$, by the inductive hypothesis, since $T_{t_{k-1},t_k}\Phi_k)(\cdot)$ is bounded and continuous thanks to \eqref{eq:LIP_flow} and dominated convergence. 

To bound the first term, we use the tightness assumption, which says that for any $\epsilon>0$ there exists a compact set $K^\epsilon \subset C([0,T], H^{-\lambda-2}(\T^d))$ such that $\P^N(\rho^N_\cdot \notin K^\epsilon) \leq \epsilon$ for any $N$.  As a consequence, for any $t$ there exists a compact $K^\epsilon_t \subset H^{-\lambda-2}(\T^d))$ such that $\P^N(\rho^N_t \notin K^\epsilon_t) \leq \epsilon$ for any $N$. We can now bound the first term as 
\begin{align*}
&\big| \E^N\big[\Phi_0(\rho^N_{t_0}) \cdots \Phi_{k-1}(\rho^N_{t_{k-1}}) 
T^N_{t_{k-1},t_k}\Phi_k(\rho^N_{t_k-1})\big] 
- \E^N\big[\Phi_0(\rho^N_{t_0}) \cdots \Phi_{k-1}(\rho^N_{t_{k-1}}) 
T_{t_{k-1},t_k}\Phi_k(\rho^N_{t_k-1}) \big] \big| \\
&\leq \big|\E^N\big[\Phi_0(\rho^N_{t_0}) \cdots \Phi_{k-1}(\rho^N_{t_{k-1}}) 
T^N_{t_{k-1},t_k}\Phi_k^m(\rho^N_{t_k-1})\big] 
- \E^N\big[\Phi_0(\rho^N_{t_0}) \cdots \Phi_{k-1}(\rho^N_{t_{k-1}}) 
T_{t_{k-1},t_k}\Phi_k^m(\rho^N_{t_k-1}) \big] \big| \\
&\quad +  \big|\E^N\big[\Phi_0(\rho^N_{t_0}) \cdots \Phi_{k-1}(\rho^N_{t_{k-1}}) 
T^N_{t_{k-1},t_k}\Phi_k^m(\rho^N_{t_k-1})\big] 
- \E^N\big[\Phi_0(\rho^N_{t_0}) \cdots \Phi_{k-1}(\rho^N_{t_{k-1}}) 
T^N_{t_{k-1},t_k}\Phi_k(\rho^N_{t_k-1}) \big] \big| \\
&\quad +  \big|\E^N\big[\Phi_0(\rho^N_{t_0}) \cdots \Phi_{k-1}(\rho^N_{t_{k-1}}) 
T_{t_{k-1},t_k}\Phi_k^m(\rho^N_{t_k-1})\big] 
- \E^N\big[\Phi_0(\rho^N_{t_0}) \cdots \Phi_{k-1}(\rho^N_{t_{k-1}}) 
T_{t_{k-1},t_k}\Phi_k(\rho^N_{t_k-1}) \big] \big|\\
&=: I_1 +I_2 + I_3
\end{align*} 
The term $I_1$ is bounded using \eqref{eq:conv_semiN}, which provides
\[
I_1 \leq \frac{C}{\sqrt N } [\Phi^m_k]_{C^2} 
||\Phi_0||_\infty \cdots  ||\Phi_{k-1}||_\infty.
\]
The term $I_2$ is bounded using the compact set as 
\begin{align*}
I_2 &= \big|\E^N\big[\big(\Phi_0(\rho^N_{t_0}) \cdots \Phi_{k-1}(\rho^N_{t_{k-1}}) 
\Phi_k^m(\rho^N_{t_k})- \Phi_0(\rho^N_{t_0}) \cdots \Phi_{k-1}(\rho^N_{t_{k-1}}) 
\Phi_k(\rho^N_{t_k-1}) \big) \big(\mathbbm{1}_{\rho^N_{t_k}\in K^\epsilon_{t_k}} + \mathbbm{1}_{\rho^N_{t_k}\notin K^\epsilon_{t_k}} \big) \big] \big| \\
&\leq ||\Phi_0||_\infty \cdots  ||\Phi_{k-1}||_\infty 
\Big( \sup_{f\in K^\epsilon_{t_k}} | \Phi^m_k(f) -\Phi_k(f)| 
+ 2 ||\Phi_{k}||_\infty  \P^N(\rho^N_{t_k}\notin K^\epsilon_{t_k} ) \Big)  \\
& \leq ||\Phi_0||_\infty \cdots  ||\Phi_{k-1}||_\infty 
 \sup_{f\in K^\epsilon_{t_k}} | \Phi^m_k(f) -\Phi_k(f)|  
 +2\epsilon ||\Phi_0||_\infty \cdots  ||\Phi_{k}||_\infty .
\end{align*} 
To bound the term $I_3$, we use the continuity of the flow of SPDE almost surely, given by \eqref{eq:LIP_flow}, to derive that, if the initial condition at time $t_{k-1}$ is $f\in K^\epsilon_{t_{k-1}}$, then for almost every $\omega$ there exists a compact $\widetilde K^\epsilon_{t_k}(\omega) \subset H^{-\lambda-2}(\T^d)$ such that  $\rho_{t_{k-1}, t_k}(f)(\omega) \in \widetilde K^\epsilon_{t_k}(\omega)$.  Thus we get 
\begin{align*}
I_3 &\leq ||\Phi_0||_\infty \cdots  ||\Phi_{k-1}||_\infty  \big(||T_{t_{k-1},t_k}\Phi_k^m ||_\infty+ ||T_{t_{k-1},t_k}\Phi_k||_\infty \big) \P^N(\rho^N_{t_{k-1}}\notin K^\epsilon_{t_k} ) \\
& \quad + ||\Phi_0||_\infty \cdots  ||\Phi_{k-1}||_\infty \sup_{f\in K^\epsilon_{t_{k-1}}} | T_{t_{k-1},t_k}(\Phi_k^m-\Phi_k)(f) | \\
&\leq 2\epsilon ||\Phi_0||_\infty \cdots  ||\Phi_{k}||_\infty \\
&\quad + ||\Phi_0||_\infty \cdots  ||\Phi_{k-1}||_\infty  \sup_{f\in K^\epsilon_{t_{k-1}}} \E \big| (\Phi_k^m-\Phi_k)   (\rho_{t_{k-1},t_k}(f)) \big|,
\end{align*}
where we used the contraction property of the semigroup, and we bound the latter term as 
\begin{align*}
\int_\Omega \sup_{f\in K^\epsilon_{t_{k-1}}} | \Phi_k^m-\Phi_k|( \rho_{t_{k-1},t_k}(f)(\omega) )\Id \P(\omega) 
\leq \int_\Omega \sup_{\tilde{f}\in \widetilde{K}^\epsilon_{t_{k}}(\omega)} | \Phi_k^m-\Phi_k|( \tilde{f} )\Id \P(\omega),  
\end{align*} 
which goes to zero, as $m\to\infty$, by the convergence on compacts and dominated convergence. Putting things together, sending $N\to\infty$ first, then $m\to\infty$ and finally $\epsilon\to 0$, we obtain \eqref{eq:conv_fdd}. 
\end{proof}

\section{Applications} \label{sec: application}
The aim of this section is to extend our Main Theorem~\ref{theorem: main_result} to more challenging models that do not satisfy the boundedness assumption on the flat derivatives stated in Assumption~\ref{ass: convergence_ass}. The starting point of our analysis is Proposition~\ref{prop: reminder}. By employing different techniques, in particular the methods developed for singular interaction kernels in the two seminal works~\cite{JabinWang2018,Serfaty2020}, we are able to estimate the remainder terms appearing in Proposition~\ref{prop: reminder} in two crucial cases: the first is when the interacting particle system is given by the point Vortex model, and the second is when the interaction kernel is the repulsive Coulomb potential.

\subsection{Vortex model}
\label{sec:vortex}
In this section we consider the Vortex model for approximating the 2D Navier-Stokes equation in the vorticity formulation, which is a guiding example in the literature~\cite{Osada1986,Fournier2014,JabinWang2018,Xianliang2023}. 
Let \(d=2\) and let \(b(t,x,m)= K*m\), where \(K\) is the Biot--Savart kernel 
\begin{equation} \label{eq: biot_savart_kernel}
K(x_1,x_2) = \frac{1}{2\pi}\frac{(-x_2,x_1)}{|x|^2} + K_0(x_1,x_2), \quad x=(x_1,x_2) \in \T^2, 
\end{equation}
where \(K_0\) is a smooth correction to periodize \(K\) on the torus. 

Notice that the kernel exhibits a singularity at the origin and is divergence-free. Consequently, we cannot directly apply our previous results, which rely on the regularity properties of the flat derivative \(\tfrac{\delta b}{\delta m}(t,x,m,v)\). As noted in Lemma~\ref{rem: convolution_estimate}, in the case of convolution-type interactions, this derivative reduces to \(k(x - v)\).
Our objective is to verify Assumptions~\ref{ass: inital_cond}--\ref{ass: coef_fokker} and to estimate the remainder term appearing in the generator expression in Proposition~\ref{prop: reminder}.

Recall that we assume \(\lambda > \tfrac{3}{2} d = 3\) and \(\lambda' > \lambda + 1\), and that the limiting measure \(\mu\) solves the Fokker--Planck equation~\eqref{eq: fokker--planck} with the Biot--Savart kernel \(K\) defined in~\eqref{eq: biot_savart_kernel}.
Unless stated otherwise, all relevant quantities such as \(\rho^N\), \(\mu^N\), and \(\rho\) are to be understood as solutions to their respective equations with the drift term given by the convolution \(b(t,x,m) = K * m(x)\). 

We begin by verifying that the underlying mathematical objects are well-defined. Our first step is to establish the existence of the interacting particle system. To this end, we recall the following result from Osada~\cite{Osada1985}.

\begin{theorem} \label{theorem: existence_vortex_particle}
Consider any family \((X^{i}_0,i=1,\dots,N)\) of \( \T^2\)-valued random variables, independent of a family \((B^{i},i=1,\dots,N)\) of independent and identically distributed two-dimensional Brownian motions, and such that almost surely,
\begin{equation} \label{eq: no_collision}
X^{i}_0 \neq X^{j}_0 \quad \text{for all } i \neq j.
\end{equation}
Then there exists a unique strong solution to system~\eqref{eq: interacting_particle_system}.
\end{theorem}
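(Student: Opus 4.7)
The plan is to follow the classical strategy of Osada, which exploits the divergence-free nature of the Biot--Savart kernel to remove the drift via a Girsanov transformation against a system of independent Brownian motions. First I would let $\mathbb{Q}$ be the measure under which $X^{i}_t = X^{i}_0 + \sigma B^{i}_t$ are $N$ independent Brownian motions starting from the (non-colliding) initial data; under $\mathbb{Q}$, the pairwise difference $X^{i}_t - X^{j}_t$ is, up to rescaling, a two-dimensional Brownian motion starting away from the origin, and hence almost surely never hits $0$, so assumption \eqref{eq: no_collision} propagates in time under $\mathbb{Q}$. The candidate measure $\mathbb{P}$ for the interacting system would then be defined through the Girsanov density
\begin{equation*}
  \frac{\Id \mathbb{P}}{\Id \mathbb{Q}}\bigg|_{\cF_t}
  = \exp\bigg( \frac{1}{\sigma}\sum_{i=1}^N \int_0^t b^{i}_s \cdot \Id B^{i}_s
   - \frac{1}{2\sigma^2}\sum_{i=1}^N \int_0^t |b^{i}_s|^2 \Id s\bigg),
\qquad b^{i}_s = \frac{1}{N}\sum_{j \neq i} K(X^{i}_s - X^{j}_s).
\end{equation*}

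The main obstacle, and the step that really requires the structure of the Biot--Savart kernel, is to prove that this exponential is a true martingale up to time $T$. Near a pairwise coincidence $|K(x)|^2 \sim |x|^{-2}$, so Novikov's criterion reduces to the Kato-class type bound
\begin{equation*}
  \mathbb{E}_{\mathbb{Q}} \bigg[ \exp\bigg( c \int_0^T \frac{\Id s}{|X^{i}_s - X^{j}_s|^2}\bigg) \bigg] < \infty
\end{equation*}
for some $c>0$, which one would establish by standard heat-kernel / Bessel-process estimates on the two-dimensional Brownian motion $X^{i} - X^{j}$, combined with a partition of $[0,T]$ into short sub-intervals to handle the case of a small allowable constant. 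Summation over all pairs is controlled by the $\tfrac{1}{N}$ prefactor in $b^{i}$ together with Cauchy--Schwarz. Here the dimension $d=2$ and the $1/|x|$ (rather than stronger) singularity of $K$ are both critical; this is where the argument is sharp.

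Once the Girsanov density is a genuine probability density, I obtain a weak solution of \eqref{eq: interacting_particle_system} satisfying $X^{i}_t \neq X^{j}_t$ for all $t \in [0,T]$, $\P$-a.s., inheriting this from the $\mathbb{Q}$-a.s.\ non-collision and the equivalence of $\mathbb{P}$ and $\mathbb{Q}$. On the complement of the collision diagonal, the drift $b(t,x,\mu)= K \ast \mu$ is smooth in the spatial variable, so pathwise uniqueness for the SDE follows from the local Lipschitz property of $K$ away from $\{x=0\}$ together with a localisation by stopping times $\tau_R = \inf\{t:\min_{i\neq j}|X^{i}_t - X^{j}_t| \le 1/R\}$, which tend to $T$ almost surely by the non-collision result. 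Finally, the Yamada--Watanabe theorem upgrades weak existence plus pathwise uniqueness to existence of a unique strong solution on the original filtered probability space, proving the claim.
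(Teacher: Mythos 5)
The paper does not actually prove this statement: it is quoted verbatim from Osada's 1985 work. Measured against the argument that is actually needed, your proposal has a genuine gap at its central step, the verification of Novikov's condition.

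The exponential moment bound
\begin{equation*}
\mathbb{E}_{\mathbb{Q}}\bigg[\exp\bigg(c\int_0^T \frac{\Id s}{|X^{i}_s-X^{j}_s|^{2}}\bigg)\bigg]<\infty
\end{equation*}
is false for every $c>0$ and every $T>0$ in dimension two; in fact even the \emph{first} moment is infinite. Under $\mathbb{Q}$ the difference $X^{i}-X^{j}$ is (up to an affine change) a planar Brownian motion $W$ started away from $0$, and Fubini gives $\mathbb{E}_{\mathbb{Q}}\int_0^T |W_s|^{-2}\Id s=\int_0^T\!\int |y|^{-2}p_s(x_0,y)\Id y\Id s=+\infty$, because $|y|^{-2}$ is not locally integrable at the origin in two dimensions. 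Equivalently, $|x|^{-2}$ is exactly the critical Hardy potential and the Hardy constant $(d-2)^2/4$ vanishes at $d=2$, so the Feynman--Kac exponential blows up instantaneously for every $c>0$; partitioning $[0,T]$ into short subintervals does not help, since the divergence occurs on every subinterval and is insensitive to the size of the constant. Your stochastic exponential is therefore a positive local martingale (its quadratic variation is a.s.\ finite because planar Brownian motion does not hit points), but none of the soft criteria (Novikov, Kazamaki, Bene\v{s}) can show it is a true martingale, and the Krylov--R\"ockner $L^p$ theory is also unavailable since $K\in L^p(\T^2)$ only for $p<2=d$. This is precisely the point where the $1/|x|$ singularity in $d=2$ is \emph{not} subcritical for a Girsanov construction.

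What actually makes the theorem true is not the size of the singularity of $K$ but its geometry: $K(x)\cdot x=0$. The mutual drift between particles $i$ and $j$ is purely rotational and contributes nothing to $\Id|X^{i}_t-X^{j}_t|^2$, so near a binary collision the squared distance behaves like a two-dimensional squared Bessel process (which stays strictly positive), the interactions with the remaining particles being locally regular there. Osada's argument (and the modern variants, e.g.\ in~\cite{Fournier2014}) controls the Lyapunov functional $-\sum_{i<j}\log|X^{i}_t-X^{j}_t|$, whose It\^o decomposition contains no singular drift term thanks to this cancellation, and thereby obtains a.s.\ non-collision on $[0,T]$ for the interacting system directly, without any change of measure. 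Once non-collision is established, the final paragraph of your proposal (stopping times $\tau_R$, local Lipschitz continuity of $K$ away from the origin, pathwise well-posedness up to $\tau_R$, and $\tau_R\uparrow T$) is the correct way to conclude; it is the middle of your argument that must be replaced.
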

Hence, in the following we fix our probabilistic setting such that \((X^{i}_0,i=1,\dots,N)\) and \((B^{i},i=1,\dots,N)\) are independent. Note that the initial conditions are not assumed to be i.i.d. Then condition~\eqref{eq: no_collision} guarantee the existence of the interacting particle system~\eqref{eq: interacting_particle_system}.
Next, by Lemma~\ref{rem: convolution_estimate}, it suffices to verify that \(K \in L^1(\T^2)\) and \(\mu_t \in C([0,T]; H^{\lambda'}(\T^2))\) in order to ensure that Assumptions~\ref{ass: existence} and~\ref{ass: coef_fokker} are satisfied. 

The condition \(K \in L^1(\T^2)\) holds, as the singularity of the Biot--Savart kernel is integrable near the origin. 
For the second condition, namely \(\mu_t \in C([0,T]; H^{\lambda'}(\T^2))\), we may assume that the initial distribution \(\mu_0\) satisfies \(\mu_0 \in B_{\infty,\infty}^{\lambda'}(\T^2)\) and \(\inf_{x \in \T^2} \mu_0(x) > 0\). According to the proof of~\cite[Theorem~1.7]{Xianliang2023}, these regularity and positivity conditions on the initial data imply that \(\mu \in C([0,T]; H^{\lambda'}(\T^2))\) and that \(\inf_{x \in \T^2} \mu_t(x) > 0\) for all \(t \in [0,T]\). These properties ensure the validity of the two crucial estimates~\eqref{eq: linear_case_1} and~\eqref{eq: linear_case_2}.

Therefore, Assumptions~\ref{ass: inital_cond}--\ref{ass: coef_fokker} are dependent on the choice of initial data. Let us now formulate the corresponding assumptions specifically for the Vortex model.

\begin{assumption}[Vortex model]\label{ass: vortex_mode}
Suppose that \(\rho_0 \in L^2_{\cF_0}(H^{-\lambda-2}(\T^d))\) and that the initial positions \((X_0^{i})_{i \in \N}\) 
satisfy~\eqref{eq: no_collision}. Assume that \(\mu_0 \in B_{\infty,\infty}^{\lambda'}(\T^2)\), \(\inf_{x \in \T^2} \mu_0(x) > 0\) and \[ \sup\limits_{N \in \mathbb{N}} \mathcal{H}\big( \bar{\mu}_0^N \,\vert\, \mu_0^{\otimes N} \big) < \infty. \] 
%
\end{assumption}
\begin{remark} \label{remark: vortex}
By the preceding discussion, it is evident that Assumption~\ref{ass: vortex_mode} implies Assumptions~\ref{ass: inital_cond}–\ref{ass: coef_fokker}.
Furthermore, Assumption~\ref{ass: vortex_mode} in combination with~\cite[Theorem~1]{JabinWang2018} guarantees the key relative entropy estimate 
\begin{equation} \label{eq: vortex_relative_entropy_bound}
   \sup\limits_{N \in \mathbb{N}} \sup\limits_{0 \le t \le T } \mathcal{H}( \bar{\mu}_t^N \,\vert\, \mu_t^{\otimes N} ) < \infty,
\end{equation}
which plays a crucial role in estimating the remainder terms.
\end{remark}

As a consequence of Remark~\ref{remark: vortex}, Remark~\ref{remark: second_flat_vanish} and Proposition~\ref{prop: reminder}, together with Lemmas~\ref{lem:r0}-~\ref{lemma: vanish_first_order} and the limit $\rho^n\to\rho$, we have the following: 

\begin{corollary} \label{cor: vortex}
    Let Assumption~\ref{ass: vortex_mode} hold and \(\Phi \in FC^\infty(H^{-\lambda-2}(\T^2))\). We have 
    \begin{align*}
     &    |\E\big[ \Phi(\rho_t^N)- \Phi(\rho_t) \big] |
     \le \frac{1}{\sqrt{N}} \limsup\limits_{n \to \infty} \int\limits_0^t \E\bigg[    \frac{\sigma^2}{2} \sum\limits_{j=1}^2 \int_{\T^d}
       \partial_{x_j} \partial_{y_j}  ( \nabla^2 \hat T_{s,t}^n  \Phi(\rho_{s}^N))(x,y)_{\big| x=y } 
    \Id  \rho_s^N ( x)\bigg]  \\
     & \quad \quad  + \E \big[ \langle   (K*  \rho_{s}^N)(\cdot) \cdot D ( \nabla \hat T_{s,t}^n  \Phi(\rho_s^N))(\cdot)  , \rho_s^N  \rangle \big] \Id s + C [\Phi]_{C^1} W_{1, H^{-\lambda-2}(\T^d)}\Big( \P_{\rho^N_0}, \P_{\rho_0}\Big)  . 
    \end{align*}
\end{corollary}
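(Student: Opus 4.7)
The plan is to start from the exact decomposition of the weak error provided by Proposition \ref{prop: reminder}, namely
\begin{equation*}
\E\big[\Phi(\rho_t^N) - \Phi(\rho_t^n)\big] = R^0_t(N,n,\Phi) + \frac{1}{\sqrt{N}} \int_0^t \E\big[(R^1_s + R^2_s + R^3_s + R^4_s)(N,n,\Phi)\big] \,\Id s,
\end{equation*}
and then to pass to the limit $n \to \infty$ on both sides, exploiting the specific convolution structure of the Biot--Savart interaction. First, thanks to Remark \ref{remark: vortex}, Assumption \ref{ass: vortex_mode} implies Assumptions \ref{ass: inital_cond}--\ref{ass: coef_fokker}, so that all results of Sections \ref{sec: SPDE_existence}--\ref{sec: comparison} apply. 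Since $b(t,x,m) = K*m(x)$ is linear in $m$, one has $\tfrac{\delta^2 b}{\delta m^2} \equiv 0$, hence $R^3_s(N,n,\Phi) = 0$ identically (cf.\ Remark \ref{remark: second_flat_vanish}).

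Next, I would rewrite $R^2_s$ using $\tfrac{\delta b}{\delta m}(s,x,m,v) = K(x-v)$, which is independent of $m$. The inner $r$-integral then collapses and
\begin{equation*}
R^2_s(N,n,\Phi) = \Big\langle (K*\rho_s^N)(\cdot) \cdot D\big(\nabla \hat T_{s,t}^n \Phi(\rho_s^N)\big)(\cdot), \rho_s^N \Big\rangle,
\end{equation*}
which is precisely the second term appearing in the statement. For $R^4_s$, Lemma \ref{lemma: vanish_first_order} gives $\E|R^4_s(N,n,\Phi)| \to 0$ as $n\to\infty$; together with the uniform bound inherited from Corollary \ref{cor: reg_derivative} this allows one to pass the $n$-limit inside the time integral by dominated convergence, so that the $R^4$-contribution disappears in the $\limsup$.

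To handle the left-hand side, I would invoke Theorem \ref{thm: conv_prob}: $\rho^n$ converges to $\rho$ in probability on $L^2([0,T]; H^{-\lambda-2}(\T^d))$, and hence (up to a further subsequence) pointwise in $t$ in $H^{-\lambda-2}(\T^d)$ almost surely. Since $\Phi \in FC^\infty(H^{-\lambda-2}(\T^2))$ is bounded and continuous, dominated convergence yields $\E[\Phi(\rho_t^n)] \to \E[\Phi(\rho_t)]$. Combining this with Lemma \ref{lem:r0}, which bounds $|R^0_t(N,n,\Phi)| \le C [\Phi]_{C^1} W_{1,H^{-\lambda-2}}(\P_{\rho_0^N},\P_{\rho_0})$ uniformly in $n$, and taking $\limsup_{n\to\infty}$ on both sides of the identity from Proposition \ref{prop: reminder}, I obtain the stated inequality by the triangle inequality. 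The main technical point is merely the bookkeeping: one must be sure that the $\limsup$ can be distributed across the remaining $R^1_s$ and $R^2_s$ terms (which is immediate, as $R^3_s = 0$ and $R^4_s \to 0$), and that no hidden dependence on $n$ inside $A_n$ survives in $R^1$ or $R^2$ --- indeed, these two terms depend on $n$ only through the derivatives of $T^n_{s,t}\Phi$, which is exactly why we keep the $\limsup$ in the final bound rather than trying to identify a limit explicitly.
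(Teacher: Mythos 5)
Your proposal is correct and follows essentially the same route as the paper, which states the corollary as a direct consequence of Proposition~\ref{prop: reminder}, Remarks~\ref{remark: vortex} and~\ref{remark: second_flat_vanish}, Lemmas~\ref{lem:r0} and~\ref{lemma: vanish_first_order}, and the convergence $\rho^n\to\rho$ from Theorem~\ref{thm: conv_prob}. The only cosmetic difference is that you spell out the bookkeeping (the collapse of $R^2$ via $\tfrac{\delta b}{\delta m}=K(x-v)$, the vanishing of $R^3$ and $R^4$, and the passage to the limit in $\E[\Phi(\rho_t^n)]$) that the paper leaves implicit.
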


Hence, it remains to estimate the rest terms. We follow similar strategies as in the proof of Theorem~\ref{theorem: main_result}. 
\begin{lemma}\label{lemma: vortex_first_order}
Let Assumption~\ref{ass: vortex_mode} hold and \(\Phi \in C_\ell^2(H^{-\lambda-2}(\T^d))\). Then,  
    \begin{equation}
        \E\big[\langle K*(\mu_t^N -\mu_t ), D ( \nabla \hat T_{s,t}^n \Phi (\rho_s^N))(\cdot) (\mu_t^N-\mu_t) \rangle \big] \le \frac{C(T,\mu_0) [\Phi]_{C^1} }{N}. 
    \end{equation}
\end{lemma}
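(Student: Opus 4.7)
The strategy is to exploit the antisymmetry of the Biot--Savart kernel together with the Lipschitz regularity of $G := D(\nabla \hat T_{s,t}^n \Phi(\rho_s^N))$ to convert the singular quadratic form into one with a bounded kernel, and then to mirror the relative-entropy machinery of Lemma~\ref{lemma: flat_reminder}. Set $\nu := \mu_t^N - \mu_t$ and denote by $\mathcal{I}$ the quantity on the left-hand side. Since $\d\nu \otimes \d\nu$ is symmetric while $K(-z) = -K(z)$, one has
\[
\mathcal{I} = \int_{\T^2 \times \T^2} \tilde\psi(x,y)\, \d\nu(x) \d\nu(y), \qquad \tilde\psi(x,y) := \tfrac{1}{2} K(x-y) \cdot \bigl(G(x) - G(y)\bigr).
\]
By Lemma~\ref{cor: dual_identification_first_derivative} together with the Sobolev embedding $B^{\lambda + 1 - d/2}_{\infty,\infty}(\T^2) \hookrightarrow C^1(\T^2)$, valid because $\lambda > 3$ and $d = 2$, one has the $\omega$-uniform bound $\|G\|_{C^1(\T^2)} \leq C [\Phi]_{C^1}$. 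Combined with $|K(z)| \lesssim |z|^{-1}$, this yields the key pointwise estimate
\[
|\tilde\psi(x,y)| \leq \tfrac12 |K(x-y)| \cdot |x-y| \cdot \|G\|_{C^1} \leq C [\Phi]_{C^1},
\]
uniformly in $(x,y)$ and in the realization of $\rho_s^N$: the symmetrization has absorbed the singularity of $K$.

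Conditional on $\rho_s^N$, I would then center $\tilde\psi$ against $\mu_t$,
\[
\bar{\tilde\psi}(x,y) := \tilde\psi(x,y) - \langle \tilde\psi(\cdot, y), \mu_t\rangle - \langle \tilde\psi(x, \cdot), \mu_t\rangle + \langle\langle \tilde\psi, \mu_t\rangle, \mu_t\rangle,
\]
so that $\|\bar{\tilde\psi}\|_{L^\infty} \leq 4 C [\Phi]_{C^1}$, the cancellations $\int \bar{\tilde\psi}(z,y)\mu_t(\d z) = 0 = \int \bar{\tilde\psi}(x,w)\mu_t(\d w)$ hold, and the standard expansion gives the identity $\mathcal{I} = \int \bar{\tilde\psi}\, \d\mu_t^N \otimes \d\mu_t^N = \tfrac{1}{N^2}\sum_{i,j=1}^N \bar{\tilde\psi}(X_t^i, X_t^j)$. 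Applying Lemma~\ref{lemma: variational_formual} with $\nu_1 = \bar\mu_t^N$, $\nu_2 = \mu_t^{\otimes N}$ and a parameter $\kappa = c / [\Phi]_{C^1}$ chosen small enough that $\hat C \, \kappa \, \|\bar{\tilde\psi}\|_\infty < 1$, Theorem~\ref{theorem: large_deviation} applied to the function $\kappa \bar{\tilde\psi}$ bounds the exponential integral by a constant $C_1$ independent of $N$. Combining with the uniform entropy estimate \eqref{eq: vortex_relative_entropy_bound} from Remark~\ref{remark: vortex} gives
\[
\E[\mathcal{I}] \leq \frac{1}{\kappa N} \sup_N \mathcal{H}(\bar\mu_t^N \vert \mu_t^{\otimes N}) + \frac{\log C_1}{\kappa N} \leq \frac{C(T,\mu_0)[\Phi]_{C^1}}{N},
\]
and running the same argument with $-\mathcal{I}$ in place of $\mathcal{I}$ produces the absolute value bound claimed.

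The main obstacle is that $\bar{\tilde\psi}$ depends on the particles through $G(\rho_s^N)$ and is therefore not a deterministic function of $(x_1,\ldots,x_N)$, whereas Theorem~\ref{theorem: large_deviation} is formulated for a fixed $\varphi$. This is handled exactly as at the end of the proof of Lemma~\ref{lemma: flat_reminder}: both the $L^\infty$ bound on $\bar{\tilde\psi}$ and the two cancellation identities hold \emph{uniformly} in the realization of $\rho_s^N$, so after conditioning on the appropriate $\sigma$-algebra Theorem~\ref{theorem: large_deviation} is applied to the then-deterministic centered kernel, and the resulting $O(1/N)$ bound, being uniform, survives integration under the outer expectation.
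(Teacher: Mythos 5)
Your overall architecture (symmetrization to kill the singularity of $K$, centering against $\mu_t$, variational entropy formula, Theorem~\ref{theorem: large_deviation}, uniform entropy bound \eqref{eq: vortex_relative_entropy_bound}) matches the paper's, but your key step differs in a way that creates a genuine gap. You symmetrize against the full test function $G=D(\nabla\hat T^n_{s,t}\Phi(\rho^N_s))$, so your centered kernel $\bar{\tilde\psi}$ depends on the particle configuration through $\rho^N_s$. Theorem~\ref{theorem: large_deviation} bounds $\int \mu^{\otimes N}\exp(\kappa N|\frac{1}{N^2}\sum_{i,j}\varphi(x_i,x_j)|)$ only for a \emph{fixed} two-variable kernel $\varphi$; when the kernel itself is a function of $\bm{x}=(x_1,\dots,x_N)$ you would need a uniform exponential-moment bound over the whole class $\{\bar{\tilde\psi}_G: G=D(\nabla\hat T^n_{s,t}\Phi(f)),\ f\in H^{-\lambda-2}\}$, since $\int\sup_G(\cdots)$ is not controlled by $\sup_G\int(\cdots)$. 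Your proposed fix by ``conditioning on the appropriate $\sigma$-algebra'' is circular: $G$ is measurable with respect to $(X^1_s,\dots,X^N_s)$, the very configuration whose law $\bar\mu^N_s$ is being replaced by $\mu_s^{\otimes N}$ in the entropy inequality, so conditioning on $\rho^N_s$ freezes the configuration and leaves nothing to change measure on. (Also, the end of the proof of Lemma~\ref{lemma: flat_reminder} does not perform any conditioning, so the reference does not support the step.)

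The paper circumvents exactly this obstruction by expanding the test function in the deterministic Fourier basis: it writes $\nabla\hat T^n_{s,t}\Phi(\rho^N_s)=\sum_k\langle k\rangle^{-2(\lambda+2)}\overline{\langle\nabla T^n_{s,t}\Phi(\rho^N_s),e_k\rangle}\,e_k$, bounds the random scalar coefficients by the deterministic quantity $\sup_f\|\nabla T^n_{s,t}\Phi(f)\|_{H^{-\lambda-2}}\le C[\Phi]_{C^1}$ and pulls them out of the expectation, and then applies the symmetrization and Theorem~\ref{theorem: large_deviation} to the purely deterministic kernels $\mathds K_k(x,y)=\tfrac12 K(x-y)\cdot(e_k(x)-e_k(y))$, which satisfy $|\mathds K_k|\le C_{\mathrm{vortex}}|k|$. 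The price is a mode-dependent choice $\kappa=\kappa(k)\sim(1+|k|)^{-1}$, so the final bound carries a factor $\sum_k\langle k\rangle^{-\lambda-1}(1+|k|)$, which converges because $\lambda>d=2$. If you want to salvage your more direct argument you would have to supply a chaining/covering bound for the exponential moment uniformly over the function class; otherwise you should adopt the Fourier decomposition.
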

\begin{proof}
Using the definition of \(\nabla \hat T_{s,t}^n \Phi (\rho_s^N))(\cdot)\) and the same computations as in Lemma~\ref{lemma: flat_reminder} we find 
\begin{align} \label{eq: vortex_first_order_aux}
\begin{split}
     &\E \big[ \langle K*(\mu_t^N -\mu_t ), D ( \nabla \hat T_{s,t}^n \Phi (\rho_s^N))(\cdot) (\mu_t^N-\mu_t) \rangle  \big]  \\
     &\quad \le C [\Phi]_{C^1} \sum\limits_{k \in \Z^2} \langle k \rangle^{-\lambda-1}  \E\big[  |\langle K*(\mu_t^N -\mu_t ),  e_k (\mu_t^N-\mu_t) \rangle| \big] . 
     \end{split}
\end{align}
    Applying the variational formula~\eqref{eq: variational_formula} for the relative entropy, we find 
    \begin{align*}
          &\E \big[ |\langle K*(\mu_t^N -\mu_t ), e_k (\mu_t^N-\mu_t) \rangle|\big] - \frac{1}{\kappa N}   \mathcal H(\bar\mu_t^N \vert \bar \mu_t^{\otimes N} ) \\
          &\quad \le   \frac{1}{\kappa N}  \log \bigg( \int_{\T^{2N}} \bar\mu_t^{\otimes N} \exp\bigg(\kappa N \bigg\langle K*\bigg (\frac{1}{N} \sum\limits_{i=1}^N \delta_{x_i}  -\mu_t \bigg), e_k(\cdot) \bigg(\frac{1}{N} \sum\limits_{i=1}^N \delta_{x_i} -\mu_t\bigg) \bigg \rangle\bigg) \Id \bm x \bigg) 
    \end{align*}
    for any \(\kappa >0\). 
    Let \(\Pi_t^N(A)=\tfrac{1}{N} \sum\limits_{i=1}^N \delta_{x_i}(A).\) for some measurable set \(A\). 
    Then, 
    \begin{align*}
       \langle K*(\Pi_t^N -\mu_t ),e_k(\cdot) (\Pi_t^N-\mu_t) \rangle  &= \langle K*\Pi_t^N ,e_k(\cdot) \Pi_t^N \rangle
       - \langle K*\mu_t ,e_k(\cdot) \Pi_t^N \rangle\\
       &\quad  -  \langle K*\Pi_t^N ,e_k(\cdot) \mu_t \rangle 
       +  \langle K*\mu_t ,e_k(\cdot) \mu_t \rangle. 
    \end{align*}
    For the first term, we apply the symmetrization trick 
    \begin{align*}
        &\langle K*\Pi_t^N,e_k(\cdot) \Pi_t^N \rangle \\
        &\quad = \int_{\T^{4}}  K(x-y) \cdot e_k(x) \Id \Pi_t^N (y) \Id \Pi_t^N (x) \\
        &\quad = \frac{1}{2}  \int_{\T^{4}}  K(x-y) ( e_k(x)- e_k(y))  \Id \Pi_t^N (y) \Id \Pi_t^N (x). 
    \end{align*}
   The same symmetrization applies to the last term. However, for the second and third terms, the symmetrization procedure above produces the corresponding term for the other one. Denote by 
   \begin{equation*}
       \mathds K_{k} (x,y) := \frac{1}{2} K(x-y) \cdot (e_k(x)-e_k(y))
   \end{equation*}
    Define the function 
    \begin{equation*}
        \varphi_k(t,x,y) = \mathds K_{k}- \langle \mathds K_{k }(x,\cdot),\mu_t \rangle -  \langle \mathds K_{k}(\cdot,y),\mu_t \rangle + \langle \mathds K_{k}(x,y),\mu_t \otimes \mu_t \rangle
    \end{equation*}
    and observe 
    \begin{equation*}
        \langle K*\Pi_t^N ,e_k(\cdot) \Pi_t^N \rangle 
        = \langle  \varphi_k(t,x,y), \Pi_t^N \otimes \Pi_t^N \rangle
        = \frac{1}{N^2} \sum\limits_{i,j=1}^N  \varphi_k(t,x_i,x_j)
    \end{equation*}
    and 
    \begin{equation*}
        \int_{\T^2} \varphi_k(t,x,y) \mu_t(x) \Id x = 0, \quad \forall y \in \T^2 , \quad \int_{\T^2} \varphi_k(t,x,y) \mu_t(y) \Id y = 0, \quad \forall x \in \T^2 . 
    \end{equation*}
    Hence, we have verified the cancellation conditions of Theorem~\ref{theorem: large_deviation}. Next, our goal is to demonstrate the inequality~\eqref{eq: deviation_p_bound}. 
    We start by demonstrating the boundedness of the function \(\mathds K_{k }\). We have 
    \begin{equation*}
        |\mathds K_{k } (x,y)|
        \le |x-y| |K(x-y)| |D e_k(x+a(x-y))| \le C |k| \sup\limits_{x \in \R^2} |x K(x)| \le C_{\mathrm{vortex}} |k|
    \end{equation*}
    for some fix constant \(C_{\mathrm{vortex}}\) depending on the periodic correction. 
Consequently, applying Lemma~\ref{cor: reg_derivative} we obtain
\begin{equation*}
    |\varphi_k(t,x,y)| \le 4 C_{\mathrm{vortex}} |k| . 
\end{equation*} 
Choosing \(\kappa = (8 \hat C C_{\mathrm{vortex}} (1+|k|))^{-1}\), where \(\hat C\) is the constant provided by Theorem~\ref{theorem: large_deviation}, we can verify \eqref{eq: deviation_p_bound}. Hence, we can use Theorem~\ref{theorem: large_deviation} to find 
\begin{equation*}
    \E \big[ |\langle K*(\mu_t^N -\mu_t ), e_k(\cdot) (\mu_t^N-\mu_t) \rangle | \big] \le  \frac{1}{\kappa N}   (\mathcal H(\bar\mu_t^N \vert \bar \mu_t^{\otimes N} ) +\log(4) ). 
\end{equation*}
Since \(\kappa\) depends on \(k\) we need to plug the above bound into~\eqref{eq: vortex_first_order_aux} to obtain 
\begin{align*}
     &\E\big[ \langle K*(\mu_t^N -\mu_t ), D ( \nabla \hat T_{s,t}^n \Phi (\rho_s^N))(\cdot) (\mu_t^N-\mu_t) \rangle \big]  \\
     &\quad \le \frac{C}{N} [\Phi]_{C^1} \big(\mathcal H(\bar\mu_t^N \vert \bar \mu_t^{\otimes N} ) +\log(4) \big) \sum\limits_{k \in \Z^d} \langle k \rangle^{-\lambda-1} (1+|k|)  \\
     &\quad \le  \frac{C}{N} [\Phi]_{C^1} \Big(\sup\limits_{0 \le t \le T} \mathcal H(\bar\mu_t^N \vert \bar \mu_t^{\otimes N} ) +1 \Big)
\end{align*}
for some new constant \(C\). We again utilized the fact that \(\lambda > d=2\) to bound the infinite series. 
Recall that Assumption~\ref{ass: vortex_mode} is sufficient to apply the relative entropy bound~\eqref{eq: vortex_relative_entropy_bound} from~\cite{JabinWang2018}, which bounds \(\sup\limits_{0 \le t \le T} \mathcal H(\bar\mu_t^N \vert \bar \mu_t^{\otimes N} )\) uniformly in \(N\in \N\) and proves our claim. 
\end{proof}

It remains to estimate the second order remaining term. 
We start by 
\begin{lemma} \label{lemma: vortex_second_order}
Let Assumption~\ref{ass: vortex_mode} hold and \(\Phi \in FC^\infty(H^{-\lambda-2}(\T^2))\).
Then,  
\begin{equation*} 
    \int\limits_0^t \E\bigg[ 
     \frac{\sigma^2}{2} \sum\limits_{j=1}^2 \int_{\T^d}
       \partial_{x_j} \partial_{y_j}  ( \nabla^2 \hat T_{s,t}^n  \Phi(\rho_{s}^N))(x,y)_{\big| x=y } 
    \Id \rho_s^N  ( x)  \bigg]  \Id s 
    \le C [\Phi]_{C^2}. 
\end{equation*}
\end{lemma}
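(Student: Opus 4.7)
The plan is to follow the same strategy as in Lemma~\ref{lemma: second_ord_rest}, replacing the use of Assumption~\ref{ass: convergence_ass} by the relative entropy bound~\eqref{eq: vortex_relative_entropy_bound} from~\cite{JabinWang2018}, which is available here thanks to Remark~\ref{remark: vortex}. The key observation is that the second-order estimate in Lemma~\ref{lemma: second_ord_rest} never used boundedness of the flat derivative of $b$: it only required the diagonal Sobolev regularity of $\nabla^2 \hat T^n_{s,t}\Phi$ (independent of the specific drift) and a uniform-in-$N$ control of the relative entropy in order to convert the pairing against $\rho^N_s$ into a bounded quantity.

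More precisely, first I would invoke Lemma~\ref{cor: besov_reg_second_order_function}, inequality~\eqref{ineq:diag}, to obtain, for any $\tilde\lambda \in (3d/2, \lambda) = (3, \lambda)$,
\begin{equation*}
   \sup_{n\in\N}\sup_{f\in H^{-\lambda-2}(\T^2)} \bigl\| \partial_{x_j}\partial_{y_j}\nabla^2 \hat T^n_{s,t}\Phi(f)(x,y)\big|_{x=y}\bigr\|_{H^{\tilde\lambda-d}(\T^2)} \le C [\Phi]_{C^2}.
\end{equation*}
Then, writing $\rho_s^N = \sqrt{N}(\mu_s^N - \mu_s)$ and using duality between $H^{\tilde\lambda-d}(\T^2)$ and $H^{-(\tilde\lambda-d)}(\T^2)$, we bound
\begin{equation*}
  \Bigl|\E\Bigl[\int_{\T^2} \partial_{x_j}\partial_{y_j}\nabla^2 \hat T^n_{s,t}\Phi(\rho_s^N)(x,y)\big|_{x=y}\,\dd\rho_s^N(x)\Bigr]\Bigr|
  \le C\sqrt{N}\,[\Phi]_{C^2}\,\E\bigl[\|\mu_s^N-\mu_s\|_{H^{-(\tilde\lambda-d)}(\T^2)}^{2}\bigr]^{1/2}.
\end{equation*}
Since $\tilde\lambda - d > d/2$ (as $d=2$ and $\tilde\lambda > 3$), \cite[Lemma 2.6]{Xianliang2023} gives
\begin{equation*}
  \E\bigl[\|\mu_s^N-\mu_s\|_{H^{-(\tilde\lambda-d)}(\T^2)}^{2}\bigr] \le \frac{C}{N}\bigl(\mathcal{H}(\bar\mu_s^N\mid\mu_s^{\otimes N}) + 1\bigr).
\end{equation*}

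Combining these two inequalities yields a bound of the integrand by $C[\Phi]_{C^2}\bigl(\mathcal{H}(\bar\mu_s^N\mid\mu_s^{\otimes N})+1\bigr)^{1/2}$, uniformly in $n$. Thanks to Assumption~\ref{ass: vortex_mode} and Remark~\ref{remark: vortex}, the relative entropy $\mathcal{H}(\bar\mu_s^N\mid\mu_s^{\otimes N})$ is bounded uniformly in $s\in[0,T]$ and $N\in\N$, so integrating in $s$ over $[0,T]$ gives the claimed estimate. There is no serious obstacle here, since the heavy machinery, namely the diagonal regularity of $\nabla^2 \hat T^n_{s,t}\Phi$ and the Jabin--Wang entropy bound, is already available; the lemma is thus essentially a verbatim transcription of the proof of Lemma~\ref{lemma: second_ord_rest} to the two-dimensional vortex setting, noting that the singularity of the Biot--Savart kernel plays no role in this particular term as it depends on $b$ only through the law of $\rho^N$.
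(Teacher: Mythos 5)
Your proposal is correct and follows exactly the paper's argument: the paper simply recalls the intermediate bound~\eqref{eq: second_order_rest_term_aux} from the proof of Lemma~\ref{lemma: second_ord_rest} (which, as you correctly observe, never uses boundedness of the flat derivative but only the diagonal regularity~\eqref{ineq:diag} and \cite[Lemma~2.6]{Xianliang2023}) and then substitutes the Jabin--Wang relative entropy bound~\eqref{eq: vortex_relative_entropy_bound}. Your write-up just makes the derivation of that intermediate bound explicit, which matches the original computation step for step.
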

\begin{proof}
Recall~\eqref{eq: second_order_rest_term_aux}, which states, without assuming boundedness of $\tfrac{\delta}{\delta m}$,  
    \begin{equation*}
    \int\limits_0^t \E\bigg[  \int_{\T^d}
       \partial_{x_j} \partial_{y_j}  ( \nabla^2 \hat T_{s,t}^n  \Phi(\rho_{s}^N))(x,y)_{\big| x=y } 
    \Id \rho_s^N  ( x)\bigg]  \Id s  \le C T [\Phi]_{C^2} \Big(\sup\limits_{ 0 \le t \le T}  \mathcal{H}(\bar \mu_t^N | \mu_t^{\otimes N} )^{\frac{1}{2}} +1 \Big). 
    \end{equation*}
Plugging the estimate~\eqref{eq: vortex_relative_entropy_bound} into the previous inequality proves the claim.  
\end{proof}

As a consequence of Corollary~\ref{cor: vortex}, Lemma~\ref{lemma: vortex_first_order} and Lemma~\ref{lemma: vortex_second_order}, and again the approximation by cylindrical functions, we obtain the main result of the section, which is a convergence rate for the weak fluctuation of the point Vortex model. 

\begin{theorem}
\label{thm:main:vortex}
    Let \(K\) be given by~\eqref{eq: biot_savart_kernel} and let Assumption~\ref{ass: vortex_mode} hold. 
Then, for all \(\Phi \in C_\ell^2(H^{-\lambda-2}(\T^2))\), we have 
\begin{equation*}
   \sup\limits_{0 \le t  \le T  } \big|\E\big[ \Phi(\rho_t^N) - \Phi(\rho_t)\big]\big|
\le  C [\Phi]_{C^2(H^{-\lambda-2}(\T^2))}  \bigg( \frac{1}{\sqrt N } +   W_{1, H^{-\lambda-2}(\T^d)}\Big( \P_{\rho^N_0}, \P_{\rho_0}\Big)   \bigg). 
\end{equation*}
\end{theorem}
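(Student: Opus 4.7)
The plan is to assemble the three ingredients already prepared, namely Corollary~\ref{cor: vortex}, Lemma~\ref{lemma: vortex_first_order}, and Lemma~\ref{lemma: vortex_second_order}, and then to extend the resulting estimate from cylindrical test functions to all $\Phi\in C^2_\ell(H^{-\lambda-2}(\T^2))$ by the same density and uniform-integrability argument used at the end of the proof of Theorem~\ref{theorem: main_result}. No new technical input beyond these lemmas is needed; the difficulty has been pushed into Lemma~\ref{lemma: vortex_first_order}, whose proof exploits the specific antisymmetric/divergence-free structure of the Biot--Savart kernel through the symmetrization trick $\mathds{K}_k(x,y)=\tfrac12 K(x-y)\cdot (e_k(x)-e_k(y))$, so as to bypass the $L^\infty$ bound on $\tfrac{\delta b}{\delta m}$ that fails here.

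First, I fix $\Phi\in FC^\infty(H^{-\lambda-2}(\T^2))$. Corollary~\ref{cor: vortex} bounds $|\E[\Phi(\rho_t^N)-\Phi(\rho_t)]|$ by the sum of three contributions: the initial-condition term $C[\Phi]_{C^1}\,W_{1,H^{-\lambda-2}(\T^d)}(\P_{\rho_0^N},\P_{\rho_0})$; the first-order residual $\frac{1}{\sqrt N}\int_0^t \E[\langle (K*\rho_s^N)\cdot D\nabla\hat T_{s,t}^n\Phi(\rho_s^N),\rho_s^N\rangle]\,ds$, which after rewriting $\rho_s^N=\sqrt N(\mu_s^N-\mu_s)$ becomes $\sqrt N\int_0^t \E[\langle K*(\mu_s^N-\mu_s), D\nabla\hat T_{s,t}^n\Phi(\rho_s^N)\cdot(\mu_s^N-\mu_s)\rangle]\,ds$; and the second-order residual involving the trace $\partial_{x_j}\partial_{y_j}\nabla^2\hat T_{s,t}^n\Phi(\rho_s^N)(x,y)|_{x=y}$ integrated against $\rho_s^N$. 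Applying Lemma~\ref{lemma: vortex_first_order} to each $s\in[0,t]$ and integrating yields a bound of the form $\frac{C}{\sqrt N}[\Phi]_{C^1}$ for the first-order residual (uniformly in $n$), while Lemma~\ref{lemma: vortex_second_order} gives a bound $\frac{C}{\sqrt N}[\Phi]_{C^2}$ for the second-order residual (also uniformly in $n$, since the key Besov/Sobolev bound on $\nabla^2\hat T_{s,t}^n\Phi$ in Lemma~\ref{cor: besov_reg_second_order_function} is uniform in $n$).

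Since both right-hand sides are independent of $n$, taking $\limsup_{n\to\infty}$ inside Corollary~\ref{cor: vortex} is harmless and I obtain
\begin{equation*}
 \sup_{0\le t\le T}\big|\E[\Phi(\rho_t^N)-\Phi(\rho_t)]\big|
 \le C[\Phi]_{C^2(H^{-\lambda-2}(\T^2))}\left(\frac{1}{\sqrt N}+W_{1,H^{-\lambda-2}(\T^d)}(\P_{\rho_0^N},\P_{\rho_0})\right)
\end{equation*}
for every $\Phi\in FC^\infty(H^{-\lambda-2}(\T^2))$.

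Finally, for a general $\Phi\in C^2_\ell(H^{-\lambda-2}(\T^2))$ I approximate it pointwise by a sequence $(\Phi_m)_m\subset FC^\infty(H^{-\lambda-2}(\T^2))$ via Lemma~\ref{lemma: approximation_hilbert_smooth}, preserving the seminorms $[\Phi_m]_{C^2}\le C[\Phi]_{C^2}$. The key integrability input is $\sup_N \E[\|\rho_t^N\|_{H^{-\lambda-2}(\T^2)}^2]<\infty$, which follows from~\cite[Lemma~2.6]{Xianliang2023} together with the uniform relative-entropy bound~\eqref{eq: vortex_relative_entropy_bound} guaranteed by Assumption~\ref{ass: vortex_mode} and~\cite[Theorem~1]{JabinWang2018}, and $\E[\|\rho_t\|_{H^{-\lambda-2}(\T^2)}^2]<\infty$ from~\eqref{eq:esti_spde}. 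Vitali's convergence theorem then lets me pass to the limit $m\to\infty$ inside the expectation, exactly as in the final step of the proof of Theorem~\ref{theorem: main_result}, to transfer the estimate from $\Phi_m$ to $\Phi$. The potentially delicate step is to verify that all estimates (uniform in $n$, uniform in $m$) are truly controlled by $[\Phi]_{C^2}$ alone and not by higher seminorms of $\Phi_m$; however, since $\nabla\hat T_{s,t}^n\Phi$ and $\nabla^2\hat T_{s,t}^n\Phi$ are controlled by $[\Phi]_{C^1}$ and $[\Phi]_{C^2}$ respectively (Lemmas~\ref{cor: dual_identification_first_derivative} and~\ref{cor: besov_reg_second_order_function}), this is automatic, and the theorem follows.
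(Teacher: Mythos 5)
Your proposal is correct and follows essentially the same route as the paper: the paper's own proof simply states that, thanks to the relative entropy bound of Jabin--Wang, one follows the proof of Theorem~\ref{theorem: main_result}, and the surrounding text makes explicit that this means combining Corollary~\ref{cor: vortex} with Lemmas~\ref{lemma: vortex_first_order} and~\ref{lemma: vortex_second_order} and then extending to $C^2_\ell$ by cylindrical approximation and Vitali's theorem. Your write-up just spells out these steps (including the correct $\sqrt{N}$ bookkeeping when rewriting $\rho^N_s=\sqrt{N}(\mu^N_s-\mu_s)$ and the uniformity in $n$ of the bounds), so no further comment is needed.
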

\begin{proof}
    Since we have the bound on the relative entropy by~\cite[Theorem~1]{JabinWang2018}, we can follow the proof of Theorem~\ref{theorem: main_result}. 
\end{proof}

\subsection{Repulsive Coulomb potential}
\label{sec:couulomb}

We consider here the following interacting particle system
\begin{equation} \label{eq: repulsive_kernel}
    \Id X_t^{i} = \sum\limits_{\substack{j=1 \\ j \neq i}}^N \nabla G(X_t^{i}-X_t^{j}) \Id t + \sigma \Id B_t^{i},
\end{equation}
where \(G(x)=\ln(|x|) + G_0(x)\) for \(d = 2\), \(G(x)=|x|^{-(d-2)} + G_0(x)\) for \(d = 3\) and \(G_0\) is a smooth correction depending on the dimension \(d\). The potential \(G\) up to a normalizing constant is characterized as the solution of the Laplace equation \(-\Delta G = \delta_0\). 
We recall the result~\cite[Theorem~1.1]{Liu2016} on the strong well-posedness of the interacting particle system~\eqref{eq: repulsive_kernel}.  Note that such result is formulated for the Euclidean space \(\R^d\) but the proof can be adapted to the torus \(\T^d\). 
\begin{theorem}
    Let \(N \ge 2\) and \((X_0^{i}, i \in \N)\) be i.i.d. with common distribution \(\mu_0 \in L^{\frac{2d}{d+2}}(\T^d)\) and 
    \[
    H(\mu_0) := \int_{\T^d} \log (\mu_0(x)) \mu_0(x) \Id x
    < \infty,
    \] and independent of the Brownian motions \((B^{i}, i \in \N)\). Then there exists a unique global strong solution to~\eqref{eq: repulsive_kernel}. 
\end{theorem}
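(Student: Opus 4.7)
The plan is to follow the strategy of Liu--Yang~\cite{Liu2016}, adapting each step to the torus. First, I would introduce a truncation of the singular kernel: for $\epsilon > 0$, set $\nabla G_\epsilon(x) := \nabla G(x) \chi(|x|/\epsilon)$ where $\chi$ is a smooth cutoff vanishing on $[0,1/2]$ and equal to $1$ on $[1,\infty)$. Since $\nabla G_\epsilon$ is smooth and bounded on $\T^d$, classical SDE theory yields a unique global strong solution $(X_t^{i,\epsilon}, i = 1,\dots,N)$ to the regularised system driven by the same Brownian motions. The goal is then to pass to the limit $\epsilon \to 0$ and to show that the limit satisfies the original system without collisions, up to any $T > 0$.

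The heart of the argument is an entropy/energy estimate that prevents particles from colliding. Let $f^N_\epsilon(t, \cdot)$ denote the joint density of $(X_t^{1,\epsilon},\dots,X_t^{N,\epsilon})$ on $\T^{dN}$, which exists and is smooth because of the uniform ellipticity of the Laplacian. By It\^o's formula applied to the relative entropy
\[
H_N(f^N_\epsilon(t) \,|\, \mu_t^{\otimes N}) = \int_{\T^{dN}} f^N_\epsilon(t) \log \frac{f^N_\epsilon(t)}{\mu_t^{\otimes N}} \,\mathrm d \bm x,
\]
together with the modulated free energy of Serfaty~\cite{Serfaty2020}, the \emph{repulsive} nature of $G$ provides a non-negative interaction term that dissipates, yielding an $\epsilon$-uniform bound
\[
\sup_{0 \le t \le T} H_N(f^N_\epsilon(t) \,|\, \mu_t^{\otimes N}) + \int_0^T \sum_{i\neq j} \E\big[ |\nabla G(X^{i,\epsilon}_t - X^{j,\epsilon}_t)|^2 \big]\, \mathrm d t \le C.
\]
Starting from i.i.d. initial data of finite entropy, the initial relative entropy is zero. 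The modulated energy also controls $\sum_{i \neq j}\E[G(X^{i,\epsilon}_t - X^{j,\epsilon}_t)]$ from below (up to a lower order term), which in turn yields a uniform integrability estimate for the pairwise singular interactions and thus tightness of $(X^{\cdot,\epsilon})_\epsilon$ on $C([0,T];\T^{dN})$.

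Passing to a limit, one obtains a candidate process $(X^{1},\dots,X^{N})$ for which the stochastic integral against the Brownian motions is unchanged and the drift converges in $L^1_t$ thanks to the $L^2$-bound on $\nabla G(X^{i,\epsilon} - X^{j,\epsilon})$. The limit therefore solves \eqref{eq: repulsive_kernel} up to the first collision time $\tau = \inf\{t : \exists i \neq j, X^i_t = X^j_t\}$. To show $\tau = +\infty$ a.s., one reuses the modulated energy/entropy bound: if particles $i$ and $j$ were to collide at $\tau < T$, the quantity $-\log|X^i_t - X^j_t|$ (in $d=2$) or $|X^i_t-X^j_t|^{-1}$ (in $d=3$) would diverge, contradicting the uniform bound. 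Strong uniqueness follows from a pathwise argument: two solutions starting from the same initial data, with no collisions, satisfy an SDE with locally Lipschitz drift, and a Gr\"onwall argument on the squared difference concludes.

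The main obstacle will be the non-collision property, which is where the torus geometry does \emph{not} trivially follow from the $\R^d$ case. On $\R^d$ the proof of \cite{Liu2016} exploits dispersion and the explicit form of the Coulomb kernel; on $\T^d$ the smooth periodic correction $G_0$ must be shown to not destroy the repulsive sign of the short-range interaction, and the conditions $\mu_0 \in L^{\frac{2d}{d+2}}(\T^d)$, $H(\mu_0) < \infty$ must be used precisely to close the modulated free energy estimate. Beyond that, the $\epsilon$-uniform Fisher information estimate, tightness, and the identification of the limit use only standard parabolic arguments together with the sharp free energy bounds of \cite{Serfaty2025} recalled in the text.
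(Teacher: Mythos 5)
The paper does not actually prove this statement: it is quoted from Liu--Yang~\cite[Theorem~1.1]{Liu2016}, with the remark that the argument adapts from $\R^d$ to $\T^d$. Your overall route --- regularise the kernel, obtain an $\epsilon$-uniform entropy/energy estimate, exclude collisions via blow-up of the interaction potential at the diagonal, pass to the limit, and deduce pathwise uniqueness from local Lipschitzness off the collision set --- is precisely the strategy of that reference, so in spirit your proposal matches what the paper relies on. (One simplification: for well-posedness of the particle system alone you do not need the relative entropy with respect to $\mu_t^{\otimes N}$ or the modulated free energy; the plain $N$-particle free energy, i.e.\ entropy of the joint density plus total interaction energy, already dissipates and suffices, and it avoids any circularity with the Fokker--Planck equation.)

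Two steps of your sketch would not survive scrutiny as written. First, the claimed $\epsilon$-uniform bound $\int_0^T\sum_{i\neq j}\E\big[|\nabla G(X^{i,\epsilon}_t-X^{j,\epsilon}_t)|^2\big]\dd t\le C$ does not follow from the free-energy dissipation and is in general false: for $d=3$ one has $|\nabla G(x)|^2\sim|x|^{-4}$, far more singular than the potential $|x|^{-1}$ that the energy controls, and even for $d=2$ the function $|x|^{-2}$ fails to be integrable near the origin, so such a bound would require quantitative vanishing of the two-particle correlation on the diagonal. What the estimate actually delivers is a bound on the expected interaction energy together with an $\epsilon$-uniform Fisher-information bound on the joint density; the drift is then controlled in $L^1_{t,\omega}$ by pairing $\nabla G\in L^p(\T^d)$ for $p<d/(d-1)$ with an $L^{p'}$ bound on the two-particle marginal coming from the Fisher information and Sobolev embedding. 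Your identification of the limiting drift must be rerouted through this weaker but correct estimate. Second, the non-collision argument ``a collision would make $-\log|X^i_t-X^j_t|$ diverge, contradicting the uniform bound'' is incomplete: a bound on $\sup_t\E[\cdot]$ does not preclude blow-up at a random time. The standard repair is a localisation: apply It\^o's formula to the total interaction energy up to the first $\epsilon$-approach time $\tau_\epsilon$, use that $G$ is harmonic away from the origin (so the It\^o correction is bounded by the smooth periodisation $G_0$, which answers your worry about the torus) and that the repulsive sign makes the drift contribution a non-positive square of gradients, and conclude $\P(\tau_\epsilon\le T)\to0$ by optional stopping; finiteness of the initial interaction energy is exactly where $\mu_0\in L^{\frac{2d}{d+2}}(\T^d)$ enters, through the Hardy--Littlewood--Sobolev inequality. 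With these two repairs your outline coincides with the cited proof.
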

For the existence of a solution to the Fokker--Planck equation, we recall~\cite[Theorem~1.1]{Serfaty2025} with a combination of Sobolev's embedding~\cite[Proposition~4.6]{Triebel2006}. 
\begin{theorem}
    Let \(\mu_0 \in B_{\infty,\infty}^{\lambda'+\epsilon}(\T^d)  \) for some \(\epsilon > 0\). Then, there exists a solution \((\mu_t, 0 \leq t \leq T)\) of~\eqref{eq: fokker--planck} such that \(\mu \in C([0,T],B_{\infty,\infty}^{\lambda'}(\T^d))\). 
\end{theorem}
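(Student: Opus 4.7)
The plan is short because the statement is essentially a recalled existence result, so the only task is to verify that the hypotheses in~\cite[Theorem~1.1]{Serfaty2025} are consequences of the assumption $\mu_0 \in B_{\infty,\infty}^{\lambda'+\epsilon}(\T^d)$, and then to transfer the output regularity to the Besov scale via~\cite[Proposition~4.6]{Triebel2006}.

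First, I would unpack the setting of~\cite[Theorem~1.1]{Serfaty2025}: for $d=2,3$ with the repulsive Coulomb kernel $\nabla G$, that result constructs a solution $(\mu_t)_{t\in[0,T]}$ to~\eqref{eq: fokker--planck} which is continuous in time with values in a Sobolev-type space, propagating (up to any finite horizon $T$) the regularity of the initial datum. The role of the buffer $\epsilon>0$ in the assumption is to absorb both the non-sharpness of the Sobolev embeddings linking the Besov scale $B_{\infty,\infty}^{s}$ to the $L^p$-Sobolev scale in which Serfaty's modulated energy estimates are phrased, and any intrinsic loss coming from controlling the singular convolution with $\nabla G$ in the relevant energy estimate.

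Next, I would apply~\cite[Proposition~4.6]{Triebel2006} twice. Once at the level of the initial datum, in order to embed $B_{\infty,\infty}^{\lambda'+\epsilon}(\T^d)$ into the function space in which~\cite[Theorem~1.1]{Serfaty2025} requires $\mu_0$ to sit; and once at the level of the solution, in order to embed the output space of Serfaty's theorem into $B_{\infty,\infty}^{\lambda'}(\T^d)$. Since both embeddings are continuous linear maps, the time continuity asserted by~\cite[Theorem~1.1]{Serfaty2025} is automatically transferred to time continuity with values in $B_{\infty,\infty}^{\lambda'}(\T^d)$, as desired.

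The only delicate step will be the book-keeping of indices: one has to check that for every $\epsilon>0$ there is enough room in the chain $B_{\infty,\infty}^{\lambda'+\epsilon}(\T^d) \hookrightarrow X_{\mathrm{in}} \hookrightarrow X_{\mathrm{out}} \hookrightarrow B_{\infty,\infty}^{\lambda'}(\T^d)$, where $X_{\mathrm{in}}, X_{\mathrm{out}}$ denote the input and output spaces of Serfaty's theorem. Because the Besov scale $B_{\infty,\infty}^{s}$ is strictly decreasing in $s$ and \cite[Proposition 4.6]{Triebel2006} provides embeddings into $H^{s'}$ whenever $s'<s-d/2+d/p$ (and vice versa), any positive $\epsilon$ is sufficient, which is precisely why the hypothesis of the theorem is stated with an arbitrary $\epsilon>0$. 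Apart from this purely bureaucratic verification, no additional argument is required.
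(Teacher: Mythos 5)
The paper offers no proof of this statement beyond the sentence that precedes it---the result is simply recalled from \cite[Theorem~1.1]{Serfaty2025} together with the Sobolev embedding \cite[Proposition~4.6]{Triebel2006}---so your citation-plus-embedding plan coincides with the paper's approach. The only place where your bookkeeping is loose is the direction of the embeddings: on the torus one has $B^{s}_{\infty,\infty}(\T^d)\hookrightarrow H^{s-\delta}(\T^d)$ for every $\delta>0$ (since $L^\infty(\T^d)\subset L^2(\T^d)$) but only $H^{s}(\T^d)\hookrightarrow B^{s-d/2}_{\infty,\infty}(\T^d)$ in the reverse direction, so the claim that ``any positive $\epsilon$ is sufficient'' closes only because the solution theory of \cite{Serfaty2025} propagates $L^\infty$-based (H\"older/Besov) regularity; if the output space $X_{\mathrm{out}}$ were an $L^2$-based Sobolev space the chain would lose $d/2$ derivatives and would require $\epsilon>d/2$.
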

Based on the above Theorems we formulate the following Assumption for the initial condition in the repulsive Coulomb case. 

\begin{assumption}[Repulsive Coulomb]\label{ass: repulsive_coulomb}
    Suppose \((X_0^{i}, i \in \N)\) be i.i.d. with common distribution \(\mu_0 \in B_{\infty,\infty}^{\lambda'+\epsilon}(\T^d) \) for some \(\epsilon >0\), independent of the Brownian motions \((B^{i}, i \in \N)\), \(H(\mu_0) < \infty\) and \(\inf_{x \in \T^d} \mu_0(x) > 0\). 
    Moreover, assume 
    \begin{equation*}
        \int\limits_{\T^{dN}} \log \bigg (\bar \mu_0^N(x_1, \ldots, x_N) \exp\bigg( \frac{1}{N\sigma^2}\sum\limits_{1 \le i \neq j \le N} G(x_i-x_j)\bigg) \bigg ) \Id \bar \mu_0^{N}< \infty
    \end{equation*}
    and \(\rho_0 \in  L^2_{\cF_0}(H^{-\lambda-2}(\T^d)) \). 
\end{assumption}
Since \(\nabla G \in L^1(\T^d)\), we can apply Lemma~\ref{rem: convolution_estimate} to deduce that Assumption~\ref{ass: repulsive_coulomb} implies Assumptions~\ref{ass: inital_cond}–\ref{ass: coef_fokker}.
As for Corollary\ref{cor: vortex}, as consequence of Proposition~\ref{prop: reminder} we obtain the following: 
\begin{corollary} \label{cor: repulsive_coulomb}
    Let Assumption~\ref{ass: repulsive_coulomb} hold and \(\Phi \in FC^\infty(H^{-\lambda-2}(\T^d))\). We have 
    \begin{align*}
     &    \big|\E\big[ \Phi(\rho_t^N)- \Phi(\rho_t) \big] \big|
     \le \frac{1}{\sqrt{N}} \limsup\limits_{n \to \infty} \int\limits_0^t \E\bigg[   \frac{\sigma^2}{2} \sum\limits_{j=1}^d \int_{\T^d}
       \partial_{x_j} \partial_{y_j}  ( \nabla^2 \hat T_{s,t}^n  \Phi(\rho_{s}^N))(x,y)_{\big| x=y } 
    \Id  \rho_s^N ( x)\bigg] \\
     & \quad \quad  + \E \big[ \langle   (\nabla G*  \rho_{s}^N)(\cdot) \cdot D ( \nabla \hat T_{s,t}^n  \Phi(\rho_s^N))(\cdot)  , \rho_s^N  \rangle \big]  \Id s + C [\Phi]_{C^1} W_{1, H^{-\lambda-2}(\T^d)}\Big( \P_{\rho^N_0}, \P_{\rho_0}\Big) . 
    \end{align*}
\end{corollary}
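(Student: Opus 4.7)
The plan is to specialize Proposition~\ref{prop: reminder} to the convolution-type drift $b(t,x,m) = \nabla G * m(x)$ and then systematically discard three of the five remainder terms, keeping only the second-order trace term $R^1$ and the first-order term $R^2$ on the right-hand side, exactly as was done in the proof of Corollary~\ref{cor: vortex} for the Vortex model.

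First, I would verify that the hypotheses needed to invoke Proposition~\ref{prop: reminder} are in force. Since $\nabla G \in L^1(\T^d)$ in both $d=2$ and $d=3$, Lemma~\ref{rem: convolution_estimate} applies as soon as we have the regularity $\mu \in C([0,T]; H^{\lambda'}(\T^d))$ and the weak well-posedness of the particle system~\eqref{eq: repulsive_kernel}; Assumption~\ref{ass: repulsive_coulomb} together with the well-posedness results recalled just before it (Liu--Yang and Serfaty--Rosenzweig) supply both, so Assumptions~\ref{ass: inital_cond}--\ref{ass: coef_fokker} hold. Consequently we may apply Proposition~\ref{prop: reminder} with $b(s,x,m)=\nabla G*m(x)$ to obtain, for any $\Phi\in FC^\infty(H^{-\lambda-2}(\T^d))$,
\begin{equation*}
\E[\Phi(\rho_t^N) - \Phi(\rho_t^n)] = R^0_t(N,n,\Phi) + \frac{1}{\sqrt{N}} \int_0^t \E\big[(R^1_s+R^2_s+R^3_s+R^4_s)(N,n,\Phi)\big] \Id s.
\end{equation*}

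Next I would eliminate the three extraneous remainder terms. For $R^3_s$, the flat derivative of the drift is $\tfrac{\delta b}{\delta m}(s,x,m,v)=\nabla G(x-v)$, which is independent of $m$, so $\tfrac{\delta^2 b}{\delta m^2}\equiv 0$ and $R^3_s(N,n,\Phi)=0$ as noted in Remark~\ref{remark: second_flat_vanish}. For $R^0_t$, Lemma~\ref{lem:r0} yields directly
\begin{equation*}
|R^0_t(N,n,\Phi)| \le C[\Phi]_{C^1} \, W_{1,H^{-\lambda-2}(\T^d)}(\P_{\rho_0^N},\P_{\rho_0}).
\end{equation*}
For $R^4_s$, Lemma~\ref{lemma: vanish_first_order} gives $\int_0^t \E|R^4_s(N,n,\Phi)|\,ds \to 0$ as $n\to\infty$. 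Thus, taking $\limsup_{n\to\infty}$ on both sides leaves only the contributions from $R^1$ and $R^2$ plus the initial-condition term.

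Finally I would pass to the limit on the left-hand side using Theorem~\ref{thm: conv_prob}: since $\rho^n \to \rho$ in probability in $L^2([0,T]; H^{-\lambda-2}(\T^d))$ and $\Phi\in FC^\infty$ is bounded and continuous on $H^{-\lambda-2}(\T^d)$, dominated convergence implies $\lim_{n\to\infty}\E[\Phi(\rho_t^n)] = \E[\Phi(\rho_t)]$ for almost every $t$, and hence for every $t$ by path continuity. Substituting the explicit expressions for $R^1_s$ and $R^2_s$ (with $\tfrac{\delta b}{\delta m}(s,x,m,v)=\nabla G(x-v)$, so that the inner integral in $R^2_s$ collapses to $\nabla G*\rho_s^N$) yields the stated inequality. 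There is no genuine obstacle at this stage: the work of bounding $R^1$ and $R^2$ for the singular Coulomb kernel (which requires the sharp modulated free energy estimates of~\cite{Serfaty2025} and where the dimensional restriction $d\in\{2,3\}$ will enter) is deferred to Theorem~\ref{theorem: repulsive_fluc}; the present corollary only packages the generator-comparison reduction in a form ready for that next step.
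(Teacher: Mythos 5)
Your proposal is correct and follows essentially the same route the paper takes: it invokes Proposition~\ref{prop: reminder} with the convolution drift $b=\nabla G * m$, kills $R^3$ via Remark~\ref{remark: second_flat_vanish}, bounds $R^0$ by Lemma~\ref{lem:r0}, removes $R^4$ in the limit $n\to\infty$ by Lemma~\ref{lemma: vanish_first_order}, and passes to the limit on the left-hand side using Theorem~\ref{thm: conv_prob}, exactly as the paper does for Corollary~\ref{cor: vortex} and then cites for this statement. The preliminary verification that Assumption~\ref{ass: repulsive_coulomb} implies Assumptions~\ref{ass: inital_cond}--\ref{ass: coef_fokker} via $\nabla G\in L^1(\T^d)$ and Lemma~\ref{rem: convolution_estimate} also matches the paper's own remark preceding the corollary.
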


In order to estimate the reminder terms, we utilize the the modulated energy given by 
\begin{equation} \label{eq: modulated_energy}
    F_N((x_1,\ldots,x_N),\mu) := \frac{1}{\sigma^2} \int\limits_{\T^{2d} \setminus D} G (x-y)  \Id  \bigg(\frac{1}{N}\sum\limits_{i=1}^N \delta_{x_i} - \mu\bigg)^{\otimes 2} (x,y) ,
\end{equation}
where \(D=\{(x,y) \in \T^d \times \T^d: \; x=y\}\) denotes the diagonal. 
The above quantity was utilized to demonstrate mean field limits for kernels, which are close to the Coulomb kernel. One can see that \(F_N\) can be understood as a renormalization of the negative-order homogeneous Sobolev norm corresponding to the Fokker--Planck eqaution~\eqref{eq: fokker--planck}.
More, presicely the mdoulated energy is coercive in the sense of~\cite[Proposition~3.6]{Serfaty2020}. Our goal is to connect \(F_N\) with the reminder terms. 
We start with the first order terms in Corollary~\ref{cor: repulsive_coulomb}. 
Notice that \(\nabla G\) is antisymmetric and we can apply a symmetrization trick to obtain 
\begin{align} \label{eq: riesz_aux2}
\begin{split}
    &\int\limits_{\T^{2d} \setminus D} \nabla G(x-y) \cdot D (\nabla \hat T^n_{s,t} \Phi(\rho_s^N)(x) \Id (\mu_s^N-\rho_s)^{\otimes 2}(x,y)\\
    &\quad = \frac{1}{2} \int\limits_{\T^{2d} \setminus D} \nabla G(x-y) \cdot \big( D (\nabla \hat T^n_{s,t} \Phi(\rho_s^N)(x)-D (\nabla \hat T^n_{s,t} \Phi(\rho_s^N)(y) \big)  \Id (\mu_s^N-\rho_s)^{\otimes 2}(x,y). 
    \end{split}
\end{align}
Terms of the above were highly analyzed by a series of works by Serfaty, Rosenzweig et al.~\cite{Serfaty2025}. 
The above expression arises naturally by pushing forward the empirical measure 
\(
\frac{1}{N} \sum_{i=1}^{N} \delta_{x_i}
\)
under the transport map \(\mathrm{Id} + t D (\nabla \hat T^n_{s,t} \Phi(\rho_s^N))\) in the modulated energy \(F_N((x_1,\ldots,x_N), \mu)\), and computing the first derivative at \(t = 0\); in other words, by evaluating the first variation of the modulated energy along the vector field \(D (\nabla \hat T^n_{s,t} \Phi(\rho_s^N))\).

We recall the crucial sharp functional inequality~\cite[Proposition~2.13]{Serfaty2025}. 
\begin{proposition}[Sharp functional inequality]\label{prop: riesz_func_inequ}
Assume \(\mu \in L^1(\T^d)\) satisfies \(\int_{\T^d} \mu = 1\). For any pairwise distinct configuration \((x_1,\ldots,x_N) \in (\T^d)^N\) and any Lipschitz map \(v \colon \T^d \to \R^d\), we have
\begin{align*}
&\left| 
\int_{(\T^d)^2\setminus D}  \nabla G(x - y) \cdot \left( v(x) - v(y) \right) \Id \left( \frac{1}{N} \sum_{i=1}^{N} \delta_{x_i} - \mu \right)^{\otimes 2}(x, y)
\right|\\
&\quad \leq \|\nabla v\|_{L^\infty(\T^d)} \left(
F_N((x_1,\ldots,x_N), \mu) + \frac{\log(N\|\nu\|_{L^\infty(\T^d)})}{4N} \indicator{d=2} + C \|\mu\|_{L^\infty(\T^d)}^{(d-2)/d} N^{-2/d}
\right),
\end{align*}
where \(C > 0\) depends only on \(d\).
\end{proposition}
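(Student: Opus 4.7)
The plan is to follow the electric field / Coulomb gas strategy developed by Serfaty and coauthors for Riesz interactions. Introduce the electric potential $h_N^\mu := G * \big(\tfrac{1}{N}\sum_i \delta_{x_i} - \mu\big)$ which, neglecting the smooth correction $G_0$, solves the Poisson equation $-\Delta h_N^\mu = \tfrac{1}{N}\sum_i \delta_{x_i} - \mu$ on $\T^d$. Formally, integration by parts yields $F_N((x_1,\dots,x_N),\mu) = \sigma^{-2}\int_{\T^d} |\nabla h_N^\mu|^2$, but the Dirac masses make $\nabla h_N^\mu$ singular near each $x_i$; this motivates truncating the charges at scale $\eta > 0$ (for instance by replacing $\delta_{x_i}$ by the uniform distribution on $\partial B(x_i,\eta)$) and working with the renormalized energy $\int |\nabla h_{N,\eta}^\mu|^2$ minus the explicit self-interaction $N^{-1} G(\eta)$ per particle.

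Next I would recognize that the left-hand side of the inequality is, up to diagonal corrections, the first variation of $F_N$ along the push-forward of both the configuration and $\mu$ by the flow $\mathrm{id} + t\, v$. A direct computation based on the Poisson equation transforms the variation into an integral of the electric stress--energy tensor against $\nabla v$, schematically of the form
\begin{equation*}
\int_{\T^d} \Big( 2 (\nabla v \, \nabla h_N^\mu) \cdot \nabla h_N^\mu - |\nabla h_N^\mu|^2 \, \nabla \cdot v \Big) \, \mathrm{d}x,
\end{equation*}
which is pointwise controlled by $C \|\nabla v\|_{L^\infty} |\nabla h_N^\mu|^2$, and therefore by $\sigma^2 \|\nabla v\|_{L^\infty}\, F_N$ once the truncation is installed.

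The principal obstacle lies in a sharp treatment of the point charges, since the stress--energy identity holds only for the truncated field $h_{N,\eta}^\mu$ and the truncation creates error terms that must be tracked. In dimension $d=2$ each particle contributes a logarithmic self-energy of order $\log(1/\eta)$; optimizing $\eta$ of the order of $(N \|\mu\|_{L^\infty})^{-1/2}$ yields the correction $\tfrac{\log(N \|\mu\|_{L^\infty})}{4N}$ appearing in the statement. In dimensions $d\geq 3$ the self-energy behaves as $\eta^{-(d-2)}$ and the optimization produces the power correction $\|\mu\|_{L^\infty}^{(d-2)/d} N^{-2/d}$. To obtain sharpness one must pick $\eta_i$ at each $x_i$ depending on the local nearest-neighbour distance and use the concentration estimates for the electric energy near each charge.

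Finally, combining the stress--energy bound for the truncated field with the explicit truncation error, and using that the renormalized electric energy is non-negative on the torus up to these corrections, would yield the claimed inequality. Since this program is carried out rigorously in \cite[Proposition~2.13]{Serfaty2025}, the intention here is to invoke that result directly and apply it with the vector field $v = D\big(\nabla \hat T^n_{s,t}\Phi(\rho_s^N)\big)$ in order to estimate the commutator term~\eqref{eq: riesz_aux2}.
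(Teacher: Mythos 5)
Your proposal ends up doing exactly what the paper does: the paper states this proposition as a recalled result and cites \cite[Proposition~2.13]{Serfaty2025} without reproving it, and you likewise invoke that reference directly. Your preliminary sketch of the electric-field/stress--energy/truncation machinery is a faithful summary of how such commutator estimates are proved in the Riesz-flow literature, but it is not needed here since the statement is simply quoted.
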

\begin{remark}
The above bound holds not only in the Coulomb case but also in the super-Coulomb regime. However, to use known existence results of the interacting particle system we require the Riesz potential parameter \(\theta\) to lie in the range \([0, d - 2]\). Moreover, a weaker estimate is available for general Riesz potentials, where the corresponding rate is given by \(-\tfrac{d - \theta}{d(d + 1)}\).

For \(\theta < d - 2\), this convergence rate becomes too weak to compensate the fluctuation scaling. More precisely, the inequality
\[
\frac{1}{2} - \frac{d - \theta}{d(d + 1)} < 0
\]
can only hold if \(\theta < 0\), which falls outside the admissible range. Therefore, we are restricted to the critical case \(\theta = d - 2\), corresponding to the Coulomb interaction, where the stronger functional inequality stated in Proposition~\ref{prop: riesz_func_inequ} applies.

In dimension \(d = 2\), this yields the favorable rate \(\log(N)/N\), while for \(d = 3\), the rate \(N^{-2/3}\) is sufficient to balance the fluctuation scaling. This highlights the special role of the Coulomb case.
\end{remark}
 To proceed, we require an estimate on the modulated energy. Fortunately, such an estimate is provided in~\cite[Theorem~1.2]{Serfaty2025}. Combining it with~\cite[Theorem~1.2 and inequality~(6.10)]{Serfaty2025} we also obtain a relative entropy estimate necessary for the other term in Corollary~\ref{cor: repulsive_coulomb}. 
\begin{theorem}\label{theorem: repulsive_coloumb}
    Let Assumption~\ref{ass: repulsive_coulomb} hold. Then there exists a constant \(C=C(T,\mu_0)\) depending on \(T\) and the initial condition \(\mu_0\) such that 
    \begin{equation*}
        \sup\limits_{0 \le t \le T} \mathcal H(\bar \mu_t^N \vert \mu_t^{\otimes N} )
    \le  C\Big|\log\Big(N \sup\limits_{0 \le t \le T} \|\mu_t\|_{L^\infty(\T^d)}\Big)\Big| \indicator{d=2} + C \sup\limits_{0 \le t \le T} \|\mu_t\|_{L^\infty(\T^d)}^{(d-2)/d} N^{-2/d+1}
    \end{equation*}
    and 
    \begin{equation*}
        \sup\limits_{0 \le t \le T} \E \big[  F_N((X^1_t,\ldots,X^N_t), \mu) \big]  
    \!\le  C\Big|\log\Big(N \sup\limits_{0 \le t \le T} \!\|\mu_t\|_{L^\infty(\T^d)}\Big)\Big| N^{-1} \indicator{d=2} 
    + C \sup\limits_{0 \le t \le T} \!\|\mu_t\|_{L^\infty(\T^d)}^{(d-2)/d} N^{-2/d}. 
    \end{equation*}
\end{theorem}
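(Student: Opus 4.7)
The plan is to reduce Theorem \ref{theorem: repulsive_coloumb} to the quantitative propagation of chaos results for the Coulomb gas developed by Serfaty and collaborators. The key object is the \emph{modulated free energy}, defined as the sum of the relative entropy and the modulated energy \(F_N\) in \eqref{eq: modulated_energy}. Concretely, I would set
\begin{equation*}
    E_N(t) := \frac{1}{N}\,\mathcal{H}\bigl(\bar \mu_t^N \,\big|\, \mu_t^{\otimes N}\bigr) + \E\bigl[F_N\bigl((X^1_t,\ldots,X^N_t),\mu_t\bigr)\bigr],
\end{equation*}
and the first goal is to show that Assumption \ref{ass: repulsive_coulomb} yields a controlled value of \(E_N(0)\). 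The logarithmic integrability condition on the initial joint density, combined with the lower bound \(\inf_x \mu_0(x) > 0\) and \(\mu_0 \in B_{\infty,\infty}^{\lambda'+\epsilon}(\T^d)\), ensures that both the initial relative entropy per particle and the initial modulated energy are finite, and one checks that they are bounded of order \(\mathcal{O}(\log N / N)\) in \(d = 2\) and \(\mathcal{O}(N^{-2/d})\) in \(d = 3\) by the standard computation on i.i.d. samples of a sufficiently regular density; see e.g.\ \cite[eq.\ (6.10)]{Serfaty2025}.

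Next, I would invoke directly \cite[Theorem 1.2]{Serfaty2025}, which gives a Grönwall-type inequality along the flow of the particle system for \(E_N(t)\). The regularity of the Fokker--Planck solution provided in the previous subsection, in particular \(\mu \in C([0,T]; B_{\infty,\infty}^{\lambda'}(\T^d))\) together with the lower bound \(\inf_x \mu_t(x) > 0\) propagated from the initial condition (obtained as in the proof of \cite[Theorem 1.7]{Xianliang2023}), is exactly what is needed to apply that theorem: both \(\|\mu_t\|_{L^\infty}\) and \(\|\nabla \log \mu_t\|_{L^\infty}\) are uniformly bounded on \([0,T]\). Integrating the differential inequality then yields
\begin{equation*}
    \sup_{0 \le t \le T} E_N(t) \le C(T,\mu_0)\, E_N(0)\, e^{CT},
\end{equation*}
up to lower-order additive corrections of the same size as \(E_N(0)\).

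Finally, I would separate the two estimates by non-negativity of each component: since both \(\mathcal{H}(\bar\mu_t^N|\mu_t^{\otimes N}) \ge 0\) and \(\E[F_N + c_N] \ge 0\) (where \(c_N\) is the standard renormalization constant making the modulated energy bounded below, see \cite[Proposition 3.6]{Serfaty2020}), one obtains separately the relative entropy bound and the modulated energy bound as stated. The only care needed is the normalization: \(\mathcal{H}(\bar\mu_t^N|\mu_t^{\otimes N})\) is the total entropy and picks up the factor \(N\) from the definition of \(E_N\), producing the rate \(|\log(N\|\mu_t\|_\infty)|\) in dimension two and \(\|\mu_t\|_\infty^{(d-2)/d} N^{1-2/d}\) in dimension three, consistent with the statement.

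I expect the main obstacle to lie in verifying that our setting on the torus matches the hypotheses of \cite[Theorem 1.2]{Serfaty2025}, in particular that the smooth periodic correction \(G_0\) does not destroy the sharp commutator and functional inequalities on which that theorem relies, and in checking that the initial modulated free energy under an i.i.d.\ sample of a smooth strictly positive density truly satisfies the claimed dimensional scaling; both points are by now standard but require some care to state precisely.
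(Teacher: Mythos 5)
Your proposal follows exactly the route the paper takes: the paper gives no independent proof of this theorem but derives it directly from the modulated free energy estimates of \cite[Theorem~1.2 and inequality~(6.10)]{Serfaty2025}, using the regularity and positivity of \(\mu_0\) in Assumption~\ref{ass: repulsive_coulomb} to guarantee the entropy-solution hypothesis there (cf.\ the remark following the statement, citing \cite[Lemma~6.2]{Serfaty2025}). Your additional care about the i.i.d.\ initial modulated energy, the renormalization constant making \(F_N\) bounded below, and the factor of \(N\) separating the two estimates correctly fills in the details the paper leaves implicit.
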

\begin{remark}
    Theorem~1.2 in~\cite{Serfaty2025} is stated under additional assumptions, such as the existence of an entropy solution. We ensure that this condition is satisfied by selecting sufficiently regular initial data as specified in Assumption~\ref{ass: repulsive_coulomb}. See, for instance,~\cite[Lemma~6.2]{Serfaty2025}.
\end{remark}

Combining Proposition~\ref{prop: riesz_func_inequ} and Theorem~\ref{theorem: repulsive_coloumb} leads to the following Lemma. 
\begin{lemma}\label{lemma: repulsive_first_order}
There exists a \(\gamma(d,N) > 0\) given by 
\begin{equation} \label{eq: vortex_rate}
    \gamma(d,N):= \begin{cases}
        \frac{\log(N)}{\sqrt{N}} \quad \mathrm{if} \; d=2,  \\
        N^{-\tfrac{1}{6} } \quad \mathrm{if} \; d=3,
    \end{cases}
\end{equation}
such that 
\begin{equation*}
    \frac{1}{\sqrt{N}} \int\limits_0^t \E \big[  \langle   (\nabla G*  \rho_{s}^N)(\cdot) \cdot D ( \nabla \hat T_{s,t}^n  \Phi(\rho_s^N))(\cdot)  , \rho_s^N  \rangle \big]  \Id s  \le C  [\Phi]_{C^1} \gamma(d,N). 
\end{equation*}
\end{lemma}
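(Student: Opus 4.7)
The strategy is to recast the fluctuation integrand in terms of an object in the spirit of the modulated energy functional \(F_N\), so that Proposition~\ref{prop: riesz_func_inequ} and Theorem~\ref{theorem: repulsive_coloumb} can be applied directly. First, using \(\rho^N_s = \sqrt{N}(\mu^N_s - \mu_s)\), I would rewrite the prefactor by absorbing one power of \(\sqrt{N}\) into each occurrence of \(\rho^N_s\): setting \(v_s(x) := D\bigl(\nabla \hat T^n_{s,t}\Phi(\rho^N_s)\bigr)(x)\), the quantity inside the time integral becomes
\begin{equation*}
   \frac{1}{\sqrt N}\,\E\bigl[\langle (\nabla G\ast \rho^N_s)\cdot v_s ,\rho^N_s\rangle\bigr]
   =\sqrt N \,\E\Bigl[\int_{\T^{2d}\setminus D}\nabla G(x-y)\cdot v_s(x)\,\dd(\mu^N_s-\mu_s)^{\otimes 2}(x,y)\Bigr],
\end{equation*}
where the removal of the diagonal is harmless because \(\mu_s\) has no atoms. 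Then the antisymmetry of \(\nabla G\) permits the symmetrization~\eqref{eq: riesz_aux2}, which expresses the inner integral as \(\tfrac{1}{2}\int_{\T^{2d}\setminus D}\nabla G(x-y)\cdot(v_s(x)-v_s(y))\dd(\mu^N_s-\mu_s)^{\otimes 2}(x,y)\).

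Next I would apply the sharp functional inequality of Proposition~\ref{prop: riesz_func_inequ} \emph{pointwise} in \(\omega\), with the Lipschitz vector field \(v_s=v_s(\omega,\cdot)\) and with \(\mu=\mu_s\). This requires the uniform Lipschitz bound \(\|\nabla v_s\|_{L^\infty(\T^d)}\le C[\Phi]_{C^1}\). That bound follows from the Sobolev embedding \(H^{\lambda+2-d/2-\epsilon}(\T^d)\hookrightarrow C^2(\T^d)\) (valid since \(\lambda>3d/2\)) combined with \eqref{cor: besov_reg_first_order_function}, which already gives \(\sup_{f,n}\|\nabla\hat T^n_{s,t}\Phi(f)\|_{H^{\lambda+2}(\T^d)}\le C[\Phi]_{C^1}\), hence two additional derivatives remain controlled. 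With this in hand one obtains
\begin{equation*}
 \Bigl|\text{sym.\ integral}\Bigr|\le C[\Phi]_{C^1}\Bigl(F_N((X^1_s,\ldots,X^N_s),\mu_s)+\tfrac{\log(N\|\mu_s\|_{L^\infty})}{4N}\mathbbm{1}_{d=2}+C\|\mu_s\|_{L^\infty}^{(d-2)/d}N^{-2/d}\Bigr)
\end{equation*}
\(\P\)-almost surely. Since \(\mu\in C([0,T];B^{\lambda'}_{\infty,\infty}(\T^d))\hookrightarrow C([0,T];L^\infty(\T^d))\), the two deterministic terms are uniformly bounded in \(s\).

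Taking expectation and using the modulated-energy estimate from Theorem~\ref{theorem: repulsive_coloumb}, namely
\(\sup_{0\le s\le T}\E[F_N((X^1_s,\ldots,X^N_s),\mu_s)] \le C\,|\log N|\,N^{-1}\mathbbm{1}_{d=2}+CN^{-2/d}\), yields a uniform-in-\(s\) bound of order \(|\log N|/N\) in dimension \(2\) and \(N^{-2/3}\) in dimension \(3\). Integrating in \(s\) over \([0,t]\) and multiplying by the remaining prefactor \(\sqrt N\) that we pulled out at the beginning gives the rates
\begin{equation*}
   \sqrt N\cdot\tfrac{|\log N|}{N}=\tfrac{|\log N|}{\sqrt N}\ (d=2),\qquad \sqrt N\cdot N^{-2/3}=N^{-1/6}\ (d=3),
\end{equation*}
which matches \(\gamma(d,N)\) defined in~\eqref{eq: vortex_rate}.

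The main obstacle is the verification that the sharp inequality can indeed be invoked with a random Lipschitz field \(v_s(\omega,\cdot)\) having a Lipschitz constant \emph{independent of} \(\omega\), \(n\), and \(s\); this rests crucially on the approximation-independent Sobolev bounds on \(\nabla\hat T^n_{s,t}\Phi\) proved in Lemma~\ref{cor: besov_reg_first_order_function} and the fact that \(\lambda\) was chosen large enough (\(\lambda>3d/2\)) precisely so that two classical derivatives of \(\nabla\hat T^n_{s,t}\Phi(f)\) survive in \(L^\infty\). All other ingredients are already packaged into Proposition~\ref{prop: riesz_func_inequ} and Theorem~\ref{theorem: repulsive_coloumb}, so once this uniform regularity is clear the proof is essentially a combination of the symmetrization identity and the two quoted results.
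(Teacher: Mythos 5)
Your proposal is correct and follows essentially the same route as the paper's proof: symmetrize via the antisymmetry of $\nabla G$ as in \eqref{eq: riesz_aux2}, verify the uniform (in $\omega$, $n$, $s$) Lipschitz bound on $D(\nabla \hat T^n_{s,t}\Phi(\rho_s^N))$ from \eqref{cor: besov_reg_first_order_function} and $\lambda+2-d/2>2$, apply Proposition~\ref{prop: riesz_func_inequ} pointwise, and conclude with the modulated-energy bound of Theorem~\ref{theorem: repulsive_coloumb}. Your write-up in fact spells out the bookkeeping of the $\sqrt{N}$ factors and the rate computation more explicitly than the paper does.
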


\begin{proof}
Rewriting the left hand side by~\eqref{eq: riesz_aux2}, we observe that the conclusion of Proposition~\ref{prop: riesz_func_inequ} applies, provided that for each \(\omega \in \Omega \) the map  \( x \mapsto D (\nabla \hat T_{s,t}^n \Phi(\rho_s^N(\omega)))(x)\) is Lipschitz continuous. 
By~\eqref{cor: besov_reg_first_order_function}, the function \(\nabla \hat T_{s,t}^n \Phi(\rho_s^N(\omega))(\cdot)\) belongs to the Besov space \(B_{\infty,\infty}^{\lambda+2 - d/2}(\T^d)\), with a uniform bound on its norm independent of \(n \in \mathbb{N}\) and \(\omega \in \Omega\). Since \(\lambda + 2 - d/2 > 2\) the derivative of the function is Lipschitz continuous and the claim follows by applying Proposition~\ref{prop: riesz_func_inequ}.
\end{proof}

Following the same steps as in the proof of Lemma~\ref{lemma: vortex_second_order}, we obtain a similar estimate with adjusted convergence rates, which are given by Theorem~\ref{theorem: repulsive_coloumb}. 
\begin{lemma} \label{lemma: repulsive_second_order}
Let Assumption~\ref{ass: repulsive_coulomb} hold and \(\Phi \in FC^\infty(H^{-\lambda-2}(\T^d))\).
Then,  
\begin{align*} 
    &\int\limits_0^t \E\bigg[  
     \frac{\sigma^2}{2} \sum\limits_{j=1}^d \int_{\T^d}
       \partial_{x_j} \partial_{y_j}  ( \nabla^2 \hat T_{s,t}^n  \Phi(\rho_{s}^N))(x,y)_{\big| x=y } 
    \Id \rho_s^N ( x)  \bigg]  \Id s  \le C(T,\mu_0) [\Phi]_{C^2} \bigg( \sqrt{\log(N)} \indicator{d=2} + N^{1/3} \bigg)  
\end{align*}
\end{lemma}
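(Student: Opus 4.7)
The plan is to mimic the proof of Lemma \ref{lemma: vortex_second_order} line for line, simply swapping the Jabin--Wang relative entropy bound for the dimension-dependent estimate of Theorem \ref{theorem: repulsive_coloumb}. No new analytic ingredient is needed.

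First I would recall the intermediate inequality \eqref{eq: second_order_rest_term_aux}, established inside the proof of Lemma \ref{lemma: second_ord_rest}, namely
\begin{equation*}
\int_0^t \E|R^1_s(N,n,\Phi)|\,\Id s \;\le\; C\,T\,[\Phi]_{C^2}\Big(\sup_{0\le t\le T}\mathcal{H}(\bar\mu_t^N\,\vert\,\mu_t^{\otimes N})^{1/2} + 1\Big).
\end{equation*}
It is essential to observe that the derivation of this bound rests only on the diagonal Sobolev regularity \eqref{ineq:diag} of $\partial_{x_j}\partial_{y_j}\nabla^2 \hat T^n_{s,t}\Phi$ (which is furnished by Lemma \ref{cor: besov_reg_second_order_function}) combined with the Gaussian concentration estimate [Lemma 2.6, Xianliang2023] controlling $\|\mu_t^N-\mu_t\|_{H^{-(\tilde\lambda-d)}(\T^d)}$ in terms of the entropy. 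At no point does that argument invoke the boundedness of $\tfrac{\delta b}{\delta m}$ imposed in Assumption \ref{ass: convergence_ass}; consequently the display above remains valid under the present Assumption \ref{ass: repulsive_coulomb}.

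Next I would insert Theorem \ref{theorem: repulsive_coloumb}. Assumption \ref{ass: repulsive_coulomb} and the Besov embedding $B^{\lambda'+\epsilon}_{\infty,\infty}(\T^d)\hookrightarrow L^\infty(\T^d)$ ensure that $\sup_{0\le t\le T}\|\mu_t\|_{L^\infty(\T^d)}<\infty$, so the theorem yields
\begin{equation*}
\sup_{0\le t\le T}\mathcal{H}(\bar\mu_t^N\,\vert\,\mu_t^{\otimes N}) \;\le\; C(T,\mu_0)\Big(|\log N|\,\indicator{d=2} + N^{-2/d+1}\Big).
\end{equation*}
Taking square roots produces $\sqrt{|\log N|}$ when $d=2$ and $N^{1/6}\le N^{1/3}$ when $d=3$. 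Substituting back into the previous display and absorbing constants into $C(T,\mu_0)$ delivers the stated inequality.

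No serious obstacle is expected: the hard analytic work (the diagonal regularity of $\nabla^2\hat T^n_{s,t}\Phi$, the $H^{-s}$-control of $\mu_t^N-\mu_t$ by the entropy, and the sharp modulated-entropy bound of Theorem \ref{theorem: repulsive_coloumb}) has already been carried out. The only point meriting care is that the Sobolev index $\tilde\lambda-d$ used in the appeal to [Lemma 2.6, Xianliang2023] must satisfy $\tilde\lambda-d>d/2$, which holds by the choice $\tilde\lambda\in(3d/2,\lambda)$ from Lemma \ref{cor: besov_reg_second_order_function}.
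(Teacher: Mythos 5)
Your proposal is correct and follows exactly the paper's own argument: recall the intermediate bound \eqref{eq: second_order_rest_term_aux} (which, as you note, does not use boundedness of the flat derivative) and replace the Jabin--Wang entropy bound with the one from Theorem~\ref{theorem: repulsive_coloumb}. The arithmetic ($\sqrt{\log N}$ for $d=2$, $N^{1/6}\le N^{1/3}$ for $d=3$) matches, and your observation about $\sup_t\|\mu_t\|_{L^\infty}<\infty$ via the Besov embedding is a sensible point the paper leaves implicit.
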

\begin{proof}
    The proof, follows by similar arguments as in Lemma~\ref{lemma: vortex_second_order}, where the sharp relative entropy estimate~\cite[Theorm~1]{JabinWang2018} needs to be replaced by the relative entropy estimate given by Theorem~\ref{theorem: repulsive_coloumb}. 
\end{proof}

Combining Lemma~\ref{lemma: repulsive_first_order} and Lemma~\ref{lemma: repulsive_second_order} with Corollary~\ref{cor: repulsive_coulomb}, we obtain the weak Gaussian fluctuation estimate in the case of a repulsive Coulomb interaction.

\begin{theorem}\label{theorem: repulsive_fluc}
Let the interacting particle system be given by~\eqref{eq: repulsive_kernel} with the repulsive Coulomb kernel in dimensions \(d = 2, 3\), and suppose that Assumption~\ref{ass: repulsive_coulomb} holds. Then, for all \(\Phi \in C_\ell^2(H^{-\lambda-2}(\T^d))\), we have
\begin{equation*}
   \sup_{0 \le t \le T} \left| \mathbb{E} \big [ \Phi(\rho_t^N) - \Phi(\rho_t) \big]  \right|
\le C [\Phi]_{C^2(H^{-\lambda-2}(\T^2))} \left( \gamma(d, N) + W_{1, H^{-\lambda-2}(\T^d)}\Big( \P_{\rho^N_0}, \P_{\rho_0}\Big)  \right),
\end{equation*}
where \(\gamma(d,N)\) is given by~\eqref{eq: vortex_rate}. 
\end{theorem}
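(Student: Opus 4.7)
The plan is to mirror the structure of the proof of Theorem \ref{thm:main:vortex}, replacing the Vortex-specific estimates by their Coulomb counterparts already established in Lemmas \ref{lemma: repulsive_first_order} and \ref{lemma: repulsive_second_order}. First I would fix a cylindrical test function $\Phi \in FC^\infty(H^{-\lambda-2}(\T^d))$. Corollary \ref{cor: repulsive_coulomb} provides the decomposition of $|\E[\Phi(\rho^N_t)-\Phi(\rho_t)]|$ into three contributions: the trace-type second order term, the nonlinear first order term involving $\nabla G \ast \rho^N_s$, and the initial condition term controlled by the Wasserstein distance $W_{1,H^{-\lambda-2}(\T^d)}(\P_{\rho^N_0},\P_{\rho_0})$ via Lemma \ref{lem:r0}. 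Note that Assumption \ref{ass: repulsive_coulomb} implies Assumptions \ref{ass: inital_cond}--\ref{ass: coef_fokker} thanks to Lemma \ref{rem: convolution_estimate} (since $\nabla G\in L^1(\T^d)$ and $\mu\in C([0,T];H^{\lambda'}(\T^d))$), so Corollary \ref{cor: repulsive_coulomb} applies.

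Next I apply Lemma \ref{lemma: repulsive_first_order} to the first order term, obtaining a bound of order $[\Phi]_{C^1}\gamma(d,N)$, and Lemma \ref{lemma: repulsive_second_order} to the trace term, which after division by $\sqrt{N}$ yields a contribution of order $[\Phi]_{C^2}(\sqrt{\log N}/\sqrt{N}\,\mathbbm{1}_{d=2} + N^{-1/6}\mathbbm{1}_{d=3})$. Both contributions are dominated by $\gamma(d,N)$ as defined in \eqref{eq: vortex_rate}. Crucially, the bounds in these two lemmas do not depend on $n$, so I can pass to the limit $n\to\infty$ and use that $\rho^n \to \rho$ in probability in $L^2([0,T];H^{-\lambda-2}(\T^d))$ by Theorem \ref{thm: conv_prob} together with dominated convergence to identify $\lim_n \E[\Phi(\rho^n_t)] = \E[\Phi(\rho_t)]$. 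This yields, for all cylindrical $\Phi$,
\begin{equation*}
\sup_{0\le t\le T}|\E[\Phi(\rho^N_t)-\Phi(\rho_t)]| \le C[\Phi]_{C^2(H^{-\lambda-2}(\T^d))}\Big(\gamma(d,N) + W_{1,H^{-\lambda-2}(\T^d)}(\P_{\rho^N_0},\P_{\rho_0})\Big).
\end{equation*}

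Finally, to extend the estimate from $FC^\infty(H^{-\lambda-2}(\T^d))$ to $C^2_\ell(H^{-\lambda-2}(\T^d))$, I would invoke Lemma \ref{lemma: approximation_hilbert_smooth} to obtain an approximating sequence $(\Phi_m)_m \subset FC^\infty$ converging pointwise to $\Phi$ with $[\Phi_m]_{C^2}$ uniformly bounded by $C[\Phi]_{C^2}$ and with linear growth controlled by $\|\Phi\|_{C_\ell}$. Uniform integrability of $|\Phi_m(\rho^N_t)-\Phi_m(\rho_t)|$ follows from the estimate $\E\|\rho^N_t\|^2_{H^{-\lambda-2}(\T^d)} \le C(\mathcal{H}(\bar\mu^N_t|\mu_t^{\otimes N})^{1/2}+1)$ of \cite[Lemma 2.6]{Xianliang2023}, where the relative entropy is now bounded uniformly in $N$ and $t\in[0,T]$ by the first assertion of Theorem \ref{theorem: repulsive_coloumb}, together with the bound \eqref{eq:esti_spde} on $\rho$. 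Vitali's convergence theorem then allows passing to the limit $m\to\infty$ and concludes the proof.

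The main obstacle in this argument is conceptually already resolved: it is the sharp functional inequality of Proposition \ref{prop: riesz_func_inequ} combined with the modulated energy estimate from Theorem \ref{theorem: repulsive_coloumb}, which is what forces the dimensional restriction $d\in\{2,3\}$ and produces the rate $\gamma(d,N)$. All remaining steps are parallel to the regular and Vortex cases.
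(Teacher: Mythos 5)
Your proposal is correct and follows essentially the same route as the paper: the paper's own proof simply invokes the combination of Corollary \ref{cor: repulsive_coulomb} with Lemmas \ref{lemma: repulsive_first_order} and \ref{lemma: repulsive_second_order}, then repeats the argument of Theorem \ref{theorem: main_result} (approximation by cylindrical functions and Vitali's theorem, justified by the relative entropy bound of Theorem \ref{theorem: repulsive_coloumb}). Your write-up is a more detailed version of exactly that argument, including the correct bookkeeping that $\sqrt{\log N}/\sqrt{N}$ and $N^{1/3-1/2}$ are dominated by $\gamma(d,N)$.
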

\begin{proof}
The Theorem follows similar to Theorem~\ref{theorem: main_result}. Notice that by Theorem~\ref{theorem: repulsive_coloumb} the relative entropy is bounded for each \(N \in \N\) and we can apply the Vitali convergence theorem in a similar fashion as in Theorem~\ref{theorem: main_result}.
\end{proof}

\begin{remark}
     Notice that, unlike in the previous example of the Vortex model, we do not obtain optimal convergence rates, but rather rates strictly smaller than \(N^{-1/2}\). However, due to the coercivity of the modulated energy approach~\cite{Serfaty2020}, the bounds on the modulated energy are sharp. Hence, Lemma~\ref{lemma: repulsive_first_order} cannot be improved. Therefore, it remains unclear whether, at least via the method presented in this paper and the modulated energy framework, Gaussian fluctuations with quantitative rates can be obtained for other Riesz kernels or in dimensions \(d > 3\).                     
\end{remark}

\appendix

\section{}
\label{sec:appendix}

\renewcommand{\thesection}{A}
\setcounter{theorem}{0}

\subsection{Sobolev and Besov spaces}

We recall some properties of Sobolev and Besov spaces, and also show a regularity result for the restriction to the diagonal of Sobolev functions. 

A consequence of Maurins theorem is the following embedding. 
\begin{lemma}[Hilbert--Schmidt Embedding]
\label{lem:HS_embedding}
    Let \(s,\tilde s \in \R\) and \(  s-\tilde s > d/2  \), then the embedding \(H^{s}(\T^d) \hookrightarrow H^{\tilde s}(\T^d)\) is Hilbert--Schmidt. 
\end{lemma}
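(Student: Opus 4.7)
The plan is to compute the Hilbert--Schmidt norm of the embedding directly using the Fourier basis, which is adapted to the scalar product defining $H^s(\T^d)$. The statement is really a quantitative version of the standard fact that $H^s \hookrightarrow H^{\tilde s}$ is compact when $s > \tilde s$: by going via Fourier series, the compactness criterion is refined into a summability condition that depends precisely on the gap $s - \tilde s$ versus the dimension $d$.

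First I would construct an explicit orthonormal basis of $H^s(\T^d)$. Using the defining inner product $\langle f,g\rangle_{H^s} = \sum_{k\in\Z^d} \langle k\rangle^{2s}\langle f,e_k\rangle\overline{\langle g,e_k\rangle}$ and the orthonormality of $(e_k)_{k\in\Z^d}$ in $L^2(\T^d)$, one checks that $\langle e_k,e_l\rangle_{H^s} = \langle k\rangle^{2s}\delta_{kl}$. Hence the rescaled family $\phi_k := \langle k\rangle^{-s} e_k$, $k\in\Z^d$, is orthonormal in $H^s(\T^d)$, and completeness follows at once from the density of trigonometric polynomials in $H^s(\T^d)$.

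Next I would compute the Hilbert--Schmidt norm of the inclusion $J\colon H^s(\T^d) \hookrightarrow H^{\tilde s}(\T^d)$. For each $k$,
\begin{equation*}
\|J\phi_k\|_{H^{\tilde s}(\T^d)}^2 \;=\; \langle k\rangle^{-2s}\,\|e_k\|_{H^{\tilde s}(\T^d)}^2 \;=\; \langle k\rangle^{-2s}\langle k\rangle^{2\tilde s} \;=\; \langle k\rangle^{-2(s-\tilde s)}.
\end{equation*}
Summing over $k\in\Z^d$ gives $\|J\|_{\mathrm{HS}}^2 = \sum_{k\in\Z^d}\langle k\rangle^{-2(s-\tilde s)}$, which is a standard series: by comparison with the integral $\int_{\R^d} (1+|x|^2)^{-(s-\tilde s)}\,dx$, it converges if and only if $2(s-\tilde s) > d$, i.e.\ exactly under the hypothesis $s-\tilde s > d/2$.

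There is essentially no obstacle here: the only point that requires a line of justification is the completeness of $(\phi_k)$ in $H^s(\T^d)$, which one can derive either from density of smooth functions or by noting that for any $f\in H^s(\T^d)$ its Fourier expansion $f = \sum_k \langle f,e_k\rangle e_k = \sum_k \langle k\rangle^s\langle f,e_k\rangle \phi_k$ converges in $H^s(\T^d)$ with squared norm $\sum_k \langle k\rangle^{2s}|\langle f,e_k\rangle|^2 = \|f\|_{H^s}^2 < \infty$. This makes $(\phi_k)$ a genuine orthonormal basis and legitimizes the computation above.
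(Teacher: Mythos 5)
Your proof is correct. The paper gives no argument for this lemma at all --- it simply attributes it to Maurin's theorem --- and your Fourier-basis computation (orthonormal basis $\phi_k=\langle k\rangle^{-s}e_k$ of $H^s(\T^d)$, then $\|J\|_{\mathrm{HS}}^2=\sum_{k\in\Z^d}\langle k\rangle^{-2(s-\tilde s)}<\infty$ iff $s-\tilde s>d/2$) is exactly the standard direct proof of that theorem on the torus, so it supplies the argument the paper's citation abbreviates.
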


We recall the multiplication inequalities for Besov spaces~\cite[Corollary~1 and Corollary~2]{Weber2017} 

\begin{lemma} \label{lemma: product_distr}
Let \(s_1 > 0 > s_2\) and \(p,q,p_1,p_2 \in [1, \infty]\) such that 
\begin{equation*}
    \frac{1}{p} = \frac{1}{p_1}+\frac{1}{p_2}.
\end{equation*}
Then, the map \((f,g) \mapsto fg\) extends to a continuous linear map from \(B_{p_1,q}^{s_1}(\T^d) \times B_{p_2,q}^{s_1}(\T^d) \) to \(B_{p,q}^{s_1}(\T^d)\) and 
\begin{equation*}
    \norm{fg}_{B_{p,q}^{s_1}(\T^d)} \le C \norm{f}_{B_{p_1,q}^{s_1}(\T^d)}
    \norm{g}_{B_{p_2,q}^{s_1}(\T^d)}.
\end{equation*}
If, in addition \(s_1+s_2 > 0\), then the map \((f,g) \mapsto fg\) extends to a continuous linear map from \(B_{p_1,q}^{s_1}(\T^d) \times B_{p_2,q}^{s_2}(\T^d) \) to \(B_{p,q}^{s_2}(\T^d)\) and 
\begin{equation*}
    \norm{fg}_{B_{p,q}^{s_2}(\T^d)} \le C \norm{f}_{B_{p_1,q}^{s_1}(\T^d)}
    \norm{g}_{B_{p_2,q}^{s_2}(\T^d)}.
\end{equation*}
\end{lemma}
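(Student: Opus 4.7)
The plan is to establish both estimates via Bony's paraproduct decomposition. For any pair of tempered distributions one splits the product as $fg = T_f g + T_g f + R(f,g)$, where $T_f g := \sum_{j\ge 2} S_{j-2}f\cdot \Delta_j g$ is the paraproduct of $f$ against $g$ (low--high interaction), and $R(f,g) := \sum_{|j-k|\le 1}\Delta_j f\, \Delta_k g$ is the resonant (high--high) part, all defined through the Littlewood--Paley blocks $\Delta_j$ and the partial sums $S_j = \sum_{k<j}\Delta_k$ associated to the dyadic partition of unity $(\chi_j)_{j\ge 0}$ introduced in \S\ref{sec:notation}. I would then estimate each of the three pieces in the appropriate Besov norm and combine the bounds.

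First I would bound the two paraproducts. Since each summand $S_{j-2} f \cdot \Delta_j g$ is spectrally localized in an annulus of size $2^j$, H\"older's inequality with $1/p = 1/p_1 + 1/p_2$ yields $\norm{T_f g}_{B^{s}_{p,q}(\T^d)} \le C\,\norm{f}_{L^{p_1}(\T^d)}\,\norm{g}_{B^{s}_{p_2,q}(\T^d)}$ for every $s \in \R$, and the $L^{p_1}$-factor is in turn controlled via the embedding $B^{s_1}_{p_1,q}(\T^d)\hookrightarrow L^{p_1}(\T^d)$, which holds precisely because $s_1>0$. The bound for $T_g f$ is symmetric. The resonant term $R(f,g)$ is the only piece where the sign of the total regularity enters: since $\Delta_j f\cdot \Delta_k g$ with $|j-k|\le 1$ is supported in a ball of radius $\sim 2^j$, applying $\Delta_\ell$ produces a tail series in $2^{-(s_1+s_2)(j-\ell)}$ which converges exactly when $s_1+s_2>0$, giving $\norm{R(f,g)}_{B^{s_1+s_2}_{p,q}(\T^d)} \le C\,\norm{f}_{B^{s_1}_{p_1,q}(\T^d)}\,\norm{g}_{B^{s_2}_{p_2,q}(\T^d)}$.

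With these three building blocks in hand, the first inequality follows by taking both regularities equal to $s_1$ (so $s_1+s_2=2s_1>0$ is automatic) and using $B^{2s_1}_{p,q}(\T^d)\hookrightarrow B^{s_1}_{p,q}(\T^d)$ for the resonant contribution. The second inequality follows by observing that $T_f g$ and $T_g f$ both end up in $B^{s_2}_{p,q}(\T^d)$ --- the regularity is always inherited from the ``high-frequency'' factor, and positivity of $s_1$ is what allows one to bound the low-frequency factor in $L^{p_i}$ --- while the resonant term sits in $B^{s_1+s_2}_{p,q}(\T^d)\hookrightarrow B^{s_2}_{p,q}(\T^d)$, using $s_1+s_2>0>s_2$.

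The main obstacle will be the handling of endpoint integrability, namely $p_i=\infty$ or $q=\infty$, where the embedding $B^{s_1}_{p_1,q}\hookrightarrow L^{p_1}$ is delicate and the paraproduct has to be estimated directly in Besov terms rather than by factoring out an $L^\infty$ norm. At the level of a proposal I would invoke the refined paraproduct estimates from the Weber reference, which are designed precisely to make all constants uniform in these limits, and conclude by density of $C^\infty(\T^d)$ in each Besov space in order to extend the bilinear map from smooth inputs to the full spaces.
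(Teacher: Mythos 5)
You should first note that the paper does not actually prove this lemma: it is recalled verbatim from \cite[Corollary~1 and Corollary~2]{Weber2017}, so there is no internal proof to match. Your paraproduct strategy is exactly the standard route and is, in substance, how the cited reference obtains these corollaries, so the overall architecture (Bony decomposition, H\"older with $1/p=1/p_1+1/p_2$ on each block, the embedding $B^{s_1}_{p_1,q}\hookrightarrow L^{p_1}$ for $s_1>0$, and summability of the resonant tail under $s_1+s_2>0$) is the right one. Your worry about the endpoints $p_i=\infty$, $q=\infty$ is also unfounded: $B^{s}_{p,q}\hookrightarrow B^{0}_{p,1}\hookrightarrow L^{p}$ holds for every $s>0$ and every $q$, including $q=\infty$, and the block-wise H\"older estimates are uniform there, so no separate endpoint argument is needed.

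There is, however, one step in your sketch that would fail as written. For the second inequality you claim that both paraproducts ``end up in $B^{s_2}_{p,q}$'' because ``positivity of $s_1$ is what allows one to bound the low-frequency factor in $L^{p_i}$.'' For $T_g f$ the low-frequency factor is $g$, which has regularity $s_2<0$ and is \emph{not} in $L^{p_2}$; the partial sums $S_{j-2}g$ are not uniformly bounded, but grow like $\|S_{j-2}g\|_{L^{p_2}}\lesssim 2^{-s_2 j}\|g\|_{B^{s_2}_{p_2,\infty}}$ (a geometric sum dominated by its top term, precisely because $s_2<0$). Feeding this growth into the block estimate gives the correct paraproduct bound $\|T_g f\|_{B^{s_1+s_2}_{p,q}}\le C\|g\|_{B^{s_2}_{p_2,\infty}}\|f\|_{B^{s_1}_{p_1,q}}$, and one then lands in $B^{s_2}_{p,q}$ via the embedding $B^{s_1+s_2}_{p,q}\hookrightarrow B^{s_2}_{p,q}$, which uses $s_1>0$ --- not via any $L^{p_2}$ control of $g$. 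Your phrase ``the regularity is always inherited from the high-frequency factor'' would also predict $B^{s_1}$ for $T_g f$, which is false (and would contradict the target space). With this one estimate corrected, the rest of the proposal --- the first inequality, the symmetric case, and the resonant bound with the tail in $2^{-(s_1+s_2)(j-\ell)}$ --- is sound.
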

Next, we also require Young's inequality~{\cite[Theorem 2.]{Schilling2022}}. 

\begin{lemma}[Young's inequality for Besov spaces] \label{lemma: young}
Let \( s \in \mathbb{R} \), \( q,q_1 \in (0, \infty] \), and \( p, p_1, p_2 \in [1, \infty] \) be such that:
\[
1 + \frac{1}{p} = \frac{1}{p_1} + \frac{1}{p_2} \quad \mathrm{and} \quad \frac{1}{q} \le \frac{1}{q_1} + \frac{1}{2}.
\]
If \( f \in B^{\alpha}_{p_1,q}(\T^d) \) and \( g \in L_{p_2}(\T^d) \), then \( f * g \in B^{s}_{p,q}(\T^d) \) and
\[
\|f * g\|_{B^{s}_{p,q}(\T^d)} \leq C \|f\|_{B^{s}_{p_1,q_1}(\T^d)} \cdot \|g\|_{L^{p_2}(\T^d)},
\]
where \( C > 0 \) is a constant independent of \( f \) and \( g \). 
\end{lemma}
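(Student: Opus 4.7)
The plan is to deduce the bound from the Paley--Littlewood characterization of the Besov norm combined with the classical Young inequality on $L^p$. First I would decompose $f*g$ using the dyadic blocks $\Delta_j$ built from the partition of unity $(\chi_j)_{j\ge 0}$ that defines $\|\cdot\|_{B^s_{p,q}(\T^d)}$ in \S\ref{sec:notation}. The key observation -- and the point where the convolution case is strictly easier than the bilinear product case of Lemma~\ref{lemma: product_distr} -- is that $\Delta_j$ is a Fourier multiplier and therefore commutes with convolution by $g$:
\[
\Delta_j(f*g) = (\Delta_j f)*g.
\]
No paraproduct decomposition is required.

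Next I would apply classical Young's inequality on each dyadic block. Under the scaling relation $1 + 1/p = 1/p_1 + 1/p_2$ this gives
\[
\|\Delta_j(f*g)\|_{L^p(\T^d)} \le \|\Delta_j f\|_{L^{p_1}(\T^d)}\,\|g\|_{L^{p_2}(\T^d)}.
\]
Multiplying by $2^{js}$ and taking the $\ell^q$ norm in $j$ yields
\[
\|f*g\|_{B^s_{p,q}(\T^d)} \le \bigl\|(2^{js}\|\Delta_j f\|_{L^{p_1}})_j\bigr\|_{\ell^q}\,\|g\|_{L^{p_2}(\T^d)},
\]
and when $q_1 \le q$ the continuous embedding $\ell^{q_1}\hookrightarrow\ell^q$ closes the estimate at once.

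The one nontrivial point is the regime $q<q_1$, still permitted by the hypothesis $1/q \le 1/q_1 + 1/2$. Here I would exploit the fact that $\Delta_j f$ is frequency-localized in an annulus $\{|\xi|\sim 2^j\}$: combining Hausdorff--Young on each annulus with a Bernstein-type inequality produces an extra $\ell^2$-summability factor that, interpolated against the $\ell^{q_1}$ block bounds, accounts exactly for the $+\tfrac12$ improvement in the fine indices. This is precisely the sharp trade-off worked out in~\cite[Theorem~2]{Schilling2022}, which I would cite directly rather than reproduce. Thus the main (and essentially only) obstacle in a self-contained argument is this fine-index interpolation; the remaining steps reduce to the $L^p$ Young inequality and standard Paley--Littlewood manipulations.
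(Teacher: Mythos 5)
The paper does not actually prove this lemma: it is quoted directly from K\"uhn--Schilling \cite[Theorem~2]{Schilling2022}, so there is no in-paper argument to compare against. Your blockwise reduction --- $\Delta_j(f*g)=(\Delta_j f)*g$, classical Young on each block under $1+\tfrac1p=\tfrac1{p_1}+\tfrac1{p_2}$, then $\ell^{q_1}\hookrightarrow\ell^{q}$ --- is correct and gives a complete, self-contained proof in the regime $q_1\le q$, which in fact covers every use of the lemma in this paper (all applications have $p_1=p=2$, $p_2=1$, $q=q_1=2$); for the remaining regime $q<q_1$ permitted by $\tfrac1q\le\tfrac1{q_1}+\tfrac12$ you defer to the same citation the paper relies on, so there is no substantive divergence. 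The only caution is that your sketched mechanism for the $+\tfrac12$ gain is not convincing as stated: frequency localization of $\Delta_j f$ in an annulus gives no summability over $j$, and the naive route of upgrading $g\in L^{p_2}$ to $\ell^2$-summable blocks via $L^{p_2}\hookrightarrow B^0_{p_2,2}$ is only valid for $p_2\le 2$ (for $p_2>2$ one merely has $L^{p_2}\hookrightarrow B^0_{p_2,p_2}$), so the sharp fine-index condition genuinely requires the argument of \cite{Schilling2022}; since you cite rather than prove that step, this is a weakness of the heuristic, not a gap in the proof.
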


\noindent The following analyzes the  regularity of a Sobolev function on the diagonal. This is a kind of Trace Theorem for Sobolev spaces onto a subspace, for which there might be some results in the literature that, however, we haven't found; we hence provide a proof based on the Fourier expansion.  

\begin{lemma} \label{lemma: diagonal}
Let \(f \in H^{s}(\T^d\times \T^d)\) for \(s > d\) and let \(g(x):=f(x,x)\). Then \(g\in H^{s'}(\T^d)\) for \( s' \leq s-d/2\) and 
\[\norm{g}_{H^{s'}(\T^d) } \le C \norm{f}_{H^{s}(\T^d\times \T^d ) } .  \]
    
\end{lemma}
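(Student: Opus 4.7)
\smallskip

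The plan is to work directly with the Fourier expansion. Writing $f(x,y)=\sum_{k,l\in\Z^d}\hat f(k,l)\,e_k(x)e_l(y)$, the key algebraic identity $e_k(x)e_l(x)=e_{k+l}(x)$ collapses the double series on the diagonal, giving
\begin{equation*}
g(x)=f(x,x)=\sum_{m\in\Z^d}\Big(\sum_{k\in\Z^d}\hat f(k,m-k)\Big)e_m(x),
\qquad \hat g(m)=\sum_{k\in\Z^d}\hat f(k,m-k).
\end{equation*}
The hypothesis $s>d$ (in fact $s>d/2$ suffices for the estimate below) guarantees that $\hat f\in\ell^1(\Z^{2d})$, hence the series for $g$ converges absolutely and $g$ is well-defined pointwise.

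The main step is a weighted Cauchy--Schwarz inequality, splitting $\hat f(k,m-k)=\hat f(k,m-k)\langle (k,m-k)\rangle^{s}\cdot \langle (k,m-k)\rangle^{-s}$:
\begin{equation*}
|\hat g(m)|^2\le\Big(\sum_{k}\langle(k,m-k)\rangle^{2s}|\hat f(k,m-k)|^2\Big)\Big(\sum_{k}\langle(k,m-k)\rangle^{-2s}\Big).
\end{equation*}
The technical heart of the argument is estimating the second factor. Using the identity $|k|^2+|m-k|^2=\tfrac12|m|^2+2|k-m/2|^2$, the sum becomes
\begin{equation*}
\sum_{k}\big(1+\tfrac12|m|^2+2|k-m/2|^2\big)^{-s},
\end{equation*}
which, since the lattice $\Z^d-m/2$ is a translate of $\Z^d$ and the summand is a smooth, positive, radially decreasing function on $\R^d$, is comparable to its Euclidean integral. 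A scaling $y=\sqrt{(1+|m|^2/2)/2}\,u$ reduces this integral to $C(1+|m|^2/2)^{d/2-s}\int_{\R^d}(1+|u|^2)^{-s}du$, the last integral being finite iff $s>d/2$. This yields the bound $\sum_{k}\langle(k,m-k)\rangle^{-2s}\le C\langle m\rangle^{d-2s}$.

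Inserting this into the definition of $\|g\|_{H^{s'}}^2$ and performing the change of variables $(k,m)\mapsto(k,l)=(k,m-k)$ via Fubini gives
\begin{equation*}
\|g\|_{H^{s'}}^2\le C\sum_{k,l}\langle k+l\rangle^{2s'-2s+d}\,\langle(k,l)\rangle^{2s}|\hat f(k,l)|^2.
\end{equation*}
The hypothesis $s'\le s-d/2$ makes the exponent $2s'-2s+d\le 0$, so $\langle k+l\rangle^{2s'-2s+d}\le 1$, and the right-hand side is bounded by $C\|f\|_{H^s(\T^d\times\T^d)}^2$, proving the claim. The only delicate point is the integral-comparison step producing the sharp exponent $d/2$ of loss; any slack there would propagate into the final regularity exponent, which is why the radial monotonicity of $(1+|m|^2/2+2|\cdot|^2)^{-s}$ and the explicit scaling are used rather than crude upper bounds.
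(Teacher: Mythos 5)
Your proof is correct and follows essentially the same route as the paper's: Fourier expansion of $g$ via the collapse $e_k(x)e_l(x)=e_{k+l}(x)$, a weighted Cauchy--Schwarz with weights $\langle(k,l)\rangle^{\pm 2s}$, the lattice-sum estimate $\sum_k\langle(k,m-k)\rangle^{-2s}\le C\langle m\rangle^{d-2s}$ obtained from the identity $|k|^2+|m-k|^2=\tfrac12|m|^2+2|k-m/2|^2$ together with an integral comparison, and finally the sign condition $2s'-2s+d\le 0$. The only (correct) refinement you add is the observation that $s>d$ is used for absolute convergence of the Fourier series (pointwise well-definedness of $g$), whereas $s>d/2$ already suffices for the norm estimate itself.
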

\begin{proof}
    First by Sobolev embedding the function \(f\) is continuous and therefore \(g\) is well-defined. Let \(l=(l_1,l_2) \in \Z^d \times \Z^d\), \(\tilde l=(\tilde l_1,\tilde l_2) \in \Z^d \times \Z^d\).
    We have 
    \begin{align*}
    \norm{g}_{H^{s'}(\T^d)}^2 &= 
    \sum\limits_{k \in \Z^d} \langle k \rangle^{2s'} |\langle g ,e_k\rangle|^2  \\
    &=  \sum\limits_{k \in \Z^d} \langle k \rangle^{2s'} \bigg| \sum\limits_{l \in \Z^d \times \Z^d}  \langle f, e_l \rangle   \langle e_{l_1+l_2} ,e_k\rangle \bigg|^2 \\
    &\le \sum\limits_{k \in \Z^d}  \sum\limits_{l \in \Z^d \times \Z^d}  \sum\limits_{\tilde l \in \Z^d \times \Z^d} \langle l \rangle^{2s}  |\langle f, e_l \rangle|^2    \langle k \rangle^{2s'}  \langle \tilde  l \rangle^{-2s}  \langle e_{ l_1+  l_2} ,e_k\rangle \langle e_{\tilde l_1+ \tilde l_2} ,e_k\rangle   \\
    &=  \sum\limits_{ n \in \Z^d} \sum\limits_{\substack{ l_1, l_2 \in \Z^d \\  l_1 + l_2 =n}}   \sum\limits_{ \tilde n \in \Z^d} \sum\limits_{\substack{\tilde l_1,\tilde l_2 \in \Z^d \\ \tilde l_1 +\tilde l_2 =\tilde n}} \sum\limits_{k \in \Z^d}  \langle l \rangle^{2s}  |\langle f, e_l \rangle|^2   \langle k \rangle^{2s'}  \langle \tilde  l \rangle^{-2s} \langle e_{ l_1+  l_2} ,e_k\rangle  \langle e_{\tilde l_1+ \tilde l_2} ,e_k\rangle   \\ 
     &\le  \sum\limits_{ n \in \Z^d} \sum\limits_{\substack{ l_1, l_2 \in \Z^d \\  l_1 + l_2 =n}}  \sum\limits_{\substack{\tilde l_1,\tilde l_2 \in \Z^d \\ \tilde l_1 +\tilde l_2 =n}}  \langle l \rangle^{2s}  |\langle f, e_l \rangle|^2   \langle n \rangle^{2s'}  \langle \tilde  l \rangle^{-2s}      
    \end{align*}
    For the last series we find 
    \begin{align*}
        \sum\limits_{\substack{\tilde l_1,\tilde l_2 \in \Z^d \\ \tilde l_1 +\tilde l_2 =n}}   \langle \tilde  l \rangle^{-2s} 
        &\le \sum\limits_{\tilde l_1 \in \Z^d}   (1+|\tilde l_1|^2+|n-\tilde l_1|^2)^{-s}
        = \sum\limits_{\tilde l_1 \in \Z^d}   (1+\tfrac{3}{4}|n_1|^2+2\big|\tfrac{n}{2}-\tilde l_1 \big|^2)^{-s} \\
        &\le C \sum\limits_{\tilde l_1 \in \Z^d}   (1+|n_1|^2+\big|\tfrac{n}{2}-\tilde l_1 \big|^2)^{-s} = C \langle n \rangle^{-2s} \sum\limits_{\tilde l_1 \in \Z^d} \bigg( 1+  \frac{\big|\tfrac{n}{2}-\tilde l_1 \big|}{1+|n|^2} \bigg)^{-s}  . 
    \end{align*}
    Using the integral criteria for the series and a change of variables, we obtain the bound 
    \begin{equation*}
         \sum\limits_{\substack{\tilde l_1,\tilde l_2 \in \Z^d \\ \tilde l_1 +\tilde l_2 =n}}   \langle \tilde  l \rangle^{-2s} 
         \le C \langle n \rangle^{d-2s} .
    \end{equation*}
Consequently, we obtain 
\begin{align*}
    \norm{g}_{H^{s'}(\T^d)} &\le C \sum\limits_{ n \in \Z^d} \langle n \rangle^{2s'+d-2s} \sum\limits_{\substack{ l_1, l_2 \in \Z^d \\  l_1 + l_2 =n}}  
     \langle l \rangle^{2s}  |\langle f, e_l \rangle|^2 \\
     &\le C \sum\limits_{ n \in \Z^d} \sum\limits_{\substack{ l_1, l_2 \in \Z^d \\  l_1 + l_2 =n}}  
     \langle l \rangle^{2s}  |\langle f, e_l \rangle|^2 \\
     &\le C \norm{f}_{H^s(\T^d \times \T^d)}^2,
\end{align*}
where we used the fact that \(2s'-d-2s \leq 0 \). 
\end{proof}

\subsection{Gy\"ongy--Krylov criterion}
We recall the Gy\"ongy--Krylov criterion~\cite{Gyongy1996}.
If \((E,d)\) is a metric space we denote by \((E^2,d^2)\) the product space with the metric given by \((d((x,y),(x',y'))=d(x,y)+d(x',y')\) and equipped with the Borel sigma algebra. 
Let \(D = \{(x, x) \in E^2 \, ; \, x \in E\}\) be the diagonal.

\begin{lemma} \label{lemma: gyongy}
Let \((X_n)_{n \in \N}\) be a sequence of random variables from a probability space \((\Omega, \cF, \P)\) to a complete separable metric space \((E, d)\). Assume that, for every pair of subsequences \((n_1(k), n_2(k))_{k \in \N}\), with \(n_1(k) \geq n_2(k)\) for every \(k \in \N\), there exists a subsequence \((k(h))_{h \in \N}\) such that the random variables
\[
(X_{n_1(k(h))}, X_{n_2(k(h))})_{h \in \N} \colon (\Omega, \cF, \P) \to (E^2, d_2)
\]
converge in law to a probability measure \(\mu\) on \(E^2\) such that \(\mu(D) = 1\). Then there exists a random variable
\[
X \colon (\Omega, \cF, \P) \to (E, d)
\]
such that \(X_n \to X\) in probability.    
\end{lemma}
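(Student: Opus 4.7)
\textbf{Proof proposal for Lemma \ref{lemma: gyongy}.}

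The plan is to show that the sequence $(X_n)$ is Cauchy in probability, and then invoke the completeness of convergence in probability on a complete separable metric space. Concretely, on $L^0(\Omega; E)$, the functional $\rho(X,Y) := \E[d(X,Y) \wedge 1]$ defines a pseudo-metric whose convergence coincides with convergence in probability; since $(E,d)$ is complete and separable, $(L^0(\Omega;E), \rho)$ is complete. Hence if I can show that $\rho(X_n, X_m) \to 0$ as $n,m \to \infty$, then there exists a random variable $X \colon \Omega \to E$ such that $X_n \to X$ in probability, which is exactly the desired conclusion.

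To prove the Cauchy property, I will argue by contradiction. Suppose $\rho(X_n,X_m)$ does not tend to $0$. Then there exist $\varepsilon > 0$ and subsequences $(n_1(k)), (n_2(k))$ with $n_1(k) \geq n_2(k)$ such that $\rho(X_{n_1(k)}, X_{n_2(k)}) \geq \varepsilon$ for every $k \in \N$. Applying the hypothesis to this pair of subsequences, I extract a further subsequence $(k(h))_{h\in\N}$ such that the laws of the pairs $(X_{n_1(k(h))}, X_{n_2(k(h))})$ converge weakly on $(E^2, d_2)$ to a probability measure $\mu$ concentrated on the diagonal $D$.

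The key step is then to upgrade this joint weak convergence to convergence in probability of the distance $d(X_{n_1(k(h))}, X_{n_2(k(h))})$ to zero. For this I will apply the portmanteau theorem to the bounded continuous function
\begin{equation*}
\Psi \colon (E^2, d_2) \to \R, \qquad \Psi(x,y) := d(x,y) \wedge 1,
\end{equation*}
which vanishes on $D$. Weak convergence of laws yields
\begin{equation*}
\rho(X_{n_1(k(h))}, X_{n_2(k(h))}) = \E[\Psi(X_{n_1(k(h))}, X_{n_2(k(h))})] \longrightarrow \int_{E^2} \Psi \, d\mu = \int_D \Psi \, d\mu = 0,
\end{equation*}
contradicting the lower bound $\rho(X_{n_1(k)}, X_{n_2(k)}) \geq \varepsilon$ along the original subsequence. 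Hence $(X_n)$ is Cauchy in probability, and the first paragraph yields the random variable $X$.

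The main conceptual obstacle is the translation between weak convergence of the joint laws on $E^2$ and convergence in probability of the marginals: this is handled precisely by the fact that $\Psi$ is bounded and continuous and vanishes exactly on the diagonal where $\mu$ is supported, so that portmanteau converts the diagonal-support information into vanishing of an $L^1$-type distance. The rest is routine: the subsequence principle (every subsequence has a further subsequence along which $\rho \to 0$ implies $\rho \to 0$ for the whole sequence of pairs) combined with the completeness of convergence in probability gives the result without any further ingredient.
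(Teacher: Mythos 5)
Your proof is correct: the paper itself gives no proof of this lemma (it is recalled verbatim from the cited reference of Gy\"ongy and Krylov), and your argument — Cauchy in probability via contradiction, portmanteau applied to the bounded continuous function $d\wedge 1$ vanishing on the diagonal, and completeness of $L^0(\Omega;E)$ for a complete separable $E$ — is precisely the standard proof of that criterion. The only detail you gloss over is that, after negating the Cauchy property, one must arrange both index sequences $n_1(k)\ge n_2(k)\to\infty$ to be strictly increasing so that they qualify as subsequences in the sense of the hypothesis; this is a routine further extraction.
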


\subsection{Approximation with cylindrical functions}
We provide the main approximation result for a class of continuous  functions. 
\begin{definition} \label{def: cylindrical_class}
    Let \(s \ge 0 \). We define the set of \emph{cylindrical functions} as  
    \begin{align*}
     FC^\infty( H^{-s}(\T^d)) := \{  \Phi \colon H^{-s}(\T^d) \to \R \, : \,  \exists m \in \N , \varphi_1, \ldots, \varphi_m \in C^{\infty}(\T^d) , g \in C^\infty_c(\R^m) \\
    \mathrm{such \; that} \; \Phi(f) = g( \langle f, \varphi_1\rangle_{H^{-s}(\T^d)} , \ldots,\langle f, \varphi_n  \rangle_{H^{-s}(\T^d)} ), \forall f \in   H^{-s}(\T^d))\}. 
    \end{align*}
\end{definition}
Note that functions in $FC^\infty( H^{-s}(\T^d))$ are in  $C^\infty( H^{-s}(\T^d))$ with derivatives all bounded. The following Lemma is a key ingredient to compute the weak error in a rigorous manner, and is a variation of the result in~\cite[Lemma~B.78]{Gozzi2017}.  Note that convergence is just on compact sets, as it is known that approximation results by smooth functions can not hold on the whole space with the uniform norm; 
see~\cite{nemirovski1973polynomial}.
\begin{lemma} \label{lemma: approximation_hilbert_smooth}
   Let \(s\ge 0 \) and \(\Phi \in  C^2_{\ell}(H^{-s}(\T^d)) \). Then there exists a sequence \((\Phi_n, n \in \N)\) in \( FC^\infty( H^{-s}(\T^d))\) such that for each compact set \(K \subset H^{-s}(\T^d)\) we have 
 \begin{align}
  \lim\limits_{n \to \infty}   \sup\limits_{f \in K} | \Phi_n(f) -\Phi(f) | &= 0,
  \label{eq:Phi_n_1}
  \\
  \| \Phi_n \|_{C_\ell(H^{-s}(\T^d))} &\leq 2\|\Phi\|_{C_\ell(H^{-s}(\T^d))} 
  \label{eq:Phi_n_2}\\
  [\Phi_n]_{C^2(H^{-s}(\T^d))} &\le  [\Phi]_{C^2(H^{-s}(\T^d))}.
  \label{eq:Phi_n_3}
 \end{align}
 Further, if $\Phi$ is just in $C_b(H^{-s}(\T^d))$ then \eqref{eq:Phi_n_1} holds and 
 \begin{equation}
 \label{eq:Phi_n_4}
\| \Phi_n \|_{C_b(H^{-s}(\T^d))} \leq \|\Phi\|_{C_b(H^{-s}(\T^d))}. 
 \end{equation} 
\end{lemma}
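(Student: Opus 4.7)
The strategy would combine three classical ingredients: orthogonal projection onto finite-dimensional subspaces of $H^{-s}(\T^d)$, Euclidean mollification of the resulting function of finitely many coordinates, and multiplication by a smooth compactly supported cutoff in order to land in $FC^\infty(H^{-s}(\T^d))$. First I would fix an orthonormal basis $\{\phi_k\}_{k\in\N}$ of $H^{-s}(\T^d)$ consisting of smooth elements (for instance real normalizations of $\langle k\rangle^{s} e_k$, so that each $\phi_k\in C^\infty(\T^d)$), and let $P_N$ be the orthogonal projection onto $V_N=\mathrm{span}(\phi_1,\dots,\phi_N)$. The map $\Pi_N : H^{-s}(\T^d)\to\R^N$, $\Pi_N(f)=(\langle f,\phi_i\rangle_{H^{-s}})_{i\leq N}$, identifies $V_N$ with a Euclidean space isometrically.

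The key observation is that $\tilde\Phi_N:=\Phi\circ P_N$ is already cylindrical in the sense that $\tilde\Phi_N(f)=\tilde g_N(\Pi_N f)$ with $\tilde g_N(x)=\Phi(\sum_i x_i\phi_i)$, and because $P_N$ is a contraction in $H^{-s}(\T^d)$ the chain rule yields $\|\tilde\Phi_N\|_{C_\ell}\leq\|\Phi\|_{C_\ell}$ and $[\tilde\Phi_N]_{C^2}\leq[\Phi]_{C^2}$. I would then mollify, setting $h_N:=\tilde g_N*\rho_{\epsilon_N}$ on $\R^N$ for a standard mollifier $\rho_{\epsilon_N}$, which produces a $C^\infty$ function converging uniformly to $\tilde g_N$ on compacts with $\|\nabla^k h_N\|_\infty\leq\|\nabla^k\tilde g_N\|_\infty$. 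Finally, picking a smooth cutoff $\chi_{R_N}:\R^N\to[0,1]$ with $\chi_{R_N}=1$ on $B_{R_N}$, supported in $B_{2R_N}$ and with $|\nabla^k\chi_{R_N}|\leq C R_N^{-k}$, I would define $g_N:=\chi_{R_N} h_N \in C^\infty_c(\R^N)$ and $\Phi_N(f):=g_N(\langle f,\phi_1\rangle,\dots,\langle f,\phi_N\rangle)\in FC^\infty(H^{-s}(\T^d))$.

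To verify the convergence on a compact $K\subset H^{-s}(\T^d)$, I would use that $K$ is bounded, say $\sup_{f\in K}\|f\|_{H^{-s}}\leq M$, so that $|\Pi_N P_N f|\leq M$; hence for $R_N\geq 2M$ the cutoff is identically $1$ on $\Pi_N(K)$. Compactness of $K$ yields $\sup_{f\in K}\|P_N f-f\|_{H^{-s}}\to 0$, and uniform continuity of $\Phi$ on a bounded neighbourhood of $K$ then gives uniform convergence of $\Phi\circ P_N$ to $\Phi$ on $K$. Choosing $\epsilon_N\to 0$ small enough depending on the modulus of continuity of $\tilde g_N$ on a large ball, the mollification adds an error that vanishes uniformly on $K$. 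The linear-growth bound $\|\Phi_N\|_{C_\ell}\leq 2\|\Phi\|_{C_\ell}$ follows from $|h_N(x)|\leq\|\Phi\|_{C_\ell}(1+|x|+\epsilon_N)$ combined with $|\chi_{R_N}|\leq 1$ and $|\Pi_N P_N f|\leq\|f\|_{H^{-s}}$, provided $\epsilon_N\leq 1$. In the $C_b$ version the same construction gives $\|\Phi_N\|_{C_b}\leq\|\Phi\|_{C_b}$ directly, since the mollification and the cutoff are both non-expansive for the supremum norm.

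The main obstacle will be the second-derivative bound \eqref{eq:Phi_n_3} under the cutoff in the last step. The product rule produces terms $(\nabla^2\chi_{R_N})h_N$ and $(\nabla\chi_{R_N})\otimes(\nabla h_N)$ whose factors $\nabla^k\chi_{R_N}$ are $O(R_N^{-k})$, but $h_N$ itself may grow linearly up to order $R_N$ on the support of $\chi_{R_N}$; the natural bound one obtains is therefore $[\Phi_N]_{C^2}\leq [\Phi]_{C^2}+C\|\Phi\|_{C_\ell}$ rather than the strict inequality stated in \eqref{eq:Phi_n_3}. This uniform-in-$N$ control is what is actually needed in the application (in the proof of Theorem~\ref{theorem: main_result}, the approximation is passed to the limit via Vitali's convergence theorem, so only a uniform bound on $[\Phi_N]_{C^2}$ matters). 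The exact form \eqref{eq:Phi_n_3} can be recovered either by inserting the cutoff only far outside any fixed compact of interest (applying the construction sequentially on an exhausting sequence of compacts) or by replacing the linear-growth part of $\tilde g_N$ by an affine function outside a large ball before cutting off, so that the cross terms carry $[\Phi]_{C^2}$ rather than $\|\Phi\|_{C_\ell}$.
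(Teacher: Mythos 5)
Your construction (finite-dimensional orthogonal projection, then Euclidean mollification) is exactly the one the paper uses: its proof delegates precisely this step to the explicit approximation $\Psi^n_k(f)=\int_{\R^n}\Phi(P_nf-Q_ny)\eta_k(y)\,dy$ of \cite[Lemma B.78]{Gozzi2017}, and your choice of a smooth orthonormal basis neatly replaces the paper's separate reduction step in which the test functions $\varphi_i\in H^{s}(\T^d)$ are approximated by smooth ones. The one place where you genuinely diverge is the cutoff $\chi_{R_N}$, and your diagnosis there is correct and worth keeping: multiplying a linearly growing $h_N$ by a cutoff produces cross terms $(\nabla\chi_{R_N})h_N=O(R_N^{-1})\cdot O(R_N\|\Phi\|_{C_\ell})$ that do not vanish, so the exact inequality \eqref{eq:Phi_n_3} is lost and one only gets $[\Phi_N]_{C^2}\le[\Phi]_{C^2}+C\|\Phi\|_{C_\ell}$. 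The paper avoids this issue rather than solving it: it passes to the class $\widetilde{FC}^\infty$ in which the outer function $g$ is merely $C^\infty(\R^m)$ (not compactly supported) and never reinstates compact support, even though the definition of $FC^\infty$ demands $g\in C^\infty_c(\R^m)$. Of your two proposed repairs, the second (replace $\tilde g_N$ by its affine extension outside a large ball before cutting off, so the cross terms are controlled by $[\Phi]_{C^1}\le[\Phi]_{C^2}$ rather than by $\|\Phi\|_{C_\ell}$) is the right one; the first (cutting off far outside a fixed compact) does not restore \eqref{eq:Phi_n_3}, since the seminorm is a supremum over all of $H^{-s}(\T^d)$ and the transition region always contributes. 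Two minor cautions: the sequence must be chosen independently of $K$, so take $R_N\to\infty$ and note that for $\Phi\in C^2_\ell$ the mollification error is controlled globally by $[\Phi]_{C^1}\epsilon_N$ (no modulus-of-continuity argument needed there); and in the $C_b$ case the uniform-continuity argument for the mollification shift $Q_Ny$ must be run at fixed projection index, where $\{P_Nf-Q_Ny: f\in K,\ |y|\le\epsilon\}$ is precompact, before diagonalizing.
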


\begin{proof}
    We will reduce the problem. 
We claim that we can actually choose \(\varphi_1,\ldots, \varphi_m  \in H^{s}(\T^d) \). Choose a sequence \((h_n^i,n \in \N)\) of smooth functions converging towards \(\varphi_i\) in \( H^{s}(\T^d) \). Then, we find 
\begin{align*}
        &|g( \langle f, \varphi_1\rangle_{H^{-s}(\T^d)} , \ldots,\langle f, \varphi_m  \rangle_{H^{-s}(\T^d)} )- g( \langle f, h_n^1 \rangle_{H^{-s}(\T^d)} , \ldots,\langle f, h_n^m \rangle_{H^{-s}(\T^d)} )|^2 \\
    &\quad \le \norm{g}_{C^1(\R^m)} \sum\limits_{j=1}^m |\langle f, \varphi_j - h_n^j \rangle_{H^{-s}(\T^d)}|^2 \\
    &\quad \le \norm{g}_{C^1(\R^m)} \sum\limits_{j=1}^m \norm{f}_{H^{-s}(\T^d)}^2 \norm{ \varphi_j -  h_n^j}_{H^{-s}(\T^d)}^2 \\
&\quad \to 0 , \quad \mathrm{as} \; n \to \infty.
\end{align*}
Therefore, it remains to show the claim for the new set 
     \begin{align*}
    \widetilde{ FC}^\infty( H^{-s}(\T^d)) := \{  \Phi \colon H^{-s}(\T^d) \to \R \, : \,  \exists m \in \N , \varphi_1, \ldots, \varphi_m \in H^{s}(\T^d) , g \in C^\infty(\R^m) \\
    \mathrm{such \; that} \; \Phi(f) = g( \langle f, \varphi_1\rangle_{H^{-s}(\T^d)}, \ldots,\langle f, \varphi_m  \rangle_{H^{-s}(\T^d)}), \,\, \forall f \in   H^{-s}(\T^d))\}. 
    \end{align*}
    The convergence claim on compact sets \eqref{eq:Phi_n_1}  as well as the preservation of linear growth norm \eqref{eq:Phi_n_2} and uniform norm \eqref{eq:Phi_n_4} follow immediately by~\cite[Lemma~B.78]{Gozzi2017}, which provides an explicit approximation based of finite dimensional projection. 
    For a given $\Phi$, denoting $P_n:H^{-s}(\T^d)\rightarrow H^{-s}(\T^d)$ the orthogonal projection into the the linear span of the first $n$ elements of a orthonormal basis and $Q_n:\R^n \rightarrow H^{-s}(\T^d)$ the corresponding embedding, such approximation is defined by
    \[
    \Psi^n_k (f) = \int_{\R^n} \Phi(P_n f -Q_n y) \eta_k(y) dy, 
    \]
    where $\eta_k:\R^n\rightarrow \R$ is a smooth mollifier with support on the ball of radius $1/k$. Such approximation immediately gives the seminorm bounds \eqref{eq:Phi_n_3}. 
    %
\end{proof}

The derivatives of functions in this class can be easily computed:

\begin{lemma}\label{lemma: derivative}
Let \(s >0\), \(\Phi \in  FC^\infty(H^{-s}(\T^d)\) with the representation 
\[
\Phi(f) = g( \langle f, \varphi_1\rangle_{H^{-s}(\T^d)}  , \ldots,\langle f, \varphi_m   \rangle_{H^{-s}(\T^d)}  ) 
\] 
for \( g \in C^\infty_c(\R^m), \varphi_1, \ldots, \varphi_m \in C^\infty(\T^d)  \).
Then, we have the following formulas for the Fréchet derivatives: 
\begin{align*}
    \nabla \Phi: H^{-s}(\T^d) &\to H^{-s}(\T^d)   \\
    f &\mapsto  \sum\limits_{i=1}^m \partial_{x_i} g( \langle f, \varphi_1\rangle_{H^{-s}(\T^d)}  , \ldots,\langle f, \varphi_m  \rangle_{H^{-s}(\T^d)}  ) \varphi_i,
%
\end{align*}
and 
\begin{align*}
    \nabla^2 \Phi: H^{-s}(\T^d) &\to L(H^{-s}(\T^d),H^{-s}(\T^d))\\
    f &\mapsto \bigg( h \mapsto \sum\limits_{i,j=1}^m \partial_{x_i} \partial_{x_j} g( \langle f, \varphi_1\rangle_{H^{-s}(\T^d)}  , \ldots,\langle f, \varphi_m  \rangle_{H^{-s}(\T^d)}  ) \langle \varphi_j,h \rangle_{H^{-s}(\T^d)} 
    \varphi_i \bigg).
\end{align*}
\end{lemma}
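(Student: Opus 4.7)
The statement is a direct chain-rule computation, so the plan is to split $\Phi$ as $g \circ F$ where $F:H^{-s}(\T^d)\to\R^m$ has components $F_i(f):=\langle f,\varphi_i\rangle_{H^{-s}(\T^d)}$, and then use Riesz identification of $L(H^{-s}(\T^d),\R)$ with $H^{-s}(\T^d)$ to turn the abstract derivatives into elements of $H^{-s}(\T^d)$.

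First I would check that each $F_i$ is a bounded linear functional on $H^{-s}(\T^d)$. Because $\varphi_i\in C^\infty(\T^d)\subseteq H^{-s}(\T^d)$, the map $f\mapsto \langle f,\varphi_i\rangle_{H^{-s}(\T^d)}$ is the scalar product against a fixed element and is hence linear and continuous, with Cauchy--Schwarz bound $|F_i(f)|\le \|\varphi_i\|_{H^{-s}(\T^d)}\|f\|_{H^{-s}(\T^d)}$. Its Fréchet derivative is itself, and under the Riesz identification $L(H^{-s}(\T^d),\R)\cong H^{-s}(\T^d)$ adopted in \S\ref{sec:notation}, we get $\nabla F_i(f)=\varphi_i$ for every $f$. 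Since $g\in C_c^\infty(\R^m)$ is smooth, the chain rule in Hilbert spaces gives, for any $h\in H^{-s}(\T^d)$,
\begin{equation*}
\langle\nabla\Phi(f),h\rangle_{H^{-s}(\T^d)}
=\sum_{i=1}^{m}\partial_{x_i}g(F_1(f),\ldots,F_m(f))\,\langle\varphi_i,h\rangle_{H^{-s}(\T^d)},
\end{equation*}
which by the Riesz identification yields the claimed formula for $\nabla\Phi(f)$.

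For the second derivative I would differentiate $\nabla\Phi$ once more, using that $\nabla F_i(f)=\varphi_i$ is independent of $f$. Only the prefactor $\partial_{x_i}g(F_1(f),\ldots,F_m(f))$ is $f$-dependent, and its derivative is obtained by another application of the chain rule:
\begin{equation*}
\nabla\bigl[f\mapsto \partial_{x_i}g(F_1(f),\ldots,F_m(f))\bigr](f)
=\sum_{j=1}^{m}\partial_{x_j}\partial_{x_i}g(F_1(f),\ldots,F_m(f))\,\varphi_j.
\end{equation*}
Combining this with the product rule (noting that $\varphi_i$ is constant in $f$) gives, for any $h\in H^{-s}(\T^d)$,
\begin{equation*}
\nabla^2\Phi(f)(h)
=\sum_{i,j=1}^{m}\partial_{x_i}\partial_{x_j}g(F_1(f),\ldots,F_m(f))\,\langle\varphi_j,h\rangle_{H^{-s}(\T^d)}\,\varphi_i,
\end{equation*}
which is exactly the stated identity for $\nabla^2\Phi(f)$ viewed as an element of $L(H^{-s}(\T^d),H^{-s}(\T^d))$.

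There is no serious obstacle: the only points requiring care are the systematic use of the Riesz identification (so that a bounded linear functional is presented as a vector in $H^{-s}(\T^d)$) and the fact that each $F_i$ is affine so that its derivative is constant and does not produce extra terms in the second differentiation. Smoothness of $g$ with compact support ensures all derivatives of $g$ are bounded, so the resulting maps $f\mapsto \nabla\Phi(f)$ and $f\mapsto \nabla^2\Phi(f)$ are themselves continuous (indeed $C^\infty$ by iterating the argument), which justifies all pointwise identifications above.
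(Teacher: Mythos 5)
Your proof is correct and follows essentially the same route as the paper's own argument: compute $\nabla\Phi(f)$ as a linear functional via the chain rule, using that each $f\mapsto\langle f,\varphi_i\rangle_{H^{-s}(\T^d)}$ is bounded linear (hence its own derivative), then apply the Riesz identification, and for the second derivative observe that the $\varphi_i$ factors are constant in $f$ so only the prefactors $\partial_{x_i}g$ get differentiated. No gaps.
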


\providecommand{\bysame}{\leavevmode\hbox to3em{\hrulefill}\thinspace}
\providecommand{\MR}{\relax\ifhmode\unskip\space\fi MR }
\providecommand{\MRhref}[2]{%
	\href{http://www.ams.org/mathscinet-getitem?mr=#1}{#2}
}
\providecommand{\href}[2]{#2}

\medskip


\begin{thebibliography}{CdCRS25}
	
	\bibitem[APY21]{azmoodeh_peccati_yang_2021}
	Ehsan Azmoodeh, Giovanni Peccati, and Xiaochuan Yang, \emph{Malliavin–stein
		method: a survey of some recent developments}, Modern Stochastics: Theory and
	Applications \textbf{8} (2021), no.~2, 141--177.
	
	\bibitem[BCD+72]{Duflo1972}
	P.~Bernat, N.~Conze, M.~Duflo, M.~L\'evy-Nahas, M.~Ra\"is, P.~Renouard, and
	M.~Vergne, \emph{Repr\'esentations des groupes de {L}ie r\'esolubles},
	Monographies de la Soci\'et\'e{} Math\'ematique de France, vol. No. 4, Dunod,
	Paris, 1972. \MR{444836}
	
	\bibitem[BCD11]{Bahouri2011}
	Hajer Bahouri, Jean-Yves Chemin, and Rapha\"el Danchin, \emph{Fourier analysis
		and nonlinear partial differential equations}, Grundlehren der mathematischen
	Wissenschaften [Fundamental Principles of Mathematical Sciences], vol. 343,
	Springer, Heidelberg, 2011. \MR{2768550}
	
	\bibitem[BD25]{bernou2025}
	Armand Bernou and Mitia Duerinckx, \emph{Uniform-in-time estimates on
		corrections to mean field for interacting brownian particles}, arXiv
	preprint, arXiv:2405.19306 (2025).
	
	\bibitem[BFFT12]{Baladron2012}
	Javier Baladron, Diego Fasoli, Olivier Faugeras, and Jonathan Touboul,
	\emph{Mean-field description and propagation of chaos in networks of
		{H}odgkin-{H}uxley and {F}itz{H}ugh-{N}agumo neurons}, J. Math. Neurosci.
	\textbf{2} (2012), Art. 10, 50. \MR{2974499}
	
	\bibitem[BJW23]{Bresch2023}
	Didier Bresch, Pierre-Emmanuel Jabin, and Zhenfu Wang, \emph{Mean field limit
		and quantitative estimates with singular attractive kernels}, Duke Math. J.
	\textbf{172} (2023), no.~13, 2591--2641. \MR{4658923}
	
	\bibitem[Bog07]{Bogachev2007}
	Vladimir~I. Bogachev, \emph{Measure theory. {V}ol. {I}, {II}}, Springer-Verlag,
	Berlin, 2007. \MR{2267655}
	
	\bibitem[Bol70]{Boltzmann1970}
	Ludwig Boltzmann, \emph{Weitere studien {\"u}ber das w{\"a}rmegleichgewicht
		unter gasmolek{\"u}len}, pp.~115--225, Vieweg+Teubner Verlag, Wiesbaden,
	1970.
	
	\bibitem[CCD22]{chassagneux2022probabilistic}
	Jean-Fran{\c{c}}ois Chassagneux, Dan Crisan, and Fran{\c{c}}ois Delarue,
	\emph{A probabilistic approach to classical solutions of the master equation
		for large population equilibria}, vol. 280, American Mathematical Society,
	2022.
	
	\bibitem[CCH14]{Carrillo2014}
	Jos\'e{}~Antonio Carrillo, Young-Pil Choi, and Maxime Hauray, \emph{The
		derivation of swarming models: mean-field limit and {W}asserstein distances},
	Collective dynamics from bacteria to crowds, CISM Courses and Lect., vol.
	553, Springer, Vienna, 2014, pp.~1--46. \MR{3331178}
	
	\bibitem[CCTT18]{Carrillo2018}
	Jos\'e{}~A. Carrillo, Young-Pil Choi, Claudia Totzeck, and Oliver Tse, \emph{An
		analytical framework for consensus-based global optimization method}, Math.
	Models Methods Appl. Sci. \textbf{28} (2018), no.~6, 1037--1066. \MR{3804923}
	
	\bibitem[CD18a]{Carmona2018vol1}
	Ren\'e{} Carmona and Fran\c{c}ois Delarue, \emph{Probabilistic theory of mean
		field games with applications. {I}}, Probability Theory and Stochastic
	Modelling, vol.~83, Springer, Cham, 2018, Mean Field FBSDEs, Control, and
	Games.
	
	\bibitem[CD18b]{Carmona2018}
	\bysame, \emph{Probabilistic theory of mean field games with applications.
		{II}}, Probability Theory and Stochastic Modelling, vol.~84, Springer, Cham,
	2018, Mean field games with common noise and master equations. \MR{3753660}
	
	\bibitem[CdCRS25]{Serfaty2025}
	Antonin Chodron~de Courcel, Matthew Rosenzweig, and Sylvia Serfaty, \emph{Sharp
		uniform-in-time mean-field convergence for singular periodic {R}iesz flows},
	Ann. Inst. H. Poincar\'e{} C Anal. Non Lin\'eaire \textbf{42} (2025), no.~2,
	391--472. \MR{4877368}
	
	\bibitem[CDLL19]{cardaliaguet2019master}
	Pierre Cardaliaguet, Fran\c{c}ois Delarue, Jean-Michel Lasry, and Pierre-Louis
	Lions, \emph{The master equation and the convergence problem in mean field
		games}, vol. 201, Princeton University Press, 2019.
	
	\bibitem[CNP25]{nikolaev2025}
	Li~Chen, Paul Nikolaev, and David~J. Pr\"omel, \emph{Hegselmann-{K}rause model
		with environmental noise}, Trans. Amer. Math. Soc. \textbf{378} (2025),
	no.~1, 527--567. \MR{4840314}
	
	\bibitem[CP19]{CECCHIN2019}
	Alekos Cecchin and Guglielmo Pelino, \emph{Convergence, fluctuations and large
		deviations for finite state mean field games via the master equation},
	Stochastic Processes and their Applications \textbf{129} (2019), no.~11,
	4510--4555.
	
	\bibitem[CST22]{CST22}
	Jean-Fran{\c{c}}ois Chassagneux, Lukasz Szpruch, and Alvin Tse, \emph{{Weak
			quantitative propagation of chaos via differential calculus on the space of
			measures}}, The Annals of Applied Probability \textbf{32} (2022), no.~3, 1929
	-- 1969.
	
	\bibitem[Daw91]{Dawson1993}
	Donald~A. Dawson, \emph{Measure-valued markov processes}, \'Ecole d'\'Et\'e{}
	de {P}robabilit\'es de {S}aint-{F}lour {XXI}---1991, Lecture Notes in Math.,
	vol. 1541, Springer, Berlin, 1991, pp.~1--260.
	
	\bibitem[Del21]{delarue2021mean}
	François Delarue, \emph{Mean field games}, Proceedings of Symposia in Applied
	Mathematics, American Mathematical Society, 2021.
	
	\bibitem[DLR19]{Delarue2019}
	François Delarue, Daniel Lacker, and Kavita Ramanan, \emph{From the master
		equation to mean field game limit theory: a central limit theorem}, Electron.
	J. Probab. \textbf{24} (2019), Paper No. 51, 54. \MR{3954791}
	
	\bibitem[Dob79]{Dobrushin1979}
	Roland~L. Dobru\v{s}in, \emph{Vlasov equations}, Funktsional. Anal. i
	Prilozhen. \textbf{13} (1979), no.~2, 48--58, 96. \MR{541637}
	
	\bibitem[DPZ14]{Prato1992}
	Giuseppe Da~Prato and Jerzy Zabczyk, \emph{Stochastic equations in infinite
		dimensions}, second ed., Encyclopedia of Mathematics and its Applications,
	vol. 152, Cambridge University Press, Cambridge, 2014. \MR{3236753}
	
	\bibitem[EK86]{Ethier1986}
	Stewart~N. Ethier and Thomas~G. Kurtz, \emph{Markov processes}, Wiley Series in
	Probability and Mathematical Statistics: Probability and Mathematical
	Statistics, John Wiley \& Sons, Inc., New York, 1986, Characterization and
	convergence. \MR{838085}
	
	\bibitem[FG15]{Fournier2015}
	Nicolas Fournier and Arnaud Guillin, \emph{On the rate of convergence in
		{W}asserstein distance of the empirical measure}, Probab. Theory Related
	Fields \textbf{162} (2015), no.~3-4, 707--738. \MR{3383341}
	
	\bibitem[FGSe17]{Gozzi2017}
	Giorgio Fabbri, Fausto Gozzi, and Andrzej \'Swipolhk~ech, \emph{Stochastic
		optimal control in infinite dimension}, Probability Theory and Stochastic
	Modelling, vol.~82, Springer, Cham, 2017, Dynamic programming and HJB
	equations, With a contribution by Marco Fuhrman and Gianmario Tessitore.
	\MR{3674558}
	
	\bibitem[FHM14]{Fournier2014}
	Nicolas Fournier, Maxime Hauray, and St\'ephane Mischler, \emph{Propagation of
		chaos for the 2{D} viscous vortex model}, J. Eur. Math. Soc. (JEMS)
	\textbf{16} (2014), no.~7, 1423--1466. \MR{3254330}
	
	\bibitem[FJ22]{flenghi2022}
	Roberta Flenghi and Benjamin Jourdain, \emph{Central limit theorem over
		non-linear functionals of empirical measures: beyond the iid setting}, arXiv
	preprint, arXiv:2204.06482 (2022).
	
	\bibitem[FL23]{Flandoli2023}
	Franco Flandoli and Eliseo Luongo, \emph{Stochastic partial differential
		equations in fluid mechanics}, Lecture Notes in Mathematics, vol. 2330,
	Springer, Singapore, [2023] \copyright 2023. \MR{4628185}
	
	\bibitem[FM97]{Fernandez1997}
	Bego\~na Fernandez and Sylvie M\'el\'eard, \emph{A {H}ilbertian approach for
		fluctuations on the {M}c{K}ean-{V}lasov model}, Stochastic Process. Appl.
	\textbf{71} (1997), no.~1, 33--53. \MR{1480638}
	
	\bibitem[GK96]{Gyongy1996}
	Istv\'an Gy\"ongy and Nicolai Krylov, \emph{Existence of strong solutions for
		{I}t\^o's stochastic equations via approximations}, Probab. Theory Related
	Fields \textbf{105} (1996), no.~2, 143--158. \MR{1392450}
	
	\bibitem[GK24]{gess2024}
	Benjamin Gess and Vitalii Konarovskyi, \emph{A quantitative central limit
		theorem for the simple symmetric exclusion process}, arXiv preprint,
	arXiv:2408.01238 (2024).
	
	\bibitem[GLBM25]{Guillin2025}
	Arnaud Guillin, Pierre Le~Bris, and Pierre Monmarch\'e, \emph{Uniform in time
		propagation of chaos for the 2{D} vortex model and other singular stochastic
		systems}, J. Eur. Math. Soc. (JEMS) \textbf{27} (2025), no.~6, 2359--2386.
	\MR{4889245}
	
	\bibitem[GMN24]{gu2024}
	Chenlin Gu, Jean-Christophe Mourrat, and Maximilian Nitzschner,
	\emph{Quantitative equilibrium fluctuations for interacting particle
		systems}, arXiv preprint, arXiv:2401.10080 (2024).
	
	\bibitem[HK02]{hegselmann2002}
	Rainer Hegselmann and Ulrich Krause, \emph{Opinion dynamics and bounded
		confidence models, analysis, and simulation}, Journal of {A}rtifical
	{S}ocieties and {S}ocial {S}imulation (JASSS) \textbf{5} (2002), no.~3,
	1--33.
	
	\bibitem[HM86]{HITSUDA1986311}
	Masuyuki Hitsuda and Itaru Mitoma, \emph{Tightness problem and stochastic
		evolution equation arising from fluctuation phenomena for interacting
		diffusions}, Journal of Multivariate Analysis \textbf{19} (1986), no.~2,
	311--328.
	
	\bibitem[HMC06]{huang2006LargePopulationStochastic}
	Minyi Huang, Roland~P. Malham{\'e}, and Peter~E. Caines, \emph{Large population
		stochastic dynamic games: Closed-loop {{McKean-Vlasov}} systems and the
		{{Nash}} certainty equivalence principle}, Communications in Information \&
	Systems \textbf{6} (2006), no.~3, 221--252.
	
	\bibitem[Jea15]{Jeans15}
	James~H. Jeans, \emph{On the theory of star-streaming and the structure of the
		universe}, Monthly Notices of the Royal Astronomical Society \textbf{76}
	(1915), no.~2, 70--84.
	
	\bibitem[JM98]{Jourdain1998}
	Benjamin Jourdain and Sylvie M\'el\'eard, \emph{Propagation of chaos and
		fluctuations for a moderate model with smooth initial data}, Ann. Inst. H.
	Poincar\'e{} Probab. Statist. \textbf{34} (1998), no.~6, 727--766.
	\MR{1653393}
	
	\bibitem[JT21]{JourdainTse21}
	Benjamin Jourdain and Alvin Tse, \emph{{Central limit theorem over non-linear
			functionals of empirical measures with applications to the mean-field
			fluctuation of interacting diffusions}}, Electronic Journal of Probability
	\textbf{26} (2021), no.~none, 1 -- 34.
	
	\bibitem[JW18]{JabinWang2018}
	Pierre-Emmanuel Jabin and Zhenfu Wang, \emph{Quantitative estimates of
		propagation of chaos for stochastic systems with {$W^{-1,\infty}$} kernels},
	Invent. Math. \textbf{214} (2018), no.~1, 523--591. \MR{3858403}
	
	\bibitem[Kal02]{Kallenberg2002}
	Olav Kallenberg, \emph{Foundations of modern probability}, second ed.,
	Probability and its Applications (New York), Springer-Verlag, New York, 2002.
	\MR{1876169}
	
	\bibitem[Kol10]{Kolokoltsov2010}
	Vassili~N. Kolokoltsov, \emph{Nonlinear {M}arkov processes and kinetic
		equations}, Cambridge Tracts in Mathematics, vol. 182, Cambridge University
	Press, Cambridge, 2010. \MR{2680971}
	
	\bibitem[Kol11]{kolokoltsov2011markov}
	\bysame, \emph{Markov processes, semigroups, and generators}, vol.~38, Walter
	de Gruyter, 2011.
	
	\bibitem[KS22]{Schilling2022}
	Franziska K\"uhn and Ren\'e{}~L. Schilling, \emph{Convolution inequalities for
		{B}esov and {T}riebel-{L}izorkin spaces, and applications to convolution
		semigroups}, Studia Math. \textbf{262} (2022), no.~1, 93--119. \MR{4339468}
	
	\bibitem[Kur14]{Kurtz2014}
	Thomas~G. Kurtz, \emph{Weak and strong solutions of general stochastic models},
	Electron. Commun. Probab. \textbf{19} (2014), no. 58, 16. \MR{3254737}
	
	\bibitem[KX04]{KurtzXiong2004}
	Thomas~G. Kurtz and Jie Xiong, \emph{{A stochastic evolution equation arising
			from the fluctuations of a class of interacting particle systems}},
	Communications in Mathematical Sciences \textbf{2} (2004), no.~3, 325 -- 358.
	
	\bibitem[Lac23]{Lacker2023}
	Daniel Lacker, \emph{Hierarchies, entropy, and quantitative propagation of
		chaos for mean field diffusions}, Probab. Math. Phys. \textbf{4} (2023),
	no.~2, 377--432. \MR{4595391}
	
	\bibitem[LL07]{lasry2007mean}
	Jean-Michel Lasry and Pierre-Louis Lions, \emph{Mean field games}, Japanese
	journal of mathematics \textbf{2} (2007), no.~1, 229--260.
	
	\bibitem[LR15]{LiuWei2015SPDE}
	Wei Liu and Michael R\"ockner, \emph{Stochastic partial differential equations:
		an introduction}, Universitext, Springer, Cham, 2015. \MR{3410409}
	
	\bibitem[LY16]{Liu2016}
	Jian-Guo Liu and Rong Yang, \emph{Propagation of chaos for large {B}rownian
		particle system with {C}oulomb interaction}, Res. Math. Sci. \textbf{3}
	(2016), Paper No. 40, 33. \MR{3572548}
	
	\bibitem[M\'96]{Meleard1996}
	Sylvie M\'el\'eard, \emph{Asymptotic behaviour of some interacting particle
		systems; mckean-vlasov and boltzmann models}, Probabilistic models for
	nonlinear partial differential equations, Lecture Notes in Mathematics, vol.
	1627, Springer-Verlag, Berlin; Centro Internazionale Matematico Estivo
	(C.I.M.E.), Florence, 1996, Lectures given at the 1st Session and Summer
	School held in Montecatini Terme, May 22--30, 1995, Fondazione CIME/CIME
	Foundation Subseries. \MR{1431297}
	
	\bibitem[MP94]{Marchioro1994}
	Carlo Marchioro and Mario Pulvirenti, \emph{Mathematical theory of
		incompressible nonviscous fluids}, Applied Mathematical Sciences, vol.~96,
	Springer-Verlag, New York, 1994. \MR{1245492}
	
	\bibitem[MR16]{Marinelli2016}
	Carlo Marinelli and Michael R\"ockner, \emph{On the maximal inequalities of
		{B}urkholder, {D}avis and {G}undy}, Expo. Math. \textbf{34} (2016), no.~1,
	1--26. \MR{3463679}
	
	\bibitem[MS02]{Moynot2002}
	Olivier Moynot and Manuel Samuelides, \emph{Large deviations and mean-field
		theory for asymmetric random recurrent neural networks}, Probab. Theory
	Related Fields \textbf{123} (2002), no.~1, 41--75. \MR{1906437}
	
	\bibitem[MW17]{Weber2017}
	Jean-Christophe Mourrat and Hendrik Weber, \emph{Global well-posedness of the
		dynamic {$\Phi^4$} model in the plane}, Ann. Probab. \textbf{45} (2017),
	no.~4, 2398--2476. \MR{3693966}
	
	\bibitem[Nik25]{Nikolaev2025_fluc}
	Paul Nikolaev, \emph{Fluctuation for interacting particle systems with common
		noise}, arXiv preprint arXiv:2504.05044 (2025).
	
	\bibitem[NS73]{nemirovski1973polynomial}
	Arkadi~S Nemirovski and SM~Semenov, \emph{On polynomial approximation of
		functions on hilbert space}, Matematicheskii Sbornik \textbf{134} (1973),
	no.~2, 257--281.
	
	\bibitem[Ons49]{Onsager1949}
	L.~Onsager, \emph{Statistical hydrodynamics}, Nuovo Cimento (9) \textbf{6}
	(1949), 279--287. \MR{36116}
	
	\bibitem[Osa85]{Osada1985}
	Hirofumi Osada, \emph{A stochastic differential equation arising from the
		vortex problem}, Proc. Japan Acad. Ser. A Math. Sci. \textbf{61} (1985),
	no.~10, 333--336. \MR{834541}
	
	\bibitem[Osa86]{Osada1986}
	\bysame, \emph{Propagation of chaos for the two-dimensional {N}avier-{S}tokes
		equation}, Proc. Japan Acad. Ser. A Math. Sci. \textbf{62} (1986), no.~1,
	8--11. \MR{839794}
	
	\bibitem[RL18]{Rozovsky2018}
	Boris~L. Rozovsky and Sergey~V. Lototsky, \emph{Stochastic evolution systems},
	second ed., Probability Theory and Stochastic Modelling, vol.~89, Springer,
	Cham, 2018, Linear theory and applications to non-linear filtering.
	\MR{3839316}
	
	\bibitem[RSZ24]{Shang2024}
	Michael R\"ockner, Shijie Shang, and Tusheng Zhang, \emph{Well-posedness of
		stochastic partial differential equations with fully local monotone
		coefficients}, Math. Ann. \textbf{390} (2024), no.~3, 3419--3469.
	\MR{4803456}
	
	\bibitem[Ser20]{Serfaty2020}
	Sylvia Serfaty, \emph{Mean field limit for {C}oulomb-type flows}, Duke Math. J.
	\textbf{169} (2020), no.~15, 2887--2935, With an appendix by Mitia Duerinckx
	and Serfaty. \MR{4158670}
	
	\bibitem[Ser23]{Serfaty2023}
	\bysame, \emph{Gaussian fluctuations and free energy expansion for {C}oulomb
		gases at any temperature}, Ann. Inst. Henri Poincar\'e{} Probab. Stat.
	\textbf{59} (2023), no.~2, 1074--1142. \MR{4575026}
	
	\bibitem[ST87]{Schmeisser1987}
	Hans-J\"urgen Schmeisser and Hans Triebel, \emph{Topics in {F}ourier analysis
		and function spaces}, A Wiley-Interscience Publication, John Wiley \& Sons,
	Ltd., Chichester, 1987. \MR{891189}
	
	\bibitem[SZ24]{shao2024}
	Yufei Shao and Xianliang Zhao, \emph{Quantitative particle approximations of
		stochastic 2d navier--stokes equation}, arXiv preprint arXiv:2402.02336
	(2024).
	
	\bibitem[Szn85]{Szn85}
	Alain-Sol Sznitman, \emph{A fluctuation result for nonlinear diffusions},
	Infinite dimensional analysis and stochastic processes, {Semin}. {Meet}.,
	{Bielefeld} 1983, {Res}. {Notes} {Math}. 124, 145-160, 1985.
	
	\bibitem[Szn91]{Sznitman1991}
	\bysame, \emph{Topics in propagation of chaos}, \'Ecole d'\'Et\'e{} de
	{P}robabilit\'es de {S}aint-{F}lour {XIX}---1989, Lecture Notes in Math.,
	vol. 1464, Springer, Berlin, 1991, pp.~165--251. \MR{1108185}
	
	\bibitem[TH81]{Tanaka1981}
	Hiroshi Tanaka and Masuyuki Hitsuda, \emph{Central limit theorem for a simple
		diffusion model of interacting particles}, Hiroshima Math. J. \textbf{11}
	(1981), no.~2, 415--423. \MR{620546}
	
	\bibitem[Tri06]{Triebel2006}
	Hans Triebel, \emph{Theory of function spaces. {III}}, Monographs in
	Mathematics, vol. 100, Birkh\"auser Verlag, Basel, 2006. \MR{2250142}
	
	\bibitem[WZZ23]{Xianliang2023}
	Zhenfu Wang, Xianliang Zhao, and Rongchan Zhu, \emph{Gaussian fluctuations for
		interacting particle systems with singular kernels}, Arch. Ration. Mech.
	Anal. \textbf{247} (2023), no.~5, Paper No. 101, 62. \MR{4646870}
	
\end{thebibliography}
\end{document}